 \newtheorem{theorem}{Theorem}[section]
 \newtheorem{corollary}[theorem]{Corollary}
 \newtheorem{lemma}[theorem]{Lemma}
 \newtheorem{proposition}[theorem]{Proposition}
 \newtheorem{definition}[theorem]{Definition}
 \newtheorem{remark}{Remark}[section]
 \newtheorem{example}{Example}[section]
 \numberwithin{equation}{section}
\begin{document}

\title[Moduli Spaces and CW Structures]
{On Moduli Spaces and CW Structures Arising from Morse Theory on
Hilbert Manifolds}

\author{LIZHEN QIN}

\address{Mathematics Department of Wayne State University, Detroit, MI 48202, USA}

\email{dv6624@wayne.edu}




\keywords{Morse theory, negative gradient dynamics, Hilbert
manifold, Condition (C), finite index, Moduli space, compactness,
orientation, CW structure, Morse Homology}





\begin{abstract}
This paper proves some results on negative gradient dynamics of
Morse functions on Hilbert manifolds. It contains the compactness of
flow lines, manifold structures of certain compactified moduli
spaces, orientation formulas, and CW structures of the underlying
manifolds.
\end{abstract}
\maketitle

\section{Introduction}
Invented in the 1920s (see \cite{morse1} and \cite{morse2}), Morse
theory has been a crucial tool in the study of smooth manifolds. In
the past two decades, largely due to the influence of A.\ Floer,
there has been a resurgence in activity in Morse theory in its
geometrical and dynamical aspects, especially in infinite
dimensional situations. An explosion of new ideas produced many
``oral theorems'' which were apparently widely acknowledged, highly
anticipated or even frequently used. Unfortunately, the literature
has not kept pace with the oral tradition. Some previously asserted
results are still stated without proof and, having asked various
experts in the field, the author could not ascertain what is
sufficiently proved or what is even regarded as true. The purpose of
this paper is to give a self-contained and detailed treatment
proving some of these claims.
\medskip

In the simplest instance, suppose one is given a Morse function on a
finite dimensional closed smooth manifold. By choosing a Riemannian
metric, one obtains a negative gradient flow. This determines a
stratification in which two points lie in the same stratum if they
lie on the same unstable manifold. Now each such unstable manifold
(or descending manifold) is homeomorphic to an open cell, and it is
desirable to know whether this open cell can be compactified in such
a way that it becomes the image of a closed cell arising from a CW
structure on the manifold. This is one of the problems we will be
addressing. Another related problem is to consider moduli spaces of
flow lines between any pair of critical points. Using piecewise flow
lines, one obtains a compactification of these moduli spaces.  The
question in this case to decide when one obtains a manifold with
corner structure from this compactification.

In addition to the finite dimensional case, our results will
generalize in two ways. Firstly, all of our results have an infinite
dimensional version in which the underlying manifold is a complete
Hilbert manifold and the Morse function satisfies Condition (C) and
has finite index at each critical point. This situation will be
called the {\it CF case} (see Definition \ref{CF_pair}). Secondly,
we will also strengthen some results in the finite dimensional case.
For example, we will obtain a certain result about simple homotopy
type in Theorem \ref{CW}.

The main results of this paper (see Section
\ref{section_main_results}) consist of nine theorems and one example
(Example \ref{not_c1}). All theorems are considered in CF case. The
results on compactness (Theorems \ref{flow_compactness} and
\ref{point_compactness}) require no more assumptions. Theorem
\ref{point_compactness} is even true in a more general setting.
Other theorems need two additional assumptions, transversality (see
Definition \ref{transverality}) and the local triviality of the
metric (see Definition \ref{locally_trivial_metric}). When the
compactification of descending manifolds is considered (see Theorems
\ref{d(p)_manifold}, \ref{disk}, \ref{CW} and
\ref{boundary_operator} and (2) of Theorem \ref{orientation}), the
Morse function is furthermore assumed to satisfy a lower bound
condition.

The following is a brief description of our main results.

Theorems \ref{flow_compactness} and \ref{point_compactness} are  two
results on the compactness. Roughly speaking, compactness means the
space of unbroken flow lines can be compactified by adding broken
flow lines. When the underlying manifold $M$ is finite dimensional,
similar results are well-known, for example, \cite[thm.\ 2.3, p.\
798]{smale3}, \cite[prop.\ 3]{burghelea_haller} and \cite[prop.\
2.35]{schwarz}. For the infinite dimensional Floer case, there are
results in \cite{floer1}, \cite{floer2} and \cite{salamon}. The
referees for this paper also referred the author to
\cite{abbondandolo_majer1} and \cite{abbondandolo_majer2} which
prove results similar to Theorem \ref{flow_compactness} (see Remark
\ref{compactness_remark_1}). Even in the finite dimensional case,
some assumptions on $M$ (e.g. compactness, both complete metrics and
Condition (c)) are needed in order to prove such results (compare
\cite[rem.,\ p. 13]{milnor1}).

Some spaces arise naturally from the study of negative gradient
dynamics. Let $\mathcal{D}(p)$ and $\mathcal{A}(p)$ be the
descending and ascending manifolds of a critical point $p$
respectively. Assuming transversality of the dynamics (see
Definition \ref{transverality}), let $\mathcal{W}(p,q)$ be the
intersection manifold of $\mathcal{D}(p)$ and $\mathcal{A}(q)$, and
$\mathcal{M}(p,q)$ be the orbit space of $\mathcal{W}(p,q)$ with
respect to the action of the flow (see Definitions
\ref{descending_ascending_manifold} and \ref{moduli_space}). It's
well-known that these manifolds can be compactified in a standard
way (see (\ref{compactified_space})). Theorems
\ref{m(p,q)_manifold}, \ref{d(p)_manifold} and \ref{w(p,q)_manifold}
consider the manifold structures of their compactified spaces.
Denote the compactified spaces by $\overline{\mathcal{M}(p,q)}$,
$\overline{\mathcal{D}(p)}$ and $\overline{\mathcal{W}(p,q)}$. A
central problem is to equip them with smooth structures in such a
way that they are manifolds with corners that are compatible with
the given stratifications. The smooth structure of
$\overline{\mathcal{M}(p,q)}$ is useful for some geometric
constructions in Morse Theory. For example, the papers
\cite{cohen_jones_segal1}, \cite{cohen_jones_segal2} and
\cite{franks} use the moduli spaces to recover the topology of the
underlying manifold. The smooth structure of
$\overline{\mathcal{D}(p)}$ is useful for Witten Deformations, for
example, see \cite{latour} and \cite{burghelea_haller}. The papers
\cite{bismut_zhang} and \cite{laudenbach} use the ``smooth
structure" of $e(\overline{\mathcal{D}(p)})$, where $e$ is the
evaluation map defined in (3) of Theorem \ref{d(p)_manifold}. The
smooth structure of $\overline{\mathcal{W}(p,q)}$ is useful for
computing the cup product of $H^{*}(M;R)$ via Morse Theory (see
\cite[sec.\ 2.4]{austin_braam} and \cite{viterbo}). To the best of
my knowledge, when $M$ is finite dimensional, and the metric is
locally trivial (see Definition \ref{locally_trivial_metric}), the
cases of $\overline{\mathcal{M}(p,q)}$ and
$\overline{\mathcal{D}(p)}$ are solved by \cite{latour} and
\cite{burghelea_haller}. (Actually, these two papers consider closed
$1$-forms which are more general than Morse functions.) The paper
\cite{burghelea_haller} gives a quick and nice proof. However, this
problem still remains open in the general case, in particular, when
the metric is nontrivial near the critical points. This problem is
closely related to the associative gluing of broken flow lines which
is also a well-known open problem. In addition, few papers in the
literature study $\overline{\mathcal{W}(p,q)}$.

In this paper, we extend the proof in \cite{burghelea_haller} to the
infinite dimensional CF case. This also includes the case of
$\overline{\mathcal{W}(p,q)}$. Our proofs of Theorems
\ref{m(p,q)_manifold} and \ref{d(p)_manifold} largely follow
\cite{burghelea_haller}. Subsection \ref{remark_manifold_structure}
presents a detailed remark on the literature, in particular, the
relations between this paper and \cite{burghelea_haller}.

Example \ref{not_c1} is another contribution of this paper to the
above problem. It shows that even if the answer to the above problem
is positive for a general metric, there are still some remarkable
differences from the locally trivial metric case even if the
underlying manifold is compact.

Theorem \ref{orientation} is a result on orientations. Since the
descending manifolds $\mathcal{D}(p)$ are finite dimensional, we can
assign orientations to them arbitrarily. This determines naturally
the orientations of $\mathcal{M}(p,q)$, $\mathcal{W}(p,q)$ and the
compactified manifolds $\overline{\mathcal{M}(p,q)}$,
$\overline{\mathcal{D}(p)}$ and $\overline{\mathcal{W}(p,q)}$. The
$1$-strata (see Definition \ref{k_stratum}) of these compactified
manifolds have two types of orientations, boundary orientations and
product orientations (see Subsection
\ref{subsection_definition_orientation} for details). Theorem
\ref{orientation} shows the relation between these two. Some results
on the finite dimensional case can be found, for example, in
\cite{austin_braam} and \cite{latour} (see Remarks
\ref{orientation_remark} and \ref{orientation_remark_latour}). These
orientation formulas have some applications. As pointed out in
\cite[prop.\ 2.8]{austin_braam}, the formula for
$\overline{\mathcal{M}(p,q)}$ ((1) of Theorem \ref{orientation})
gives an immediate proof of $\partial^{2} = 0$ for the Thom-Smale
complex in Morse homology. The formula for
$\overline{\mathcal{D}(p)}$ ((2) of Theorem \ref{orientation}) tells
us how to apply Stokes' theorem correctly when a differential form
is integrated on $\overline{\mathcal{D}(p)}$ (compare \cite[prop.\
6]{laudenbach}). In this paper, it together with Theorem \ref{disk}
also gives a straightforward proof of Theorem
\ref{boundary_operator}. As mentioned above, the papers
\cite{austin_braam} and \cite{viterbo} compute the cup product of
$H^{*}(M;R)$ via Morse Theory. Both \cite[(2.2)]{austin_braam} and
\cite[lem.\ 2 and 3]{viterbo} neglect signs. If we do care about the
signs in their formulas, the formula for
$\overline{\mathcal{W}(p,q)}$ ((3) of Theorem \ref{orientation}) can
tell us the answer (see Remark \ref{cup_product}).

The proof of Theorem \ref{orientation} is based on subtle
computations. If the underlying manifold $M$ is finite dimensional,
then it is locally orientable, and the proof follows easily from the
geometric constructions in \cite{burghelea_haller} although the
details are possibly lengthy. However, the issue is more complicated
in the infinite dimensional case since there is no way to give $M$ a
local orientation.

Finally, we consider the problem of constructing a CW structure from
the descending manifolds of the Morse function. Suppose a Morse
function $f$ on $M$ is lower bounded. It's well known that the
descending manifolds $\mathcal{D}(p)$ for the critical points $p$
are disjoint. Let $K^{a} = \bigsqcup_{f(p) \leq a} \mathcal{D}(p)$.
A natural question is whether or not $K^{a}$ is a CW complex with
open cells $\mathcal{D}(p)$. This has been considered by Thom
(\cite{thom}), Bott (\cite[p. 104]{bott}) and Smale (\cite[p.
197]{smale2}). If the answer is positive, then Morse theory will
give a compact manifold a bona fide CW decomposition which is
stronger than the homotopical CW approximation in \cite[thm.\
3.5]{milnor1}. In order to prove this, we have to construct a
characteristic map $e: D \longrightarrow M$ such that $e$ maps the
interior $D^{\circ}$ homeomorphically onto $\mathcal{D}(p)$ for each
$p$, where $D$ is a closed disk. This has been solved by \cite[thm.\
1]{kalmbach2} and \cite[rem.\ 3]{laudenbach} when $M$ is finite
dimensional and the metric is locally trivial. In this paper, these
results will be further improved as follows.

Actually, the papers \cite{kalmbach2} and \cite{laudenbach} show
that there exists such a characteristic map. Theorem
\ref{d(p)_manifold} shows that, even in the infinite dimensional CF
case, $\mathcal{D}(p)$ can be compactified to be
$\overline{\mathcal{D}(p)}$ and there is the map $e:
\overline{\mathcal{D}(p)} \longrightarrow M$ which is explicitly
constructed. If $\overline{\mathcal{D}(p)}$ is homeomorphic to a
closed disk (this is Theorem \ref{disk}), then $K^{a}$ is a CW
complex, and what's more, the characteristic maps $e:
\overline{\mathcal{D}(p)} \longrightarrow M$ are explicit. In order
to get an elementary proof of Theorem \ref{disk}, I asked Prof. John
Milnor for help. (Actually, there is a quick but non-elementary
proof based on the Poincar\'{e} Conjecture in all dimensions, see
Remark \ref{disk_non_elementary}.) I had not known the existence of
characteristic maps had been proved by \cite{kalmbach2} and
\cite{laudenbach} at that time. Prof. Milnor helped me greatly.
First, he referred me to \cite{kalmbach2}. Second, he suggested that
we may add a vector field to $-\nabla f$ on $\mathcal{D}(p)$ to
control the limit behavior of $-\nabla f$. Motivated by his
suggestion and \cite{kalmbach2}, I found the desired proof. In
particular, the key Lemma \ref{modified_vector} fulfills his
suggestion.

In addition, Theorem \ref{disk} and Lemma \ref{modified_vector} help
us prove more results. Let $M^{a} = f^{-1}((-\infty,a])$, the paper
\cite[cor.,\ p. 543]{kalmbach1} (see also \cite[sec.\
4.5]{kalmbach2}) shows that $K^{a}$ is a strong deformation retract
of $M^{a}$ when $f$ is lower bounded and proper and $a$ is regular.
Theorem \ref{CW} shows that, in this case, $M^{a}$ even has a CW
decomposition such that $K^{a}$ expands to $ M^{a}$ by elementary
expansions. The last theorem, Theorem \ref{boundary_operator},
computes the boundary operator of the CW chain complex associated
with $K^{a}$. This relates Morse homology to a cellular chain
complex (see Remark \ref{morse_homology}). The proofs of Theorems
\ref{CW} and \ref{boundary_operator} reflect the advantage of
Theorem \ref{disk} and Lemma \ref{modified_vector}.

The outline of this paper is as follows. Section
\ref{section_preliminaries} gives some definitions, notation and
elementary results mostly used in this paper. Section
\ref{section_main_results} formulates our main results. The
subsequent sections are the proofs of the main results.

\section{Preliminaries}\label{section_preliminaries}
In this paper, we assume $M$ is a Hilbert manifold with a
\textit{complete} Riemannian metric. The completeness of the metric
is necessary for Theorem \ref{bounded} (compare \cite[rem.,\ p.
13]{milnor1}). Let $f$ be a Morse function on $M$. Denote the index
of a critical point $p$ by $\textrm{ind}(p)$. Denote $f^{-1}([a,b])$
by $M^{a,b}$. Denote $f^{-1}((-\infty, a])$ by $M^{a}$.

We need the well-known Condition (C) or Palais-Smale Condition (see
\cite{palais_smale}). \\

\noindent \textbf{Condition (C):} \textit{If $S$ is a subset of $M$
on which $f$ is bounded but on which $\| \nabla f \|$ is not bounded
away from $0$, then there is a critical point of $f$ in the closure
of $S$.} \\

Assuming this condition, its easy to prove the following results.
Good references are \cite[thm.\ 1 and 2]{palais_smale},
\cite{palais} and \cite[sec.\ 9.1]{palais_terng}.

\begin{theorem}\label{finite_number}
If $(M, f)$ satisfies Condition (C), then for all $a, b$ such that
$-\infty < a < b < +\infty$, $M^{a,b}$ contains only finite many
critical points.
\end{theorem}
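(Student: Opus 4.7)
The plan is to argue by contradiction, supposing that $M^{a,b}$ contains infinitely many critical points and using Condition (C) to produce an accumulation point, then contradicting the local isolation of critical points of a Morse function.

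First I would take a sequence $\{p_n\}$ of pairwise distinct critical points in $M^{a,b}$ and let $S = \{p_n : n \geq 1\}$. On $S$ the function $f$ is bounded, since $a \leq f(p_n) \leq b$ by construction, and $\|\nabla f\|$ vanishes identically on $S$, hence is certainly not bounded away from $0$. Condition (C) then supplies a critical point $p$ of $f$ belonging to the closure $\overline{S}$. In particular, one can extract a subsequence $p_{n_k} \to p$.

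Next I would invoke the Morse lemma for Hilbert manifolds (which applies because $f$ is Morse and $M$ is a Hilbert manifold): in a sufficiently small neighborhood $U$ of $p$ there exist coordinates in which $f - f(p)$ is a non-degenerate quadratic form, so $p$ is the unique critical point of $f$ in $U$. Since $p_{n_k} \to p$ and the $p_{n_k}$ are distinct, infinitely many of them must lie in $U \setminus \{p\}$, contradicting uniqueness. This contradiction forces $M^{a,b}$ to contain only finitely many critical points.

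The argument is essentially routine; the only point that deserves care is the local isolation step, since we are in an infinite-dimensional Hilbert manifold. I would therefore cite (or briefly recall) the Hilbert-manifold version of the Morse lemma from Palais's work, where non-degeneracy of the Hessian as a self-adjoint operator guarantees that $p$ is an isolated critical point. No further structure beyond Condition (C), the Morse property, and a Hilbert-manifold Morse lemma is needed.
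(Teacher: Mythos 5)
Your argument has a real gap at the step where you deduce a convergent subsequence $p_{n_k}\to p$ from Condition~(C). In the formulation quoted in this paper (following Palais and Smale), Condition~(C) guarantees only that $\overline{S}$ contains \emph{some} critical point. Your set $S=\{p_n\}$ already consists entirely of critical points, so that conclusion is automatic: the point $p$ it supplies could simply be one of the $p_n$ themselves, and nothing in the statement forces it to be a limit point of $S$. You therefore cannot ``extract a subsequence $p_{n_k}\to p$'' from this application alone; Condition~(C) applied to a subset of the critical locus is vacuous.

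The standard repair is a perturbation. For each $n$ choose a regular point $q_n\neq p_n$ with $d(q_n,p_n)<1/n$, $\|\nabla f(q_n)\|<1/n$, and $|f(q_n)-f(p_n)|<1$; this is possible since $f$ and $\nabla f$ are continuous near $p_n$ and $p_n$ is isolated by the Morse lemma. The set $S'=\{q_n\}$ then contains no critical point, $f$ is still bounded on it, and $\|\nabla f\|$ is not bounded away from zero. Applying Condition~(C) to $S'$ now produces a critical point $p\in\overline{S'}\setminus S'$, hence a genuine limit point of $S'$, and one extracts $q_{n_k}\to p$ with $n_k\to\infty$. Since $d(q_{n_k},p_{n_k})\to 0$, also $p_{n_k}\to p$, and your appeal to the infinite-dimensional Morse lemma (isolation of critical points) closes the proof. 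For context, the paper itself gives no proof of this statement and simply cites Palais--Smale, Palais, and Palais--Terng; had Condition~(C) been taken in its sequential form (every sequence with $f$ bounded and $\|\nabla f\|\to 0$ admits a convergent subsequence), your original argument would have been correct as written.
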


We cite \cite[thm.\ (3), p. 333]{palais} as follows.

\begin{theorem}\label{bounded}
If $(M, f)$ satisfies Condition (C), $x \in M$, and $\phi_{t}(x)$ is
the maximal flow of $-\nabla f$ with initial value $x$, then
$\phi_{t}(x)$ satisfies one of the following two conditions:

(1) $f(\phi_{t}(x))$ has no lower (upper) bound; or

(2) $f(\phi_{t}(x))$ has a lower (upper) bound, $\phi_{t}(x)$ can be
defined as a function of $t$ on $[0,+\infty)$ ($(-\infty,0]$),
$\displaystyle \lim_{t \rightarrow +\infty} \phi_{t}(x)$
($\displaystyle \lim_{t \rightarrow -\infty} \phi_{t}(x)$) exists
and is a critical point of $f$.
\end{theorem}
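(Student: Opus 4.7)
The plan is to treat only the $t \to +\infty$ case, since the $t \to -\infty$ statement follows by replacing $f$ with $-f$ (Condition (C) depends only on $\|\nabla f\|$). The identity to exploit is
\[
\frac{d}{dt}f(\phi_t(x)) = -\|\nabla f(\phi_t(x))\|^2 \leq 0,
\]
which makes $f \circ \phi_t$ non-increasing. Assuming alternative (1) fails, set $c = \inf_{t \geq 0} f(\phi_t(x)) > -\infty$ and let $(\alpha,\beta)$ denote the maximal interval of existence.

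My first step is to extend the flow to all of $[0,\infty)$. Integrating the identity above gives $\int_0^T \|\nabla f(\phi_t)\|^2\,dt = f(x) - f(\phi_T(x)) \leq f(x) - c$, and then Cauchy--Schwarz yields
\[
\int_s^t \|\nabla f(\phi_u)\|\,du \leq \sqrt{t-s}\,\sqrt{f(x) - c}
\]
for all $0 \leq s < t < \beta$. Since the integrand is the speed of the orbit, this bounds the Riemannian arc-length on any bounded subinterval. If $\beta < \infty$, the orbit would be Cauchy as $t \to \beta^{-}$, so completeness of the metric would extend it continuously to $\beta$, contradicting maximality. Hence $\beta = \infty$ and $f(\phi_t(x)) \searrow f_\infty \geq c$. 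Next, the total energy $\int_0^\infty \|\nabla f\|^2\,dt = f(x) - f_\infty$ is finite, so $\|\nabla f\|$ cannot be bounded away from $0$ on the orbit $S = \{\phi_t(x) : t \geq 0\}$, and since $f$ is also bounded on $S$, Condition (C) produces a critical point $p \in \overline{S}$.

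The hard part I expect is upgrading $p \in \overline{S}$ to $\phi_t(x) \to p$. Theorem \ref{finite_number} ensures that $f^{-1}([f_\infty, f(x)])$ contains only finitely many critical points, so I may pick $r > 0$ so that $p$ is the unique critical point in $\overline{B(p,r)}$. Suppose for contradiction $\phi_t(x) \not\to p$; then for some $\epsilon \in (0,r)$ the orbit traverses the closed annulus $A = \overline{B(p,\epsilon)} \setminus B(p,\epsilon/2)$ infinitely often. Set $E = \{t \geq 0 : \phi_t(x) \in A\}$. The closure of $\phi(E)$ lies in $A$ and thus contains no critical point, so Condition (C) forces $\|\nabla f\| \geq \delta > 0$ on $\phi(E)$; the energy identity then gives $\delta^2\,|E| \leq f(x) - f_\infty$, so $|E| < \infty$, and Cauchy--Schwarz gives $\int_E \|\nabla f\|\,du \leq \sqrt{|E|}\,\sqrt{f(x) - f_\infty} < \infty$. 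On the other hand, each excursion across $A$ contributes at least $\epsilon/2$ to this integral (the Riemannian length inside $A$), so infinitely many excursions force it to diverge --- the desired contradiction, which completes the proof.
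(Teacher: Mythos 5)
The paper does not prove Theorem~\ref{bounded}; it simply cites \cite[thm.\ (3), p.\ 333]{palais}, so there is no internal argument to compare against. Your proof is correct, self-contained, and essentially the standard one: reduce to $t\to+\infty$ via $f\mapsto-f$; derive the Cauchy--Schwarz arc-length bound $\int_s^t\|\nabla f(\phi_u)\|\,du\le\sqrt{t-s}\,\sqrt{f(x)-c}$, which combined with completeness of the metric forces the maximal time of existence to be $+\infty$; extract a critical cluster point $p$ from the finite-energy identity and Condition~(C); and upgrade cluster point to limit by the annulus argument. Two small points deserve to be made explicit. First, the annulus step invokes Theorem~\ref{finite_number} to isolate $p$ in a metric ball, and that theorem (and hence this whole step) uses the standing assumption that $f$ is Morse, so that its critical points are isolated. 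Second, the claim that each excursion across $A$ contributes at least $\epsilon/2$ to $\int_E\|\nabla f\|\,du$ should be phrased a bit more carefully: from each pass one extracts a maximal parameter subinterval on which the trajectory stays in $A$ and travels from $\partial B(p,\epsilon/2)$ to $\partial B(p,\epsilon)$; the arc length over that subinterval dominates the Riemannian distance between its endpoints, which is at least $\epsilon/2$ by the triangle inequality, and the subintervals for distinct excursions are pairwise disjoint and contained in $E$. With this spelled out, $\int_E\|\nabla f\|\,du$ diverges and the contradiction is airtight.
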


By Theorem \ref{bounded}, we get an immediate corollary.
\begin{corollary}\label{start_terminate}
Suppose $(M,f)$ satisfies Condition (C) and $-\infty < a < b <
+\infty$. Then all flow lines in $M^{a,b}$ are from $f^{-1}(b)$ or a
critical point in $M^{a,b}$ to $f^{-1}(a)$ or a critical point in
$M^{a,b}$.
\end{corollary}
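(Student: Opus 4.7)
The plan is to combine the fact that $f$ strictly decreases along any non-constant orbit of $-\nabla f$ with the dichotomy in Theorem \ref{bounded}. Fix a flow line $\phi_t(x)$ and let $I \subseteq \Real$ be the maximal interval on which $\phi_t(x)$ lies in $M^{a,b}$; by continuity $I$ is an interval. I would handle the forward endpoint $t^{+} := \sup I$ and the backward endpoint $t^{-} := \inf I$ separately, since the two cases are symmetric under $t \mapsto -t$.

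First consider $t^{+}$. If $t^{+} < +\infty$, then by maximality $\phi_{t^{+}}(x)$ lies on the boundary of $M^{a,b}$; because $f \circ \phi_t$ is non-increasing, the orbit cannot leave through $f^{-1}(b)$, so the exit point lies on $f^{-1}(a)$. If instead $t^{+} = +\infty$, then $f(\phi_t(x))$ is bounded below by $a$ on $[0,+\infty)$, so case (2) of Theorem \ref{bounded} applies: the limit $\lim_{t\to+\infty}\phi_t(x)$ exists and is a critical point of $f$. Since $f(\phi_t(x)) \in [a,b]$ along the orbit, the limit value of $f$ also lies in $[a,b]$, so this critical point belongs to $M^{a,b}$.

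For the backward endpoint $t^{-}$, the same argument in reverse time works: if $t^{-} > -\infty$ then the orbit entered $M^{a,b}$ through $f^{-1}(b)$ (again by monotonicity of $f$), and if $t^{-} = -\infty$ then $f(\phi_t(x))$ is bounded above by $b$ on $(-\infty,0]$, so the other half of Theorem \ref{bounded} produces a critical point limit, which lies in $M^{a,b}$ for the same reason.

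There is no real obstacle beyond bookkeeping: the completeness of the metric and Condition (C) are already packaged into Theorem \ref{bounded}, and the only fact used in addition is the elementary observation that $\frac{d}{dt} f(\phi_t(x)) = -\|\nabla f(\phi_t(x))\|^2 \leq 0$, which confines possible entry points of the orbit into $M^{a,b}$ to $f^{-1}(b)$ and possible exit points to $f^{-1}(a)$.
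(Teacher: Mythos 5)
Your proposal is correct and unpacks the argument the paper leaves implicit: the paper derives this corollary directly from Theorem \ref{bounded} with no written proof, and your dichotomy on $t^{\pm}$ being finite or infinite, combined with the monotonicity of $f$ along trajectories, is exactly the intended reasoning. The only small point worth tightening is the phrase ``by continuity $I$ is an interval'' --- what actually makes $I = \{t : \phi_t(x) \in M^{a,b}\}$ an interval is the monotonicity of $f \circ \phi_t$ (which you do invoke later), and the existence of $\phi_{t^{+}}(x)$ when $t^{+} < +\infty$ is itself guaranteed by Theorem \ref{bounded} (if $f$ were unbounded below along the forward orbit, it must cross $f^{-1}(a)$ at a time strictly inside the flow's maximal domain).
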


\begin{definition}\label{descending_ascending_manifold}
Let $\phi_{t}(x)$ be the flow generated by $- \nabla f$ with initial
value $x$. Suppose $p$ is a critical point. Define the descending
manifold of $p$ to be $\mathcal{D}(p) = \{ x \in M \mid
\displaystyle \lim_{t \rightarrow - \infty} \phi_{t}(x) = p \}$.
Define the ascending manifold of $p$ to be $\mathcal{A}(p) = \{ x
\in M \mid \displaystyle \lim_{t \rightarrow + \infty} \phi_{t}(x) =
p \}$.
\end{definition}

Both $\mathcal{D}(p)$ and $\mathcal{A}(p)$ are embedded submanifolds
diffeomorphic to (maybe infinite dimensional) open disks. By Theorem
\ref{finite_number} and Corollary \ref{start_terminate}, we get the
following.

\begin{corollary}\label{flow_map}
Suppose $(M,f)$ satisfies Condition (C) and $-\infty < a < b <
+\infty$. Suppose $\{ p_{1}, \cdots, p_{n} \}$ consists of all
critical points in $M^{a,b}$. Denote $\mathcal{A}(p_{i}) \cap
f^{-1}(b)$ by $S^{+}_{i}$, and $\mathcal{D}(p_{i}) \cap f^{-1}(a)$
by $S^{-}_{i}$. Then the flow map can be defined and gives a
diffeomorphism:
\[
  \psi : f^{-1}(b) - \bigcup_{i=1}^{n} S^{+}_{i} \longrightarrow f^{-1}(a) - \bigcup_{i=1}^{n}
  S^{-}_{i}.
\]
In particular, if there is no critical point in $M^{a,b}$, we have
the following diffeomorphism:
\[
  \psi : f^{-1}(b) \longrightarrow f^{-1}(a).
\]
Here, if $x \in f^{-1}(b)$, $\phi_{t}(x) = y \in f^{-1}(a)$ for some
$t$, the flow map is defined by $\psi(x)=y$.
\end{corollary}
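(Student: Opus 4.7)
The plan is to realize $\psi$ explicitly as the flow map and then verify the required properties one by one. The heart of the argument is an exit-time construction driven by Theorems \ref{finite_number} and \ref{bounded}; the rest is bookkeeping.

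Fix $x \in f^{-1}(b) - \bigcup_{i=1}^{n} S_{i}^{+}$. I claim the forward orbit $\phi_{t}(x)$ exits $M^{a,b}$ at some finite time $t_{x} > 0$ through $f^{-1}(a)$, and that this exit time is unique. If instead $\phi_{t}(x) \in M^{a,b}$ for all $t \geq 0$, then $f(\phi_{t}(x))$ is bounded below by $a$, so Theorem \ref{bounded}(2) forces $\phi_{t}(x) \to p$ for some critical point $p$; since $p \in M^{a,b}$, Theorem \ref{finite_number} gives $p = p_{i}$ for some $i$, whence $x \in \mathcal{A}(p_{i}) \cap f^{-1}(b) = S_{i}^{+}$, contradicting our choice of $x$. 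Since $\tfrac{d}{dt} f(\phi_{t}(x)) = -\|\nabla f(\phi_{t}(x))\|^{2} \leq 0$ with equality only at critical points, and since $x$ itself is not critical (else $x \in S_{i}^{+}$), $f$ strictly decreases along the orbit. Hence the orbit leaves $M^{a,b}$ through $f^{-1}(a)$ at a unique first time $t_{x}$; define $\psi(x) := \phi_{t_{x}}(x)$.

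Next I check that $\psi(x)$ is a regular point of $f$, lies in $f^{-1}(a) - \bigcup_{i} S_{i}^{-}$, and depends smoothly on $x$. If $\psi(x)$ were critical it would equal some $p_{j}$ with $f(p_{j}) = a$, but then $\phi_{t}(x) \equiv p_{j}$ for $t \geq t_{x}$ would force $x \in S_{j}^{+}$, impossible. So $\nabla f(\psi(x)) \neq 0$, and the implicit function theorem applied to $(y, t) \mapsto f(\phi_{t}(y))$ at $(x, t_{x})$ yields $t_{x}$ smoothly in $x$; composing with the smooth flow gives smoothness of $\psi$. If $\psi(x)$ lay in $S_{i}^{-} = \mathcal{D}(p_{i}) \cap f^{-1}(a)$, then the backward orbit from $\psi(x)$ would converge asymptotically to $p_{i}$ with $f$ strictly less than $f(p_{i}) \leq b$ at every finite backward time; since $x = \phi_{-t_{x}}(\psi(x))$ lies on this orbit, we would get $f(x) < b$, contradicting $f(x) = b$.

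Injectivity of $\psi$ follows from uniqueness of ODE solutions. To produce the inverse, I run the symmetric argument with the backward flow on $f^{-1}(a) - \bigcup_{i} S_{i}^{-}$: Theorem \ref{bounded}(2) forces the backward orbit of each such point to exit $M^{a,b}$ through $f^{-1}(b)$ in finite time, and the same implicit function step delivers smoothness. The special case with no critical points is the general case with all $S_{i}^{\pm} = \emptyset$, so needs no separate treatment. The main obstacle is really the single step ruling out asymptotic behavior of orbits confined to $M^{a,b}$: without Theorem \ref{bounded}, an orbit could in principle wander within $M^{a,b}$ forever without converging, which would sabotage the exit-time construction. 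Once that theorem is invoked, what remains is a routine implicit function theorem computation.
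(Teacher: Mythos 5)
Your proof is correct and follows the same approach the paper sketches: the paper presents this corollary as an immediate consequence of Theorem \ref{finite_number} and Corollary \ref{start_terminate} without writing out a proof, and your exit-time construction via Theorem \ref{bounded}, the implicit-function-theorem smoothness step, and the symmetric backward-flow argument for the inverse are exactly the details those citations are shorthand for.
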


\begin{remark}
Although we use the notation $S_{i}^{\pm}$ in Corollary
\ref{flow_map}, $S_{i}^{\pm}$ are not necessarily homeomorphic to
spheres.
\end{remark}

\begin{definition}\label{CF_pair}
If $(M, f)$ satisfies Condition (C) and $\textrm{ind}(p) < + \infty$
for all critical points $p$, then we call $(M, f)$ a CF pair.
\end{definition}

\begin{definition}\label{transverality}
If the descending manifold $\mathcal{D}(p)$ and the ascending
manifold $\mathcal{A}(q)$ are transversal for all critical points
$p$ and $q$, then we say $- \nabla f$ satisfies transversality.
\end{definition}

\begin{remark}
Some papers in the literature call Definition \ref{transverality}
Morse-Smale Condition.
\end{remark}

If $- \nabla f$ satisfies transversality, then $\mathcal{D}(p) \cap
\mathcal{A}(q)$ is an embedded submanifold which consists of points
on flow lines from $p$ to $q$. Since a flow line has an $R$-action,
we may take the quotient of $\mathcal{D}(p) \cap \mathcal{A}(q)$ by
this $R$-action, i.e. consider its orbit space acted upon by the
flow. This leads to the following definition. (See also
\cite[observation 4]{burghelea_haller}, \cite[p.
3]{cohen_jones_segal1}, \cite[defn.\ 2.32]{schwarz} and \cite[p.
158]{banyaga_hurtubise}.)

\begin{definition}\label{moduli_space}
Suppose $- \nabla f$ satisfies transversality. Define
$\mathcal{W}(p,q) = \mathcal{D}(p) \cap \mathcal{A}(q)$. Define the
moduli space $\mathcal{M}(p,q)$ to be the orbit space
$\mathcal{W}(p,q)/R$.
\end{definition}

Clearly, both $\mathcal{W}(p,q)$ and $\mathcal{M}(p,q)$ are smooth
manifolds. Suppose $\gamma_{1}$ and $\gamma_{2}$ are two flow lines
such that $\gamma_{1}(-\infty) = \gamma_{2}(-\infty) = p$,
$\gamma_{1}(+\infty) = \gamma_{2}(+\infty) = q$ and $\gamma_{1}(0) =
\gamma_{2}(t_{0})$ for some $t_{0} \neq 0$. Then $\gamma_{1}$ and
$\gamma_{2}$ are two distinct flow lines which represent the same
point of $\mathcal{M}(p,q)$. For convenience and briefness, we
identify them as the same flow line. Then $\mathcal{M}(p,q)= \{
\gamma \mid \gamma$ is a flow line, $\gamma(-\infty) = p$ and
$\gamma(+\infty) = q. \}$. Suppose $a \in (f(q), f(p))$ is a regular
value. For all $\gamma \in \mathcal{M}(p,q)$, it intersects with
$f^{-1}(a)$ at a unique point. This gives $\mathcal{M}(p,q)$ a
natural identification with $\mathcal{W}(p,q) \cap f^{-1}(a)$ which
is a diffeomorphism.

We generalize the concept of flow lines. Suppose $\gamma$ is a flow
line. If it passes through a singularity, it is a constant flow
line. Otherwise, it is nonconstant. The following definition is
slightly different from the ``broken trajectories" in \cite[defn.\
4]{burghelea_haller}.

\begin{definition}\label{generalized_flow_line}
An ordered sequence of flow lines $\Gamma = (\gamma_{1},\cdots,
\gamma_{n})$, $n \geq 1$, is a generalized flow line if
$\gamma_{i}(+\infty) = \gamma_{i+1}(-\infty)$ and $\gamma_{i}$ are
constant or nonconstant alternatively according the order of their
places in the sequence. $\gamma_{i}$ is a component of $\Gamma$.
$\Gamma$ is a unbroken generalized flow line if $n=1$ and a broken
generalized flow line if $n>1$.
\end{definition}

\begin{example}
Suppose $p$ is a singularity. Assume $\gamma_{1}$, $\gamma_{2}$ and
$\gamma_{3}$ are flow lines in which $\gamma_{1}$ and $\gamma_{3}$
are nonconstant and $\gamma_{1}(+\infty) = \gamma_{3}(-\infty) = p$,
$\gamma_{2}(t) \equiv p$. Then $(\gamma_{1})$, $(\gamma_{1},
\gamma_{2})$, $(\gamma_{2}, \gamma_{3})$ and $(\gamma_{1},
\gamma_{2}, \gamma_{3})$ are generalized flow lines, $(\gamma_{1})$
is unbroken, and others are broken. Furthermore, $(\gamma_{1},
\gamma_{3})$ is not a generalized flow line.
\end{example}

For convenience, we may identify a flow line $\gamma$ with the
generalized flow line $(\gamma)$. Definition
\ref{generalized_flow_line} is a generalization of flow lines.

\begin{definition}\label{generalized_flow_line_points}
Suppose $x$ and $y$ are two points in $M$. A generalized flow line
$(\gamma_{1},\cdots, \gamma_{n})$ connects $x$ and $y$ if there
exist $t_{1}, t_{2} \in (-\infty, +\infty)$ such that
$\gamma_{1}(t_{1}) = x$ and $\gamma_{n}(t_{2}) = y$. A point $z$ is
a point on $(\gamma_{1},\cdots, \gamma_{n})$ if there exists
$\gamma_{i}$ and $t \in (-\infty, +\infty)$ such that $\gamma_{i}(t)
= z$.
\end{definition}

\begin{example}
Suppose $p$ and $q$ are two critical points. Let $\gamma_{1}$,
$\gamma_{2}$ and $\gamma_{3}$ be flow lines such that $\gamma_{1}(t)
\equiv p$, $\gamma_{3}(t) \equiv q$, $\gamma_{2}(-\infty) = p$ and
$\gamma_{2}(+\infty) = q$. Then $(\gamma_{1}, \gamma_{2},
\gamma_{3})$ is a generalized flow line connecting $p$ and $q$,
while $\gamma_{2}$ is not.
\end{example}

We need to consider the relations between two critical points.

\begin{definition}\label{points_partial_order}
Suppose $p$ and $q$ are two critical points. We define the relation
$p \succeq q$ if there is a flow line from $p$ to $q$. We define the
relation $p \succ q$ if $p \succeq q$ and $p \neq q$.
\end{definition}

\begin{definition}\label{critical_sequence}
An ordered set $I = \{ r_{0}, r_{1}, \cdots, r_{k+1} \}$ is a
critical sequence if $r_{i}$ ($i=0, \cdots, k+1$) are critical
points and $r_{0} \succ r_{1} \succ \cdots \succ r_{k+1}$. We call
$r_{0}$ the head of $I$, and $r_{k+1}$ the tail of $I$. The length
of $I$ is $|I|=k$.
\end{definition}

Suppose $I = \{ r_{0}, r_{1}, \cdots, r_{k+1} \}$ is a critical
sequence. We denote the following product manifolds by
$\mathcal{M}_{I}$ and $\mathcal{D}_{I}$.
\begin{equation}\label{M_I}
\mathcal{M}_{I} = \prod_{i=0}^{k} \mathcal{M}(r_{i}, r_{i+1}),
\qquad \mathcal{D}_{I} = \prod_{i=0}^{k} \mathcal{M}(r_{i}, r_{i+1})
\times \mathcal{D}(r_{k+1}).
\end{equation}

We shall consider the manifold structures of  compactifications of
the spaces $\mathcal{M}(p,q)$, $\mathcal{D}(p)$ and
$\mathcal{W}(p,q)$. They usually have corners. For the definition of
manifold with corners, we follow \cite[p. 2]{douady} and \cite[sec.\
1.1]{janich}.

\begin{definition}\label{manifold_with_corner}
A smooth manifold with corners is a space defined in the same way as
a smooth manifold except that its atlases are open subsets of $[0, +
\infty)^{n}$.
\end{definition}

If $L$ is a smooth manifold with corners, $x \in L$, a neighborhood
of $x$ is differomorphic to $[0, \epsilon)^{k} \times (0,
\epsilon)^{n-k}$, then define $c(x) = k$. Clearly, $c(x)$ does not
depend on the choice of atlas. We call a union of some components of
$\{ x \in L \mid c(x) = 1 \}$ a face.

\begin{definition}\label{manifold_with_face}
A smooth manifold with faces is a smooth manifold with corners such
that each $x$ belongs to the closures of $c(x)$ different connected
faces.
\end{definition}

Now we introduce another definition.

\begin{definition}\label{k_stratum}
Suppose $L$ is a smooth manifold. $\{ x \in L \mid c(x) = k \}$ is
the $k$-stratum of $L$. Denote it by $\partial^{k} L$.
\end{definition}

Clearly, faces and the $k$-strata are manifolds in the usual sense.
They are also submanifolds of $L$ of codimension $1$ and $k$
respectively.

Suppose $p$ is critical point. By the Morse Lemma, there exist
$\epsilon
> 0$ and a diffeomorphism
\begin{equation}\label{localization_map}
h: B(\epsilon) \longrightarrow U
\end{equation}
such that
\begin{equation}\label{localization_function}
  f \circ h (v_{1}, v_{2}) = f(p) - \frac{1}{2} \langle v_{1}, v_{1}
  \rangle + \frac{1}{2} \langle v_{2}, v_{2} \rangle.
\end{equation}
Here $B(\epsilon) = \{ (v_{1}, v_{2}) \in T_{p} M \mid v_{1} \in
V_{-}, v_{2} \in V_{+}, \| v_{1} \|^{2} < 2\epsilon$ and $\| v_{2}
\|^{2} < 2\epsilon \}$, $V_{-} \times \{0\}$ is the negative
spectrum space of $\nabla^{2} f$ and $\{0\} \times V_{+}$ is the
positive spectrum space of $\nabla^{2} f$, $U$ is a neighborhood of
$p$ and $h(0,0) = p$.

\begin{definition}\label{locally_trivial_metric}
If the map $h$ in (\ref{localization_map}) also preserves the
metric, then we say that the metric of $M$ is locally trivial at
$p$. If it is locally trivial at each critical point, then we say
that the metric on $M$ is locally trivial.
\end{definition}

If the metric is locally trivial at $p$, then we have
\begin{equation}\label{localization_dymamics}
- \nabla f|_{U} = dh \cdot (v_{1}, -v_{2}).
\end{equation}

When the metric is locally trivial, Figure \ref{model_figure} shows
the standard model of the neighborhood $U$, where $U$ is identified
with $B(\epsilon)$. Here, $a$ and $b$ are regular values such that
$b < f(p) < a$, and $f^{-1}(a)$ and $f^{-1}(b)$ are two level
surfaces. The arrows indicate the directions of the flow. The points
$(v_{1}, v_{2})$, $(v_{3}, v_{4})$ and $(v_{5}, v_{6})$ are on the
same flow line, whereas $(v_{7}, v_{8})$, $(v_{9}, v_{10})$,
$(v_{11}, v_{12})$ and $(0,0)$ are on the same broken generalized
flow line. Figure \ref{model_figure} will provide geometric
intuition for the arguments in this paper.

\begin{figure}[!htbp]
\centering
\includegraphics[scale=0.3]{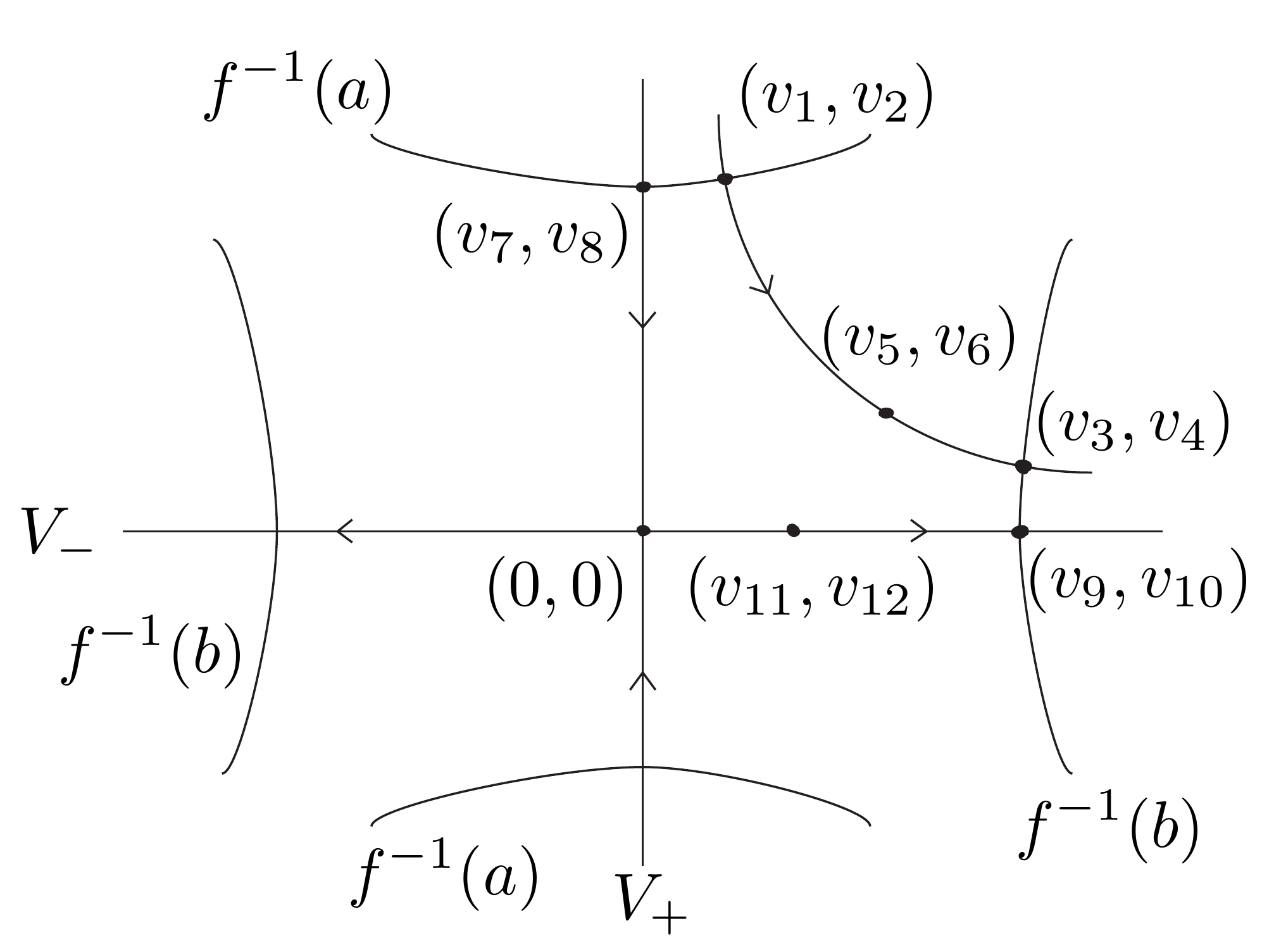} \caption{Standard Model}
\label{model_figure}
\end{figure}

\section{Main Results}\label{section_main_results}
All results are in CF case (see Definition \ref{CF_pair}) in this
paper. Theorem \ref{point_compactness} and Proposition
\ref{point_flow_limit} hold even in a more general setting. The
assumption of transversality (see Definition \ref{transverality}),
local triviality of the metric (see Definition
\ref{locally_trivial_metric}) or the lower bound of the Morse
function is needed for some results.

\subsection{Compactness}
Theorem \ref{flow_compactness}  shows that the closure of the space
of unbroken flow lines is compact and is contained in the (maybe
broken) generalized flow lines (see Definition
\ref{generalized_flow_line}). Theorem \ref{point_compactness}
considers the set consisting of points on the generalized flow lines
with a fixed head and tail. They are essential for proving the
compactness of $\overline{\mathcal{M}(p,q)}$,
$\overline{\mathcal{D}(p)}$ and $\overline{\mathcal{W}(p,q)}$ later.

\begin{theorem}[Compactness of Flows]\label{flow_compactness}
Suppose $(M,f)$ is a CF pair. Suppose $p$ and $q$ are two distinct
critical points and $\{ \gamma_{n} \}_{n=1}^{\infty}$ are flow lines
such that $\gamma_{n}(-\infty)=p$ and $\gamma_{n}(+\infty)=q$. Then
there exist finite many distinct critical points $r_{i}$ $(i=0,
\cdots, l+1)$ and flow lines $\hat{\gamma}_{i}$ $(i=0, \cdots, l)$
such that $\hat{\gamma}_{i}(-\infty) = r_{i}$,
$\hat{\gamma}_{i}(+\infty) = r_{i+1}$, $r_{0} = p$ and $r_{l+1} =
q$. There exist a subsequence $\{ \gamma_{n_{k}} \} \subseteq \{
\gamma_{n} \}$ and time $s_{n_{k}}^{0} < \cdots < s_{n_{k}}^{l}$
such that $\displaystyle \lim_{k \rightarrow \infty}
\gamma_{n_{k}}(s_{n_{k}}^{i}) = \hat{\gamma}_{i}(0)$.
\end{theorem}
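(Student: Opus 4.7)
The plan is to build the broken trajectory $(r_0=p,r_1,\ldots,r_{l+1}=q;\ \hat{\gamma}_0,\ldots,\hat{\gamma}_l)$ by induction on pieces, peeling one off at a time from nested subsequences of $\{\gamma_n\}$. Because the values $f(r_0)>f(r_1)>\cdots$ strictly decrease and Theorem \ref{finite_number} bounds the number of critical points in $M^{f(q),f(p)}$, the induction must terminate in finitely many steps. The common thread is that while generic level sets of $f$ in the Hilbert manifold $M$ are not compact, the descending manifolds at critical points are finite-dimensional by the CF hypothesis, so their intersections with suitable nearby level sets are finite-dimensional compact spheres, and these provide the sequential compactness needed to extract limits.

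For the initial piece, pick a regular value $a_0\in(f(q),f(p))$ close enough to $f(p)$ that $[a_0,f(p))$ contains no critical values and that $f^{-1}(a_0)$ meets the Morse chart at $p$ in the compact sphere $\Sigma_0=\mathcal{D}(p)\cap f^{-1}(a_0)$ of dimension $\mathrm{ind}(p)-1$. Since $\gamma_n(-\infty)=p$ and $f$ is strictly decreasing along every nonconstant flow line, $\gamma_n$ crosses $f^{-1}(a_0)$ at a unique time, which we reparametrize to be $s_n^0=0$; then $\gamma_n(0)\in\Sigma_0$. Passing to a subsequence, $\gamma_n(0)\to x_0$ for some $x_0\in\Sigma_0$, and let $\hat{\gamma}_0$ be the flow line through $x_0$ with $\hat{\gamma}_0(0)=x_0$. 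Continuous dependence on initial data yields $\gamma_n\to\hat{\gamma}_0$ uniformly on compact time intervals, so in particular $\hat{\gamma}_0(-\infty)=p$; since $f\geq f(q)$ along each $\gamma_n$ the same holds on $\hat{\gamma}_0$, and Theorem \ref{bounded} gives $\hat{\gamma}_0(+\infty)=r_1$ for some critical point with $f(q)\leq f(r_1)<f(p)$.

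For the inductive step, assume we have extracted distinct critical points $r_0,\ldots,r_k$ with $r_k\neq q$, flow lines $\hat{\gamma}_0,\ldots,\hat{\gamma}_{k-1}$, and times $s_n^0<\cdots<s_n^{k-1}$ along a subsequence with $\gamma_n(s_n^i)\to\hat{\gamma}_i(0)$. Repeat the initial construction at $r_k$: choose a regular value $a_k<f(r_k)$ close enough to $f(r_k)$ that no critical value lies in $(a_k,f(r_k))$ and that $\Sigma_k=\mathcal{D}(r_k)\cap f^{-1}(a_k)$ sits as a compact finite-dimensional sphere inside the Morse chart at $r_k$. Because $\hat{\gamma}_{k-1}(+\infty)=r_k$ and $\gamma_n\to\hat{\gamma}_{k-1}$ on compact intervals, for $n$ large $\gamma_n$ enters the Morse chart arbitrarily close to the ascending manifold $\mathcal{A}(r_k)$; by hyperbolicity of $r_k$, such a flow line must exit the chart through $f^{-1}(a_k)$ at a point uniformly close to $\Sigma_k$, and (since $f$ is strictly decreasing along $\gamma_n$) this is the unique crossing $\gamma_n(s_n^k)\in\gamma_n\cap f^{-1}(a_k)$, occurring at a time $s_n^k>s_n^{k-1}$. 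After a further subsequence, $\gamma_n(s_n^k)\to x_k$ for some $x_k\in\Sigma_k$; let $\hat{\gamma}_k$ be the flow line through $x_k$ with $\hat{\gamma}_k(0)=x_k$. Then $\hat{\gamma}_k(-\infty)=r_k$ by definition of $\Sigma_k$, and Theorem \ref{bounded} yields $\hat{\gamma}_k(+\infty)=r_{k+1}$ with $f(q)\leq f(r_{k+1})<f(r_k)$. The process terminates at some $r_{l+1}=q$, and a standard diagonal extraction produces a single subsequence realizing all the convergences simultaneously.

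The main obstacle, and the reason the result is not immediate in the infinite-dimensional CF setting, is the quantitative step inside the Morse chart at each intermediate $r_k$: because generic level sets in $M$ are noncompact, I must show that a flow line entering the chart very close to $\mathcal{A}(r_k)$ exits close to the finite-dimensional descending sphere $\Sigma_k$, with uniform control as the entry point approaches $\mathcal{A}(r_k)$. Since the metric need not be locally trivial, the flow in chart coordinates is not literally $(e^t v_1,e^{-t}v_2)$, and the estimate must be derived from the Morse-lemma normal form of $f$ together with Condition (C) (which precludes escape to infinity inside the chart). This is essentially a $\lambda$-lemma style argument adapted to the Hilbert setting, and is the technical core of the proof; once it is in place, the extraction of nested subsequences, the diagonalization, and the verification that the output is a genuine broken trajectory are all routine.
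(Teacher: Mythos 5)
Your strategy matches the paper's: extract convergent subsequences on compact finite-dimensional descending spheres, build the broken trajectory inductively one arc at a time, and terminate by the finiteness of critical points in $M^{f(q),f(p)}$. You also correctly locate the technical core: the behavior of flow lines passing near an intermediate critical point $r_k$. However, this is exactly where your proposal stops being a proof. You flag the key estimate (``a flow line entering the chart close to $\mathcal{A}(r_k)$ exits close to $\Sigma_k$'') as ``essentially a $\lambda$-lemma style argument'' but do not prove it, and the specific mechanism you invoke (deriving uniform control ``from the Morse-lemma normal form of $f$ together with Condition (C)'') is not how the paper (Lemmas \ref{conjugate} and \ref{1_singularity_compactness}) closes this step. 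The paper instead invokes the Grobman--Hartman theorem in Banach spaces to obtain a local \emph{topological conjugacy} between $-\nabla f$ near $r_k$ and the model flow $(v_1,-v_2)$ on $T_{r_k}M=V_-\times V_+$, and then does an explicit computation in the model: a trajectory entering at $(v_{1,n},v_{2,n})$ with $v_{1,n}\to 0$ exits the cylinder $S_1(\tfrac{\epsilon}{2})\times D_2(\epsilon)$ at $\bigl(\tfrac{\epsilon}{2\|v_{1,n}\|}v_{1,n},\ \tfrac{2\|v_{1,n}\|}{\epsilon}v_{2,n}\bigr)$, whose $V_+$-component tends to $0$ while the $V_-$-component lies on the \emph{compact} sphere $S_1(\tfrac{\epsilon}{2})$ (compactness is precisely where finite index is used). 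Without this topological conjugacy (or something of comparable strength), the claim that the exit point converges to $\Sigma_k$ is exactly what needs proof, not something that follows from ``hyperbolicity.''

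Two further remarks. First, a genuine $\lambda$-lemma (giving $C^1$-convergence of transported disks) is both unnecessary here and considerably harder in this generality; the Grobman--Hartman topological conjugacy suffices and is what the paper uses. Second, you take the exit surface at the inductive step to be a level set $f^{-1}(a_k)$, whereas the paper exits through the coordinate cylinder $S_1(\tfrac{\epsilon}{2})\times D_2(\epsilon)$ in the linearized chart. With a general (non-locally-trivial) metric the conjugating homeomorphism $h$ has no a priori control on $f\circ h$, so it is not automatic that trajectories entering near $\mathcal{A}(r_k)$ reach $f^{-1}(a_k)$ before leaving the chart laterally; the paper's choice of exit surface sidesteps this. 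So the proposal has the right skeleton and correctly diagnoses the obstacle, but the proof is incomplete at the decisive step, and the tool you gesture at is not quite the one needed.
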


\begin{remark}\label{compactness_remark_1}
As pointed out by the referees, the papers
\cite{abbondandolo_majer1} and \cite{abbondandolo_majer2} prove
results similar to Theorem \ref{flow_compactness}. The proof of
\cite{abbondandolo_majer1} relies on the study of differential
operators on vector fields, which is a very different approach from
that of this paper. However, the proof of \cite[prop. 2.4, 1.17 and
2.2]{abbondandolo_majer2} is essentially the same as that of Theorem
\ref{flow_compactness}. Thus, theoretically, it's unnecessary to
include the proof here. Nevertheless, for the sake of completeness,
we still keep it.
\end{remark}

\begin{theorem}[Compactness of Points]\label{point_compactness}
Suppose, for any real numbers $a < b$, $M^{a,b}$ only contains
finite many critical points. Suppose, for any two critical points
$p$ and $q$, the conclusion of Theorem \ref{flow_compactness} holds.
Let $\{ \Gamma_{n} \}_{n=1}^{\infty}$ be a sequence of generalized
flow lines connecting $p$ and $q$. Then we have the following
results.

(1). Suppose $x_{n}$ is on $\Gamma_{n}$. Then there exists a
subsequence $\{ x_{n_{k}} \}_{k=1}^{\infty}$ of $\{ x_{n}
\}_{n=1}^{\infty}$ such that $\displaystyle \lim_{k \rightarrow
\infty} x_{n_{k}}$ exists and is on a generalized flow line
connecting $p$ and $q$.

(2). Suppose $x_{n}^{i}$ are on $\Gamma_{n}$ and $\displaystyle
\lim_{n \rightarrow \infty} x_{n}^{i}$ exist ($i=1, \cdots, k$).
Then these limit points are on a same generalized flow line
connecting $p$ and $q$.

In particular, if $(M,f)$ is a CF pair, then the above (1) and (2)
hold.
\end{theorem}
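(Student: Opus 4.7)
The plan is to (i) reduce to a fixed combinatorial type of $\Gamma_n$, (ii) invoke Theorem \ref{flow_compactness} on the component that carries $x_n$, and (iii) locate the limit of $x_n$ inside the resulting broken trajectory. Since $M^{f(q),f(p)}$ contains only finitely many critical points, the nonconstant components of each $\Gamma_n$ (each connecting two distinct critical points with strictly decreasing $f$-values in this strip) are uniformly bounded in number, and since constant components alternate with nonconstant ones, the total length of $\Gamma_n$ is uniformly bounded as well. After passing to a subsequence I may therefore assume $\Gamma_n = (\gamma_{n,0}, \ldots, \gamma_{n,l})$ with the \emph{same} sequence of critical endpoints $p = r_0, r_1, \ldots, r_{l+1} = q$ for every $n$, and that $x_n$ sits on the same component index $i$ for every $n$. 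If that component is constant then $x_n \equiv r_i$ and the limit is trivially on a generalized flow line from $p$ to $q$. Otherwise $r_i \neq r_{i+1}$, and Theorem \ref{flow_compactness} applied to $\{\gamma_{n,i}\}$ yields (after a further subsequence) a broken limit $\hat\Gamma = (\hat\gamma_0, \ldots, \hat\gamma_m)$ from $r_i$ to $r_{i+1}$ through distinct critical points $s_0, \ldots, s_{m+1}$, together with times $s_{n_k}^0 < \cdots < s_{n_k}^m$ satisfying $\gamma_{n_k,i}(s_{n_k}^j) \to \hat\gamma_j(0)$.

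Write $x_{n_k} = \gamma_{n_k,i}(t_{n_k})$ and pass to a subsequence with $f(x_{n_k}) \to c \in [f(r_{i+1}), f(r_i)]$. If $c$ is not a critical value, there is a unique $j$ with $c \in (f(s_{j+1}), f(s_j))$ and a unique $x^{\star}$ on $\hat\gamma_j$ at level $c$; continuous dependence on initial conditions gives $\gamma_{n_k,i}(s_{n_k}^j + \tau) \to x^{\star}$, where $\tau$ is the finite time for $\hat\gamma_j$ to flow from $\hat\gamma_j(0)$ down to $x^{\star}$, and since $f$ is strictly monotone along every nonconstant flow line with derivative bounded away from zero near the regular level $c$, comparing $f$-values forces $t_{n_k} - s_{n_k}^j \to \tau$, hence $x_{n_k} \to x^{\star}$. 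If instead $c = f(s_j)$, then a bounded subsequence of $t_{n_k} - s_{n_k}^{j-1}$ or of $s_{n_k}^j - t_{n_k}$ would, by the same continuous-dependence argument, place $\lim x_{n_k}$ on an open piece of $\hat\gamma_{j-1}$ or $\hat\gamma_j$ at the forbidden level $f(s_j)$; hence both $t_{n_k} - s_{n_k}^{j-1} \to +\infty$ and $s_{n_k}^j - t_{n_k} \to +\infty$, and a standard trapping estimate near the hyperbolic critical point $s_j$ (of the sort that also powers the proof of Theorem \ref{flow_compactness}) forces $x_{n_k} \to s_j$. In either sub-case, $\lim x_{n_k}$ lies on $\hat\Gamma$, and hence on the generalized flow line from $p$ to $q$ obtained by splicing $\hat\Gamma$ in place of $\gamma_{n,i}$ inside $\Gamma_n$.

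For part (2) the same reduction is applied simultaneously to $x_n^1, \ldots, x_n^k$: after a subsequence each $x_n^j$ sits on a fixed component index $i^j$ of $\Gamma_n$, and a diagonal invocation of Theorem \ref{flow_compactness} on each nonconstant component of $\Gamma_n$ produces a single limit generalized flow line $\hat\Gamma_{\mathrm{tot}}$ from $p$ to $q$; the argument above then places each $\lim_n x_n^j$ on the piece of $\hat\Gamma_{\mathrm{tot}}$ coming from the component that carried $x_n^j$, so all the limits lie on $\hat\Gamma_{\mathrm{tot}}$. The last assertion is immediate, since if $(M,f)$ is a CF pair then Theorem \ref{finite_number} supplies the finiteness hypothesis and Theorem \ref{flow_compactness} supplies the compactness hypothesis. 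The principal technical obstacle is the critical-value case in part (1): once $f(x_{n_k}) \to f(s_j)$ and $t_{n_k}$ drifts far from both $s_{n_k}^{j-1}$ and $s_{n_k}^j$, one needs the trapping of flow lines near the hyperbolic critical point $s_j$, which I would either extract from the proof of Theorem \ref{flow_compactness} or re-derive using Condition (C) and the Morse Lemma on a small neighborhood of $s_j$.
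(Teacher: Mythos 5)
Your proposal follows essentially the same route as the paper: pass to a subsequence with a fixed combinatorial type (finiteness of critical points in the strip), apply Theorem \ref{flow_compactness} to the component carrying $x_n$, and then locate $\lim x_{n_k}$ inside the resulting broken limit. The paper packages the last step as a separate result, Proposition \ref{point_flow_limit}, and dichotomizes directly on the time gaps $t_n - s_n^j$ rather than on the level $c = \lim f(x_{n_k})$. The two organizations are interconvertible, but the time-gap version is cleaner (three mutually exclusive alternatives). One caution about your level-set version: in the regular-level subcase, ruling out $|t_{n_k}-s_{n_k}^j|\to\infty$ should be done with Lemma \ref{function_limit} (finite-time continuity plus $f$-monotonicity) rather than via a uniform lower bound on $\|\nabla f\|$ near the regular level; the latter would need Condition (C), which is deliberately \emph{not} among the hypotheses of this theorem.

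The one real gap is exactly the step you flag as the "principal technical obstacle": the trapping near $s_j$ when both time gaps diverge. The paper does not extract this from Grobman--Hartman nor from the Morse Lemma; Proposition \ref{point_flow_limit}(2) gives a quantitative contradiction argument. Nondegeneracy of $s_j$ plus a Taylor expansion give $0 < C_1 \leq \|\nabla f\| \leq C_2$ on a geodesic annulus $\overline{D(s_j,\epsilon)}\setminus D(s_j,\epsilon/2)$. Any flow arc from inside $D(s_j,\epsilon/2)$ to outside $D(s_j,\epsilon)$ travels length $\geq \epsilon/2$, hence spends time $\geq \epsilon/(2C_2)$ in the annulus, hence loses $f$-value at least $K = C_1^2\epsilon/(2C_2)$. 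Choosing $t_\infty$ so that $\hat\gamma_{j-1}(t_\infty)$ lies in the inner ball with $f$-value below $f(s_j)+K/2$, the convergence $\gamma_{n}(s_n^{j-1})\to\hat\gamma_{j-1}(0)$ places $\gamma_n(s_n^{j-1}+t_\infty)$ there for large $n$; if $\gamma_n(t_n)$ escaped the outer ball this would force $\limsup_n f(\gamma_n(t_n)) \leq f(s_j) - K/2$, contradicting $\liminf_n f(\gamma_n(t_n)) \geq f(s_j)$ supplied by Lemma \ref{function_limit} from $s_n^j - t_n\to+\infty$. Re-deriving this or simply invoking Proposition \ref{point_flow_limit} would close your proof; note the proposition uses neither Condition (C) nor finite indices, which is why Theorem \ref{point_compactness} is stated more generally than the CF case.
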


\begin{remark}
Essentially, the compactness of points follows from the compactness
of flows. For a more precise description, see Proposition
\ref{point_flow_limit}.
\end{remark}

\subsection{Manifold Structures}
We consider the manifold structures of the compactified spaces of
$\mathcal{M}(p,q)$, $\mathcal{D}(p)$ and $\mathcal{W}(p,q)$ (see
Definitions \ref{descending_ascending_manifold} and
\ref{moduli_space}).

First, we introduce some notation. Suppose $I_{1} = (p, r_{1},
\cdots, r_{s})$ and $I_{2} = (r_{s+1}, \cdots, r_{k},$ $q)$ are
critical sequences (see Definition \ref{critical_sequence}) and
$r_{s} \succeq r_{s+1}$. Let $(I, s) = (p, r_{1}, \cdots, q)$. It is
not necessarily a critical sequence since $r_{s}$ may equal
$r_{s+1}$. Denote the following product manifold by $\mathcal{W}_{I,
s}$.
\begin{equation}\label{W_I_s}
\mathcal{W}_{I, s} = \mathcal{M}_{I_{1}} \times
\mathcal{W}(r_{s},r_{s+1}) \times \mathcal{M}_{I_{2}}.
\end{equation}

The compactifications are standard. Define (see (\ref{M_I}))
\begin{equation}\label{compactified_space}
\overline{\mathcal{M}(p,q)} = \bigsqcup_{I} \mathcal{M}_{I}, \quad
\overline{\mathcal{D}(p)} = \bigsqcup_{I} \mathcal{D}_{I}, \quad
\overline{\mathcal{W}(p,q)} = \bigsqcup_{(I,s)} \mathcal{W}_{I, s}.
\end{equation}
Here the first disjoint union is over all critical sequences with
head $p$ and tail $q$; the second one is over all critical sequences
with head $p$; and the third one is over all $(I,s) = (p, r_{1},
\cdots, r_{k}, q)$ such that $p \succ r_{1} \succ \cdots \succ r_{s}
\succeq r_{s+1} \succ \cdots \succ r_{k} \succ q$ for all $k$.
Clearly, $\mathcal{M}(p,q) \subseteq \overline{\mathcal{M}(p,q)}$,
$\mathcal{D}(p) \subseteq \overline{\mathcal{D}(p)}$ and
$\mathcal{W}(p,q) \subseteq \overline{\mathcal{W}(p,q)}$. Since
$\mathcal{M}(r_{i}, r_{i+1})$, $\mathcal{D}(r_{k})$ and
$\mathcal{W}(r_{s},r_{s+1})$ are smooth manifolds, so are
$\mathcal{M}_{I}$, $\mathcal{D}_{I}$ and $\mathcal{W}_{I, s}$. An
example of $\overline{\mathcal{D}(p)}$ is illustrated by Figure
\ref{dp_figure} in Subsection \ref{subsection_compactified_space}.

Suppose $\alpha \in \mathcal{M}_{I} \subseteq
\overline{\mathcal{M}(p,q)}$. Then $\alpha = (\gamma_{0}, \cdots,
\gamma_{k})$, where $\gamma_{i} \in \mathcal{M}(r_{i}, r_{i+1})$,
$r_{0}=p$ and $r_{k+1}=q$. By Condition (C), there are only finitely
many critical values in $[f(q), f(p)]$. Suppose the critical values
of $f$ divide $[f(q), f(p)]$ into $l+1$ intervals $[c_{i+1}, c_{i}]$
($i=0, \cdots, l$), where $c_{0} = f(p)$ and $c_{l+1} = f(q)$. For
all $a_{i} \in (c_{i+1}, c_{i})$, they are regular. The union of the
components of $\alpha$ intersects with $f^{-1}(a_{i})$ at exactly
one point $x_{i}(\alpha)$. There is an evaluation map $E:
\overline{\mathcal{M}(p,q)} \longrightarrow \prod_{i=0}^{l}
f^{-1}(a_{i})$ such that
\begin{equation}\label{evalution_m(p,q)}
E(\alpha) = (x_{0}(\alpha), \cdots, x_{l}(\alpha)).
\end{equation}

If $\alpha_{1} \in \prod_{i=0}^{j-1} \mathcal{M}(r_{i}, r_{i+1})
\subseteq \overline{\mathcal{M}(r_{0},r_{j})}$ and $\alpha_{2} \in
\prod_{i=j}^{k} \mathcal{M}(r_{i}, r_{i+1}) \subseteq
\overline{\mathcal{M}(r_{j},r_{k})}$, then $(\alpha_{1}, \alpha_{2})
\in \prod_{i=0}^{k} \mathcal{M}(r_{i}, r_{i+1}) \subseteq
\overline{\mathcal{M}(r_{0},r_{k})}$. This gives a map $i_{(p,r,q)}:
\overline{\mathcal{M}(p,r)} \times \overline{\mathcal{M}(r,q)}
\longrightarrow \overline{\mathcal{M}(p,q)}$. We shall prove the
following theorem (see Definition \ref{critical_sequence}).

\begin{theorem}[Smooth Structure of $\overline{\mathcal{M}(p,q)}$]\label{m(p,q)_manifold}
Let $(M,f)$ be a CF pair satisfying transversality and having a
locally trivial metric. Then, for each pair of critical points
$(p,q)$, there is a smooth structure on
$\overline{\mathcal{M}(p,q)}$ which satisfies the following
properties.

(1). It is a compact manifold with faces whose $k$-stratum is
exactly $\displaystyle \bigsqcup_{|I|=k} \mathcal{M}_{I}$, where the
disjoint union is over all critical sequences $I$ with head $p$ and
tail $q$.

(2). The smooth structure is compatible with that of
$\mathcal{M}_{I}$ in each stratum.

(3). The evaluation map $\displaystyle E:
\overline{\mathcal{M}(p,q)} \longrightarrow \prod_{i=0}^{l}
f^{-1}(a_{i})$ is a smooth embedding, where $E$ is defined by
(\ref{evalution_m(p,q)}).

(4). The smooth structures are compatible with critical pairs, i.e.,
$i_{(p,r,q)}: \overline{\mathcal{M}(p,r)} \times
\overline{\mathcal{M}(r,q)} \longrightarrow
\overline{\mathcal{M}(p,q)}$ is a smooth embedding.
\end{theorem}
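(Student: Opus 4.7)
The plan is to define the smooth structure on $\overline{\mathcal{M}(p,q)}$ by exhibiting the evaluation map $E$ of (\ref{evalution_m(p,q)}) as a topological embedding into the finite product $\prod_{i=0}^{l} f^{-1}(a_{i})$ of regular level sets, and then showing that its image is locally modeled on $[0,\infty)^{k} \times \mathbb{R}^{n-k}$ inside this ambient smooth manifold. Once this is done, parts (1)--(3) become essentially tautological: the smooth structure is pulled back from $\prod_{i=0}^{l} f^{-1}(a_{i})$, it coincides with the manifold structure of each $\mathcal{M}_{I}$ on every stratum, the stratification by $|I|$ matches the corner structure, and $E$ is by construction a smooth embedding.

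First I would set up the topology on $\overline{\mathcal{M}(p,q)}$ using Theorems \ref{flow_compactness} and \ref{point_compactness}, which yield compactness of the source and continuity of $E$. Injectivity of $E$ follows because a generalized flow line is determined by its transverse intersections with the intermediate levels $f^{-1}(a_{i})$: the chain of critical points $r_{0},\dots,r_{k+1}$ is recovered as the list of critical values of $f$ between which consecutive $x_{i}(\alpha)$ fail to be joined by an unbroken trajectory, using Corollary \ref{flow_map}. Hausdorffness then follows from injectivity plus continuity, and since the domain is compact, $E$ is automatically a topological embedding.

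The technical heart is the construction of smooth charts near a broken trajectory $\alpha = (\gamma_{0},\dots,\gamma_{k}) \in \mathcal{M}_{I}$. Here I would invoke the local triviality hypothesis and the standard linear model (\ref{localization_map})--(\ref{localization_dymamics}) at each intermediate critical point $r_{i}$, where the flow reads $(v_{1},v_{2}) \mapsto (e^{t} v_{1}, e^{-t} v_{2})$. Flow lines entering a small box $B(\epsilon)$ near $r_{i}$ and exiting it are then parametrized, up to time translation, by an incoming point on $\{\|v_{1}\|^{2} = 2\epsilon\}$ and an outgoing point on $\{\|v_{2}\|^{2} = 2\epsilon\}$ with matching conserved quantity $\|v_{1}\|\cdot\|v_{2}\| \ge 0$; degeneration to a broken component at $r_{i}$ corresponds exactly to this product vanishing. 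Composing these local gluing parameters $\epsilon_{1},\dots,\epsilon_{k} \in [0,\delta)$ with the smooth flow maps of Corollary \ref{flow_map} along the regular cylinders between consecutive critical values, one builds a chart
\begin{equation*}
\Phi \colon [0,\delta)^{k} \times N \longrightarrow \overline{\mathcal{M}(p,q)},
\end{equation*}
where $N$ is a neighborhood of $\alpha$ inside $\mathcal{M}_{I}$, and setting $\epsilon_{i}=0$ recovers exactly the stratum through $\alpha$. Reading $E \circ \Phi$ off the linear model exhibits the image as a smooth submanifold with corners of $\prod_{i} f^{-1}(a_{i})$. This is the step I expect to be the main obstacle, both because one must verify that different choices of the regular values $a_{i}$ and different broken trajectories yield compatible smooth charts, and because it is precisely here that the locally trivial metric is indispensable: without it, the asymptotic analysis of the flow near $r_{i}$ does not reduce to the linear model, and the transition maps between overlapping charts need not be smooth.

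Property (4) then reduces to the observation that in these coordinates $i_{(p,r,q)}$ is the inclusion of the face $\{\epsilon_{j} = 0\}$ corresponding to the intermediate critical point $r$, so it is automatically a smooth embedding onto a union of faces. The manifold-with-faces condition of Definition \ref{manifold_with_face} is checked by noting that at $\alpha \in \mathcal{M}_{I}$ with $|I|=k$, the $k$ gluing parameters $\epsilon_{1},\dots,\epsilon_{k}$ correspond bijectively to the $k$ distinct faces through $\alpha$, each coming from smoothing out exactly one intermediate critical point of $I$.
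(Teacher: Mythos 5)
Your overall strategy — pull the smooth structure on $\overline{\mathcal{M}(p,q)}$ back from the ambient product $\prod_{i} f^{-1}(a_{i})$ via $E$, after showing the image is locally a manifold with corners — is in the same spirit as the paper's, and you correctly identify that all of (1)--(3) become formal once the local charts are in place and that local triviality enters precisely in the model behaviour near the intermediate critical points. What differs is how you try to produce the charts. The paper does not build charts directly at a broken trajectory. It constructs, for each intermediate critical value $c_{i}$, the manifold-with-boundary $P_{c_{i}} \subseteq M_{c_{i}}^{+} \times M_{c_{i}}^{-}$ of pairs connected by (possibly broken) trajectories (Lemma \ref{p_manifold}); it then forms the product manifold-with-corners $P = \prod_{i} P_{c_{i}}$ and cuts out $K \cong \overline{\mathcal{M}(p,q)}$ as $\iota^{-1}(\operatorname{Im}\Delta)$, where the ``diagonal'' embedding $\Delta$ encodes all the matching conditions (exit point of one cylinder flows to entry point of the next, the first entry lies on $\mathcal{D}(p)$, the last exit lies on $\mathcal{A}(q)$). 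The smooth structure then comes for free from transversality and Lemma \ref{cut_submanifold}.

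This is where your proposal has a genuine gap, not merely a deferred technicality. The chart you describe,
\begin{equation*}
\Phi \colon [0,\delta)^{k} \times N \longrightarrow \overline{\mathcal{M}(p,q)}, \qquad N \subseteq \mathcal{M}_{I},
\end{equation*}
is claimed to arise by ``composing the local gluing parameters with the flow maps along the regular cylinders.'' But given $\alpha' = (\gamma_{0}',\dots,\gamma_{k}') \in N$ and gluing parameters $\vec{\epsilon}$, this composition does not produce a generalized flow line. Gluing at $r_{1}$ with $\epsilon_{1} > 0$ produces an exit point that, after flowing down to the level $f(r_{2})+\tfrac{\epsilon}{2}$, does \emph{not} land on the entry point that $\gamma_{1}'$ specifies; nor, after further gluing at $r_{2},\dots,r_{k}$, is the resulting trajectory guaranteed to lie on $\mathcal{A}(q)$. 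In other words, the incoming data at $r_{i}$ for $i \geq 2$ and the endpoint condition at $q$ are determined constraints, not free parameters, and the over-determined parametrization must be corrected by an implicit function theorem / transversality argument before $\Phi$ is even well-defined. This is exactly what the paper's $\Delta$-construction plus the transversality hypothesis packages. So you have not avoided that device — you have hidden it inside the phrase ``one builds a chart.'' If you want to pursue a direct-gluing route, you would need to (a) prove the local model at a \emph{single} critical level (the analogue of Lemma \ref{p_manifold}, where local triviality is genuinely used and the explicit $s$-parametrization makes $d\varphi|_{s=0}$ injective), and then (b) set up and solve the matching system via transversality of descending and ascending manifolds along the chain, which is the content that is invisible in your sketch. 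Finally, one small imprecision: the incoming point for the local model near $r_{i}$ should lie on the level set $f^{-1}(c_{i}+\tfrac{\epsilon}{2})$, not on $\{\|v_{1}\|^{2}=2\epsilon\}$, which is not a level set of $f$ in the Morse chart.
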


We define the evaluation map $e: \overline{\mathcal{D}(p)}
\longrightarrow M$ as follows. The restriction of $e$ on
$\mathcal{D}_{I} = \mathcal{M}_{I} \times \mathcal{D}(r_{k})$ is
just the coordinate projection $\mathcal{M}_{I} \times
\mathcal{D}(r_{k}) \longrightarrow \mathcal{D}(r_{k})$. This defines
the map since $\mathcal{D}(r_{k}) \subseteq M$.

If $\alpha_{1} \in \prod_{i=0}^{j-1} \mathcal{M}(r_{i}, r_{i+1})
\subseteq \overline{\mathcal{M}(r_{0},r_{j})}$ and $(\alpha_{2},x)
\in \prod_{i=j}^{k} \mathcal{M}(r_{i}, r_{i+1}) \times
\mathcal{D}(r_{k}) \subseteq \overline{\mathcal{D}(r_{j})}$, then
$(\alpha_{1}, \alpha_{2}, x) \in \prod_{i=0}^{k} \mathcal{M}(r_{i},
r_{i+1}) \times \mathcal{D}(r_{k}) \subseteq
\overline{\mathcal{D}(r_{0})}$. This gives a map $i_{(p,r)}:
\overline{\mathcal{M}(p,r)} \times \overline{\mathcal{D}(r)}
\longrightarrow \overline{\mathcal{D}(p)}$. We shall prove the
following theorem.

\begin{theorem}[Smooth Structure of $\overline{\mathcal{D}(p)}$]\label{d(p)_manifold}
Under the assumptions of Theorem \ref{m(p,q)_manifold}, suppose $f$
has a lower bound. Then, for each critical point $p$, there is a
smooth structure on $\overline{\mathcal{D}(p)}$ satisfying the
following properties.

(1). It is a compact manifold with faces whose $k$-stratum is
exactly $\displaystyle \bigsqcup_{|I|=k-1} \mathcal{D}_{I}$ where
the disjoint union is over all critical sequences with head $p$.

(2). The smooth structure is compatible with that of
$\mathcal{D}_{I}$ in each stratum.

(3). The evaluation map $e: \overline{\mathcal{D}(p)}
\longrightarrow M$ is smooth, where the restriction of $e$ on
$\mathcal{D}_{I} = \mathcal{M}_{I} \times \mathcal{D}_{r_{k}}$ is
the coordinate projection onto $ \mathcal{D}_{r_{k}} \subseteq M$.

(4). The smooth structures are compatible with critical pairs, i.e.,
$i_{(p,r)}: \overline{\mathcal{M}(p,r)} \times
\overline{\mathcal{D}(r)} \longrightarrow \overline{\mathcal{D}(p)}$
is a smooth embedding, where the smooth structure of
$\overline{\mathcal{M}(p,r)}$ is defined in Theorem
\ref{m(p,q)_manifold}.
\end{theorem}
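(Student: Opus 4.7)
The plan is to adapt the gluing atlas constructed for $\overline{\mathcal{M}(p,q)}$ in Theorem~\ref{m(p,q)_manifold} to the ``one-sided'' setting of $\overline{\mathcal{D}(p)}$, and to induct on the (finitely many, by Theorem~\ref{finite_number} and the lower bound on $f$) critical points $q \preceq p$. First I would verify the point-set compactness of $\overline{\mathcal{D}(p)}$ using Theorem~\ref{point_compactness}: any sequence in $\mathcal{D}(p)$ has a subsequence converging to a point on a generalized flow line emanating from $p$, and the finiteness of $\{q : q \preceq p\}$ makes the disjoint union in (\ref{compactified_space}) finite. This endows $\overline{\mathcal{D}(p)}$ with a natural topology in which the strata $\mathcal{D}_I$ sit as the expected subspaces.

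For the smooth structure, I would build charts near each point $\mathbf{x} = (\alpha, y) \in \mathcal{D}_I$ with $I = (p = r_0, r_1, \ldots, r_{k+1})$. By the inductive hypothesis applied to $\overline{\mathcal{D}(r_{k+1})}$, pick a chart $V$ about $y$ adapted to its face structure, and by Theorem~\ref{m(p,q)_manifold} applied to $\overline{\mathcal{M}(p, r_{k+1})}$, pick a chart $U$ about $\alpha$. I would declare a neighborhood of $\mathbf{x}$ to be $U \times V$, with the corner/face structure being the product. The main point to check is that these product charts are compatible with the top-stratum chart coming from $\mathcal{D}(p)$. Using the locally trivial metric and the normal form (\ref{localization_dymamics}) at each $r_i$, the Burghelea--Haller construction produces, for each broken configuration, a one-parameter family of nearby unbroken trajectories parametrized by a nonnegative gluing time $t_i \in [0,\epsilon)$; at the level of charts, these $t_i$ are precisely the collar coordinates of the faces that meet at $\mathbf{x}$.

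The properties (1)--(4) should then follow. Items (1) and (2) reduce to local inspection of the chart $[0,\epsilon)^{k} \times U' \times V'$, where the face count at $\mathbf{x}$ matches the sum of face counts coming from $\overline{\mathcal{M}(p,r_{k+1})}$ and $\overline{\mathcal{D}(r_{k+1})}$; this also makes the overall space a manifold with faces, not just with corners. Item (3) follows because in the product chart $U \times V$ the evaluation is the composition of the projection to $V$ with the (inductively smooth) evaluation on $\overline{\mathcal{D}(r_{k+1})}$, and on the top stratum $e$ is simply the inclusion $\mathcal{D}(p) \hookrightarrow M$. Item (4) is then automatic from the product-of-factors nature of $i_{(p,r)}$: the first $j$ gluing parameters live in the $\overline{\mathcal{M}(p,r)}$ factor and the remaining ones in $\overline{\mathcal{D}(r)}$. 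The central technical obstacle is the smoothness of chart transitions at the endpoint $r_{k+1}$, because the descending-manifold factor $\mathcal{D}(r_{k+1})$ replaces what would have been an outgoing semi-infinite flow segment in the moduli case; the assumption that the metric is locally trivial at $r_{k+1}$ is essential here, since it reduces the verification to an explicit computation in the flat model $(v_{1}, v_{2}) \mapsto (e^{t}v_{1}, e^{-t}v_{2})$ of (\ref{localization_dymamics}).
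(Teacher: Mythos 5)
Your approach diverges from the paper's in two ways, and one of the divergences hides a genuine gap.

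The paper does not argue by induction on critical points $q \preceq p$; it covers $\overline{\mathcal{D}(p)}$ by the open sets $U(i) = e^{-1}\bigl(f^{-1}((c_{i+1},c_{i-1}))\bigr)$ indexed by the finitely many critical values below $f(p)$, realizes each $U(i)$ as a submanifold with corners of the product $\prod_{j<i} P_{j} \times Q_{i}^{+}$ via transversality to a ``diagonal'' map $\Delta_{i}$, and then patches the $U(i)$'s using flow maps. This avoids constructing a gluing chart at a broken flow line; smoothness at the break is carried entirely by Lemmas \ref{p_manifold} and \ref{q_manifold}. An inductive product-chart scheme along the lines you sketch could in principle be organized, but it is not the route taken here.

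The gap in your proposal is a dimension count. You declare a neighborhood of $\mathbf{x}=(\alpha,y)\in\mathcal{D}_{I}$, $I=(p,r_{1},\dots,r_{k+1})$, to be $U\times V$ with $U$ a chart in $\overline{\mathcal{M}(p,r_{k+1})}$ and $V$ a chart in $\overline{\mathcal{D}(r_{k+1})}$, and you assert the face count at $\mathbf{x}$ is the sum of those two face counts. But $\dim\bigl(\overline{\mathcal{M}(p,r_{k+1})}\times\overline{\mathcal{D}(r_{k+1})}\bigr) = (\mathrm{ind}(p)-\mathrm{ind}(r_{k+1})-1)+\mathrm{ind}(r_{k+1}) = \mathrm{ind}(p)-1 = \dim\overline{\mathcal{D}(p)}-1$: the map $i_{(p,r_{k+1})}$ is a codimension-one embedding onto (the closure of) a single face, which is precisely the content of part (4). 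Consequently $U\times V$ cannot be a neighborhood of $\mathbf{x}$ in $\overline{\mathcal{D}(p)}$, and the corner depth of $\mathbf{x}$ in $\overline{\mathcal{D}(p)}$ is $k+1$ while the product carries corner depth only $k$ at that point ($k$ from $\alpha$ in the $(k)$-stratum $\mathcal{M}_{I}\subset\overline{\mathcal{M}(p,r_{k+1})}$ plus $0$ from $y$ in the interior of $\overline{\mathcal{D}(r_{k+1})}$). The missing direction is the gluing/collar parameter associated to the break at $r_{k+1}$ itself, i.e., the $[0,\epsilon)$ factor normal to the face $\mathcal{M}(p,r_{k+1})\times\mathcal{D}(r_{k+1})$. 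This is not a bookkeeping slip: supplying that extra coordinate smoothly and compatibly with the interior of $\mathcal{D}(p)$ is exactly the hard part, and it is what the factor $Q_{i}^{+}$ of Lemma \ref{q_manifold} (a manifold with boundary whose boundary is $\bigsqcup S^{+}_{p_{i}}\times D_{p_{i}}$) contributes in the paper's construction. Your proposal defers this to ``an explicit computation in the flat model'' but does not produce the coordinate, so the charts as stated do not cover $\overline{\mathcal{D}(p)}$ and the compatibility verification never gets off the ground.
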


\begin{remark}
It's easy to see that Theorem \ref{d(p)_manifold} will not be true
if we don't assume that $f$ is lower bounded.
\end{remark}

Similarly, we also have the following theorem about
$\mathcal{W}(p,q)$. The maps $e$, $i_{(p,r,q)}^{1}$ and
$i_{(p,r,q)}^{2}$ are defined in Subsection 5.1.

\begin{theorem}[Smooth Structure of $\overline{\mathcal{W}(p,q)}$]\label{w(p,q)_manifold}
Under the assumptions of Theorem \ref{m(p,q)_manifold}, for each
pair of critical points $(p,q)$, there is a smooth structure on
$\overline{\mathcal{W}(p,q)}$ satisfying the following properties.

(1). It is a compact manifold with faces whose $k$-stratum is
exactly $\displaystyle \bigsqcup_{(I, s)} \mathcal{W}_{I, s}$. Here
$(I,s) = (p, r_{1}, \cdots, r_{k}, q)$ such that $p \succ r_{1}
\succ \cdots \succ r_{s} \succeq r_{s+1} \succ \cdots \succ r_{k}
\succ q$. The disjoint union is over all $(I,s)$ which contain $k+2$
components.

(2). The smooth structure is compatible with that of
$\mathcal{W}_{I, s}$ in each stratum.

(3). The evaluation map $e: \overline{\mathcal{W}(p,q)}
\longrightarrow M$ is smooth.

(4). The smooth structures are compatible with critical pairs, i.e.,
$i_{(p,r,q)}^{1}: \overline{\mathcal{W}(p,r)} \times
\overline{\mathcal{M}(r,q)} \longrightarrow
\overline{\mathcal{W}(p,q)}$ and $i_{(p,r,q)}^{2}:
\overline{\mathcal{M}(p,r)} \times \overline{\mathcal{W}(r,q)}
\longrightarrow \overline{\mathcal{W}(p,q)}$ are smooth embeddings.

Here the smooth structure of $\overline{\mathcal{M}(*,*)}$ is
defined in Theorem \ref{m(p,q)_manifold}.
\end{theorem}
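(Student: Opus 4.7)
The plan is to follow the same philosophy as in Theorems \ref{m(p,q)_manifold} and \ref{d(p)_manifold}, viewing $\overline{\mathcal{W}(p,q)}$ as essentially $\overline{\mathcal{M}(p,q)}$ enlarged by one extra ``time'' direction along the distinguished $s$-th component, with the subtlety that this direction degenerates when $r_s = r_{s+1}$. As a first step, I would equip $\overline{\mathcal{W}(p,q)}$ with a Hausdorff compact topology, using Theorems \ref{flow_compactness} and \ref{point_compactness} to characterize convergence: a sequence converges to $\alpha \in \mathcal{W}_{I,s}$ iff the underlying generalized flow lines and their marked points converge in the natural sense.

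Next I would construct charts around each $\alpha = (\alpha_1, w, \alpha_2) \in \mathcal{W}_{I,s}$. In the non-degenerate case $r_s \neq r_{s+1}$, the point $w$ lies on a nonconstant flow line $\hat{w} \in \mathcal{M}(r_s,r_{s+1})$, and the flow action gives a local diffeomorphism from a neighborhood of $\hat{w}$ in $\mathcal{M}(r_s,r_{s+1})$ times an open interval onto a neighborhood of $w$ in $\mathcal{W}(r_s,r_{s+1})$; combining this with the chart of Theorem \ref{m(p,q)_manifold} around $(\alpha_1,\hat{w},\alpha_2)\in \mathcal{M}_I$ yields a chart for $\overline{\mathcal{W}(p,q)}$ of product form that covers the adjacent non-degenerate strata correctly. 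In the degenerate case $r_s = r_{s+1} = r$, one has $w = r$, and I would use the Morse coordinates around $r$ provided by the local triviality assumption: in these coordinates $-\nabla f = (v_1, -v_2)$, and the generalized flow lines near $\alpha$ which pass through a small neighborhood of $r$ are parameterized by their entry and exit points on the boundary of the Morse chart together with the position $(v_1,v_2)$ of the marked point inside it. Combined with charts for the outer pieces, this gives a chart valued in a product of $[0,\epsilon)^k$ with open factors whose corner faces match the deeper strata.

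The main obstacle is to verify that the two chart constructions glue $C^\infty$-smoothly at the degenerate stratum: one must show that the time parameter from the non-degenerate chart passes smoothly through $r$ in the Morse coordinates, and that the $\mathcal{W}$-factor opens correctly onto the corresponding corner. This is a refinement of the gluing analysis for $\overline{\mathcal{M}(p,q)}$ in \cite{burghelea_haller} with one additional marked-point variable, and constitutes the technical heart of the argument. Once the smooth structure is in place, properties (1) and (2) hold by construction; property (3) follows because in the above charts the evaluation $e(\alpha_1, w, \alpha_2) = w$ is either the inclusion of a submanifold of $M$ (non-degenerate case) or a Morse coordinate (degenerate case), hence smooth; and property (4) follows because $i^{1}_{(p,r,q)}$ and $i^{2}_{(p,r,q)}$ embed their source onto explicit boundary faces of $\overline{\mathcal{W}(p,q)}$, manifestly smooth in the local product structure of the charts.
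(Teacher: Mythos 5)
Your overall picture is right, and you correctly identify the evaluation $e(\alpha_1, w, \alpha_2) = w$ and the fact that $i^{1}_{(p,r,q)}$, $i^{2}_{(p,r,q)}$ land on boundary faces, but your proposal takes a structurally different route from the paper and leaves the key step unfinished. You attempt to build charts for $\overline{\mathcal{W}(p,q)}$ directly: a flow-time thickening of a $\overline{\mathcal{M}(p,q)}$-chart in the nondegenerate case, and a Morse-coordinate chart at the degenerate stratum, and then you say the ``technical heart'' is to verify these glue $C^{\infty}$ across the interface. That gluing is precisely what you do not carry out, and it is where all the analytic difficulty lives (your own Example \ref{not_c1} shows that without local triviality this smoothness actually fails, so the verification cannot be glossed over). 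A sketch that names the hard step without doing it is not a proof.

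The paper avoids having to do any direct chart-gluing on $\overline{\mathcal{W}(p,q)}$ itself. Instead it reuses the machinery already set up for $\overline{\mathcal{M}(p,q)}$ and $\overline{\mathcal{D}(p)}$: the explicit parametrizations in Lemmas \ref{p_manifold} and \ref{q_manifold} establish that the ``pair'' spaces $P_{c}$ and $Q_{c}^{\pm}$ are smooth submanifolds with boundary of products of level sets and $M(c)$; then for each band of regular values one defines ambient products $Q(i)$, $O(i)$, $R(i)$ (with one extra $M(i)$ factor carrying the marked point, and with $Q_i^{+} \times Q_i^{-}$ replacing a $P_i$ factor when the marked point is near $c_i$) and cuts out $\tilde{U}(i) = \iota_{i}^{-1}(\textrm{Im}\,\Delta_{i})$ by transversality via Lemma \ref{cut_submanifold}. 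The smooth compatibility of the resulting patches $U(i)$ then comes from flow-map diffeomorphisms, not from a hand-built coordinate change across the critical level. This is the step your proposal replaces by an appeal to an external ``gluing analysis,'' and it is the step that must actually be carried out. If you want to save your route, you should write down the transition map between your nondegenerate chart and the Morse-coordinate chart explicitly in the local model $(v_{1}, v_{2}) \mapsto (e^{t}v_{1}, e^{-t}v_{2})$ and verify it extends smoothly as $t \to +\infty$ (equivalently as the marked point approaches the corner) — in effect reproving the content of Lemma \ref{q_manifold} on the spot. Absent that, the argument has a genuine gap.
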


The above theorems are under the assumption that the metric is
locally trivial. If this assumption is dropped, can
$\overline{\mathcal{M}(p,q)}$, $\overline{\mathcal{D}(p)}$ and
$\overline{\mathcal{W}(p,q)}$ still be equipped with smooth
structures with the desired stratifications? To the best of my
knowledge, this question is still open even when $M$ is finite
dimensional. However, even if the answer is positive, there is still
some difference between the case of a locally trivial metric and
that of a general metric.

Consider $E: \overline{\mathcal{M}(p,q)} \longrightarrow
\prod_{i=0}^{l} f^{-1}(a_{i})$ in (\ref{evalution_m(p,q)}). By (3)
of Theorem \ref{m(p,q)_manifold}, the image of $E$, $\textrm{Im}(E)
\subseteq \prod_{i=0}^{l} f^{-1}(a_{i})$ is a smooth ($C^{\infty}$)
embedded submanifold of $\prod_{i=0}^{l} f^{-1}(a_{i})$ when the
metric is locally trivial. For a general metric, we have the
following counterexample.

\begin{example}[Not $C^{1}$]\label{not_c1}
Let $CP^{2}$ the complex projective plane. Then there exist a metric
and a Morse function $f$ on $CP^{2}$, where $f$ has three critical
points $p$, $q$ and $r$ such that $\textrm{ind}(p) = 4$,
$\textrm{ind}(q) = 0$ and $\textrm{ind}(r) = 2$.
$\overline{\mathcal{M}(p,q)} = \mathcal{M}(p,q) \sqcup
(\mathcal{M}(p,r) \times \mathcal{M}(r,q))$. And
$E(\overline{\mathcal{M}(p,q)})$ is NOT a $C^{1}$ embedded
submanifold with boundary $E(\mathcal{M}(p,r) \times
\mathcal{M}(r,q))$ of $\prod_{i=0}^{1} f^{-1}(a_{i})$ (see
(\ref{evalution_m(p,q)})). In other words, it's impossible to give
$\textrm{Im}(E)$ a $C^{1}$ structure compatible with
$\prod_{i=0}^{1} f^{-1}(a_{i})$.
\end{example}

\subsection{Orientation}
We consider the orientation of $\overline{\mathcal{M}(p,q)}$,
$\overline{\mathcal{D}(p)}$ and $\overline{\mathcal{W}(p,q)}$. Since
$\textrm{ind}(p) < + \infty$, we can assign $\mathcal{D}(p)$ an
orientation arbitrarily. The orientations of $\mathcal{D}(p)$ for
all $p$ determine the orientations of $\mathcal{M}(p,q)$ and
$\mathcal{W}(p,q)$ and then those of their compactified manifolds
for all pairs $(p,q)$. Now we consider the $1$-stratum (see
Definition \ref{k_stratum}) of $\overline{\mathcal{M}(p,q)}$, i.e.,
$\partial^{1} \overline{\mathcal{M}(p,q)} = \bigsqcup_{r}
\mathcal{M}(p,r) \times \mathcal{M}(r,q)$. The orientation of
$\overline{\mathcal{M}(p,q)}$ gives $\partial^{1}
\overline{\mathcal{M}(p,q)}$ a boundary orientation. On the other
hand, the orientations of $\mathcal{M}(p,r)$ and $\mathcal{M}(r,q)$
give $\mathcal{M}(p,r) \times \mathcal{M}(r,q)$ a product
orientation. We shall consider the relation between the two
orientations of $\partial^{1} \overline{\mathcal{M}(p,q)}$.
Similarly, $\overline{\mathcal{D}(p)}$ and
$\overline{\mathcal{W}(p,q)}$ also have such orientation issues. The
definition of the above orientations is given in Subsection
\ref{subsection_definition_orientation}. We have the following
orientation formulas.

\begin{theorem}[Orientation Formulas]\label{orientation}
Under the assumption of Theorem \ref{m(p,q)_manifold}, as oriented
manifolds, we have

(1). $\displaystyle \partial^{1} \overline{\mathcal{M}(p,q)} =
\bigsqcup_{p \succ r \succ q} (-1)^{\textrm{ind}(p) -
\textrm{ind}(r)} \mathcal{M}(p,r) \times \mathcal{M}(r,q)$;

(2). $\displaystyle \partial^{1} \overline{\mathcal{D}(p)} =
\bigsqcup_{p \succ r} \mathcal{M}(p,r) \times \mathcal{D}(r)$, where
$f$ is lower bounded;

(3). $\displaystyle \partial^{1} \overline{\mathcal{W}(p,q)} =
\bigsqcup_{p \succeq r \succ q} (-1)^{\textrm{ind}(p) -
\textrm{ind}(r) + 1} \mathcal{W}(p,r) \times \mathcal{M}(r,q) \sqcup
\bigsqcup_{p \succ r \succeq q} \mathcal{M}(p,r) \times
\mathcal{W}(r,q)$.

In the above, $\partial^{1} \Box$ are equipped with boundary
orientations, $\Box \times \Box$ are equipped with product
orientations.
\end{theorem}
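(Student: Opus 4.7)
The plan is to prove all three orientation formulas by a common local analysis near a broken flow line at an intermediate critical point $r$, comparing the boundary orientation (induced by the bulk orientation via the outward-normal-first convention) with the product orientation on the corresponding stratum. The local charts near such corners are furnished by Theorems \ref{m(p,q)_manifold}, \ref{d(p)_manifold} and \ref{w(p,q)_manifold}, and the signs are computed by tracing the orientation conventions of Subsection \ref{subsection_definition_orientation} through these charts. The main conceptual difficulty is that $M$ is an infinite-dimensional Hilbert manifold and is not orientable, so every orientation must be reduced to the finite-dimensional orientations of the descending manifolds $\mathcal{D}(\cdot)$ via the transversality decomposition $T\mathcal{D}(p)|_{\mathcal{W}(p,q)} = T\mathcal{W}(p,q) \oplus T\mathcal{D}(q)$, in which the second summand identifies with the normal bundle of $\mathcal{A}(q)$.

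First, I recall from Subsection \ref{subsection_definition_orientation} the defining relations
\[
\mathrm{or}(\mathcal{W}(p,q)) \wedge \mathrm{or}(\mathcal{D}(q)) = \mathrm{or}(\mathcal{D}(p)), \qquad \mathrm{or}(\partial_t) \wedge \mathrm{or}(\mathcal{M}(p,q)) = \mathrm{or}(\mathcal{W}(p,q)),
\]
where $\partial_t$ denotes the flow direction. Using the locally trivial metric, I work in Morse coordinates $(v_1,v_2) \in V_- \oplus V_+$ at $r$, with $V_- = T_r\mathcal{D}(r)$ oriented. A neighborhood of a broken line $(\gamma_1,\gamma_2) \in \mathcal{M}(p,r) \times \mathcal{M}(r,q) \subset \partial^1 \overline{\mathcal{M}(p,q)}$ is parametrized by $(\gamma_1',\gamma_2',\lambda) \in U_1 \times U_2 \times [0,\epsilon)$ so that the corresponding glued flow line crosses $f^{-1}(f(r)+\delta)$ at a point whose $V_+$-component is $v(\gamma_1')$ (determined by $\gamma_1' \in \mathcal{M}(p,r) \hookrightarrow S^+_r$) and whose $V_-$-component is $\lambda\,\hat\gamma_2'$ (with $\hat\gamma_2' \in S^-_r$ the sphere representative of $\gamma_2' \in \mathcal{M}(r,q)$). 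At $\lambda=0$ this is the broken line and $-\partial_\lambda$ is the outward normal. Analogous parametrizations handle the corners of $\overline{\mathcal{D}(p)}$ (using $\mathcal{D}(r)$ in place of $\mathcal{M}(r,q)$) and both types of corners of $\overline{\mathcal{W}(p,q)}$ (using $\mathcal{W}(p,r)$ or $\mathcal{W}(r,q)$).

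The sign is then extracted by expressing the lifted frame on $T\mathcal{D}(p)$ through the splitting $T\mathcal{D}(p) = T\mathcal{W}(p,q) \oplus T\mathcal{D}(q)$. In the chart for $\overline{\mathcal{M}(p,q)}$ the vectors $\partial_{\gamma_1'}$ are tangent to $\mathcal{W}(p,r) \subset \mathcal{W}(p,q)$, the pair $(\partial_\lambda,\partial_{\gamma_2'})$ spans the cone over $\mathcal{M}(r,q)$ inside $V_- = T\mathcal{D}(r)$ and contributes to $T\mathcal{W}(p,q)$, and the remaining $\mathrm{ind}(q)$ directions of $V_-$ normal to this cone are identified with $T\mathcal{D}(q)$. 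Rewriting the identity $\mathrm{or}(\mathcal{D}(p)) = \mathrm{or}(\mathcal{W}(p,q)) \wedge \mathrm{or}(\mathcal{D}(q))$ in these coordinates and commuting $\partial_\lambda$ past the block $\partial_{\gamma_1'} \wedge \partial_t$ of total dimension $\mathrm{ind}(p) - \mathrm{ind}(r)$ yields the sign $(-1)^{\mathrm{ind}(p) - \mathrm{ind}(r)}$; combined with the outward-normal convention this proves (1). Part (2) reduces to the same computation but without any quotient on the tail factor $\mathcal{D}(r)$, so no commutation arises and the sign is $+1$. Part (3) is handled analogously for both corner types: on the $\mathcal{W}(p,r) \times \mathcal{M}(r,q)$ side an extra flow direction on the upper factor must be commuted past $\partial_\lambda$, producing the additional $-1$ and hence $(-1)^{\mathrm{ind}(p) - \mathrm{ind}(r) + 1}$, while on the $\mathcal{M}(p,r) \times \mathcal{W}(r,q)$ side no such commutation is needed and the sign is $+1$. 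The hardest step is justifying, in infinite dimensions, that the identification $N\mathcal{A}(q) \cong T\mathcal{D}(q)$ used globally to orient $\mathcal{W}(p,q)$ agrees, after flowing through the local Morse chart at $r$, with the local identification $N\mathcal{A}(r) \cong T\mathcal{D}(r)$ that appears in the above bookkeeping; this compatibility is the core content forcing the exact form of the sign and is the most delicate point of the argument.
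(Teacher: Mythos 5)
Your strategy -- work in Morse coordinates at the intermediate critical point $r$, parametrize a collar of the boundary stratum, and compare the boundary orientation against the product orientation by a frame calculation -- is exactly the strategy of the paper (Lemmas \ref{orientation_1}, \ref{orientation_2}, \ref{orientation_3}). However, your parametrization is incorrect as written, and this is not a cosmetic issue. You parametrize the glued trajectory by letting its intersection with $f^{-1}(f(r)+\delta)$ have $V_+$-component equal to $v(\gamma_1')$ (independent of $\lambda$) and $V_-$-component equal to $\lambda\hat\gamma_2'$. But the sphere $S_p^- = \mathcal{D}(p)\cap f^{-1}(f(r)+\delta)$ is generally not a metric cone on $\mathcal{M}(p,r) = S_p^-\cap S_r^+$: as one moves off $S_p^-\cap S_r^+ \subset \{0\}\times V_+$ in the $V_-$-direction, the $V_+$-component of points of $S_p^-$ changes. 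The point $(\lambda\hat\gamma_2', v(\gamma_1'))$ is therefore not on $S_p^-$ for $\lambda > 0$, so your map does not land in $\overline{\mathcal{M}(p,q)}$. The paper fixes this by proving (Lemma \ref{orientation_1}) that $S_p^-$ is the graph $(v_1, \theta(v_1,v_2))$ of a smooth map $\theta$ with $\theta(0,v_2)=v_2$, and then carrying all of the $d\theta$ terms through the orientation calculation (Lemmas \ref{orientation_2} and \ref{orientation_3}). These terms do not obviously wash out; the verification that the base (\ref{orientation_3_1}) is still a frame at $s=0$, and that it reduces in the limit to a frame built out of $-\nabla f$, $T\mathcal{M}(p,r)$ and the normal of $\mathcal{M}(p,r)$ in $S_p^-$, is where the sign is actually extracted, and your sketch never produces such a frame.

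A second issue is that you yourself flag the ``hardest step'' -- the compatibility of the global identification $N(\mathcal{A}(q))\cong T\mathcal{D}(q)$ with the local Morse-chart identification $N(\mathcal{A}(r))\cong T\mathcal{D}(r)$ at the corner -- and then do not carry it out. This is essentially the content of the passage in the paper's proof of (1) that shows $(\pi_-|_{\partial L})^{-1}(S_q^+)$ inherits the boundary orientation from $\pi_-^{-1}(S_q^+)$, and it is not a formality: without it the commuting-past-blocks calculation has no well-defined starting frame. Finally, your block-commutation count (``$\partial_\lambda$ past $\partial_{\gamma_1'}\wedge\partial_t$, total dimension $\mathrm{ind}(p)-\mathrm{ind}(r)$, combined with the outward-normal convention'') does not cleanly separate the minus sign coming from the fact that the outward normal is $-\partial_\lambda$ (rather than $\partial_\lambda$) from the transpositions; in the paper's computation the correct count is one sign flip from the outward normal plus $\mathrm{ind}(p)-\mathrm{ind}(r)-1$ transpositions across $T\mathcal{M}(p,r)$. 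Your total happens to match, but the phrasing suggests the two contributions are being conflated, which is dangerous in a computation this sign-sensitive -- especially for part (3), where an extra factor is needed and the accounting has to be airtight.
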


\begin{remark}\label{orientation_remark}
The papers \cite[lem.\ 3.4]{austin_braam} and \cite[sec.\ 2.14 and
2.15]{latour} announce formulas similar to (1) and (2) of Theorem
\ref{orientation} in finite dimensional case (\cite{austin_braam}
even does the Morse-Bott case). Our method to define orientations is
different from theirs. Thus our formulas are different from theirs.
By our definition of orientations, there is no sign in (2) of
Theorem \ref{orientation}.
\end{remark}

\subsection{CW Structure}\label{subsection_cw_structure}
Finally, we point out that the compatification of $\mathcal{D}(p)$
results in a bona fide smooth CW decomposition of $M$.

Clearly, $\mathcal{D}(p)$ is diffeomorphic to an open disk of
dimension $\textrm{ind}(p)$, and $\mathcal{D}(p) \cap \mathcal{D}(q)
= \emptyset$ when $p \neq q$. Recall the evaluation map $e:
\overline{\mathcal{D}(p)} \longrightarrow M$ and that
$\overline{\mathcal{D}(p)} = \bigsqcup_{I} \mathcal{M}_{I} \times
\mathcal{D}(r_{k})$ (see Theorem \ref{d(p)_manifold}). The
restriction of $e$ to $\mathcal{M}_{I} \times \mathcal{D}(r_{k})$ is
just the coordinate projection onto $\mathcal{D}(r_{k})$. Thus
$e|_{\mathcal{D}(p)}$ is the identity map, and $e(\partial
\overline{\mathcal{D}(p)})$ consists of finite number of
$\mathcal{D}(q)$ such that $\textrm{ind}(q) < \textrm{ind}(p)$. Thus
if $\overline{\mathcal{D}(p)}$ is homeomorphic to a closed disk for
all $p$, then, $\forall a \in R$, $K^{a} = \bigsqcup_{f(p) \leq a}
\mathcal{D}(p)$ is a finite CW complex with characteristic maps $e$.
We shall prove the following theorems.

\begin{theorem}[Topology of $ \overline{\mathcal{D}(p)}$]\label{disk}
Under the assumption of Theorem \ref{d(p)_manifold}, there is a
homeomorphism $\Psi: (D^{\textrm{ind}(p)}, S^{\textrm{ind}(p)-1})
\longrightarrow (\overline{\mathcal{D}(p)},
\partial \overline{\mathcal{D}(p)})$, where $D^{\textrm{ind}(p)}$ is the
$\textrm{ind}(p)$ dimensional closed disk and $S^{\textrm{ind}(p)-1}
=
\partial D^{\textrm{ind}(p)}$.
\end{theorem}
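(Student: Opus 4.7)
The plan is to construct the homeomorphism $\Psi$ explicitly via the flow of a modified vector field, following the suggestion of Milnor encoded in Lemma \ref{modified_vector}. First I would invoke Lemma \ref{modified_vector} to obtain a smooth vector field $X$ on $\mathcal{D}(p)$ which agrees with $-\nabla f$ in a small neighborhood of $p$, has $p$ as its unique singularity, and whose nonconstant flow lines exit every compact subset of $\mathcal{D}(p)$ in finite positive time. Intuitively, $X$ repairs the defect that $-\nabla f$ takes infinite time to reach $\partial\overline{\mathcal{D}(p)}$, so that the flow of $X$ extends continuously to $\overline{\mathcal{D}(p)}$ and reaches the boundary at a finite parameter value.

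Next, using the Morse coordinates at $p$, I would fix a small $\epsilon>0$ and identify the closed ball $\overline{B}_{\epsilon}(0)\subset V_{-}$ with a closed neighborhood of $p$ in $\mathcal{D}(p)$. For each $v$ on the sphere $S_{\epsilon}=\partial\overline{B}_{\epsilon}(0)$, let $T(v)\in(0,+\infty)$ be the first time at which the $X$-orbit starting at $v$ reaches $\partial\overline{\mathcal{D}(p)}$; by construction of $X$ this is finite. Writing the model disk as $D^{\textrm{ind}(p)}=\overline{B}_{\epsilon}(0)\cup(S_{\epsilon}\times[0,1])$, I would define $\Psi$ by the Morse identification on the inner ball and by $(v,s)\mapsto\phi^{X}_{sT(v)}(v)$ on the annular piece, where $\phi^{X}_{t}$ is the flow of $X$, extended to $\overline{\mathcal{D}(p)}$ at $s=1$.

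I would then verify that $\Psi$ is continuous, bijective, and carries $S^{\textrm{ind}(p)-1}$ onto $\partial\overline{\mathcal{D}(p)}$. Continuity in the interior and on the inner ball is automatic from smoothness of the flow. Bijectivity onto $\mathcal{D}(p)$ follows because each point of $\mathcal{D}(p)\setminus\{p\}$ lies on a unique $X$-orbit traversed once as $s$ varies in $[0,1)$. For the boundary, every element of $\partial\overline{\mathcal{D}(p)}$ lies on a unique broken flow line issuing from $p$, so the assignment $v\mapsto\lim_{s\to1^{-}}\phi^{X}_{sT(v)}(v)$ is a bijection $S_{\epsilon}\to\partial\overline{\mathcal{D}(p)}$; pushing forward via the Morse identification gives the required map on $S^{\textrm{ind}(p)-1}$.

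The main obstacle I anticipate is the continuity of $\Psi$ along the outer sphere, i.e.\ showing that $\phi^{X}_{sT(v)}(v)$ converges to the correct broken generalized flow line as $(v,s)\to(v_{0},1)$ in the sense of the stratified topology on $\overline{\mathcal{D}(p)}$ furnished by Theorem \ref{d(p)_manifold}. This is precisely where both the compactness results (Theorems \ref{flow_compactness} and \ref{point_compactness}) and the careful design of $X$ in Lemma \ref{modified_vector} become indispensable: the compactness theorems force every subsequential limit to be a broken flow line from $p$, while the finite-time exit property of $X$ together with continuity of $T(\cdot)$ identifies this limit uniquely with the image point on $\partial\overline{\mathcal{D}(p)}$. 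Once $\Psi$ is shown to be a continuous bijection between the compact Hausdorff spaces $D^{\textrm{ind}(p)}$ and $\overline{\mathcal{D}(p)}$, it is automatically a homeomorphism, and by construction it restricts to a homeomorphism $S^{\textrm{ind}(p)-1}\to\partial\overline{\mathcal{D}(p)}$.
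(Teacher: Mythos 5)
This is essentially the paper's proof: modify the pulled-back gradient to a vector field $\widetilde{X}$ via Lemma \ref{modified_vector}, and sweep a small Morse coordinate disk around $p$ out to $\partial\overline{\mathcal{D}(p)}$ along its flow. One refinement worth noting: Lemma \ref{modified_vector} supplies $\widetilde{X}$ smooth on all of $\overline{\mathcal{D}(p)}$ (not merely on $\mathcal{D}(p)$) and \emph{strictly outward} at the corner strata, so the boundary continuity you flag as the main obstacle is in fact handled directly by the smoothness of the flow on the compact manifold with corners together with the finiteness and continuity of the exit time $\omega(x)$ and the limit $\phi_{t}(x)\to p$ as $t\to-\infty$ (Lemmas \ref{to_boundary} and \ref{to_center}), rather than by re-invoking Theorems \ref{flow_compactness} and \ref{point_compactness}.
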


For the definition of simple homotopy equivalence and elementary
expansion, see \cite[p. 14-15]{m.cohen}

\begin{theorem}[CW Structure]\label{CW}
Under the assumption of Theorem \ref{d(p)_manifold}, let $a$ be a
regular value of $f$. Then $K^{a} = \bigsqcup_{f(p) \leq a}
\mathcal{D}(p)$ is a finite CW complex with characteristic maps $e:
\overline{\mathcal{D}(p)} \longrightarrow K^{a}$, where $e$ is
defined in (3) of Theorem \ref{d(p)_manifold}. In particular, if $f$
is proper, then the inclusion $K^{a} \hookrightarrow M^{a}$ is a
simple homotopy equivalence. In fact, in this special case, there is
a CW decomposition of $M^{a}$ such that $K^{a}$ expands to $ M^{a}$
by elementary expansions.
\end{theorem}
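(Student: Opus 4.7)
\textbf{Proof plan for Theorem \ref{CW}.}
The argument splits naturally into two parts: the CW statement for $K^{a}$ itself, and the simple homotopy statement for the inclusion $K^{a}\hookrightarrow M^{a}$ when $f$ is proper.

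For the first part, the lower bound on $f$ together with Theorem \ref{finite_number} applied to $[\inf f,a]$ gives only finitely many critical points $p$ with $f(p)\le a$, so only finitely many candidate open cells $\mathcal{D}(p)$. For each such $p$, Theorem \ref{disk} supplies a homeomorphism $\Psi_{p}:(D^{\textrm{ind}(p)},S^{\textrm{ind}(p)-1})\to(\overline{\mathcal{D}(p)},\partial\overline{\mathcal{D}(p)})$, and then $e\circ\Psi_{p}$ is a continuous map from a closed disk into $K^{a}$ which restricts to the identity on the open cell $\mathcal{D}(p)$. The observations in Subsection \ref{subsection_cw_structure} show that $e(\partial\overline{\mathcal{D}(p)})$ lies in a finite union of $\mathcal{D}(q)$ with $\textrm{ind}(q)<\textrm{ind}(p)$ and $f(q)<f(p)\le a$, so ordering the cells by index produces a genuine CW structure. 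To identify the CW topology with the subspace topology from $M$, I would observe that both are compact Hausdorff (the CW topology because the complex is finite; the subspace topology because $K^{a}$ is a finite union of compact images $e(\overline{\mathcal{D}(p)})$), and the continuous identity from the first to the second must therefore be a homeomorphism.

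For the simple homotopy statement, assume $f$ is proper, so $M^{a}$ is compact and (being locally modelled on Hilbert space) finite dimensional near $M^{a}$. I would construct a CW decomposition of $M^{a}$ containing $K^{a}$ as a subcomplex that collapses onto $K^{a}$ cell by cell; reversing gives the desired sequence of elementary expansions from $K^{a}$ to $M^{a}$. The construction proceeds inductively on the critical points $p_{1},\dots,p_{N}$ in $M^{a}$ in increasing order of $f$-value. On each slab $M^{c_{i}+\eps,c_{i+1}-\eps}$ between consecutive critical values, Corollary \ref{flow_map} identifies the slab with a cylinder $f^{-1}(c_{i+1}-\eps)\times I$ via the flow, and a CW cylinder over a chosen CW structure $X$ on the upper end is built from $X\times\{0\}$ by one elementary expansion per cell of $X$ (adding each $e\times I$ with free face $e\times\{1\}$). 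At each critical level, Lemma \ref{modified_vector} provides a modified vector field whose flow realises the local handle $H_{i}\cong D^{\textrm{ind}(p_{i})}\times D^{k_{i}}$ as collapsing to its core $\overline{\mathcal{D}(p_{i})}\cap H_{i}$; an accompanying CW decomposition of $H_{i}$ with free faces dual to each cotangent direction then exhibits $H_{i}$ as obtained from its core by a sequence of elementary expansions.

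The main obstacle I expect is the compatibility at the intermediate level sets: one must choose the CW structures on $f^{-1}(c_{i}\pm\eps)$ so that the attaching spheres $S^{\textrm{ind}(p_{i})-1}\times D^{k_{i}}$ of successive handles become subcomplexes and the cylinder cells from one slab match the handle cells at the next critical level. Lemma \ref{modified_vector} together with Theorem \ref{disk} are precisely what make this feasible: once the closed descending cells are honest disks with controlled boundary behaviour, the attaching maps can be cellularized by an induction over the stratification of $\overline{\mathcal{D}(p)}$, and the resulting collapse of $M^{a}$ onto $K^{a}$ is automatically elementary cell by cell, which is equivalent to the claimed chain of elementary expansions from $K^{a}$ to $M^{a}$ and hence to the simple homotopy equivalence of $K^{a}\hookrightarrow M^{a}$.
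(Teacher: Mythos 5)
The first half of your argument---that $K^{a}$ is a finite CW complex---is correct and essentially matches the paper: finiteness from Theorem \ref{finite_number}, characteristic maps from Theorem \ref{disk} and the evaluation map, and the boundary-in-lower-skeleton property from the stratification of $\overline{\mathcal{D}(p)}$.

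For the simple homotopy claim, however, your level-by-level plan is a genuinely different route from the paper and it has a real gap. The paper does not attempt to cellularize a handle decomposition slab by slab. Instead it doubles $M^{a}$ to a closed manifold $2M^{a}$, constructs a $\mathbb{Z}_{2}$-invariant Morse function $F$ and negative gradient-like vector field $\xi$ on $2M^{a}$ (Lemmas \ref{symmetry_function} and \ref{symmetry_dynamic}), and invokes Theorems \ref{d(p)_manifold} and \ref{disk} for $F$ to get compatible CW decompositions of both $2M^{a}$ and $\partial M^{a}$, with $K^{a}$ as a subcomplex of $2M^{a}$. Each critical point $p$ of $F$ lying on $\partial M^{a}$ then contributes a ``half-cell'' pair $e_{p}^{-1}(M^{a})\supseteq\overline{\widetilde{\mathcal{D}(p)}}$ which, by Lemma \ref{action_d(p)} and a $\mathbb{Z}_{2}$-equivariant version of the Theorem \ref{disk} argument, is homeomorphic to a half-disk pair $(\tfrac12 D^{n},D^{n-1})$. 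Attaching these in order of index gives elementary expansions from $K^{a}$ to $M^{a}$.

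Your proposal, by contrast, presupposes CW structures on the level sets $f^{-1}(c_{i}\pm\varepsilon)$ in order to build ``CW cylinders'' across the slabs, but never explains where those CW structures come from or why they would be compatible with the handle attachments at the next critical level. This is not a minor bookkeeping issue: it is precisely the difficulty that the doubling trick is designed to resolve. Concretely, the attaching sphere $S^{\textrm{ind}(p_{i})-1}\times D^{k_{i}}$ of a handle is not cellular for an arbitrary choice of CW structure on the level set, and ``cellularizing'' it by a homotopy would only produce a CW approximation homotopy equivalent to $M^{a}$, not an actual CW decomposition of $M^{a}$ together with the collapse to $K^{a}$. Your appeal to Lemma \ref{modified_vector} and Theorem \ref{disk} at the critical levels is in the right spirit---these results are indeed what make the final proof work---but as stated they apply to $\overline{\mathcal{D}(p)}$ for the dynamics inside $M^{a}$, not to a handle $D^{\textrm{ind}(p_{i})}\times D^{k_{i}}$ or to the cylinders in between. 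To make your outline into a proof you would at minimum need to produce a Morse function on each level set, prove that the slab flow map is cellular with respect to the resulting CW structures, and verify that the handle core/boundary pairs are subcomplexes; the doubling construction accomplishes all of this in one step by importing the already-established theory for a closed manifold.
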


The following theorem explicitly computes the boundary operator of
the CW chain complex $C_{*}(K^{a})$ associated with the CW
structure.

\begin{theorem}[Boundary Operator]\label{boundary_operator}
Let $K^{a}$ be the CW complex in Theorem \ref{CW} (we do NOT assume
$f$ is proper). Let $C_{*}(K^{a})$ be the associated CW chain
complex and $[\overline{\mathcal{D}(p)}]$ be the base element
represented by $\overline{\mathcal{D}(p)}$ in $C_{*}(K^{a})$. Then
\[
  \partial [\overline{\mathcal{D}(p)}] = \sum_{\textrm{ind}(q) = \textrm{ind}(p) -1}
  \# \mathcal{M}(p,q) [\overline{\mathcal{D}(q)}],
\]
where $\# \mathcal{M}(p,q)$ is the sum of the orientations $\pm 1$
of all points in $\mathcal{M}(p,q)$ defined in Theorem
\ref{orientation}.
\end{theorem}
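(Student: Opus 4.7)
The plan is to compute $\partial[\overline{\mathcal{D}(p)}]$ via the standard cellular degree formula and read off the signs from Theorem~\ref{orientation}(2). Set $n = \textrm{ind}(p)$. Theorem~\ref{disk} gives a homeomorphism $\overline{\mathcal{D}(p)} \cong D^n$, so the map $\alpha := e|_{\partial\overline{\mathcal{D}(p)}} \colon S^{n-1} \to K^{a}$ is the attaching map of the $n$-cell indexed by $p$. For each $q$ with $\textrm{ind}(q) = n-1$, the coefficient of $[\overline{\mathcal{D}(q)}]$ in $\partial[\overline{\mathcal{D}(p)}]$ equals the degree of the composite $\alpha_q \colon S^{n-1} \to S^{n-1}$ obtained by following $\alpha$ with the quotient collapsing the $(n-2)$-skeleton of $K^{a}$, and then with the collapse onto $\overline{\mathcal{D}(q)}/\partial\overline{\mathcal{D}(q)}$.

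\textbf{Preimage analysis.} Pick any $y$ in the open cell $\mathcal{D}(q)$. By Theorem~\ref{d(p)_manifold}(1), $\partial\overline{\mathcal{D}(p)}$ is stratified by the products $\mathcal{D}_{I} = \mathcal{M}_{I} \times \mathcal{D}(r_{k+1})$ indexed by critical sequences $I = (p, r_{1}, \ldots, r_{k+1})$. Transversality forces $p \succ r \Rightarrow \textrm{ind}(p) > \textrm{ind}(r)$, so any stratum of codimension $\geq 2$ would require an intermediate critical point of index strictly between $\textrm{ind}(q) = n-1$ and $\textrm{ind}(p) = n$, which is impossible. Hence only the 1-stratum component $\mathcal{M}(p,q) \times \mathcal{D}(q)$ can meet $\alpha^{-1}(y)$, and by the same index argument together with Theorem~\ref{m(p,q)_manifold}, $\overline{\mathcal{M}(p,q)} = \mathcal{M}(p,q)$ is a compact oriented $0$-manifold, hence a finite signed set. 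Thus $\alpha^{-1}(y) = \mathcal{M}(p,q) \times \{y\}$ is finite and lies in the smooth interior of a face of $\partial\overline{\mathcal{D}(p)}$.

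\textbf{Sign computation and conclusion.} Near each $(\gamma, y)$, the map $e$ restricts to the projection $\mathcal{M}(p,q) \times \mathcal{D}(q) \to \mathcal{D}(q)$, which is a local diffeomorphism. Theorem~\ref{orientation}(2) asserts that $\mathcal{M}(p,q) \times \mathcal{D}(q)$ appears in $\partial^{1}\overline{\mathcal{D}(p)}$ with no sign correction, so the boundary orientation on this face equals the product orientation. The target sphere $S^{n-1} \cong \overline{\mathcal{D}(q)}/\partial\overline{\mathcal{D}(q)}$ inherits its orientation from $\mathcal{D}(q)$. Consequently the local degree of $\alpha_q$ at $(\gamma, y)$ equals exactly the orientation sign of $\gamma$ as a point of the oriented $0$-manifold $\mathcal{M}(p,q)$. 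Summing over $\gamma$ yields $\deg(\alpha_q) = \#\mathcal{M}(p,q)$, which is the claimed formula.

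\textbf{Main obstacle.} The principal subtlety is confirming that no higher-codimension strata of $\partial\overline{\mathcal{D}(p)}$ contribute preimages of the regular value $y$; here this follows cleanly from the index-gap-one constraint under transversality, but it is exactly the sort of bookkeeping for which Theorems~\ref{m(p,q)_manifold}, \ref{d(p)_manifold}, and \ref{orientation}(2) were developed. A secondary technical issue is that $e$ is a priori only continuous across different strata, but since the finitely many preimages of $y$ all lie in the interior of the 1-stratum where $e$ is smooth and a local diffeomorphism onto its image, the degree is well-defined and computable as above.
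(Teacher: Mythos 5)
Your proof is correct and follows essentially the same route as the paper's: both compute the cellular boundary coefficient using Theorem \ref{orientation}(2) to identify the boundary orientation on $\mathcal{M}(p,q)\times\mathcal{D}(q)$ with the product orientation, and both rely on the observation that the index gap of one rules out intermediate critical points, leaving $\overline{\mathcal{M}(p,q)}=\mathcal{M}(p,q)$ as a finite signed set. The paper phrases the computation as a composite of homology maps (long exact sequence, then collapse of $e^{-1}(K^a_{k-2})$, then the map induced by $e$ into $K^a_{k-1}/K^a_{k-2}$), explicitly exhibiting $\partial\overline{\mathcal{D}(p)}/e^{-1}(K^a_{k-2})$ as a wedge of $(k-1)$-spheres indexed by $\bigsqcup_q\mathcal{M}(p,q)$; you replace that with a regular-value degree count of $\alpha_q$, which amounts to the same thing.
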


\begin{remark}\label{morse_homology}
Theorem \ref{boundary_operator} shows that the boundary operator of
$C_{*}(K^{a})$ coincides with that of the Thom-Smale complex in
Morse homology when $M$ is compact. This shows Morse homology arises
from  a cellular chain complex. However, unlike the assumption of
Theorem \ref{boundary_operator}, Morse homology does not require the
local triviality of metrics. For Morse homology, see \cite[cor.\
7.3]{milnor2}, \cite{banyaga_hurtubise} and \cite{schwarz}. For some
of its generalizations to Hilbert manifolds, see \cite{peng},
\cite{abbondandolo_majer1} and \cite{abbondandolo_majer2}.
\end{remark}

\section{Compactness}

\subsection{Proof of Theorem \ref{flow_compactness}}
In order to prove Theorem \ref{flow_compactness}, we need the
classical Grobman-Hartman Theorem in Banach spaces.

Suppose $U_{i}$ ($i=1,2$) are two open subsets in two Banach spaces
$E_{i}$. Let $X_{i}$ be a smooth vector field on $U_{i}$. Let
$\phi^{i}_{t}(x_{i})$ be the associated flow on $U_{i}$ with initial
value $x_{i}$. We say $\phi^{i}_{t}$ ($i=1,2$) are topologically
conjugate if there exists a homeomorphism $h: U_{1} \longrightarrow
U_{2}$ such that $h(\phi^{1}_{t}(x_{1})) = \phi^{2}_{t}(h(x_{1}))$
(see \cite[p. 26]{palis_de}). The Grobman-Hartman Theorem states
that, if $p$ is a hyperbolic singularity of $X$ on an open subset
$U$ of a Banach space $E$, then the flow generated by $X$ is locally
topologically conjugate to that generated by the linear vector field
$\nabla X(p) v$ near $0$ on $T_{p} U$ (see \cite[sec.\ 4]{pugh},
\cite[sec.\ 5]{palis} and \cite[thm.\ 4.10, p. 66]{palis_de}.
Although the statements in \cite{palis} and \cite{palis_de} are only
up to topological equivalence, they actually construct the
conjugate.)

In our case, $\nabla^{2}f(p)$ splits $T_{p} M$ into two subspace
$T_{p} M = V_{-} \times V_{+}$, where $\{0\} \times V_{+}$ ($V_{-}
\times \{0\}$) is the positive (negative) spectrum space of
$\nabla^{2}f(p)$. Thus the flow of $-\nabla f$ is topologically
conjugate to the flow of $(- \nabla^{2}f(p) v_{1}, - \nabla^{2}f(p)
v_{2})$ on $T_{p} M$. Furthermore, $- \nabla^{2}f(p)$ is symmetric
and negative (positive) definite in $\{0\} \times V_{+}$ ($V_{-}
\times \{0\}$), thus $- \nabla^{2}f(p) v_{i}$ is transversal to the
unit sphere in $V_{\pm}$. By the method of the proof of \cite[prop.\
2.15, p. 52]{palis_de}, we have the flow of $(- \nabla^{2}f(p)
v_{1}, - \nabla^{2}f(p) v_{2})$ is topologically conjugate to the
flow of $(v_{1}, - v_{2})$. Thus we get the flowing lemma (compare
(\ref{localization_dymamics})).

\begin{lemma}\label{conjugate}
The flow generated by $-\nabla f$ near a critical point $p$ is
locally topologically conjugate to the flow generated by $(v_{1}, -
v_{2})$ near $0$ on $T_{p} M = V_{-} \times V_{+}$.
\end{lemma}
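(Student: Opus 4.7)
\medskip

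\noindent\textbf{Proof proposal.} The plan is to carry out the two reductions sketched just above the lemma statement in full detail. First, I would verify that $p$ is a hyperbolic rest point of $-\nabla f$ in the Banach-space sense. Because $f$ is Morse, $\nabla^{2} f(p)$ is a self-adjoint isomorphism of the Hilbert space $T_{p} M$, so its spectrum is bounded away from $0$ and splits cleanly into its positive and negative parts. This gives an orthogonal splitting $T_{p}M = V_{-}\times V_{+}$ with $-\nabla^{2} f(p)$ positive definite on $V_{-}\times\{0\}$ and negative definite on $\{0\}\times V_{+}$; in particular the spectrum of the linearization $-\nabla^{2}f(p)$ is bounded away from the imaginary axis, which is the Banach-space hyperbolicity hypothesis required by Grobman--Hartman. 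Applying the Banach-space Grobman--Hartman theorem (as in the references cited, \cite{pugh}, \cite{palis}, \cite{palis_de}) then produces a local homeomorphism near $p$ conjugating the flow of $-\nabla f$ to the linear flow generated by $-\nabla^{2}f(p)v$ on a neighborhood of $0$ in $T_{p}M$.

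Second, I would conjugate this linear flow to the decoupled normal form $(v_{1},-v_{2})$. The splitting $T_{p}M = V_{-}\times V_{+}$ is invariant under $-\nabla^{2}f(p)$, so the linear flow factors as $(\Phi_{t}^{-}v_{1},\Phi_{t}^{+}v_{2})$, where $\Phi_{t}^{-}$ is generated by a positive definite self-adjoint operator on $V_{-}$ and $\Phi_{t}^{+}$ by a negative definite self-adjoint one on $V_{+}$. On each factor the vector field is radially transverse to spheres centred at $0$: in $V_{-}$, $\langle -\nabla^{2}f(p)v_{1},v_{1}\rangle>0$ for $v_{1}\neq 0$, and symmetrically in $V_{+}$ with the opposite sign. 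Mimicking the construction in \cite[prop.\ 2.15, p.\ 52]{palis_de}, I would build a topological conjugacy on $V_{-}$ by sending each orbit of $\Phi_{t}^{-}$ to the corresponding orbit of $v_{1}\mapsto v_{1}$ via its unique intersection with the unit sphere; transversality to spheres makes this assignment a well-defined homeomorphism, and it intertwines the two flows by construction. The analogous construction on $V_{+}$ conjugates $\Phi_{t}^{+}$ with the flow of $-v_{2}$, and taking the product yields a local topological conjugacy between the linearization and the normal form $(v_{1},-v_{2})$.

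Composing the two conjugacies produces the desired local topological conjugacy near $p$ on one side and $0$ on the other. The main obstacle I would pay attention to is that Grobman--Hartman in infinite dimensions is usually stated up to topological equivalence (re-parametrised orbits) rather than true conjugacy; I would need to check, as noted parenthetically in the excerpt, that the cited references actually produce a flow-conjugating homeomorphism, and if necessary patch this using the self-adjointness of $\nabla^{2}f(p)$ to avoid any reparametrisation. A secondary technical point is the radial construction on $V_{\pm}$: since these spaces may be infinite dimensional, I would make sure the ``project to unit sphere along the orbit'' map is continuous in the norm topology, which follows from the uniform spectral gap of $\mp\nabla^{2}f(p)|_{V_{\mp}}$ and standard estimates on $\|\Phi_{t}^{\pm}v\|$. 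With these verifications in hand, the two reductions combine to give the lemma.
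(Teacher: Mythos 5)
Your proposal follows essentially the same route as the paper: verify hyperbolicity of $p$ via the self-adjoint isomorphism $\nabla^{2}f(p)$ and the induced spectral splitting $T_{p}M = V_{-}\times V_{+}$, apply the Banach-space Grobman--Hartman theorem to conjugate with the linearization, and then use transversality of the linear flow to spheres in $V_{\pm}$ together with the method of \cite[prop.\ 2.15]{palis_de} to pass to the normal form $(v_{1},-v_{2})$. You also correctly flag the same caveat the paper notes parenthetically, that the cited references state their results up to topological equivalence but in fact construct a genuine flow conjugacy.
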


\begin{lemma}\label{1_singularity_compactness}
Suppose $\{ \gamma_{n}(t) \}_{n=1}^{\infty}$ and
$\hat{\gamma}_{1}(t)$ are flow lines such that $\displaystyle
\lim_{n \rightarrow \infty} \gamma_{n}(0) = \hat{\gamma}_{1}(0)$,
$\hat{\gamma}_{1}(+\infty) = p$ with $\textrm{ind}(p) < +\infty$,
and, for all $n$, $\gamma_{n}(+\infty) \neq p$. Then there exist a
subsequence $\{ \gamma_{n_{k}} \} \subseteq \{ \gamma_{n} \}$, time
$s_{n_{k}} > 0$ and a nonconstant flow line $\hat{\gamma}_{2}(t)$
such that $\hat{\gamma}_{2}(-\infty) = p$ and $\displaystyle \lim_{k
\rightarrow \infty} \gamma_{n_{k}}(s_{n_{k}}) =
\hat{\gamma}_{2}(0)$.
\end{lemma}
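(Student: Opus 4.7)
The plan is to transport the problem into the linear Grobman--Hartman model near $p$ via Lemma \ref{conjugate}, and then to use the finite index of $p$ as the source of compactness. Let $h$ conjugate the flow of $-\nabla f$ on a neighborhood $U$ of $p$ with the linear flow $(v_{1}, v_{2}) \mapsto (e^{t}v_{1}, e^{-t}v_{2})$ on a neighborhood of $0$ in $V_{-} \times V_{+} = T_{p}M$. Since $\hat{\gamma}_{1}(+\infty) = p$ and so $\hat{\gamma}_{1} \subset \mathcal{A}(p)$, for sufficiently large $T$ the point $\hat{\gamma}_{1}(T)$ sits in $U$ with local coordinates $(0, v_{2}^{*})$ of arbitrarily small norm.

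Next, I would invoke continuous dependence of the flow on initial conditions on the compact interval $[0,T]$ to write $\gamma_{n}(T) = h(v_{1}^{n}, v_{2}^{n})$ for $n$ large, with $v_{1}^{n} \to 0$ in $V_{-}$ and $v_{2}^{n} \to v_{2}^{*}$ in $V_{+}$. The hypothesis $\gamma_{n}(+\infty) \neq p$ forces $\gamma_{n} \not\subset \mathcal{A}(p)$, so $v_{1}^{n} \neq 0$. I would then set $s_{n} = T + \log(\epsilon/(2\|v_{1}^{n}\|))$, the first time the $V_{-}$-coordinate reaches norm $\epsilon/2$; then $s_{n} \to +\infty$, and $\gamma_{n}$ stays in $U$ on $[T,s_{n}]$ because the $V_{+}$-coordinate only decays. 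In local coordinates one has $\gamma_{n}(s_{n}) = h(u_{n}, w_{n})$ with $\|u_{n}\| = \epsilon/2$ and $\|w_{n}\| \to 0$.

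Here the CF hypothesis enters decisively. Because $\textrm{ind}(p) = \dim V_{-} < \infty$, the sphere of radius $\epsilon/2$ in $V_{-}$ is compact, so I can pass to a subsequence with $u_{n_{k}} \to w$ for some $w \in V_{-}$ of norm $\epsilon/2$, giving $\gamma_{n_{k}}(s_{n_{k}}) \to h(w, 0)$. Taking $\hat{\gamma}_{2}$ to be the flow line with $\hat{\gamma}_{2}(0) = h(w,0)$ then finishes the proof: under the conjugacy its backward orbit reads $h(e^{t}w, 0) \to h(0,0) = p$ as $t \to -\infty$, so $\hat{\gamma}_{2}(-\infty) = p$, and nonconstancy is immediate from $w \neq 0$.

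The main obstacle is precisely this compactness extraction: the ``escape'' must be measured in the finite-dimensional unstable direction $V_{-}$ (where Bolzano--Weierstrass on a sphere applies) rather than in the possibly infinite-dimensional stable direction $V_{+}$, which is exactly why the assumption $\textrm{ind}(p) < \infty$ is indispensable. Outside of that step the argument is bookkeeping: making $T$ large enough so that $\hat{\gamma}_{1}(T)$ and (for $n$ large) $\gamma_{n}(T)$ lie in the Grobman--Hartman chart, noting that $h$ being merely a homeomorphism is harmless since only topological convergence of points and flow lines is needed.
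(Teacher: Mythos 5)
Your argument is correct and follows essentially the same route as the paper: localize near $p$ with the Grobman--Hartman conjugacy (Lemma \ref{conjugate}), push along the linear flow until the $V_-$-coordinate reaches a fixed sphere, and extract a convergent subsequence there using compactness of the sphere in the finite-dimensional unstable subspace. The only cosmetic difference is that you write out the explicit linear flow $(e^{t}v_{1},e^{-t}v_{2})$ and the hitting time $s_{n}=T+\log(\epsilon/(2\|v_{1}^{n}\|))$, whereas the paper works directly with the intersection point on $S_{1}(\epsilon/2)\times D_{2}(\epsilon)$.
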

\begin{proof}
By Lemma \ref{conjugate}, there exist a neighborhood $U_{2}$ of $p$
in $M$, a neighborhood $U_{1}$ of $0$ in $T_{p}M$ and a
homeomorphism $h: U_{1} \longrightarrow U_{2}$ such that $h(0)=p$
and $h$ conjugates between the flow generated by $(v_{1}, -v_{2})$
in $U_{1}$ and the flow generated by $-\nabla f$ in $U_{2}$ (see
Figure \ref{topological_conjugate_figure}).

\begin{figure}[!htbp]
\centering
\includegraphics[scale=0.25]{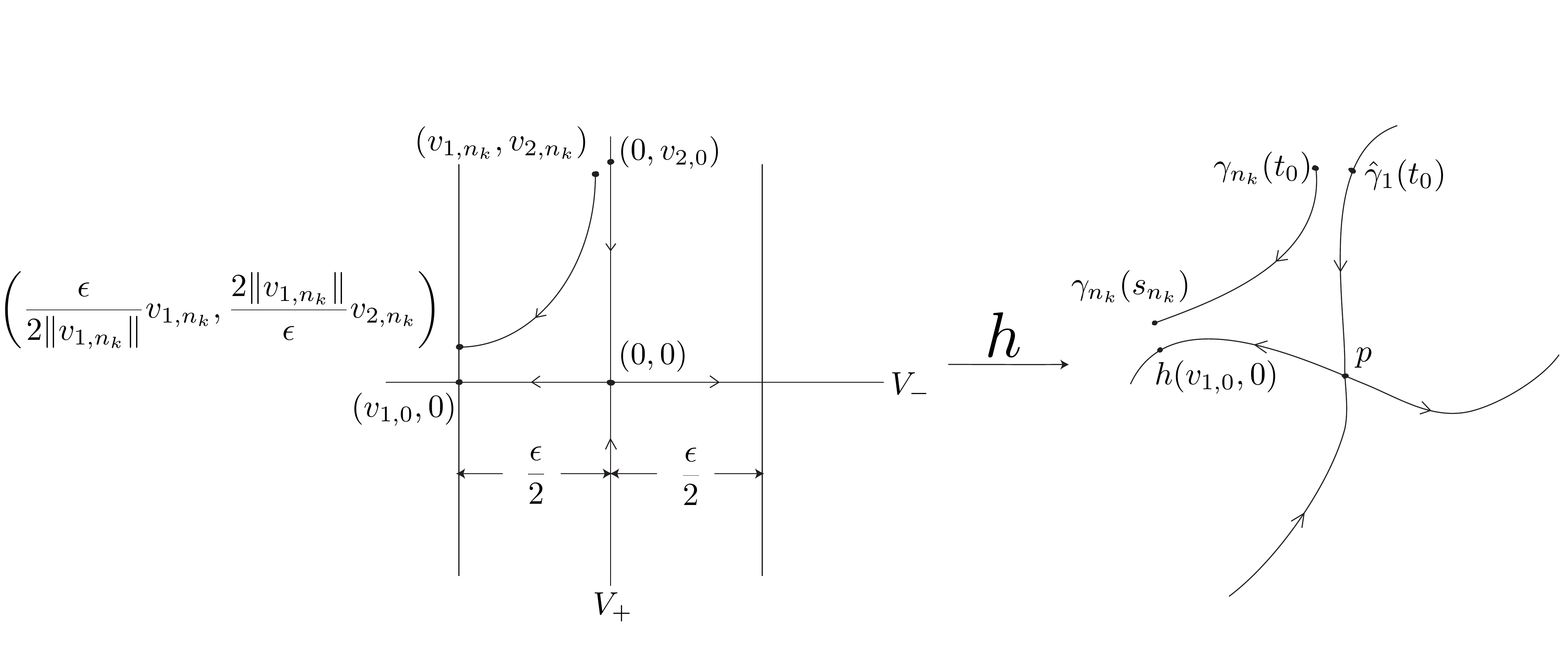} \caption{Topological Conjugate}
\label{topological_conjugate_figure}
\end{figure}

Choosing an open subset if necessary, we may assume $U_{1} =
D_{1}(\epsilon) \times D_{2}(\epsilon)$ for some $\epsilon$, where
$D_{1}(\epsilon) = \{ v_{1} \in V_{-} \mid \| v_{1} \| < \epsilon
\}$ and $D_{2}(\epsilon) = \{ v_{2} \in V_{+} \mid \| v_{2} \| <
\epsilon \}$. In $U_{1}$, $(V_{-} \times \{0\}) \cap U_{1}$ is the
unstable submanifold, $(\{0\} \times V_{+}) \cap U_{1}$ is the
stable submanifold. Thus $h(V_{-} \times \{0\})$ and $h(\{0\} \times
V_{+})$ are locally unstable and locally stable submanifolds
respectively in $U_{2}$.

Since $\hat{\gamma}_{1}(+\infty) = p$, $\exists t_{0}$ such that
$\forall t \geq t_{0}$, $\hat{\gamma}_{1}(t) \in h(\{0\} \times
V_{+})$. Suppose $h^{-1}(\hat{\gamma}_{1}(t_{0})) = (0, v_{2,0})$.
Since $\gamma_{n}(0) \rightarrow \hat{\gamma}_{1}(t_{0})$, we have
$\gamma_{n}(t_{0}) \in U_{2}$, $h^{-1}(\gamma_{n}(t_{0}))=(v_{1,n},
v_{2,n})$ and $\| v_{1,n} \| < \frac{\epsilon}{2}$ when $n$ is large
enough. Since $\gamma_{n}(+\infty) \neq p$, we have $v_{1,n} \neq
0$. As a result, in $U_{1}$, the flow line passing through
$(v_{1,n}, v_{2,n})$ intersects $S_{1} (\frac{\epsilon}{2}) \times
D_{2}(\epsilon)$ at $\displaystyle \left( \frac{\epsilon}{2\|
v_{1,n} \|} v_{1,n}, \frac{2\| v_{1,n} \|}{\epsilon} v_{2,n}
\right)$, where $S_{1} (\frac{\epsilon}{2}) = \{ v_{1} \in V_{-}
\mid \| v_{1} \| = \frac{\epsilon}{2} \}$. When $n \rightarrow
\infty$, we have $(v_{1,n}, v_{2,n}) \rightarrow (0, v_{2,0})$. Thus
$\displaystyle \frac{2\| v_{1,n} \|}{\epsilon} v_{2,n} \rightarrow
0$. Since it is a $\textrm{ind}(p)-1$ dimensional sphere and
$\textrm{ind}(p) < +\infty$, we have $S_{1}\left( \frac{\epsilon}{2}
\right)$ is compact. So there exists a subsequence $\displaystyle
\left\{ \frac{\epsilon v_{1,n_{k}}}{2\| v_{1,n_{k}} \|} \right\}$ of
$\displaystyle \left \{ \frac{\epsilon v_{1,n}}{2\| v_{1,n} \|}
\right \}$ such that $\displaystyle \lim_{k \rightarrow \infty}
\frac{\epsilon}{2\| v_{1,n_{k}} \|} v_{1,n_{k}} = v_{1,0}$. Clearly,
there exists $s_{n_{k}} > 0$ such that
\[
\gamma_{n_{k}}(s_{n_{k}}) = h \left( \frac{\epsilon}{2\| v_{1,n_{k}}
\|} v_{1,n_{k}}, \frac{2\| v_{1,n_{k}} \|}{\epsilon} v_{2,n_{k}}
\right).
\]
Thus
\[
  \lim_{k \rightarrow \infty} \gamma_{n_{k}} (s_{n_{k}}) = h (v_{1,0}, 0).
\]

Denote the flow line with initial value $h (v_{1,0}, 0)$ by
$\hat{\gamma}_{2}(t)$. Then $\displaystyle \lim_{k \rightarrow
\infty} \gamma_{n_{k}}(s_{n_{k}})$ $= \hat{\gamma}_{2}(0)$. Since
$h^{-1}(\hat{\gamma}_{2}(0)) = (v_{1,0}, 0) \in V_{-} \times \{0\}$,
we know that $h^{-1}(\hat{\gamma}_{2}(-\infty))= (0,0)$ or
$\hat{\gamma}_{2}(-\infty) = p$. Since $\hat{\gamma}_{2}(0) \neq p$,
$\hat{\gamma}_{2}$ is nonconstant.
\end{proof}

\begin{proof}[Proof of Theorem \ref{flow_compactness}]
Let $a$ be a regular value such that $a < f(p)$ and there is no
critical value in $(a, f(p))$. Let $S_{p}^{-} = \mathcal{D}(p) \cap
f^{-1}(q)$. Then $S_{p}^{-}$ is a sphere with dimension
$\textrm{ind}(p) < +\infty$, and it is compact. Suppose
$\gamma_{n}(s_{n}^{0}) \in S_{p}^{-}$. Then there exists a
subsequence of $\{ \gamma_{n}(s_{n}^{0}) \}$, we may still denote it
by $\{ \gamma_{n}(s_{n}^{0}) \}$, which converges. Suppose
$\displaystyle \lim_{n \rightarrow \infty} \gamma_{n}(s_{n}^{0}) =
x_{0}$. Then $x_{0} \in S_{p}^{-}$. Denote the flow line with
initial value $x_{0}$ by $\hat{\gamma}_{0}$. Then $\hat{\gamma}_{0}
(-\infty) = p$ because $\hat{\gamma}_{0} (0) = x_{0} \in S_{p}^{-}
\subseteq \mathcal{D}(p)$. Since $\gamma_{n}(+\infty)=q$, we have,
for all $t$, $f(\gamma_{n}(s_{n}^{0}+t)) \geq f(q)$. Thus, for all
$t$,
\[
  f(\hat{\gamma}_{0}(t)) = \lim_{n \rightarrow \infty} f(
  \gamma_{n}(s_{n}^{0}+t) ) \geq f(q),
\]
i.e., $f(\hat{\gamma}_{0}(t))$ has a lower bound $f(q)$. By Theorem
\ref{bounded}, $\displaystyle \lim_{t \rightarrow +\infty}
\hat{\gamma}_{0}(t)$ exists and $\hat{\gamma}_{0}(+\infty) = r_{1}$
is a critical point in $M^{f(q),f(p)}$. Clearly, $\hat{\gamma}_{0}$
is nonconstant. Thus $r_{1} \neq p$. There are exactly the following
two cases.

Case (1): $r_{1} = q$. In this case, the proof is finished.

Case (2): $r_{1} \neq q$. Since $\gamma_{n}(+\infty) = q \neq r_{1}$
and $\textrm{ind}(r_{1}) < +\infty$, by Lemma
\ref{1_singularity_compactness}, there exists a nonconstant flow
line $\hat{\gamma}_{1}$ such that $\hat{\gamma}_{1}(-\infty) =
r_{1}$. Furthermore, there exists a subsequence of $\{ \gamma_{n}
\}$, which we still denote by $\{ \gamma_{n} \}$, and time
$s_{n}^{1} > s_{n}^{0}$ such that $\displaystyle \lim_{n \rightarrow
\infty} \gamma_{n}(s_{n}^{1}) = \hat{\gamma}_{1}(0)$. Similar to the
case of $\hat{\gamma}_{0}$, we have $\displaystyle \lim_{t
\rightarrow +\infty} \hat{\gamma}_{1}(t)$ exists and
$\hat{\gamma}_{1}(+\infty) = r_{2}$ is also a critical point in
$M^{f(q),f(p)}$. Since $\hat{\gamma}_{1}$ is nonconstant, $p$,
$r_{1}$ and $r_{2}$ are distinct. If $r_{2} = q$, the proof is
finished. Otherwise, repeat the argument of Case (2).

By Theorem \ref{finite_number}, there are only finitely many
critical points in $M^{f(q),f(p)}$, the process of the above
argument terminates in finitely many steps.
\end{proof}
\subsection{Proof of Theorem \ref{point_compactness}}
We first give two results needed for the proof of Theorem
\ref{point_compactness}.

\begin{lemma}\label{function_limit}
Suppose $\{ \gamma_{n} \}_{n=1}^{\infty}$ and $\hat{\gamma}$ are
flow lines such that $\hat{\gamma}(-\infty) = p$,
$\hat{\gamma}(+\infty) = q$ and $\displaystyle \lim_{n \rightarrow
\infty} \gamma_{n}(s_{n}) = \hat{\gamma}(0)$. If $\displaystyle
\lim_{n \rightarrow \infty} (t_{n} - s_{n}) = +\infty$
($\displaystyle \lim_{n \rightarrow \infty} (t_{n} - s_{n}) =
-\infty$), then $\displaystyle \limsup_{n \rightarrow \infty}
f(\gamma_{n}(t_{n})) \leq f(q)$ ($\displaystyle \liminf_{n
\rightarrow \infty} f(\gamma_{n}(t_{n})) \geq f(p)$).
\end{lemma}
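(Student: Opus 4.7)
The plan is to combine two standard facts: the function $f$ is nonincreasing along every trajectory of $-\nabla f$ (strictly decreasing on the nonconstant portion), and the time-$T$ flow map depends continuously on initial conditions. Since $\hat{\gamma}(+\infty) = q$ and $f\circ\hat{\gamma}$ is continuous and monotone, the value $f(\hat{\gamma}(T))$ decreases to $f(q)$ as $T \to +\infty$; dually, $f(\hat{\gamma}(-T))$ increases to $f(p)$. The hypothesis $\gamma_{n}(s_{n}) \to \hat{\gamma}(0)$ should then propagate to later (or earlier) times via continuous dependence, and monotonicity of $f$ will push the desired inequality through.

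For the forward case, fix $\eps > 0$ and choose $T > 0$ with $f(\hat{\gamma}(T)) < f(q) + \eps$. By continuous dependence of the negative gradient flow on initial data on the compact time interval $[0,T]$, the hypothesis $\gamma_{n}(s_{n}) \to \hat{\gamma}(0)$ yields $\gamma_{n}(s_{n}+T) \to \hat{\gamma}(T)$ for all sufficiently large $n$. Note $\gamma_{n}(s_{n}+T)$ is defined for large $n$: because $t_{n} - s_{n} \to +\infty$ we have $t_{n} \geq s_{n}+T$ eventually, and a flow line which exists at two parameter values exists on the entire interval between them. Monotonicity of $f$ along $-\nabla f$ then gives $f(\gamma_{n}(t_{n})) \leq f(\gamma_{n}(s_{n}+T))$ for such $n$, so taking $\limsup$ yields
\[
  \limsup_{n \to \infty} f(\gamma_{n}(t_{n})) \leq f(\hat{\gamma}(T)) < f(q) + \eps.
\]
Letting $\eps \to 0^{+}$ delivers the claim. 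The backward case is entirely symmetric: pick $T > 0$ with $f(\hat{\gamma}(-T)) > f(p) - \eps$, obtain $\gamma_{n}(s_{n}-T) \to \hat{\gamma}(-T)$ the same way, and use $f(\gamma_{n}(t_{n})) \geq f(\gamma_{n}(s_{n}-T))$, valid once $t_{n} \leq s_{n}-T$.

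The only point requiring a little care is continuous dependence of the time-$T$ flow map at $\hat{\gamma}(0)$ in the Hilbert manifold setting. This is the standard smoothness of ODE solutions on Banach manifolds: since $-\nabla f$ is smooth and the orbit segment $\hat{\gamma}([0,T])$ is compact, one can cover it by finitely many flowbox charts and iterate the local Picard estimate, or equivalently appeal directly to smoothness of the flow on its domain of definition. No global assumption on $M$ beyond what we already have is needed — in particular, Condition (C) plays no direct role in this lemma, which is essentially a soft consequence of smoothness of $-\nabla f$ and monotonicity of $f$ along its trajectories.
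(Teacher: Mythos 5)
Your proof is correct and follows essentially the same route as the paper: choose $T$ so that $f(\hat{\gamma}(T)) < f(q) + \eps$, use continuity of the time-$T$ flow to get $f(\gamma_{n}(s_{n}+T)) \to f(\hat{\gamma}(T))$, and then apply monotonicity of $f$ along trajectories. The only differences are cosmetic: the paper writes out the backward case by symmetry without comment and does not dwell on continuous dependence in the Hilbert-manifold setting, which you rightly note is the standard smoothness of the flow.
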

\begin{proof}
It suffices to prove the case $\displaystyle \lim_{n \rightarrow
\infty} (t_{n} - s_{n}) = +\infty$.

Since $\hat{\gamma}(+\infty) = q$, then $\forall \epsilon > 0$,
$\exists T$, such that $\forall t \geq T$, we have
$f(\hat{\gamma}(t)) < f(q) + \epsilon$. By that $\displaystyle
\lim_{n \rightarrow \infty} (t_{n} - s_{n}) = +\infty$, we have
$t_{n} > s_{n} + T$ and $f(\gamma_{n}(t_{n})) <
f(\gamma_{n}(s_{n}+T))$ when $n$ is large enough. Since
$\displaystyle \lim_{n \rightarrow \infty} \gamma_{n}(s_{n}) =
\hat{\gamma}(0)$, we infer
\[
  \lim_{n \rightarrow \infty} f(\gamma_{n}(s_{n}+T)) =
  f(\hat{\gamma}(T)) < f(q) + \epsilon.
\]
Thus
\[
  \limsup_{n \rightarrow \infty} f(\gamma_{n}(t_{n})) \leq \lim_{n \rightarrow \infty}
  f(\gamma_{n}(s_{n}+T)) < f(q) + \epsilon.
\]
Now let $\epsilon \rightarrow 0$. Then we get $\displaystyle
\limsup_{n \rightarrow \infty} f(\gamma_{n}(t_{n})) \leq f(q)$.
\end{proof}

The following proposition requires neither Condition (C) nor finite
indices.

\begin{proposition}\label{point_flow_limit}
Suppose $p$ and $q$ are two critical points, $\{ \gamma_{n}
\}_{n=1}^{\infty}$ are flow lines such that $\gamma_{n}(-\infty)=p$
and $\gamma_{n}(+\infty)=q$, and there exist $s_{n}^{0} < \cdots <
s_{n}^{l}$ such that $\displaystyle \lim_{n \rightarrow \infty}
\gamma_{n}(s_{n}^{i}) = \hat{\gamma}_{i}(0)$. Here
$\hat{\gamma}_{i}$ are flow lines such that
$\hat{\gamma}_{i}(-\infty) = r_{i}$, $\hat{\gamma}_{i}(+\infty) =
r_{i+1}$, and $r_{0}=p$, $r_{l+1}=q$. Then we have the following
convergence result.

(1). If $\displaystyle \lim_{n \rightarrow \infty} (t_{n} -
s_{n}^{i}) = \tau$, $|\tau| < +\infty$, then $\displaystyle \lim_{n
\rightarrow \infty} \gamma_{n}(t_{n}) = \hat{\gamma}_{i}(\tau)$;

(2). If $s_{n}^{i} < t_{n} < s_{n}^{i+1}$, and $\displaystyle
\lim_{n \rightarrow \infty} (t_{n} - s_{n}^{i}) = \lim_{n
\rightarrow \infty} (s_{n}^{i+1} - t_{n}) = + \infty$, then
$\displaystyle \lim_{n \rightarrow \infty} \gamma_{n}(t_{n}) =
r_{i+1}$, where $s_{n}^{-1} = -\infty$ and $s_{n}^{l+1} = +\infty$.
\end{proposition}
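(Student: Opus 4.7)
The plan is to prove the two parts separately: (1) follows from continuous dependence of the flow on initial data, while (2) combines Lemma \ref{function_limit} with the local normal form near $r_{i+1}$ supplied by Lemma \ref{conjugate}. For (1), I would write $\gamma_{n}(t_{n}) = \phi_{t_{n}-s_{n}^{i}}(\gamma_{n}(s_{n}^{i}))$, where $\phi$ denotes the flow of $-\nabla f$; joint continuity of $\phi$ in time and initial condition on any relatively compact set, together with $\gamma_{n}(s_{n}^{i}) \to \hat{\gamma}_{i}(0)$ and $t_{n} - s_{n}^{i} \to \tau$, then yields $\gamma_{n}(t_{n}) \to \phi_{\tau}(\hat{\gamma}_{i}(0)) = \hat{\gamma}_{i}(\tau)$.

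For (2), the first step is to show $f(\gamma_{n}(t_{n})) \to f(r_{i+1})$. I would apply Lemma \ref{function_limit} with base point $s_{n}^{i}$ and limit flow line $\hat{\gamma}_{i}$ (whose forward limit is $r_{i+1}$) together with $t_{n} - s_{n}^{i} \to +\infty$ to obtain $\limsup_{n} f(\gamma_{n}(t_{n})) \leq f(r_{i+1})$; then apply the same lemma with base point $s_{n}^{i+1}$ and limit $\hat{\gamma}_{i+1}$ (whose backward limit is $r_{i+1}$) together with $t_{n} - s_{n}^{i+1} \to -\infty$ to obtain $\liminf_{n} f(\gamma_{n}(t_{n})) \geq f(r_{i+1})$.

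The second step upgrades this value-convergence to point-convergence using Lemma \ref{conjugate}. Pick a chart $h: D_{1}(\epsilon) \times D_{2}(\epsilon) \to U$ around $r_{i+1}$ in which the flow is $(e^{t}v_{1}, e^{-t}v_{2})$. By part (1), for any fixed large $T>0$, $\gamma_{n}(s_{n}^{i}+T) \to \hat{\gamma}_{i}(T) \in U$, with local coordinates $(\alpha_{n}, \beta_{n}) \to (0, \xi_{T})$ where $\xi_{T} \to 0$ as $T \to \infty$; similarly $\gamma_{n}(s_{n}^{i+1}-T) \to \hat{\gamma}_{i+1}(-T) \in U$, with local coordinates $(\mu_{n}, \nu_{n}) \to (\eta_{T}, 0)$. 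If $\gamma_{n}$ remains in $U$ throughout $[s_{n}^{i}+T, s_{n}^{i+1}-T]$, the explicit linear flow forces the local coordinates of $\gamma_{n}(t_{n})$ to equal
\[
\bigl(\mu_{n} \cdot e^{-(s_{n}^{i+1} - T - t_{n})},\ \beta_{n} \cdot e^{-(t_{n} - s_{n}^{i} - T)}\bigr),
\]
where both exponents tend to $-\infty$ and both prefactors stay bounded, so both coordinates vanish in the limit and $\gamma_{n}(t_{n}) \to r_{i+1}$.

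The hard part will be justifying that $\gamma_{n}$ does stay inside $U$ throughout $[s_{n}^{i}+T, s_{n}^{i+1}-T]$ for $n$ large. I expect to handle this by contradiction: an earlier exit would force the unstable coordinate $v_{1}$ along $\gamma_{n}$ to reach $\epsilon$ at some intermediate time, and then the monotone growth of $|v_{1}|$ along the linearized flow (so that a trajectory cannot re-enter with small $v_{1}$) would put $\gamma_{n}(s_{n}^{i+1}-T)$ outside $U$ for infinitely many $n$, contradicting its convergence to $\hat{\gamma}_{i+1}(-T) \in U$ established in part (1).
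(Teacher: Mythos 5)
Your treatment of part (1) is correct and essentially the same as the paper's, which dismisses it in one line by continuity of the flow map. Your reduction of part (2), via two applications of Lemma \ref{function_limit}, to the statement $f(\gamma_n(t_n)) \to f(r_{i+1})$ is also sound. The problem is exactly where you flag it: the claim that $\gamma_n$ remains in the Grobman--Hartman chart $U$ throughout $[s_n^i+T,\,s_n^{i+1}-T]$. Your proposed justification does not work. You argue that once the unstable coordinate $|v_1|$ reaches $\epsilon$, ``monotone growth of $|v_1|$'' prevents re-entry with small $v_1$; but that monotonicity is a property of the linearized flow and holds only \emph{inside} $U$. Once $\gamma_n$ exits $U$, the conjugacy gives no control, and the nonlinear dynamics impose no restriction on where, or with what chart coordinates, the trajectory next enters $U$. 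In fact, running the linear flow backward from $\gamma_n(s_n^{i+1}-T)\to(\eta_T,0)$ shows that the $v_1$-coordinate at the last entry through $\{|v_2|=\epsilon\}$ tends to $0$ as $n\to\infty$, so a small-$v_1$ re-entry is precisely what would occur if $\gamma_n$ did leave $U$ in between; your intended contradiction does not arise from coordinate monotonicity.

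What actually closes the gap is a monotonicity-of-$f$ argument, not a monotonicity-of-coordinates one. An exit through the face $\{|v_1|=\epsilon\}$ with $|v_2|$ small lies near $h(\partial D_1(\epsilon)\times\{0\})\subseteq\mathcal{D}(r_{i+1})$, and by compactness of $\partial D_1(\epsilon)$ (here finiteness of $\textrm{ind}(r_{i+1})$ is used) there is a uniform gap $\delta>0$ with $f<f(r_{i+1})-\delta$ on that sphere, hence $f<f(r_{i+1})-\delta/2$ at the exit for $n$ large; since $f$ is strictly decreasing along the flow, $f(\gamma_n(s_n^{i+1}-T))<f(r_{i+1})-\delta/2$, contradicting $f(\gamma_n(s_n^{i+1}-T))\to f(\hat{\gamma}_{i+1}(-T))$ being arbitrarily close to $f(r_{i+1})$. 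This is in spirit a retelling of the paper's own proof, which bypasses Grobman--Hartman entirely: it bounds $\|\nabla f\|$ above and below on a geodesic annulus about $r_{i+1}$, deduces that any flow line crossing from the inner ball to outside the outer ball must lose a definite amount $K$ of $f$-value, and then contradicts the $\liminf$ bound from Lemma \ref{function_limit}. That elementary gradient-on-an-annulus estimate replaces the step your linearization argument cannot supply, namely control of the trajectory when it is outside any chart.
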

\begin{proof}
Case (1) is obvious. We only need to prove Case (2).

We may assume $s_{n}^{i} < t_{n} < s_{n}^{i+1}$ and $i \geq 0$
because the subcase of $i=-1$ will be converted to the subcase of
$i=l$ if $f$ is replaced by $-f$.

We shall prove $\displaystyle \lim_{n \rightarrow \infty}
\gamma_{n}(t_{n}) = r_{i+1}$ by contradiction.

Suppose it doesn't hold, then there exist a subsequence of
$\{\gamma_{n}(t_{n})\}$, which we still denote by
$\{\gamma_{n}(t_{n})\}$, and a neighborhood $U$ of $r_{i+1}$ such
that $\gamma_{n}(t_{n}) \notin U$. Choose an open geodesic disk
$D(r_{i+1}, \epsilon)$ with center $r_{i+1}$ and radius $\epsilon$
such that $\overline{D(r_{i+1}, \epsilon)} \subseteq U$. Since
$r_{i+1}$ is a nondegenerate critical point, by the Taylor
expansion, we may choose $\epsilon$ small enough such that, there
exist constants $C_{1}$ and $C_{2}$, and $0 < C_{1} \leq \| \nabla f
\| \leq C_{2}$ in $\overline{D(r_{i+1}, \epsilon)} - D(r_{i+1},
\frac{\epsilon}{2})$ for a fixed $\epsilon$.

Suppose $\gamma(t)$ is a flow line, $\tau_{1} < \tau_{2}$, such that
$\gamma(\tau_{1}) \in D(r_{i+1}, \frac{\epsilon}{2})$ and
$\gamma(\tau_{2}) \notin \overline{D(r_{i+1}, \epsilon)}$. Thus
there exist $\tau'_{1}, \tau'_{2}$ such that $\tau_{1} < \tau'_{1} <
\tau'_{2} < \tau_{2}$, $\gamma([\tau'_{1}, \tau'_{2}]) \subseteq
\overline{D(r_{i+1}, \epsilon)} - D(r_{i+1}, \frac{\epsilon}{2})$,
$\gamma(\tau'_{1}) \in
\partial D(r_{i+1}, \frac{\epsilon}{2})$ and $\gamma(\tau'_{2}) \in
\partial D(r_{i+1}, \epsilon)$.

Consider the distance $d(\gamma(\tau'_{1}), \gamma(\tau'_{2}))$
between $\gamma(\tau'_{1})$ and $\gamma(\tau'_{2})$. Clearly, $
d(\gamma(\tau'_{1}),$ $ \gamma(\tau'_{2}))$ $\geq
\frac{\epsilon}{2}$. Thus
\begin{eqnarray*}
   &   & \frac{\epsilon}{2} \leq d(\gamma(\tau'_{1}), \gamma(\tau'_{2}))
   \leq \int_{\tau'_{1}}^{\tau'_{2}} \left \| \frac{d}{dt} \gamma (t) \right \| dt \\
   & = & \int_{\tau'_{1}}^{\tau'_{2}} \| \nabla f (\gamma(t)) \| dt
   \leq  \int_{\tau'_{1}}^{\tau'_{2}} C_{2} dt = C_{2} (\tau'_{2} - \tau'_{1}).
\end{eqnarray*}
We have $\tau'_{2} - \tau'_{1} \geq \frac{\epsilon}{2 C_{2}}$. Then
\[
  \int_{\tau'_{1}}^{\tau'_{2}} \| \nabla f \|^{2}
  \geq \int_{\tau'_{1}}^{\tau'_{2}} C_{1}^{2} \geq \frac{C_{1}^{2} \epsilon}{2
  C_{2}}.
\]
Thus we get
\[
f(\gamma (\tau_{1})) - f(\gamma (\tau_{2})) =
\int_{\tau_{1}}^{\tau_{2}} \| \nabla f \|^{2} \geq
\int_{\tau'_{1}}^{\tau'_{2}} \| \nabla f \|^{2} \geq \frac{C_{1}^{2}
\epsilon}{2 C_{2}} > 0.
\]
Denoting $\frac{C_{1}^{2} \epsilon}{2 C_{2}}$ by $K$, we get
\begin{equation}\label{point_flow_limit_1}
f(\gamma (\tau_{1})) - f(\gamma (\tau_{2})) \geq K > 0.
\end{equation}

Since $\hat{\gamma}_{i}(+\infty) = r_{i+1}$, then there exists
$t_{\infty}$ such that $\hat{\gamma}_{i}(t_{\infty}) \in B(r_{i+1},
\frac{\epsilon}{2})$ and $f(\hat{\gamma}_{i}(t_{\infty})) <
f(r_{i+1}) + \frac{K}{2}$. Since $\gamma_{n}(s_{n}^{i}) \rightarrow
\hat{\gamma}_{i}(0)$, we have $\gamma_{n}(s_{n}^{i} + t_{\infty})
\in B(r_{i+1}, \frac{\epsilon}{2})$ and $f(\gamma_{n}(s_{n}^{i} +
t_{\infty})) < f(r_{i+1}) + \frac{K}{2}$ when $n$ is large enough.
Also since $(t_{n} - s_{n}^{i}) \rightarrow + \infty$, we get $t_{n}
> s_{n}^{i} + t_{\infty}$ when $n$ is large enough. Now we can
replace $\gamma(\tau_{1})$ and $\gamma(\tau_{2})$ in
(\ref{point_flow_limit_1}) by $\gamma_{n}(s_{n}^{i} + t_{\infty})$
and $\gamma_{n}(t_{n})$, then $f(\gamma_{n}(s_{n}^{i} + t_{\infty}))
- f(\gamma_{n}(t_{n})) \geq K$. Furthermore,
\[
  f(\gamma_{n}(t_{n})) \leq f(\gamma_{n}(s_{n}^{i} + t_{\infty})) - K < f(r_{i+1}) -
  \frac{K}{2}.
\]
Thus
\begin{equation}
\limsup_{n \rightarrow \infty} f(\gamma_{n}(t_{n})) \leq f(r_{i+1})
- \frac{K}{2} < f(r_{i+1}).
\end{equation}
However, since $\hat{\gamma}_{i+1}(-\infty) = r_{i+1}$, and $(t_{n}
- s_{n}^{i+1}) \rightarrow - \infty$, by Lemma \ref{function_limit},
we have
\begin{equation}
\liminf_{n \rightarrow \infty} f(\gamma_{n}(t_{n})) \geq f(r_{i+1}),
\end{equation}
which is a contradiction.
\end{proof}

\begin{proof}[Proof of Theorem \ref{point_compactness}]
(1). By assumption, there are only finite many critical points in
$M^{f(q),f(p)}$. We can find two critical points $p'$ and $q'$, a
subsequence $\{ x_{n_{k}} \}_{k=1}^{\infty}$ of $\{ x_{n}
\}_{n=1}^{\infty}$ such that $x_{n_{k}}$ is on $\gamma_{n_{k}}$,
$\gamma_{n_{k}}(-\infty) = p'$, $\gamma_{n_{k}}(+\infty) = q'$ and
$\gamma_{n_{k}}$ is a component of $\Gamma_{n_{k}}$. Clearly,  a
generalized flow line connecting $p'$ and $q'$ can be extended to
one connecting $p$ and $q$. If there is a cluster point of $\{
x_{n_{k}} \}_{k=1}^{\infty}$ on a generalized flow line connecting
$p'$ and $q'$, this cluster point is also on one connecting $p$ and
$q$. So we may assume that $x_{n}$ is on $\gamma_{n}$,
$\gamma_{n}(-\infty) = p$ and $\gamma_{n}(+\infty) = q$.

If $p=q$, this is obviously true. Now we assume $p \neq q$. Suppose
$\gamma_{n}(t_{n}) = x_{n}$. Since the conclusion of Theorem
\ref{flow_compactness} holds, choosing a subsequence if necessary,
we can find $s_{n}^{0} < \cdots < s_{n}^{l}$ such that
$\displaystyle \lim_{n \rightarrow \infty} \gamma_{n}(s_{n}^{i}) =
\hat{\gamma}_{i}(0)$, where $\hat{\gamma}_{i}(-\infty) = r_{i}$,
$\hat{\gamma}_{i}(+\infty) = r_{i+1}$, and $r_{0}=p$, $r_{l+1}=q$.

Choosing a subsequence again if necessary, we can find a fixed $i$
such that, for all $n$, we have $t_{n} \in [s_{n}^{i},
s_{n}^{i+1}]$, where $s_{n}^{-1} = -\infty$ and $s_{n}^{l+1} =
+\infty$. In addition, we may assume there are exactly the following
three cases when $n \rightarrow \infty$. By Proposition
\ref{point_flow_limit}, we have:

Case (a): $\displaystyle \lim_{n \rightarrow \infty}(t_{n} -
s_{n}^{i}) = \tau < +\infty$. Then $x_{n}$ converges to a point on
$\hat{\gamma}_{i}$;

Case (b): $\displaystyle \lim_{n \rightarrow \infty}(s_{n}^{i+1} -
t_{n}) = \tau < +\infty$. Then $x_{n}$ converges to a point on
$\hat{\gamma}_{i+1}$;

Case (c): $\displaystyle \lim_{n \rightarrow \infty}(t_{n} -
s_{n}^{i}) = \lim_{n \rightarrow \infty}(s_{n}^{i+1} - t_{n}) =
+\infty$. Then $x_{n}$ converges to $r_{i+1} =
\hat{\gamma}_{i}(+\infty) = \hat{\gamma}_{i+1}(-\infty)$.

This completes the proof of the first result.

(2). Since the limit of $\{ x_{n}^{i} \}$ exists, its subsequences
share the same limit with it. So we only need to check the limit of
a subsequence of $\{ x_{n}^{i} \}$. Since there are only finitely
many critical points in $M^{f(q),f(p)}$, we may argue as in (1):
choosing a subsequence if necessary, we may assume $\Gamma_{n} =
(\gamma_{n,1}, \cdots, \gamma_{n,m})$, $\gamma_{n,j}(-\infty) =
r_{j}$ and $\gamma_{n,j}(+\infty) = r_{j+1}$ are fixed and
independent of $n$. In addition, $\forall i$, there is a fixed $j$
such that for all $n$, $x_{n}^{i}$ is on $\gamma_{n,j}$. If $r_{j} =
r_{j+1}$, then $\gamma_{n,j}$ converges to the constant flow
connecting $r_{j}$ and $r_{j}$. Otherwise, choosing a subsequence
again if necessary, $\{ \gamma_{n,j} \}_{n=1}^{\infty}$ converges to
a generalized flow line connecting $r_{j}$ and $r_{j+1}$. The
combination of the limits of $\{ \gamma_{n,j} \}_{n=1}^{\infty}$ for
$j=1, \cdots, m$ yields a generalized flow line, $\Gamma$,
connecting $p$ and $q$. By an argument similar to that of (1), the
limits of all $\{ x_{n}^{i} \}$ are on $\Gamma$.
\end{proof}
\section{Manifold Structure}
\subsection{Different Viewpoints on Compactified Spaces}\label{subsection_compactified_space}
If $\alpha \in \mathcal{M}_{I} \subseteq
\overline{\mathcal{M}(p,q)}$, then $\alpha = (\gamma_{0}, \cdots,
\gamma_{k})$, where $\gamma_{i} \in \mathcal{M}(r_{i}, r_{i+1})$,
$r_{0}=p$ and $r_{k+1}=q$. Denote the constant flow line passing
through $r_{i}$ by $\beta(r_{i})$. We can identify $\alpha$ with the
generalized flow line $(\beta(r_{0}), \gamma_{0}, \beta(r_{1}),
\cdots, \gamma_{k}, \beta(r_{k+1}))$ connecting $p$ and $q$. Thus we
get
\begin{equation}\label{comapctify_m}
\overline{\mathcal{M}(p,q)}  = \{ \Gamma \mid \Gamma \text{ is a
generalized flow line connecting $p$ and $q$} \}.
\end{equation}
Suppose $(\alpha, x) \in \mathcal{M}_{I} \times \mathcal{D}(r_{k})
\subseteq \overline{\mathcal{D}(p)}$. We can identify $\alpha$ with
a generalized flow line connecting $p$ and $r_{k}$. Adding the flow
line passing through $x$ to the above generalized flow line, we get
a generalized flow line connecting $p$ and $x$. The latter
generalized flow line is uniquely determined by $(\alpha, x)$. Thus
we get
\begin{equation}\label{comapctify_d}
\overline{\mathcal{D}(p)}  =  \{ (\Gamma, x) \mid \Gamma \text{ is a
generalized flow line connecting $p$ and $x$} \}.
\end{equation}
Similarly, we also get
\begin{equation}\label{comapctify_w}
\overline{\mathcal{W}(p,q)}  =  \{ (\Gamma, x) \mid \Gamma \in
\overline{\mathcal{M}(p,q)},\ \text{$x$ is on $\Gamma$} \}.
\end{equation}
From the above viewpoint, $\Gamma \in \mathcal{M}(p,q)$, $(\Gamma,
x) \in \mathcal{D}(p)$ (or $\mathcal{W}(p,q)$) if and only if
$\Gamma$ has no intermediate critical points.

By (\ref{comapctify_d}), we can see that the evaluation map $e$ in
(3) of Theorem \ref{d(p)_manifold} is just defined by $e(\Gamma, x)
= x$. If $\Gamma_{1} \in \overline{\mathcal{M}(p,r)}$ and
$\Gamma_{2} \in \overline{\mathcal{M}(r,q)}$, then the combination
of $\Gamma_{1}$ and $\Gamma_{2}$ gives an element in
$\overline{\mathcal{M}(p,q)}$. If $\Gamma_{1} \in
\overline{\mathcal{M}(p,r)}$ and $(\Gamma_{2}, x) \in
\overline{\mathcal{D}(r)}$, then the combination of $\Gamma_{1}$ and
$\Gamma_{2}$ is a generalized flow line connecting $p$ and $x$.
These precisely define the maps $i_{(p,r,q)}$ in Theorem
\ref{m(p,q)_manifold} and $i_{(p,r)}$ in Theorem
\ref{w(p,q)_manifold}.

Similarly, if $(\Gamma_{1}, x) \in \overline{\mathcal{W}(p,r)}$ and
$\Gamma_{2} \in \overline{\mathcal{M}(r,q)}$, then the combination
of $\Gamma_{1}$ and $\Gamma_{2}$ gives an element in
$\overline{\mathcal{M}(p,q)}$ and $x$ is on it. This defines the map
$i_{(p,r,q)}^{1}$ in (4) of Theorem \ref{w(p,q)_manifold}.
$i_{(p,r,q)}^{2}$ is defined in a similar way. The map $e$ in (3) of
Theorem \ref{w(p,q)_manifold} is defined as $e(\Gamma, x) = x$. The
restriction of $e$ on $\mathcal{W}_{I,s} = \mathcal{M}_{I_{1}}
\times \mathcal{W}(r_{s}, r_{s+1}) \times \mathcal{M}_{I_{2}}$ is
just the coordinate projection onto $\mathcal{W}(r_{s}, r_{s+1})$.

Figure \ref{dp_figure} shows a standard example on a torus $T^{2} =
S^{1} \times S^{1}$. Consider $S^{1}$ as the unit circle on the
complex plane. Define a Morse function on $T^{2}$ by $f(z_{1},
z_{2}) = \textrm{Re}(z_{1}) + \textrm{Re}(z_{2})$. $f$ has $4$
critical points $p$, $r$, $s$ and $q$. Their indices are $2$, $1$,
$1$ and $0$ respectively. Equip $T^{2}$ with the standard metric.
The left part of Figure \ref{dp_figure} shows the flow on $T^{2}$,
where the opposite sides of the square are identified with each
other. The right part is $\overline{\mathcal{D}(p)}$.
$\overline{\mathcal{D}(p)}$ is an octagon. $\mathcal{M}(p,r) \times
\mathcal{D}(r)$ (or $\mathcal{M}(p,s) \times \mathcal{D}(s)$)
consists of open edges containing $r_{i}$ (or $s_{i}$), where
$i=1,2$. $\mathcal{M}(p,q) \times \mathcal{D}(q)$ consists of the
other $4$ open edges. $(\mathcal{M}(p,r) \times \mathcal{M}(r,q)
\times \mathcal{D}(q)) \cup (\mathcal{M}(p,s) \times
\mathcal{M}(s,q) \times \mathcal{D}(q))$ consists of the $8$
vertices. $e$ maps $r_{i}$ (or $s_{i}$) to $r$ (or $s$).

\begin{figure}[!htbp]
\centering
\includegraphics[scale=0.3]{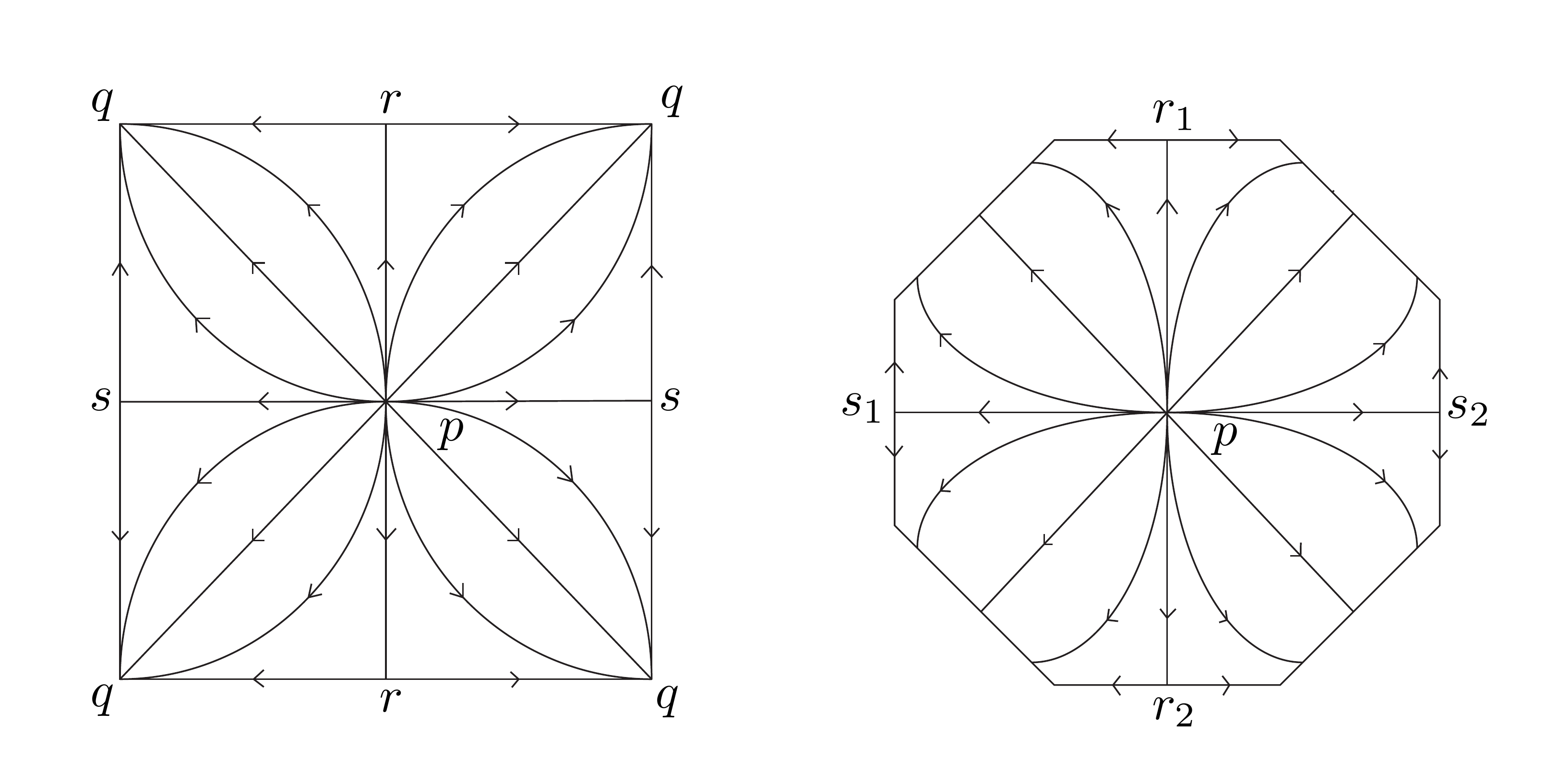} \caption{Compactification of the Descending Manifolds}
\label{dp_figure}
\end{figure}

\subsection{A Remark on the Literature}\label{remark_manifold_structure}
To the best of my knowledge, in the case of a general metric, there
is no well developed theory of smooth structures on these
compactified spaces. In addition, few papers in the literature study
$\overline{\mathcal{W}(p,q})$.

When $M$ is finite dimensional and the metric is locally trivial,
the papers \cite[prop.\ 2.11]{latour} and \cite[thm.\
1]{burghelea_haller} study $\overline{\mathcal{M}(p,q})$ and
$\overline{\mathcal{D}(p})$. We extend the proof in
\cite{burghelea_haller} to the infinite dimensional CF case.
Actually \cite{burghelea_haller} proves (1) and (2) of Theorem
\ref{m(p,q)_manifold} and (1), (2), (3) of Theorem
\ref{d(p)_manifold} except for the face structures. The book
\cite{burghelea_friedlander_kappeler} contains a proof of the face
structures which is different from the proof in this paper. Although
(3) and (4) of Theorem \ref{m(p,q)_manifold} and (4) of Theorem
\ref{d(p)_manifold} are not pointed out in \cite{burghelea_haller},
they are straightforward results from the geometric construction in
that proof. Despite the infinite dimensions, our main geometric
constructions to prove the smooth structures follow those in
\cite{burghelea_haller} except that Corollary \ref{flow_map} is
elementary in the finite dimensional case. The really big difference
between our proof and \cite{burghelea_haller} is to prove the
compactness of these manifolds. When $M$ is finite dimensional, both
\cite[prop.\ 3]{burghelea_haller} and \cite[prop.\ 2.35]{schwarz}
consider generalized flow lines as maps from an interval to $M$ and
prove compactness by the Arzela-Ascoli Theorem. However, the
Arzela-Ascoli Theorem does not hold when $M$ is infinite
dimensional. Our proof is based on Theorems \ref{flow_compactness}
and \ref{point_compactness}.

Modifying the geometric construction in \cite{burghelea_haller}, we
get a proof of Theorem \ref{w(p,q)_manifold}.

The paper \cite{burghelea_haller} explains its geometric
constructions clearly. However, these geometric constructions are
important for our proofs of other results (see Lemmas
\ref{d(p)_embedding}, \ref{d(p)_normal}, \ref{orientation_3},
\ref{orientation_1}, \ref{orientation_2}, \ref{pull_back_gradient},
\ref{critical_boundary} and \ref{action_d(p)} and Example
\ref{not_c1}). For the completeness of this paper, we explain the
main constructions in \cite{burghelea_haller} again.

\subsection{Preparation Lemmas}
The following two lemmas, Lemmas \ref{p_manifold} and
\ref{q_manifold} are crucial for our proof. Example \ref{not_c1}
shows that they necessarily depend on the local triviality of the
metric. These two lemmas are announced in \cite[observations 8 and
9]{burghelea_haller}. A proof for them in the finite dimensional
case is given in \cite{burghelea_friedlander_kappeler}. For the
importance of them, we present a proof which follows that in
\cite{burghelea_friedlander_kappeler}.

Figure \ref{model_figure} gives an illustration for the following
argument. Suppose $c$ is a critical value of $f$. The critical
points with function value $c$ are exactly $p_{1}, \cdots, p_{n}$.
Just as (\ref{localization_map}), we have diffeomorphisms $h_{i}:
B_{i}(\epsilon) \longrightarrow U_{i}$ such that
(\ref{localization_function}) and (\ref{localization_dymamics})
hold, where $B_{i}(\epsilon)$ is the open subset of $T_{p_{i}} M$
and $U_{i}$ is the neighborhood of $p_{i}$. Choose $\epsilon$ small
enough such that there is no critical value in $[c-\epsilon,
c+\epsilon]$ other than $c$. Let $M_{c}^{+} = \{ x \in M \mid f(x) =
c + \frac{1}{2} \epsilon \}$ and $M_{c}^{-} = \{ x \in M \mid f(x) =
c - \frac{1}{2} \epsilon \}$. Let
\[
P_{c} = \{ (x^{+}, x^{-}) \in M_{c}^{+} \times M_{c}^{-} \mid
\text{$x^{+}$ and $x^{-}$ are connected by a generalized flow line}
\}.
\]
Clearly, $x^{+}$ and $x^{-}$ are connected by broken generalized
flow lines if and only if $(x^{+}, x^{-}) \in \bigsqcup_{p_{i}}
S_{p_{i}}^{+} \times S_{p_{i}}^{-}$, where $S_{p_{i}}^{+}$ and
$S_{p_{i}}^{-}$ are $\mathcal{A}(p_{i}) \cap M_{c}^{+}$ and
$\mathcal{D}(p_{i}) \cap M_{c}^{-}$ respectively. Suppose the
smallest (largest) critical value greater (smaller) than $c$ is
$c_{+}$ ($c_{-}$). Here $c_{\pm}$ may be $\pm \infty$. Define $M(c)
= f^{-1}((c_{-}, c_{+}))$. Let
\[
Q_{c}^{+} = \{ (x^{+}, z) \in M_{c}^{+} \times M(c) \mid x^{+} \
\text{and} \  z \  \text{are connected by a generalized flow line}
\}.
\]
\[
Q_{c}^{-} = \{ (z, x^{-}) \in M(c) \times M_{c}^{-} \mid x^{-} \
\text{and} \  z \  \text{are connected by a generalized flow line}
\}.
\]
Then $x^{\pm}$ and $z$ are connected by broken generalized flow
lines if and only if $(x^{+}, z) \in \bigsqcup_{p_{i}} S_{p_{i}}^{+}
\times D_{p_{i}}$ and $(z, x^{-}) \in \bigsqcup_{p_{i}} A_{p_{i}}
\times S_{p_{i}}^{-}$ respectively, where $D_{p_{i}} =
\mathcal{D}(p_{i}) \cap M(c)$ and $A_{p_{i}} = \mathcal{A}(p_{i})
\cap M(c)$.

\begin{lemma}\label{p_manifold}
Suppose the metric is locally trivial. Then $P_{c}$ is a smoothly
embedded submanifold with boundary $\bigsqcup_{p_{i}} S_{p_{i}}^{+}
\times S_{p_{i}}^{-}$ of $M_{c}^{+} \times M_{c}^{-}$.
\end{lemma}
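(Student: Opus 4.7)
The plan is to work locally in the Morse chart $h_i: B_i(\epsilon) \to U_i$ around each critical point $p_i$, where local triviality provides the explicit flow $\phi_t(v_1,v_2) = (e^t v_1, e^{-t} v_2)$ on $V_- \times V_+$. Away from the ascending and descending spheres at level $c$, Corollary \ref{flow_map} gives a diffeomorphism $\psi: M_c^+ \setminus \bigsqcup_i S_{p_i}^+ \to M_c^- \setminus \bigsqcup_i S_{p_i}^-$, and $P_c$ is exactly the graph of $\psi$ there, hence a smoothly embedded submanifold of $M_c^+ \times M_c^-$. The substantive content is to describe $P_c$ near each boundary point $(x_0^+, x_0^-) \in S_{p_i}^+ \times S_{p_i}^-$ and show that the broken trajectories through $p_i$ glue smoothly onto this graph.

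In the local chart, $M_c^+ \cap U_i = \{|v_2|^2 - |v_1|^2 = \epsilon\}$, $M_c^- \cap U_i = \{|v_1|^2 - |v_2|^2 = \epsilon\}$, $S_{p_i}^+ = \{v_1 = 0,\; |v_2| = \sqrt{\epsilon}\}$, and $S_{p_i}^- = \{|v_1| = \sqrt{\epsilon},\; v_2 = 0\}$. The linear flow preserves both the unit directions of the $V_\pm$ components and the product $|v_1(t)|\cdot|v_2(t)|$; a short calculation then shows that the flow line starting at $(a\hat{v}_1, \sqrt{\epsilon+a^2}\,\hat{v}_2) \in M_c^+$ exits through $M_c^-$ precisely at $(\sqrt{\epsilon+a^2}\,\hat{v}_1, a\hat{v}_2)$, and that the trajectory stays inside $U_i$ provided $a < \sqrt{\epsilon}$. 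This motivates, for each boundary point $(x_0^+, x_0^-)$ corresponding to $(\hat{v}_1^0, \hat{v}_2^0)$ in the unit sphere product $S(V_-) \times S(V_+)$, the chart
\[
\phi: [0, \delta) \times N \longrightarrow M_c^+ \times M_c^-, \qquad \phi(a, \hat{v}_1, \hat{v}_2) = \bigl( h_i(a\hat{v}_1, \sqrt{\epsilon+a^2}\,\hat{v}_2),\; h_i(\sqrt{\epsilon+a^2}\,\hat{v}_1, a\hat{v}_2) \bigr),
\]
where $N$ is a small open neighborhood of $(\hat{v}_1^0, \hat{v}_2^0)$ in $S(V_-) \times S(V_+)$.

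Since $\sqrt{\epsilon + a^2}$ is $C^\infty$ on all of $\mathbb{R}$, the map $\phi$ is smooth on its half-open domain; at $a=0$ it covers a neighborhood of $(x_0^+, x_0^-)$ in $S_{p_i}^+ \times S_{p_i}^-$, and for $a > 0$ it lies in the graph of $\psi$. I would then verify that (i) $\phi$ is injective --- $\hat{v}_1, \hat{v}_2$ are recoverable as the unit directions of the $V_-$-component of $x^-$ and the $V_+$-component of $x^+$, while $a = |v_1^+| = |v_2^-|$; and (ii) $d\phi$ is injective at every point, which reduces to a direct differentiation. Together with the graph-of-$\psi$ description away from the boundary, this presents $P_c$ as a smoothly embedded submanifold with boundary in $M_c^+ \times M_c^-$ whose boundary is precisely $\bigsqcup_i S_{p_i}^+ \times S_{p_i}^-$.

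The main obstacle is showing $C^\infty$-smoothness of $\phi$ at $a = 0$. The crucial input is that local triviality makes the conservation law $|v_1(t)| \cdot |v_2(t)| \equiv |v_1| \cdot |v_2|$ exact, which decouples the parametrization into a radial parameter $a$ and angular data $(\hat{v}_1, \hat{v}_2)$ with all dependence funneled through the smooth function $\sqrt{\epsilon + a^2}$. Without local triviality, higher-order corrections to the flow spoil smoothness at the boundary, matching the phenomenon exhibited in Example \ref{not_c1}. A minor auxiliary task is to check that the boundary charts at different points and the interior graph chart are smoothly compatible on their overlaps, which is automatic since both parametrize the same subset of $M_c^+ \times M_c^-$ by smooth data.
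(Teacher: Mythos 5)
Your proposal is correct and follows essentially the same approach as the paper: the collar chart $\phi(a,\hat v_1,\hat v_2)=\bigl(h_i(a\hat v_1,\sqrt{\epsilon+a^2}\,\hat v_2),\,h_i(\sqrt{\epsilon+a^2}\,\hat v_1,a\hat v_2)\bigr)$ you write down is exactly the paper's map $\varphi(v_2,v_1,s)=\bigl((sv_1,(1+s^2)^{1/2}v_2),((1+s^2)^{1/2}v_1,sv_2)\bigr)$ after the reparametrization $a=s\sqrt{\epsilon}$, $v_j=\sqrt{\epsilon}\,\hat v_j$. Your injectivity check by recovering $(a,\hat v_1,\hat v_2)$ from the image, the verification that $d\phi$ is injective at $a=0$, and the observation that local triviality is precisely what makes $\sqrt{\epsilon+a^2}$ enter as a globally smooth function of the radial parameter (the failure of which underlies Example \ref{not_c1}) all correspond directly to steps in the paper's argument.
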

\begin{proof}
There is no essential difference in the proof between the case of
one critical point and that of several critical points. For
convenience, we may assume there is only one critical point $p$ in
$M^{c-\epsilon, c+\epsilon}$. We shall prove that $P_{c}$ is a
smooth embedding submanifold with boundary $S_{p}^{+} \times
S_{p}^{-}$ of $M_{c}^{+} \times M_{c}^{-}$.

Firstly, we shall prove $P_{c} - S_{p}^{+} \times S_{p}^{-}$ is a
smoothly embedded submanifold of $M_{c}^{+} \times M_{c}^{-}$.

Since it is an open subset of $M_{c}^{+}$, $M_{c}^{+} - S_{p}^{+}$
is a smooth submanifold of $M_{c}^{+}$. By Corollary \ref{flow_map},
we can define the flow map $\psi: M_{c}^{+} - S_{p}^{+}
\longrightarrow M_{c}^{-} - S_{p}^{-}$. Define $\varphi: M_{c}^{+} -
S_{p}^{+} \longrightarrow M_{c}^{+} \times M_{c}^{-}$ by $\varphi
(x_{+}) = (x_{+}, \psi(x_{+}))$. Clearly, $\varphi$ is smooth and
$\textrm{Im}(\varphi) = P_{c} - S_{p}^{+} \times S_{p}^{-}$. Define
$\pi_{+}: M_{c}^{+} \times M_{c}^{-} \longrightarrow M_{c}^{+}$ to
be the natural projection. We have $\pi_{+}$ is smooth and $\pi_{+}
\varphi = \textrm{Id}$, so $\varphi$ is a homeomorphism to its
image. Since $d \pi_{+} d \varphi =\textrm{Id}$, $d \varphi$ is an
isomorphism to its image. Thus $P_{c} - S_{p}^{+} \times S_{p}^{-} =
\textrm{Im}(\varphi)$ is a smooth manifold of $M_{c}^{+} \times
M_{c}^{-}$.

Secondly, we shall prove that there is an open neighborhood $W$ of
$S_{p}^{+} \times S_{p}^{-}$ in $M_{c}^{+} \times M_{c}^{-}$ such
that $W \cap P_{c}$ is a smooth submanifold with boundary $S_{p}^{+}
\times S_{p}^{-}$ in $W$.

By local triviality of the metric, there is a diffeomorphism $h: B
\longrightarrow U$ given by (\ref{localization_map}) which satisfies
(\ref{localization_function}) and (\ref{localization_dymamics}). For
convenience, we identify $B$ with $U$. Then $S_{p}^{+} = \{ (0,
v_{2}) \mid \| v_{2} \|^{2} = \epsilon \}$, $S_{p}^{-} = \{ (v_{1},
0) \mid \| v_{1} \|^{2} = \epsilon \}$, $M_{c}^{+} \cap U = \{
(v_{1}, v_{2}) \mid \| v_{1} \|^{2} <2 \epsilon,$ and $\| v_{2}
\|^{2} <2 \epsilon, - \| v_{1} \|^{2} + \| v_{2} \|^{2} = \epsilon
\}$ and $M_{c}^{-} \cap U = \{ (v_{1}, v_{2}) \mid \| v_{1} \|^{2}
<2 \epsilon,$ and $\| v_{2} \|^{2} <2 \epsilon, - \| v_{1} \|^{2} +
\| v_{2} \|^{2} = -\epsilon \}$. Let $U_{+} = \{ (v_{1}, v_{2}) \mid
\| v_{1} \|^{2} < \epsilon$ and $\frac{\epsilon}{2} < \| v_{2}
\|^{2} <2 \epsilon. \}$ and $U_{-} = \{ (v_{1}, v_{2}) \mid \| v_{2}
\|^{2} < \epsilon$ and $\frac{\epsilon}{2} < \| v_{1} \|^{2} <2
\epsilon. \}$. Then $U_{+} \times U_{-}$ is an open neighborhood of
$S_{p}^{+} \times S_{p}^{-}$ in $M^{c-\epsilon, c+\epsilon} \times
M^{c-\epsilon, c+\epsilon}$. For convenience, we identify
$S_{p}^{-}$ with $\{ v_{1} \mid (v_{1}, 0) \in S_{p}^{-} \}$ and
$S_{p}^{+}$ with $\{ v_{2} \mid (0, v_{2}) \in S_{p}^{+} \}$.

Consider the map $\varphi: S_{p}^{+} \times S_{p}^{-} \times [0,1)
\longrightarrow U_{+} \times U_{-}$ satisfying
\[
\varphi (v_{2}, v_{1}, s) = \left( (sv_{1}, (1+s^{2})^{\frac{1}{2}}
v_{2}), ((1+s^{2})^{\frac{1}{2}} v_{1}, sv_{2}) \right).
\]

Clearly, $\varphi$ is smooth, $\textrm{Im}(\varphi) = P_{c} \cap
(U_{+} \times U_{-})$ and $\varphi|_{ S_{p}^{+} \times S_{p}^{-} } =
\textrm{Id}$. On the other hand, consider the map $\alpha: U_{+}
\times U_{-} \longrightarrow S_{p}^{+} \times S_{p}^{-} \times
[0,1)$ satisfying
\[
  \alpha ( (z_{1}, z_{2}),(z_{3}, z_{4}) ) = \left( \epsilon^{\frac{1}{2}} \frac{z_{2}}{\|z_{2}\|},
  \epsilon^{\frac{1}{2}} \frac{z_{3}}{\|z_{3}\|}, \epsilon^{-\frac{1}{2}} \|z_{1}\|
  \right).
\]
Then $\alpha$ is continuous and $\alpha \varphi = \textrm{Id}$. In
addition, $\alpha$ is smooth when $z_{1} \neq 0$. Then $\varphi$ is
a homeomorphism to its image, and $d \varphi$ is an isomorphism onto
its image when $s \neq 0$.

Now we consider the case of $s=0$. We shall prove that $d
\varphi|_{s=0}$ is an isomorphism onto its image. It suffices to
prove that there exists $\lambda > 0$, for all $v \in T(S_{p}^{+}
\times S_{p}^{-} \times [0,1))$, such that
\begin{equation}\label{p_manifold_1}
\|d \varphi \cdot v \| \geq \lambda \|v\|.
\end{equation}
Let $\frac{\partial}{\partial s}$ be the positive unit tangent
vector of $[0,1)$, $e_{2}$ and $e_{1}$ are tangent vectors of
$S_{p}^{+}$ and $S_{p}^{-}$. Then
\[
  d \varphi|_{s=0} \left( \frac{\partial}{\partial s} \right) = (v_{1}, 0, 0,
  v_{2}), \quad d \varphi|_{s=0} (e_{1}) = (0, 0, e_{1}, 0), \quad d \varphi|_{s=0} (e_{2}) = (0, e_{2}, 0,
  0).
\]
It's easy to see (\ref{p_manifold_1}) holds.

Thus $\varphi$ is a smooth embedding into $U_{+} \times U_{-}$. Let
$W= (U_{+} \times U_{-}) \cap (M_{c}^{+} \times M_{c}^{-})$. Then
$W$ is an open neighborhood of $S_{p}^{+} \times S_{p}^{-}$ in
$M_{c}^{+} \times M_{c}^{-}$, $P_{c} \cap W = \textrm{Im}(\varphi)$
and $P_{c} \cap W$ is a smoothly embedded submanifold with boundary
$S_{p}^{+} \times S_{p}^{-}$.
\end{proof}

\begin{lemma}\label{q_manifold}
Suppose the metric is locally trivial. Then $Q_{c}^{+}$
($Q_{c}^{-}$) is a smoothly embedded submanifold with boundary
$\bigsqcup_{p_{i}} S_{p_{i}}^{+} \times D_{p_{i}}$
($\bigsqcup_{p_{i}} A_{p_{i}} \times S_{p_{i}}^{-}$) of $M_{c}^{+}
\times M(c)$ ($M(c) \times M_{c}^{-}$).
\end{lemma}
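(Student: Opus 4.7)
The plan is to adapt the two-chart construction from the proof of Lemma~\ref{p_manifold}, with one chart covering the interior of $Q_c^+$ and a second covering the boundary stratum, and then to deduce the $Q_c^-$ case from the $Q_c^+$ case by replacing $f$ with $-f$ (which interchanges ascending and descending manifolds, and turns $(M_c^+, D_{p_i})$ into $(M_c^-, A_{p_i})$ for the new function). As in Lemma~\ref{p_manifold}, I will treat only the case of a single critical point $p$ at level $c$; the multi-critical case is the obvious disjoint union since the Morse charts $U_i$ are disjoint.

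For the interior chart, set $\Omega = \{(x^+, t) \in M_c^+ \times \mathbb{R} \mid \phi_t(x^+) \in M(c)\}$, which is open, and $\varphi_0(x^+, t) = (x^+, \phi_t(x^+))$ landing in $M_c^+ \times M(c)$. The projection to the first factor splits $\varphi_0$, so $\varphi_0$ is a topological embedding; since $\partial_t \varphi_0 = -\nabla f(\phi_t(x^+)) \neq 0$ on $\Omega$ (points of $\Omega$ are non-critical as $M(c)\setminus\{p\}$ is critical-value-free), $\varphi_0$ is a smooth embedding. Its image is exactly $Q_c^+ \setminus (S_p^+ \times D_p)$: an element $(x^+, z) \in Q_c^+$ is in the image iff the connecting generalized flow line is unbroken, and an easy case check (compare Corollary~\ref{start_terminate}) shows the remaining pairs are precisely those in $S_p^+\times D_p$, which are connected only by broken trajectories through $p$.

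For the boundary chart, use local triviality to write an explicit model near each $(x_0^+, \eta_0) \in S_p^+ \times D_p$. First assume $\eta_0 \in U = h(B(\epsilon))$. Identify $S_p^+$ with $\{v_2 \in V_+ \mid \|v_2\|^2 = \epsilon\}$ and the local piece of $D_p$ with $\{w_1 \in V_- \mid \|w_1\|^2 < 2\epsilon\}$, and for $s \in [0,1)$ define
\[
\varphi(v_2, w_1, s) = \Bigl( (sw_1,\; \mu(s) v_2),\; (w_1,\; s\mu(s) v_2) \Bigr), \qquad \mu(s) = \sqrt{1 + s^2\|w_1\|^2/\epsilon}.
\]
A direct computation using (\ref{localization_function}) and (\ref{localization_dymamics}) shows that the first component lies on $M_c^+$, that for $s>0$ the second component is the image of the first under the flow at time $-\ln s$ (so the pair is in $Q_c^+$), and that at $s=0$ we recover $((0,v_2), (w_1,0)) \in S_p^+ \times D_p$. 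For $\eta_0$ outside the Morse chart, choose $T_0$ so that $\phi_{-T_0}(\eta_0) \in U$ and $T_0$ extends to a smooth function of $\eta$ near $\eta_0$; then compose the second component of the above $\varphi$ with $\phi_{T_0}$ to obtain a chart near $(x_0^+, \eta_0)$. A continuous inverse is built as $\alpha$ was built in Lemma~\ref{p_manifold}: read $s$ off the norm of the $V_-$-component of $x^+$, and recover $v_2$, $w_1$ (and then $\eta$ via $\phi_{T_0}$) from the unit vectors.

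It remains to verify that $\varphi$ is a smooth embedding up to and including $s=0$, and that it glues smoothly with $\varphi_0$. At $s=0$ the three partial derivatives $\partial_{v_2}, \partial_{w_1}, \partial_s$ evaluate to $((0,\delta v_2), (0,0))$, $((0,0), (\delta w_1, 0))$, and $((w_1, 0), (0, v_2))$ respectively (using $\mu'(0)=0$), and these clearly lie in pairwise disjoint summands of $T(M_c^+\times M(c))$, so $d\varphi|_{s=0}$ is injective. On the overlap $s>0$, the explicit formula shows $\varphi$ agrees with $\varphi_0$ under the substitution $t = -\ln s$, so the two charts define the same smooth structure. The main obstacle is precisely the smoothness of $\varphi$ across $s=0$: it is only because the metric is locally trivial that the flow has the exact product form $(v_1, v_2) \mapsto (e^t v_1, e^{-t} v_2)$ that makes the clean formula for $\mu(s)$ possible, and Example~\ref{not_c1} shows that without local triviality no such smooth chart can generally exist.
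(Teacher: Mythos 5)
Your proof follows the paper's essentially verbatim: the same reduction to a single critical point, the same two-chart structure, and literally the same formula for the boundary chart (your $\mu(s)=(1+s^{2}\|w_{1}\|^{2}/\epsilon)^{1/2}$ is the paper's $(s^{2}\|v_{1}\|^{2}+\epsilon)^{1/2}\epsilon^{-1/2}$, and your $d\varphi|_{s=0}$ computation matches~(\ref{q_manifold_1})). The only imprecision worth flagging is in the interior chart: the first-factor projection does not split $\varphi_{0}(x^{+},t)=(x^{+},\phi_{t}(x^{+}))$, since it recovers $x^{+}$ but not $t$; the paper sidesteps this by parametrizing the interior as $z\mapsto(\psi(z),z)$ with $z\in M(c)\setminus D_{p}$, for which the second-factor projection literally is a left inverse (alternatively, note that the flow time from $x^{+}$ to $z$ is a smooth function of $(x^{+},z)$ near $\mathrm{Im}(\varphi_{0})$, giving the missing coordinate), and also handles the ``$\eta_{0}$ outside $U$'' issue up front by shrinking $M(c)$ to $f^{-1}((c-\tfrac{\epsilon}{2},c+\tfrac{\epsilon}{2}))$ rather than composing with a flow map as you do.
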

\begin{proof}
We only need to prove the case of $Q_{c}^{+}$.

Let $\tilde{Q}_{c}^{+} = \{ (x^{+},z) \in Q_{c}^{+} \mid f(z) \in
(c_{i} - \frac{\epsilon}{2}, c_{i} + \frac{\epsilon}{2}) \}$. If we
shrink $M(c)$ by an isotopy along flow lines, we get a
diffeomorphism from $M(c)$ to $f^{-1}((c_{i} - \frac{\epsilon}{2},
c_{i} + \frac{\epsilon}{2}))$. This diffeomorphism preserves flow
lines. Thus it induces a diffeomorphism from $Q_{c}^{+}$ to
$\tilde{Q}_{c}^{+}$. Then we only need to prove that
$\tilde{Q}_{c}^{+}$ is a submanifold of $M_{c}^{+} \times
f^{-1}((c_{i} - \frac{\epsilon}{2}, c_{i} + \frac{\epsilon}{2}))$.
We can therefore assume $M(c) = f^{-1}((c_{i} - \frac{\epsilon}{2},
c_{i} + \frac{\epsilon}{2}))$.

The proof is very similar to that of Lemma \ref{p_manifold}. We
assume there is only one critical point in $M(c)$.

Firstly, we prove $Q_{c}^{+} - S_{p}^{+} \times D_{p}$ is a smooth
embedding submanifold of $M_{c}^{+} \times M(c)$. There is a smooth
map $\varphi: M(c)-D_{p} \longrightarrow M_{c}^{+} \times M(c)$ such
that $\varphi (z) = (\psi(z), z)$, where $\psi$ is the flow map from
$M(c)-D_{p}$ to $M_{c}^{+}$. Similarly to Lemma \ref{p_manifold},
$\varphi$ is also a smooth embedding. This gives the proof.

Secondly, we shall find an open neighborhood $W$ of $S_{p}^{+}
\times D_{p}$ such that $Q_{c}^{+} \cap W$ is a smoothly embedded
submanifold with boundary $S_{p}^{+} \times D_{p}$ of $M_{c}^{+}
\times M(c)$.

Just as the proof of Lemma \ref{p_manifold}, we use the same
notation of $h$, $B$, $U$ and $U_{+}$, identify $U$ with $B$, and we
define $\widetilde{U}_{-} = \{(v_{1}, v_{2}) \mid \| v_{2} \|^{2} <
\epsilon$ and $\| v_{1} \|^{2} <2 \epsilon. \}$. Define $\varphi:
S_{p}^{+} \times D_{p} \times [0,1) \longrightarrow U_{+} \times
\widetilde{U}_{-}$ by
\[
  \varphi (v_{2}, v_{1}, s) = \left( (s v_{1},(s^{2} \| v_{1} \|^{2} + \epsilon)^{\frac{1}{2}}
  \epsilon^{-\frac{1}{2}} v_{2}), (v_{1}, s (s^{2} \| v_{1} \|^{2} + \epsilon)^{\frac{1}{2}}
  \epsilon^{-\frac{1}{2}} v_{2}) \right).
\]
Define $\alpha: U_{+} \times \widetilde{U}_{-} \longrightarrow
S_{p}^{+} \times D_{p} \times [0,1)$ by
\[
  \alpha ((z_{1}, z_{2}), (z_{3}, z_{4})) = \left( \epsilon^{\frac{1}{2}} \frac{z_{2}}{\|z_{2}\|},
  z_{3}, \frac{\| z_{4} \|}{\| z_{2} \|} \right).
\]
Then $\alpha \varphi = \textrm{Id}$. Similar to the proof of Lemma
\ref{p_manifold}, $\varphi$ is a homeomorphism. And $d \varphi$ is
an isomorphism to its image when $s \neq 0$. When $s=0$,
\begin{equation}\label{q_manifold_1}
d \varphi|_{s=0} \left( \frac{\partial}{\partial s} \right) =
(v_{1}, 0, 0, v_{2}),
\end{equation}
\[
  d \varphi|_{s=0} (e_{1}) = (0, 0, e_{1}, 0), \qquad d \varphi|_{s=0} (e_{2}) = (0, e_{2}, 0,
  0),
\]
and $d \varphi$ is also an isomorphism to its image. Thus $\varphi$
is a smooth embedding. Let $W=(U_{+} \times \widetilde{U}_{-}) \cap
(M_{c}^{+} \times M(c))$. This finishes the proof.
\end{proof}

We shall cut out a submanifold with corners from a manifold with
corners. This requires a result about transversality. (See \cite[II.
E]{palais2} for more details about transversality on Hilbert
manifolds.) First we recall a classical result about manifold with
boundary. Suppose $L$ is a Hilbert manifold with boundary, and
$N_{1}$ and $N_{2}$ are Hilbert manifolds. Assume $N_{2}$ is an
embedded submanifold of $N_{1}$. Suppose $g: L \longrightarrow
N_{1}$ is a smooth manifold transversal to $N_{2}$ both in
$L^{\circ} = L - \partial L$ and in $\partial L$. Then
$g^{-1}(N_{2})$ is an embedded submanifold with boundary inside $L$,
and $\partial g^{-1}(N_{2}) = g^{-1}(N_{2}) \cap \partial L$. Now we
extend this result to the product of manifolds with boundary.
Suppose $L_{i}$ ($i=1, \cdots, n$) are Hilbert manifolds with
boundary. Then $\prod_{i=1}^{n} L_{i}$ is a Hilbert manifold with
corners. Its $k$-stratum is just $\partial^{k} \prod_{i=1}^{n} L_{i}
= \bigsqcup_{|\Lambda|=k}(\prod_{i \in \Lambda}
\partial L_{i} \times \prod_{i \notin \Lambda} L_{i}^{\circ})$,
where $\Lambda$ is a subset of $\{1, \cdots, n \}$. The above
extends Definitions \ref{manifold_with_corner} and \ref{k_stratum}.
We have the following result, whose proof is a straightforward
extension of that in the case of a manifold with boundary.

\begin{lemma}\label{cut_submanifold}
If $g: \prod_{i=1}^{n} L_{i} \longrightarrow N_{1}$ is transversal
to $N_{2}$ in each stratum of $ \prod_{i=1}^{n} L_{i}$, then
$g^{-1}(N_{2})$ is a smoothly embedded submanifold with corners of
$\prod_{i=1}^{n} L_{i}$ such that $\partial^{k} g^{-1}(N_{2}) =
g^{-1}(N_{2}) \cap \partial^{k} \prod_{i=1}^{n} L_{i}$.
\end{lemma}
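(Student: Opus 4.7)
The plan is to reduce the claim to the classical preimage theorem via a product chart and a smooth extension across the corner. Working locally, I would fix $x = (x_1, \ldots, x_n) \in g^{-1}(N_2)$ and set $k = c(x)$; after relabeling, assume $x_i \in \partial L_i$ for $i \le k$ and $x_i \in L_i^\circ$ for $i > k$. I would then choose a corner chart of $L_i$ near $x_i$ of the form $[0,\epsilon) \times U_i$ for $i \le k$ and an ordinary chart $U_i$ for $i > k$, giving a product chart $\Phi: [0,\epsilon)^k \times U \to \prod_{i=1}^{n} L_i$ around $x$ with $U = \prod_i U_i$. Since $N_2 \subseteq N_1$ is embedded, after shrinking there exists a smooth submersion $\pi: W \to Z$ from a neighborhood $W$ of $g(x)$ in $N_1$ onto some Hilbert space $Z$ with $N_2 \cap W = \pi^{-1}(0)$. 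Setting $F = \pi \circ g \circ \Phi$, we have $\Phi^{-1}(g^{-1}(N_2)) = F^{-1}(0)$ locally, so the question reduces to the structure of $F^{-1}(0)$.

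Next I would extend $F$ across the corner. By Definition \ref{manifold_with_corner}, smoothness of $F$ on $[0,\epsilon)^k \times U$ means $F$ extends to a smooth map $\widetilde{F}: (-\delta, \epsilon)^k \times U \to Z$ for some $\delta > 0$. The transversality hypothesis at $x$ in the $k$-stratum says precisely that $dF_0$ restricted to $\{dx_1 = \cdots = dx_k = 0\} = T_0 U$ surjects onto $Z$. Hence $d\widetilde{F}_0$ is a fortiori surjective, so by continuity $\widetilde{F}$ is a submersion on a neighborhood of $0$. The implicit function theorem in Hilbert spaces then lets me split $U \cong U' \times Z$ after a linear change of coordinates so that $\widetilde{F}^{-1}(0)$ is the graph of a smooth map $(-\delta, \epsilon)^k \times U' \to Z$. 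Intersecting with the half-spaces $\{x_i \ge 0\}$ yields
\[
F^{-1}(0) \;\cong\; [0,\epsilon)^k \times U',
\]
a standard manifold-with-corners chart of codimension $\dim Z$ inside $[0,\epsilon)^k \times U$; transporting by $\Phi$ exhibits $g^{-1}(N_2)$ as a smoothly embedded submanifold with corners near $x$.

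Finally, I would verify the stratum identity. In the adapted chart, the corner depth of a point equals the number of vanishing coordinates among $x_1, \ldots, x_k$, and this count is the same whether one views the point inside $[0,\epsilon)^k \times U'$ or inside $[0,\epsilon)^k \times U$. Hence $\partial^{j}(g^{-1}(N_2))$ coincides with $g^{-1}(N_2) \cap \partial^{j}(\prod_{i=1}^{n} L_i)$ near $x$ for each $j$, and letting $x$ vary gives the global identity stated in the lemma. The only substantive step is the extension-to-submersion reduction: one must check that transversality in just the single stratum through $x$, where the tangent space is thinnest, already forces any smooth extension to be a submersion near the corner point. This works because appending the normal directions $dx_1, \ldots, dx_k$ can only enlarge the image of the differential, and this is precisely the content of the hypothesis that $g$ be transversal to $N_2$ in each stratum. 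Everything else is the classical preimage theorem together with bookkeeping of the product corner structure.
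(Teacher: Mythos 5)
Your proof is correct and is essentially the argument the paper has in mind when it calls the lemma a ``straightforward extension'' of the manifold-with-boundary case: you pick an adapted corner chart, extend the local defining map across the corner, apply the implicit function theorem to the extension, and restrict back to the closed orthant, keeping track of the shared corner coordinates. The key observation---that transversality in the lowest stratum through $x$ already makes any smooth local extension a submersion near $0$, since adjoining the corner directions can only enlarge the image of the differential---is correct and is precisely what the ``transversal in each stratum'' hypothesis encodes.
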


\subsection{Proof of Theorem \ref{m(p,q)_manifold}}
\begin{proof}
(1) \& (2). We only prove the corner structure now. The face
structure will follow from (4). Suppose the critical values in
$[f(q), f(p)]$ are exactly $c_{l+1} < \cdots < c_{1} < c_{0}$, where
$c_{0} = f(p)$ and $c_{l+1} = f(q)$. Define
\[
P = \prod_{i=1}^{l} P_{i}, \quad R = S_{p}^{-} \times
\prod_{i=1}^{l-1} M_{i}^{-} \times S_{q}^{+}, \quad O =
\prod_{i=1}^{l} (M_{i}^{+} \times M_{i}^{-}).
\]
Here $P_{i} = P_{c_{i}}$, $M_{i}^{+} = M_{c_{i}}^{+}$, $M_{i}^{-} =
M_{c_{i}}^{-}$, $S_{p}^{-} = \mathcal{D}(p) \cap M_{1}^{+}$ and
$S_{q}^{+} = \mathcal{A}(q) \cap M_{l}^{-}$ are defined as before
Lemma \ref{p_manifold}. By Lemma \ref{p_manifold}, $P$ is a manifold
with corners whose $k$-stratum is exactly the disjoint union of
$\prod_{i=1}^{k} (S_{r_{i}}^{+} \times S_{r_{i}}^{-}) \times
\prod_{j \notin \Lambda_{I}} P_{j}^{\circ}$, where $I= \{p, r_{1},
\cdots, r_{k}, q\}$ is a critical sequence and $\Lambda_{I}= \{ j
\mid c_{j} = f(r_{i}), i=1, \cdots, k \}$. Clearly, $P$ is a
submanifold of $O$, so there is an inclusion $\iota: P
\longrightarrow O$. On the other hand, define a smooth embedding
$\Delta: R \longrightarrow O$ as follows. Since there is no critical
point in $M^{c_{i+1}+\frac{\epsilon}{2}, c_{i}-\frac{\epsilon}{2}}$,
by Corollary \ref{flow_map}, we have a flow map $\psi_{i}: M_{i}^{-}
\longrightarrow M_{i+1}^{+}$. Define
\[
  \Delta (y_{0}^{-}, y_{1}^{-}, \cdots, y_{l-1}^{-}, y_{l+1}^{+}) =
  (\psi_{0} y_{0}^{-}, y_{1}^{-}, \psi_{1} y_{1}^{-}, \cdots, y_{l-1}^{-}, \psi_{l-1} y_{l-1}^{-}, \psi_{l+1}^{-1}
  y_{l+1}^{+}).
\]

Now we point out that $\iota$ is transversal to $\Delta$ in each
stratum of $P$. When $M$ is compact, transversality is proved by
\cite[thm.\ 1]{burghelea_haller}. (The paper \cite{burghelea_haller}
uses different notations from ours. Its $\mathcal{P}$, $\mathcal{S}$
and $\mathcal{O}$ are our $P$, $R$ and $O$ respectively. Its maps
$p$ and $s$ are our $\iota$ and $\Delta$ respectively.) The proof
needs Corollary \ref{flow_map} which is trivial in the compact case.
Our proof of the transverality duplicates that in
\cite{burghelea_haller}, so we omit it.

Denote $K = \iota^{-1}(\textrm{Im}(\Delta))$. By Lemma
\ref{cut_submanifold}, $K$ is a smoothly embedded submanifold of $P$
whose $k$-stratum is exactly the intersection of $K$ with the
$k$-stratum of $P$.

Now we identify the strata of $K$ with the disjoint unions of
$\mathcal{M}(p,r_{1}) \times \mathcal{M}(r_{1},r_{2}) \times \cdots
\times \mathcal{M}(r_{k},q)$ as smooth manifolds.

It's easy to see that
\begin{eqnarray}\label{m(p,q)_K}
K & = &\{ (x_{1}^{+}, x_{1}^{-}, \cdots,
x_{l}^{+}, x_{l}^{-}) \in O \mid x_{i}^{\pm} \  (1 \leq i \leq l) \\
& & \text{are on a same generalized flow line connecting $p$ and
$q$} \}. \nonumber
\end{eqnarray}
Let $I = \{p, r_{1}, \cdots, r_{k}, q\}$ be a critical sequence. For
all $\Gamma \in \mathcal{M}_{I}$ (see (\ref{comapctify_m})),
$\Gamma$ intersects $M_{i}^{\pm}$ at exactly one point
$x_{i}^{\pm}(\Gamma)$. Thus there is also an evaluation map
$\tilde{E}_{I}: \mathcal{M}_{I} \longrightarrow O$ such that
$\tilde{E}_{I}(\Gamma) = (x_{1}^{+}(\Gamma), \cdots,
x_{l}^{-}(\Gamma))$. Clearly, $\tilde{E}_{I}$ is a smooth embedding,
and $\textrm{Im}(\tilde{E}_{I})$ is exactly $K \cap (
\prod_{i=1}^{k}(S_{r_{i}}^{+} \times S_{r_{i}}^{-}) \times \prod_{j
\notin \Lambda_{I}} P_{j}^{\circ})$ which is an open subset of the
$k$-stratum of $K$. This gives an identification preserving smooth
structures.

As a result, identifying $\overline{\mathcal{M}(p,q)}$ with $K$, we
give $\overline{\mathcal{M}(p,q)}$ a smooth structure which is
compatible with the smooth structure of $\prod_{i=0}^{k}
\mathcal{M}(r_{i}, r_{i+1})$ for all critical sequences and its
$k$-stratum is exactly $\bigsqcup_{|I|=k} \mathcal{M}_{I}$.

Now we prove the compactness of $\overline{\mathcal{M}(p,q)}$.

By (\ref{m(p,q)_K}), for all $\{x_{n}\}_{n=1}^{\infty} \subseteq K$,
$x_{n} = (x_{n,1}^{+}, x_{n,1}^{-}, \cdots, x_{n,l}^{+},
x_{n,l}^{-})$, $x_{n,i}^{\pm} \in M_{i}^{\pm}$ and are on a same
generalized flow line connecting $p$ and $q$. By Theorem
\ref{point_compactness}, $\{x_{n}\}$ has a cluster point $x_{0} =
(x_{1}^{+}, x_{1}^{-}, \cdots, x_{l}^{+}, x_{l}^{-})$, and
$x_{i}^{\pm}$ are on a same generalized flow line connecting $p$ and
$q$. Since $M_{i}^{\pm}$ is closed, $x_{i}^{\pm} \in M_{i}^{\pm}$ or
$x_{0} \in K$. So $K$ and then $\overline{\mathcal{M}(p,q)}$ are
compact.

(3). Since $a_{i}$, $c_{i} - \frac{\epsilon}{2}$ and $ c_{i+1} +
\frac{\epsilon}{2}$ are in $(c_{i+1}, c_{i})$ and there is no
critical value in $(c_{i+1}, c_{i})$, by Corollary \ref{flow_map},
the flow map gives a smooth map from $f^{-1}(a_{i})$ to $M_{i}^{-}
\times M_{i+1}^{+}$. This induces a map $\varphi: \prod_{i=0}^{l}
f^{-1}(a_{i}) \longrightarrow O$. Clearly, $\varphi \circ E:
\overline{\mathcal{M}(p,q)} \longrightarrow O$ is exactly the
inclusion if we identify $\overline{\mathcal{M}(p,q)}$ with $K$. So
$\varphi \circ E$ and then $E$ are smooth embeddings.

(4). Suppose $f(r) = c_{k}$. By (3), we have the following
commutative diagram. Here $E_{p,q}$, $E_{p,r}:
\overline{\mathcal{M}(p,r)} \longrightarrow \prod_{i=0}^{k-1}
f^{-1}(a_{i})$, and $ E_{r,q}: \overline{\mathcal{M}(r,q)}
\longrightarrow \prod_{i=k}^{l} f^{-1}(a_{i})$ are evaluation maps.
\[
\xymatrix{
  \overline{\mathcal{M}(p,r)} \times \overline{\mathcal{M}(r,q)}\ar[d]_{i_{(p,r,q)}} \ar[rr]^-{E_{p,r} \times E_{r,q}} && \prod_{i=0}^{l} f^{-1}(a_{i}) \\
  \overline{\mathcal{M}(p,q)} \ar[urr]_{E_{p,q}}                     }
\]
Also by (3), the above three evaluation maps are smooth embeddings.
Then so is $i_{(p,r,q)}$. This completes the proof of (4).

Finally, we establish the face structure of
$\overline{\mathcal{M}(p,q)}$. Suppose $x$ is in the $k$-stratum.
Then $x \in \mathcal{M}_{I}$ for some $I = \{ p, r_{1}, \cdots,
r_{k}, q \}$. Thus $x \in \overline{\mathcal{M}(p,r_{i})} \times
\overline{\mathcal{M}(r_{i},q)}$ for $i = 1, \cdots, k$. Clearly,
$\mathcal{M}(p,r_{i}) \times \mathcal{M}(r_{i},q)$ are $k$ pairwise
disjoint faces. We only need to prove that their closures are
$\overline{\mathcal{M}(p,r_{i})} \times
\overline{\mathcal{M}(r_{i},q)}$ respectively. On the one hand,
since it is compact, $\overline{\mathcal{M}(p,r_{i})} \times
\overline{\mathcal{M}(r_{i},q)}$ contains the closure of
$\mathcal{M}(p,r_{i}) \times \mathcal{M}(r_{i},q)$ in
$\overline{\mathcal{M}(p,q)}$. On the other hand, as
$\mathcal{M}(p,r_{i}) \times \mathcal{M}(r_{i},q)$ is the
$0$-stratum (the interior) of $\overline{\mathcal{M}(p,r_{i})}
\times \overline{\mathcal{M}(r_{i},q)}$, we infer that the closure
of $\mathcal{M}(p,r_{i}) \times \mathcal{M}(r_{i},q)$ contains
$\overline{\mathcal{M}(p,r_{i})} \times
\overline{\mathcal{M}(r_{i},q)}$. Thus the closure is exactly
$\overline{\mathcal{M}(p,r_{i})} \times
\overline{\mathcal{M}(r_{i},q)}$.
\end{proof}

\subsection{Proof of Theorem \ref{d(p)_manifold}}
Since $f$ is lower bounded, by Theorem \ref{finite_number}, there
are only finite number of critical values in $(-\infty, f(p)]$.
Suppose they are $c_{l} < \cdots < c_{0} = f(p)$. Denote $M(c_{i})$
by $M(i)$, $P_{c_{i}}$ by $P_{i}$ and $Q_{c_{i}}^{+}$ by
$Q_{i}^{+}$, where $M(c_{i})$, $P_{c_{i}}$ and $Q_{c_{i}}^{+}$ are
as defined before Lemma \ref{p_manifold}. Define $U(i) \subseteq
\overline{\mathcal{D}(p)}$ as $U(i) = e^{-1}(M(i))$.

\begin{proof}
(1), (2) \& (3). We shall give each $U(i)$ a smooth structure, and
show that $U(i) \cap U(j)$ is open in both $U(i)$ and $U(j)$ and
smooth structures are compatible in $U(i) \cap U(j)$.

Firstly, when $i=0$, $U(0)$ is identified with $\mathcal{D}(p) \cap
M(0)$. $\mathcal{D}(p) \cap M(0)$ is a smooth embedded submanifold
of $M$. Thus $U(0)$ has a smooth structure by this identification.

Secondly, when $i>0$, let $ Q(i) = \prod_{j=1}^{i-1} P_{j} \times
Q_{i}^{+}$, $O(i) = \prod_{j=1}^{i-1} (M_{j}^{+} \times M_{j}^{-})
\times M_{i}^{+}$ and $R(i) = S_{p}^{-} \times \prod_{j=1}^{i-1}
M_{j}^{-}$. We know that, $\forall x \in Q(i)$, $x = (x_{1}^{+},
x_{1}^{-}, \cdots, x_{i-1}^{-}, x_{i}^{+}, z_{i})$, where
$x_{j}^{\pm} \in M_{j}^{\pm}$ and $z_{i} \in M(i)$. Define a smooth
map $\iota_{i}: Q(i) \longrightarrow O(i)$ by
\[
  \iota_{i} (x_{1}^{+}, x_{1}^{-}, \cdots, x_{i-1}^{-}, x_{i}^{+},
  z_{i}) = (x_{1}^{+}, x_{1}^{-}, \cdots, x_{i-1}^{-}, x_{i}^{+}).
\]
Define a smooth embedding $\Delta_{i}: R(i) \longrightarrow O(i)$ by
\[
  \Delta_{i} (y_{0}^{-}, y_{1}^{-}, \cdots, y_{i-1}^{-}) =
  (\psi_{0} y_{0}^{-}, y_{1}^{-}, \psi_{1} y_{1}^{-}, \cdots, y_{i-1}^{-}, \psi_{i-1}
  y_{i-1}^{-}),
\]
where $\psi_{j}$ is the flow map from $M_{j}^{-}$ to $M_{j+1}^{+}$.

As in the proof of Theorem \ref{m(p,q)_manifold}, we point out that
$\iota_{i}$ is transversal to $\Delta_{i}$ in each stratum of
$Q(i)$. The proof is similar to that of Theorem
\ref{m(p,q)_manifold}.

Thus $\tilde{U}(i) = \iota_{i}^{-1}(\textrm{Im}(\Delta_{i}))$ is a
smooth embedding submanifold of $Q(i)$ whose $k$-stratum is exactly
the intersection of $\tilde{U}(i)$ with the $k$-stratum of $Q(i)$.

Now we identify $U(i)$ with $\tilde{U}(i)$. It's easy to see that
\begin{eqnarray}\label{d(p)_u}
\tilde{U}(i) & = &\{ (x_{1}^{+}, x_{1}^{-}, \cdots,
x_{i-1}^{-}, x_{i}^{+}, z_{i}) \in O(i) \times M(i) \mid x_{j}^{\pm} \\
& & \text{are on a same generalized flow line connecting $p$ and
$z_{i}$.} \}. \nonumber
\end{eqnarray}
Let $I = (p, r_{1}, \cdots, r_{k})$ be a critical sequence. For any
element $(\Gamma, x) \in \mathcal{D}_{I} \cap U(i)$ (see
(\ref{comapctify_d})), $\Gamma$ intersects $M_{j}^{\pm}$ at exactly
one point $x_{j}^{\pm}(\Gamma)$. Thus there is an evaluation map
$\tilde{E}_{I}: \mathcal{D}_{I} \cap U(i) \longrightarrow O(i)
\times M(i)$ such that $\tilde{E}_{I}(\Gamma, x) =
(x_{1}^{+}(\Gamma), x_{1}^{-}(\Gamma), \cdots, x_{i}^{+}(\Gamma),
x)$. Similar to the identification of $K$ with
$\overline{\mathcal{M}(p,q)}$ in the proof of Theorem
\ref{m(p,q)_manifold}, this also identifies $U(i)$ with
$\tilde{U}(i)$ and preserves the smooth structure of the strata. So
we get a desired smooth structure on $U_{i}$.

In each $\tilde{U}(i)$, define $\tilde{e}_{i}: \tilde{U}(i)
\longrightarrow M$ by $\tilde{e}_{i}(x_{1}^{+}, \cdots, x_{i}^{+},
z_{i}) = z_{i}$, then $\tilde{e}_{i}$ is smooth. When we identify
$U(i)$ with $\tilde{U}(i)$, we have $e|_{U(i)} = \tilde{e}_{i}$.
Thus $e|_{U(i)}$ is smooth.

Now we check the compatibility of smooth structures for all $U(i)$
($0 \leq i \leq l$). Clearly, if $|i-j|>1$, then $U(i) \cap U(j) =
\emptyset$. We only need to check the compatibility of $U(i)$ and
$U(i+1)$.

Denote $M(i)^{-} = f^{-1}((c_{i+1}, c_{i}))$. For clarity, when we
consider $U(i) \cap U(i+1)$ as a topological subspace of $U(i)$ (or
$U(i+1)$), we denote it by $U(i,i+1)$ (or $U(i+1,i)$). Since
$U(i,i+1) = e|_{U(i)}^{-1}(M(i)^{-})$, it is an open subset of
$U(i)$. Furthermore, $U(i+1,i)$ is an open subset of $U(i+1)$. When
$i \geq 1$, $U(i,i+1) \subseteq \prod_{j=1}^{i-1} (M_{j}^{+} \times
M_{j}^{-}) \times M_{i}^{+} \times M(i)^{-}$ and $U(i+1,i) \subseteq
\prod_{j=1}^{i} (M_{j}^{+} \times M_{j}^{-}) \times M_{i+1}^{+}
\times M(i)^{-}$. Define $\pi: \prod_{j=1}^{i} (M_{j}^{+} \times
M_{j}^{-}) \times M_{i+1}^{+} \times M(i)^{-} \longrightarrow
\prod_{j=1}^{i-1} (M_{j}^{+} \times M_{j}^{-}) \times M_{i}^{+}
\times M(i)^{-}$ be the natural projection. Define $\varphi:
\prod_{j=1}^{i-1} (M_{j}^{+} \times M_{j}^{-}) \times M_{i}^{+}
\times M(i)^{-} \longrightarrow \prod_{j=1}^{i} (M_{j}^{+} \times
M_{j}^{-}) \times M_{i+1}^{+} \times M(i)^{-} $ such that
\[
  \varphi (x_{1}^{+}, x_{1}^{-}, \cdots, x_{i-1}^{-}, x_{i}^{+},
  z_{i}) = (x_{1}^{+}, x_{1}^{-}, \cdots, x_{i-1}^{-}, x_{i}^{+},
  \psi_{-}(z_{i}), \psi_{+}(z_{i}), z_{i}),
\]
where $\psi_{-}$ and $\psi_{+}$ are flow maps from $M(i)^{-}$ to
$M_{i}^{-}$ and $M_{i+1}^{+}$ respectively. Then $\pi (U(i+1,i)) =
U(i,i+1)$, $\varphi (U(i,i+1)) = U(i+1,i)$, $\pi \varphi|_{U(i,i+1)}
= \textrm{Id}$, and $\varphi \pi|_{U(i+1,i)} = \textrm{Id}$. Thus
$\pi$ and $\varphi$ are diffeomorphisms between $U(i,i+1)$ and
$U(i+1,i)$, and they are the identity on the set $U(i) \cap U(i+1)$.
Thus $U(i)$ and $U(i+1)$ have compatible smooth structures when $i
\geq 1$.

Similarly, $U(0,1) \subseteq M(0)^{-}$ and $U(1,0) \subseteq
M_{1}^{+} \times M(0)^{-}$, and $U(0,1)$ and $U(1,0)$ also have
compatible smooth structures.

As a result, we can patch the smooth structures on all $U(i)$
together to give a smooth structure on $\overline{\mathcal{D}(p)}$
satisfying all properties of (1) and (2) but the face structure and
compactness. Similar to Theorem \ref{m(p,q)_manifold}, the face
structure will follow from (4). Also $e$ is smooth since $e|_{U(i)}$
is smooth. This proves (3).

Finally, we prove compactness.

Let $K(i) = e^{-1} (L(i))$, where $L(i) = f^{-1} ( [
\frac{c_{i+1}+c_{i}}{2}, \frac{c_{i}+c_{i-1}}{2} ] )$. Then $L(i)$
is closed. Similar to proving the compactness of $K$ in the proof of
Theorem \ref{comapctify_d}, we get $K(i)$ is compact. Thus
$\overline{\mathcal{D}(p)}$ is compact because
$\overline{\mathcal{D}(p)} = \bigcup_{i=0}^{l} K(i)$.

This completes the proof of (1), (2) and (3).

(4). First, similar to the proof of (3) of Theorem
\ref{m(p,q)_manifold}, we can prove the following lemma.

\begin{lemma}\label{d(p)_embedding}
Suppose the critical values in $(-\infty, f(p)]$ are $c_{l} < \cdots
< c_{0}$. Let $U(i) = e^{-1} \circ f^{-1} ((c_{i+1}, c_{i-1}))$.
Define $E(i): U(i) \longrightarrow \prod_{j=0}^{i-1} f^{-1}(a_{j})$
$\times M$ by $E(i)(\Gamma, x) = (x_{0}(\Gamma), \cdots,
x_{i-1}(\Gamma), x)$ for any $(\Gamma, x) \in U(i)$ (see
(\ref{comapctify_d})), where $x_{j}(\Gamma)$ is the unique
intersection of $\Gamma$ with $f^{-1}(a_{j})$. Then $E(i)$ is a
smooth embedding for $i=0, \cdots, l$. Here $c_{-1} = -\infty$ and
$c_{l+1} = +\infty$.
\end{lemma}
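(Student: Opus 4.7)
The plan is to adapt the argument for part (3) of Theorem \ref{m(p,q)_manifold}. Since there are no critical values inside any interval $(c_{j+1}, c_j)$, Corollary \ref{flow_map} provides smooth flow-map diffeomorphisms between $f^{-1}(a_j)$ and each of $M_j^{-}$ and $M_{j+1}^{+}$ (after shrinking $\epsilon$ so that $a_j$ lies strictly inside $(c_{j+1} + \epsilon/2, c_j - \epsilon/2)$). These flow maps are the only tools needed.

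First I will verify smoothness of $E(i)$. Using the identification $U(i) \cong \tilde{U}(i) \subseteq O(i) \times M(i)$ built in the proof of Theorem \ref{d(p)_manifold}, with coordinates $(x_1^{+}, x_1^{-}, \ldots, x_{i-1}^{-}, x_i^{+}, z_i)$, the intersection point $x_j(\Gamma)$ of the generalized flow line with $f^{-1}(a_j)$ is obtained by flowing $x_1^{+}$ up to $f^{-1}(a_0)$ when $j=0$, and by flowing $x_j^{-}$ down to $f^{-1}(a_j)$ for $1 \leq j \leq i-1$, while the last coordinate is just $x = z_i$. Thus $E(i)$ factors in these charts as a coordinate projection followed by smooth flow maps, and so is smooth.

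Next I construct a smooth map $\varphi: \prod_{j=0}^{i-1} f^{-1}(a_j) \times M \longrightarrow O(i) \times M$ in the reverse direction: send $x_0$ to its flow image in $M_1^{+}$, send each $x_j$ ($1 \leq j \leq i-1$) to the pair $(\psi^{\uparrow}(x_j), \psi^{\downarrow}(x_j)) \in M_j^{-} \times M_{j+1}^{+}$, and leave the $M$-coordinate fixed. A direct check along any generalized flow line gives $\varphi \circ E(i) = \iota$, where $\iota: U(i) \hookrightarrow O(i) \times M$ is the inclusion from the identification with $\tilde{U}(i)$ and is already a smooth embedding by the proof of Theorem \ref{d(p)_manifold}. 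From this factorization I deduce: $E(i)$ is injective because $\iota$ is; $dE(i)$ is injective because $d\iota = d\varphi \circ dE(i)$ is; and $E(i)$ is a homeomorphism onto its image, since continuity of $\varphi$ pulls any convergence $E(i)(y_n) \to E(i)(y)$ back to $\iota(y_n) \to \iota(y)$, whence $y_n \to y$ as $\iota$ is itself a homeomorphism onto its image. Combined with smoothness, this yields that $E(i)$ is a smooth embedding.

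The main obstacle is purely bookkeeping: carefully aligning indices and level surfaces at the endpoints $j=0$ and $j=i-1$. At $j=0$ the generalized flow line starts at $p$ itself, so only the upward flow from $M_1^{+}$ is involved; at $j=i-1$ the endpoint $x$ lives in $M(i)$, which may straddle $c_i$, so one must check that the relevant flow maps to $f^{-1}(a_{i-1})$ from $M_{i-1}^{-}$ are defined and agree with the restrictions dictated by the coordinates of $\tilde{U}(i)$. Everything else is a routine extension of the argument used for part (3) of Theorem \ref{m(p,q)_manifold}.
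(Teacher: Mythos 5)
Your proof is correct and takes essentially the same approach as the paper, which treats this lemma only by saying it is ``similar to the proof of (3) of Theorem \ref{m(p,q)_manifold}'': you build the auxiliary map $\varphi$ from flow maps so that $\varphi \circ E(i)$ equals the smooth embedding $\iota : U(i) \cong \tilde{U}(i) \hookrightarrow O(i) \times M$ and then deduce injectivity, injectivity of the differential, and the homeomorphism-onto-image property from this factorization. Verifying smoothness of $E(i)$ directly in the $\tilde{U}(i)$ charts and handling the endpoint indices $j=0$ and $j=i-1$ explicitly are exactly the bookkeeping adaptations the paper leaves implicit, so your write-up faithfully fills in what the paper cites by analogy.
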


Clearly, $i_{(p,r)}$ is one to one. Suppose $f(r) = c_{m}$. For
clarity, denote the evaluation map from $\overline{\mathcal{D}(p)}$
and $\overline{\mathcal{D}(r)}$ to $M$ by $e_{p}$ and $e_{r}$
respectively. Let $U_{p}(k) = e_{p}^{-1}(M(k))$ and $U_{r}(k) =
e_{r}^{-1}(M(k))$. Since $\overline{\mathcal{M}(p,r)} \times
\overline{\mathcal{D}(r)}$ is compact, we only need to prove that
$i_{(p,r)}(k): \overline{\mathcal{M}(p,r)} \times U_{r}(k)
\longrightarrow \overline{\mathcal{D}(p)}$ is a smooth embedding for
$k \geq m$.

By Lemma \ref{d(p)_embedding}, $E_{r}(k): U_{r}(k) \longrightarrow
\prod_{j=m}^{k-1} f^{-1}(a_{j}) \times M$ and $E_{p}(k): U_{p}(k)
\longrightarrow \prod_{j=0}^{k-1} f^{-1}(a_{j}) \times M$ are smooth
embeddings. By (3) of Theorem \ref{m(p,q)_manifold}, we know that
$E_{p,r}: \overline{\mathcal{M}(p,r)} \longrightarrow
\prod_{j=0}^{m-1} f^{-1}(a_{j})$ is also a smooth embedding. Thus
$E_{p,r} \times E_{r}(k): \overline{\mathcal{M}(p,r)} \times
U_{r}(k) \longrightarrow \prod_{i=0}^{k-1} f^{-1}(a_{i}) \times M$
is a smooth embedding. In addition, $E_{p,r} \times E_{r}(k) =
E_{p}(k) \circ i_{(p,r)}(k)$. Thus $i_{(p,r)}(k)$ is a smooth
embedding.

Finally, $\overline{\mathcal{D}(p)}$ has the disjoint faces
$\mathcal{M}(p,r) \times \mathcal{D}(r)$. Their closures are
$\overline{\mathcal{M}(p,r)} \times \overline{\mathcal{D}(r)}$. This
gives the face structure of (1).
\end{proof}

\subsection{Proof of Theorem \ref{w(p,q)_manifold}}
The proof of Theorem \ref{w(p,q)_manifold} is a mixture of the
proofs of Theorems \ref{m(p,q)_manifold} and \ref{d(p)_manifold}.
Thus we only need to give the key constructions in the proof. Just
as the proofs of the previous two theorems, we still use the
notation $M(i)$, $P_{i}$ and $Q_{i}^{\pm}$. Suppose the critical
values in $[f(q), f(p)]$ are exactly $f(q) = c_{l+1} < \cdots <
c_{0} = f(p)$. Define $U(i) \subseteq \overline{\mathcal{W}(p,q)}$
as $U(i) = e^{-1}(M(i))$. Use the notation of $S_{p}^{\pm}$, $D_{p}$
and $A_{p}$ as those appearing before Lemma \ref{p_manifold}.

\begin{proof}
Similarly to the proof of Theorem \ref{d(p)_manifold}, we shall give
each $U(i)$ a smooth structure and then patch them together.

Define $Q(0) = Q_{0}^{-} \times \prod_{j=1}^{l} P_{j}$, $R(0) =
D_{p} \times \prod_{j=0}^{l-1} M_{j}^{-} \times S_{q}^{+}$ and $O(0)
= M(0) \times M_{0}^{-} \times \prod_{j=1}^{l} (M_{j}^{+} \times
M_{j}^{-})$. Define $\Delta_{0}: R(0) \longrightarrow O(0)$ by
\[
\Delta_{0} (z_{0}, y_{0}^{-}, \cdots, y_{l-1}^{-}, y_{l+1}^{+}) =
(z_{0}, y_{0}^{-}, \psi_{0} y_{0}^{-}, \cdots, y_{l-1}^{-},
\psi_{l-1} y_{l-1}^{-}, \psi_{l}^{-1} y_{l+1}^{+}).
\]

Define $Q(l+1) = \prod_{j=1}^{l} P_{j} \times Q_{l+1}^{+}$, $R(l+1)
= S_{p}^{-} \times \prod_{j=1}^{l} M_{j}^{-} \times A_{q}$, and
$O(l+1) = \prod_{j=1}^{l} (M_{j}^{+} \times M_{j}^{-}) \times
M_{l+1}^{+} \times M(l+1)$. Define $\Delta_{l+1}: R(l+1)
\longrightarrow O(l+1)$ by
\[
\Delta_{l+1} (y_{0}^{-}, \cdots, y_{l}^{-}, z_{l+1}) = ( \psi_{0}
y_{0}^{-}, y_{1}^{-}, \psi_{1} y_{1}^{-}, \cdots, y_{l}^{-},
\psi_{l} y_{l}^{-}, z_{l+1}).
\]

When $1 \leq i \leq l$, define $Q(i) = \prod_{j=1}^{i-1} P_{j}
\times Q_{i}^{+} \times Q_{i}^{-} \times \prod_{j=i+1}^{l} P_{j}$,
$R(i) = S_{p}^{-} \times \prod_{j=1}^{i-1} M_{j}^{-} \times M(i)
\times \prod_{j=i}^{l-1} M_{j}^{-} \times S_{q}^{+}$ and $O(i) =
\prod_{j=1}^{i-1} (M_{j}^{+} \times M_{j}^{-}) \times M_{i}^{+}
\times M(i) \times M(i) \times M_{i}^{-} \times \prod_{j=i+1}^{l}
(M_{j}^{+} \times M_{j}^{-})$. Define $\Delta_{i}: R(i)
\longrightarrow O(i)$ by
\begin{eqnarray*}
  & & \Delta_{i} (y_{0}^{-}, y_{1}^{-}, \cdots, y_{i-1}^{-}, z_{i}, y_{i}^{-}, \cdots, y_{l-1}^{-}, y_{l+1}^{+}) \\
  & = & (\psi_{0} y_{0}^{-}, y_{1}^{-}, \psi_{1} y_{1}^{-}, \cdots,
  y_{i-1}^{-}, \psi_{i-1} y_{i-1}^{-}, z_{i}, z_{i}, y_{i}^{-}, \psi_{i} y_{i}^{-}, \cdots, \psi_{l-1} y_{l-1}^{-}, \psi_{l}^{-1} y_{l+1}^{+}).
\end{eqnarray*}

In the above, $\psi_{i}$ are flow maps from $M_{i}^{-}$ to
$M_{i+1}^{+}$.

Define $\iota_{i}: Q(i) \longrightarrow O(i)$ to be the inclusion
for all $i=0, \cdots, l+1$.

Similar to the proof of Theorems \ref{m(p,q)_manifold} and
\ref{d(p)_manifold}, $\iota_{i}$ is transversal to $\Delta_{i}$ in
each stratum of $Q(i)$. Thus $\tilde{U}(i) =
\iota_{i}^{-1}(\textrm{Im}(\Delta_{i}))$ is a smooth manifold with
corners. $U(i)$ can be identified with $\tilde{U}(i)$ and the smooth
structures are preserved. This gives a smooth structure to each
$U(i)$. $e|_{U(i)}$ is smooth, and $U(i)$ and $U(j)$ have compatible
smooth structures. Thus $e$ is smooth. The face structures will
follow from (4). Let $L(i) = f^{-1}([\frac{c_{i+1}+c_{i}}{2},
\frac{c_{i}+c_{i-1}}{2}])$, then $e^{-1}(L(i))$ is compact. Thus
$\mathcal{W}(p,q)$ is compact. This finishes the proof of (1), (2)
and (3).

Finally, (4) is proved by an argument similar to that in (4) of
Theorem \ref{d(p)_manifold}.

This completes the proof.
\end{proof}

\subsection{Proof of Example \ref{not_c1}}
\begin{proof}
Clearly, there is a Morse function on $CP^{2}$ with such three
critical points and $f(r)=0$. By the Morse Lemma, in a neighborhood
$U$ of $r$, there is a local coordinate chart $(v_{1}, v_{2}, v_{3},
v_{4})$ such that $r$ has the coordinate $(0,0,0,0)$,
$\sum_{i=1}^{4} v_{i}^{2} < 4 \epsilon^{2}$ and, in the local chart,
we have $f(v) = \frac{1}{2} (- v_{1}^{2} - v_{2}^{2} + v_{3}^{2} +
v_{4}^{2})$. We can choose $f$ such that $\epsilon = 1$. We equip
$CP^{2}$ with a metric such that, in $U$, it has the form
\begin{equation}
  (d x_{1})^{2} + \frac{1}{2} (d x_{2})^{2} +
  \frac{1}{4} (d x_{3})^{2} + \frac{1}{4} (d x_{4})^{2}.
\end{equation}
Then the flow with initial value $(v_{1}, v_{2}, v_{3}, v_{4})$ is
$(e^{t} v_{1}, e^{2t} v_{2}, e^{-4t} v_{3}, e^{-4t} v_{4})$.

Consider the map $E: \overline{\mathcal{M}(p,q)} \longrightarrow
M_{0} \times M_{1}$, where $M_{0} = f^{-1}(\frac{1}{2})$ and $M_{1}
= f^{-1}(-\frac{1}{2})$. We shall prove $\textrm{Im} (E)$ is not a
$C^{1}$ embedded submanifold with boundary $E(\mathcal{M}(p,r)
\times \mathcal{M}(r,q))$ of $M_{0} \times M_{1}$.

Clearly, $E(\mathcal{M}(p,r) \times \mathcal{M}(r,q)) = S^{+} \times
S^{-}$, where $S^{+} = \{ (0, 0, v_{3}, v_{4}) \mid v_{3}^{2} +
v_{4}^{2} = 1 \}$ and $S^{-} = \{ (v_{1}, v_{2}, 0, 0) \mid
v_{1}^{2} + v_{2}^{2} = 1 \}$. Let $\tilde{S}^{+} = \{ (v_{3},
v_{4}) \mid v_{3}^{2} + v_{4}^{2} = 1 \}$ and $\tilde{S}^{-} = \{
(v_{1}, v_{2}) \mid v_{1}^{2} + v_{2}^{2} = 1 \}$.

The flow map gives a diffeomorphism from an open neighborhood
$W_{0}$ of $S^{+}$ in $M_{0}$ onto $U \cap (R^{2} \times
\tilde{S}^{+})$ and a diffeomorphism from an open neighborhood
$W_{1}$ of $S^{-}$ in $M_{1}$ onto $U \cap (\tilde{S}^{-} \times
R^{2})$. Thus there is a diffeomorphism $\psi: W_{0} \times W_{1}
\longrightarrow (U \cap (R^{2} \times \tilde{S}^{+})) \times (U \cap
(\tilde{S}^{-} \times R^{2}))$. Denote $\psi(\textrm{Im}(E) \cap
(W_{0} \times W_{1}))$ by $P$. Then
\begin{eqnarray*}
P & = &\{ ((v_{1}, v_{2}, v_{3}, v_{4}), (v_{5}, v_{6}, v_{7},
v_{8})) \mid \text{$(v_{1}, v_{2}, v_{3}, v_{4})
 \in U \cap (R^{2} \times \tilde{S}^{+})$ and} \\
& & \text{$(v_{5}, v_{6}, v_{7}, v_{8}) \in U \cap (\tilde{S}^{-}
\times R^{2})$ are connected by a generalized flow line.} \}.
\end{eqnarray*}

In order to prove $\textrm{Im}(E)$ is not a $C^{1}$ embedded
submanifold of $M_{1} \times M_{2}$, we only need to check $P$ is
not a $C^{1}$ embedded submanifold of $(U \cap (R^{2} \times
\tilde{S}^{+})) \times (U \cap (\tilde{S}^{-} \times R^{2}))$.

Suppose $(v_{1}, v_{2}, v_{3}, v_{4}) \in U \cap (R^{2} \times
\tilde{S}^{+})$ and $(v_{1}, v_{2}) \neq (0,0)$, by a direct
calculation, $(v_{1}, v_{2}, v_{3}, v_{4})$ is connected to
\begin{equation}\label{not_c1_1}
( d^{- \frac{1}{2}} v_{1}, d^{-1} v_{2}, d^{2} v_{3}, d^{2} v_{4} )
\end{equation}
by an unbroken flow line, where
\begin{equation}\label{not_c1_2}
d = \frac{1}{2} v_{1}^{2} + \frac{1}{2} (v_{1}^{4} + 4
v_{2}^{2})^{\frac{1}{2}}.
\end{equation}

We prove our result by contradiction. If $P$ were a $C^{1}$ embedded
submanifold with boundary $\partial P = S^{+} \times S^{-}$, then
there is a $C^{1}$ collar embedding $\varphi: \tilde{S}^{+} \times
\tilde{S}^{-} \times [0, \epsilon) \longrightarrow (R^{2} \times
\tilde{S}^{+}) \times (\tilde{S}^{-} \times R^{2})$ such that
\[
\varphi(\cos \theta^{+}, \sin \theta^{+}, \cos \theta^{-}, \sin
\theta^{-}, s) = ((v_{1}, v_{2}, v_{3}, v_{4}), (v_{5}, v_{6},
v_{7}, v_{8})),
\]
and
\[
\varphi(\cos \theta^{+}, \sin \theta^{+}, \cos \theta^{-}, \sin
\theta^{-}, 0) = ((0, 0, \cos \theta^{+}, \sin \theta^{+}), (\cos
\theta^{-}, \sin \theta^{-}, 0, 0)).
\]
When $s \neq 0$, $\textrm{Im}(\varphi) \cap \partial P = \emptyset$,
thus $(v_{1}, v_{2}) \neq (0,0)$ and $(v_{5}, v_{6}, v_{7}, v_{8})$
equals (\ref{not_c1_1}).

In the following four steps, we will use some estimates. The same
notation $C$ or $C_{i}$ may stand for different constants in
different steps.

Firstly, we prove that $\frac{\partial}{\partial s}|_{s=0} v_{7} =
\frac{\partial}{\partial s}|_{s=0} v_{8} = 0$.

Fix $\theta^{+}$ and $\theta^{-}$, then $v_{1}$ and $v_{2}$ are
$C^{1}$ functions of $s$, and $v_{1} = v_{2} = 0$ when $s=0$. So
there exist $C_{1} > 0$ and $\delta > 0$ such that, for all $s \in
[0, \delta)$, we have $|v_{1}| \leq C_{1} s$ and $|v_{2}| \leq C_{1}
s$. Since $(v_{3}, v_{4}) \in \tilde{S}^{+}$, $(v_{3}, v_{4})$ is
bounded, by (\ref{not_c1_1}) and (\ref{not_c1_2}), there exists
$C_{2} > 0$ such that $|v_{7}| \leq C_{2} s^{2}$ and $|v_{8}| \leq
C_{2} s^{2}$. This proves our first claim.

Secondly, we claim that $\frac{\partial}{\partial s}|_{s=0} (v_{1},
v_{2}) \neq (0,0)$.

If not, then
\[
(d \varphi)|_{s=0} \frac{\partial}{\partial s} = \left( 0, 0,
\frac{\partial}{\partial s}|_{s=0} v_{3}, \frac{\partial}{\partial
s}|_{s=0} v_{4}, \frac{\partial}{\partial s}|_{s=0} v_{5},
\frac{\partial}{\partial s}|_{s=0} v_{6}, 0, 0 \right) \in T(S^{+}
\times S^{-}),
\]
So $(d \varphi)|_{s=0} \frac{\partial}{\partial s}$ is not a normal
vector of $S^{+} \times S^{-}$. This gives a contradiction.

Thirdly, we prove that $\frac{\partial}{\partial s}|_{s=0} v_{2} =
0$.

By the continuity of $\frac{\partial}{\partial s}|_{s=0} v_{2}$, we
only need to prove this is true when $\cos \theta^{-} \neq 0$. Fix
$\theta^{+}$ and $\theta^{-}$, then $\displaystyle \lim_{s
\rightarrow 0} v_{5} = \cos \theta^{-} \neq 0$ and $\displaystyle
\lim_{s \rightarrow 0} v_{6} = \sin \theta^{-}$. Thus there exist
$\delta > 0$, $C_{2} > 0$ and $C_{3} > 0$, such that, for all $s \in
(0, \delta)$, we have $0 < C_{2} \leq |v_{5}|$ and $|v_{6}| \leq
C_{3}$. By (\ref{not_c1_1}), $C_{2} \leq |d^{- \frac{1}{2}} v_{1}|$
and $|d^{- 1} v_{2}| \leq C_{3}$. Then $C_{2}^{2} d \leq
|v_{1}|^{2}$ and $|v_{2}| \leq C_{3} d$. So $|v_{2}| \leq C_{3}
C_{2}^{-2} |v_{1}|^{2}$. In the first step, we showed that there
exists $C_{1} > 0$, shrinking $\delta$ if necessary, we get $|v_{1}|
\leq C_{1} s$. Thus $|v_{2}| \leq C_{3} C_{2}^{-2} C_{1}^{2} s^{2}$.
This gives our third claim.

Finally, we derive the contradiction.

Let $\cos \theta^{-} = 0$ and $\sin \theta^{-} = 1$. Fix
$\theta^{+}$. By the second and the third claims,
$\frac{\partial}{\partial s}|_{s=0} v_{1} \neq 0$. Since $v_{1} = 0$
when $s=0$, then there exist $\delta_{1} > 0$ and $C_{1}
> 0$ such that, for all $s \in [0, \delta_{1})$, we have
\begin{equation}\label{not_c1_3}
|v_{1}| \geq C_{1} s.
\end{equation}
Since $v_{5} = \cos \theta^{-} = 0$ when $s=0$, and $v_{5}$ is a
$C^{1}$ function, then there exist $\delta_{2} > 0$ and $C_{2}
> 0$ such that, for all $s \in [0, \delta_{2})$, we have
\begin{equation}\label{not_c1_4}
|v_{5}| \leq C_{2} s.
\end{equation}
Let $\delta = \min \{ \delta_{1}, \delta_{2} \}$. Combining
(\ref{not_c1_1}),  (\ref{not_c1_3}) and (\ref{not_c1_4}), we can
find $C > 0$ such that, for all $s \in (0, \delta)$, we have
$|d^{-\frac{1}{2}} v_{1}| = |v_{5}| \leq C |v_{1}|$ and $v_{1} \neq
0$. Thus by (\ref{not_c1_2}),
\begin{equation}
\frac{2}{ v_{1}^{2} + (v_{1}^{4} + 4 v_{2}^{2})^{\frac{1}{2}} } =
d^{-1} \leq C^{2}.
\end{equation}
However, when $s \rightarrow 0$, we have $v_{1} \rightarrow 0$,
$v_{2} \rightarrow 0$ and $v_{1}^{2} + (v_{1}^{4} + 4
v_{2}^{2})^{\frac{1}{2}} \rightarrow 0$, then $d^{-1} \rightarrow +
\infty$. This gives a contradiction.
\end{proof}

\subsection{Additional Results}
We prove two results which are needed later.

First, we have the following result which follows straightforwardly
from the face structure of $\overline{\mathcal{D}(p)}$ (see
Definition \ref{manifold_with_face}).

\begin{lemma}\label{embed_d(p)_normal}
Suppose $I = \{p, r_{1}, \cdots, r_{k}\}$ is a critical sequence and
$x \in \mathcal{D}_{I} \subseteq \overline{\mathcal{D}(p)}$. Then
there exist an open neighborhood $W$ of $x$ in $\mathcal{D}_{I}$ and
a smooth map $\varphi: W \times [0, \epsilon)^{k} \longrightarrow
\overline{\mathcal{D}(p)}$, where $\varphi$ is a diffeomorphism onto
an open neighborhood of $x$ in $\overline{\mathcal{D}(p)}$
satisfying the following stratum condition. For all $y \in W$,
$\rho_{I} = (\rho_{1}, \cdots, \rho_{k}) \in [0, \epsilon)^{k}$ and
$J= \{ p, r_{i_{1}}, \cdots, r_{i_{s}} \}$, we have $\varphi(x,
\rho_{I}) \in \mathcal{D}_{J}$ if and only if $\rho_{j} > 0$ when
$r_{j} \notin J$ and $\rho_{j} = 0$ when $r_{j} \in J$.
\end{lemma}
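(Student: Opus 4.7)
The plan is to derive the lemma as a direct consequence of the explicit local charts constructed in the proof of Theorem \ref{d(p)_manifold}, together with the collar-type models supplied by Lemmas \ref{p_manifold} and \ref{q_manifold}. The guiding observation is that the $[0,\epsilon)$-factor appearing in each of those local models parametrizes precisely the ``resolution'' of a broken generalized flow line at one critical point: the value $\rho=0$ corresponds to the flow line actually breaking at that critical point, and $\rho>0$ corresponds to the flow line passing smoothly through the level set without stopping.

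First I would localize. Writing $I=\{p,r_{1},\cdots,r_{k}\}$ and $f(r_{j})=c_{i_{j}}$, and possibly shrinking, I may work inside $\tilde{U}(m)\subseteq O(m)\times M(m)$ from the proof of Theorem \ref{d(p)_manifold}, where $m$ is the index with $c_{m}=f(r_{k})$; the identification of $U(m)$ with $\tilde{U}(m)$ places $x$ at the deepest corner of the product $\prod_{j=1}^{m-1}P_{j}\times Q_{m}^{+}$. For each intermediate critical value $c_{i_{j}}$ with $j<k$, Lemma \ref{p_manifold} furnishes a smooth diffeomorphism $\varphi_{j}$ from an open neighborhood of $(x_{i_{j}}^{+},x_{i_{j}}^{-})$ in $P_{i_{j}}$ onto $W_{j}\times[0,\epsilon)$, with $\{s=0\}$ hitting $S_{r_{j}}^{+}\times S_{r_{j}}^{-}$; for the tail index $j=k$, Lemma \ref{q_manifold} supplies the analogous $\varphi_{k}$ with its own $[0,\epsilon)$-factor and $\{s=0\}$ hitting $S_{r_{k}}^{+}\times D_{r_{k}}$. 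On the other $P_{i}$-factors (those corresponding to critical values $c_{i}$ with no $r_{j}$ in $I$), the point $x$ lies in the interior, so those factors contribute only open directions $W_{i}^{\circ}$.

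Taking the product of these charts yields a smooth diffeomorphism from a neighborhood of the form $W\times[0,\epsilon)^{k}$ onto an open neighborhood of $x$ in $\tilde{U}(m)\subseteq O(m)\times M(m)$. Transferring through the identification $U(m)\cong\tilde{U}(m)$ of Theorem \ref{d(p)_manifold} produces the desired map $\varphi:W\times[0,\epsilon)^{k}\longrightarrow\overline{\mathcal{D}(p)}$. To verify the stratum correspondence, I would read the behavior of each $\varphi_{j}$ directly from Lemmas \ref{p_manifold} and \ref{q_manifold}: $\rho_{j}=0$ exactly when the associated generalized flow line breaks at $r_{j}$. Using the explicit description \eqref{comapctify_d} of $\overline{\mathcal{D}(p)}$ as pairs $(\Gamma,x)$ with $\Gamma$ a generalized flow line, I see that $\varphi(y,\rho_{I})$ lies in $\mathcal{D}_{J}$ for $J=\{p,r_{i_{1}},\cdots,r_{i_{s}}\}$ if and only if the set of critical points at which $\Gamma$ breaks is exactly $\{r_{i_{1}},\cdots,r_{i_{s}}\}$, which is equivalent to $\rho_{j}=0\Leftrightarrow r_{j}\in J$.

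The main obstacle is combinatorial rather than analytical: one has to arrange the product of collar charts so that the $k$ corner factors are indexed consistently by the $k$ elements of $I\setminus\{p\}$, and in particular handle the case where several elements of $I$ share a single critical value $c_{i}$ (then Lemma \ref{p_manifold}, stated for one critical point, must be applied with several $p_{i}$'s simultaneously, as noted at the start of its proof). The tail index $j=k$ also requires switching from Lemma \ref{p_manifold} to Lemma \ref{q_manifold}, since $\mathcal{D}(r_{k})$ plays the role of the open region $M(i)$ rather than of a level set. Once this bookkeeping is set up, the stratum condition drops out from the explicit formulas for $\varphi_{j}$ and $\alpha_{j}$ given inside the proofs of Lemmas \ref{p_manifold} and \ref{q_manifold}.
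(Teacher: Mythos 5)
Your proposal takes a different route than the paper and has a gap. The paper's proof is abstract: since $\mathcal{D}_I$ lies in the $k$-stratum, a chart $\varphi: W\times[0,\epsilon)^k \longrightarrow \overline{\mathcal{D}(p)}$ exists near $x$ by the very definition of a manifold with corners, and the entire content of the lemma is the stratum correspondence. The paper obtains this from the face structure established at the end of the proof of Theorem \ref{d(p)_manifold}: the $k$ faces of $\overline{\mathcal{D}(p)}$ whose closures contain $\mathcal{D}_I$ are exactly $F_i=\mathcal{M}(p,r_i)\times\mathcal{D}(r_i)$; a diffeomorphism of manifolds with corners carries faces to faces, so with $W$ connected and after permuting the $[0,\epsilon)^k$-factors one has $\varphi(G_i)\subseteq F_i$, from which the stratum condition follows. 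No explicit collar formulas are used.

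Your argument instead tries to build $\varphi$ by hand from the collar parametrizations in Lemmas \ref{p_manifold} and \ref{q_manifold}, and the gap is in the sentence asserting that the product of those collars gives a diffeomorphism onto an open neighborhood of $x$ in $\tilde{U}(m)$. The product of the $\varphi_j$'s parametrizes a neighborhood of $x$ in the \emph{ambient} corner manifold $Q(m)=\prod_{j=1}^{m-1}P_j\times Q_m^+$, not in $\tilde{U}(m)=\iota_m^{-1}(\textrm{Im}(\Delta_m))$, which is a proper submanifold of $Q(m)$ of infinite codimension in this Hilbert setting. Restricting your product chart to the preimage of $\tilde{U}(m)$ produces a submanifold with corners of $W\times[0,\epsilon)^k$ whose strata are controlled by Lemma \ref{cut_submanifold}, but that submanifold is not itself a product $W'\times[0,\epsilon)^k$ without a further straightening step — an implicit-function-theorem argument at the corner, exploiting the stratum-wise transversality of $\iota_m$ to $\Delta_m$ — which your proposal does not carry out. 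Until that is supplied you have not produced a chart of $\overline{\mathcal{D}(p)}$ of the required product form. Your dictionary ``$\rho_j=0$ if and only if the generalized flow line breaks at $r_j$'' is correct and would finish the argument once a correct chart is in place, so the idea is salvageable; but as written the construction parametrizes $Q(m)$ rather than $\tilde{U}(m)$, which is precisely the space the lemma is about. (Also, $x$ sits at a corner of depth $k$ in $Q(m)$, not at its deepest corner.)
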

\begin{proof}
Since $\mathcal{D}_{I}$ is an open subset of the $k$-stratum of
$\overline{\mathcal{D}(p)}$, there is a smooth map $\varphi: W
\times [0, \epsilon)^{k} \longrightarrow \overline{\mathcal{D}(p)}$
which is a diffeomorphism onto an open neighborhood of $x$ in
$\overline{\mathcal{D}(p)}$. As mentioned at the end of the proof of
Theorem \ref{d(p)_manifold}, $\mathcal{D}_{I}$ is contained in
$\overline{F_{i}} = \overline{\mathcal{M}(p,r_{i})} \times
\overline{\mathcal{D}(r_{i})}$, the closure of $k$ disjoint faces
$F_{i} = \mathcal{M}(p,r_{i}) \times \mathcal{D}(r_{i})$ ($i=1,
\cdots, k$). Furthermore, $W \times [0, \epsilon)^{k}$ also has $k$
disjoint faces $G_{i} = W \times (0,\epsilon)^{i-1} \times \{0\}
\times (0, \epsilon)^{k-i}$. The closure of $G_{i}$ is
$\overline{G_{i}} = W \times [0,\epsilon)^{i-1} \times \{0\} \times
[0, \epsilon)^{k-i}$. Since it is a diffeomorphism, $\varphi$ maps a
face into a face. Choose $W$ to be connected, then permutating the
coordinates of $[0,\epsilon)^{k}$ if necessary, we have
$\varphi(G_{i}) \subseteq F_{i}$. Thus $\varphi(\overline{G_{i}})
\subseteq \overline{F_{i}}$. Moreover, using the fact that $\varphi$
is a diffeomorphism again, $x$ is in the $i$-stratum if and only if
$\varphi(x)$ is in the $i$-stratum.
\end{proof}

\begin{lemma}\label{d(p)_normal}
Let $e: \overline{\mathcal{D}(p)} \longrightarrow M$ be the map in
(3) of Theorem \ref{d(p)_manifold}, and let $I=\{ p, r_{1}, \cdots,
r_{k} \}$ and $J=\{ p, r_{1}, \cdots, r_{k-1} \}$ be critical
sequences. Suppose $(\alpha, r_{k}) \in \mathcal{M}_{I} \times
\mathcal{D}(r_{k}) = \mathcal{D}_{I}$. Let $\mathcal{N} \in
T_{(\alpha, r_{k})} (\mathcal{M}_{J} \times
\overline{\mathcal{D}(r_{k-1})})$ represent an inward normal vector
in $N_{(\alpha, r_{k})}(\mathcal{D}_{I},\mathcal{M}_{J} \times
\overline{\mathcal{D}(r_{k-1})})$, and $d e (\mathcal{N}) =
(\mathcal{N}_{1}, \mathcal{N}_{2}) \in V_{-} \times V_{+} =
T_{r_{k}}M$. Then $\mathcal{N}_{2} \neq 0$. (Here $\displaystyle
N_{(\alpha, r_{k})} (\mathcal{D}_{I}, \mathcal{M}_{J} \times
\overline{\mathcal{D}(r_{k-1})}) = \frac{T_{(\alpha, r_{k})}
\mathcal{M}_{J} \times \overline{\mathcal{D}(r_{k-1})}}{T_{(\alpha,
r_{k})} \mathcal{D}_{I}}$ is the normal space  of $\mathcal{D}_{I}$
in $\mathcal{M}_{J} \times \overline{\mathcal{D}(r_{k-1})}$, and
$de$ is the derivative of $e$.)
\end{lemma}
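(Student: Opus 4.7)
The plan is to reduce the computation to $\overline{\mathcal{D}(r_{k-1})}$ via the face embedding, exhibit an explicit inward-normal representative using Lemma~\ref{q_manifold}, and read off $de$ on it directly.

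By part~(4) of Theorem~\ref{d(p)_manifold}, the embedding $i_{(p,r_{k-1})}$ realizes $\mathcal{M}_J\times\overline{\mathcal{D}(r_{k-1})}$ as a smooth submanifold-with-corners of $\overline{\mathcal{D}(p)}$, and on its image $e$ factors as the projection onto the second factor followed by $e_{r_{k-1}}\colon\overline{\mathcal{D}(r_{k-1})}\to M$. Since $\mathcal{M}_J$ has empty boundary, the normal space of $\mathcal{D}_I$ in $\mathcal{M}_J\times\overline{\mathcal{D}(r_{k-1})}$ at $(\alpha,r_k)$ agrees with the normal space of $\mathcal{M}(r_{k-1},r_k)\times\mathcal{D}(r_k)$ in $\overline{\mathcal{D}(r_{k-1})}$ at $(\gamma_{k-1},r_k)$. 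Thus it suffices to produce some inward-normal representative $\widetilde{\mathcal{N}}\in T_{(\gamma_{k-1},r_k)}\overline{\mathcal{D}(r_{k-1})}$ whose image under $de_{r_{k-1}}$ has nonzero $V_+$-component.

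To build $\widetilde{\mathcal{N}}$, I will apply Lemma~\ref{q_manifold} with $c=f(r_k)$ and critical point $p=r_k$. In the Morse chart at $r_k$, the flow line $\gamma_{k-1}$ meets $M_c^+$ at a point of the form $(0,v_2)$ with $\|v_2\|^2=\epsilon$, since $\gamma_{k-1}$ enters $r_k$ along $\mathcal{A}(r_k)$ and $\mathcal{A}(r_k)$ is locally the $V_+$-axis. Specializing the collar parameterization of $Q_c^+$ from the proof of Lemma~\ref{q_manifold} to $v_1=0$ gives the smooth curve
\[
  \varphi(v_2,0,s) \;=\; \bigl((0,v_2),\,(0,sv_2)\bigr),\qquad s\in[0,\epsilon),
\]
which starts at the boundary point corresponding to $(\gamma_{k-1},r_k)$. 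Because $U(1)\subset\overline{\mathcal{D}(r_{k-1})}$ is defined in the proof of Theorem~\ref{d(p)_manifold} as the transversal pullback $\tilde{U}(1)=\iota_1^{-1}(\mathrm{Im}(\Delta_1))\subset Q_c^+$, and the first coordinate $(0,v_2)$ lies in $\mathrm{Im}(\Delta_1)=\psi_0(S_{r_{k-1}}^-)$ precisely because $\gamma_{k-1}$ descends from $r_{k-1}$, this curve lies in $\tilde{U}(1)\cong U(1)$; its $s$-tangent at $s=0$ is therefore a legitimate inward-normal representative $\widetilde{\mathcal{N}}$.

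The conclusion is then immediate: $e_{r_{k-1}}\circ\varphi(v_2,0,s)=(0,sv_2)$, so $de_{r_{k-1}}(\widetilde{\mathcal{N}})=(0,v_2)\in V_-\times V_+$ has $V_+$-component $v_2$ of norm $\epsilon^{1/2}>0$. Any other representative $\mathcal{N}$ of the inward normal differs from a positive multiple of $\widetilde{\mathcal{N}}$ by a vector tangent to $\mathcal{D}_I$; but $e|_{\mathcal{D}_I}$ is just the projection onto $\mathcal{D}(r_k)$, whose tangent at $r_k$ equals $V_-\times\{0\}$, so such a correction alters only the $V_-$-component. Hence $\mathcal{N}_2\neq 0$ for every inward representative. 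The main subtlety is the middle paragraph, namely verifying that the $s$-direction of $\varphi$ descends faithfully to an inward normal of $\tilde{U}(1)$; this is ensured by formula~\eqref{q_manifold_1}, whose specialization at $v_1=0$ shows that $d\varphi|_{s=0}(\partial/\partial s)$ is transverse to $\mathrm{Im}(\Delta_1)$'s tangent space and to the face.
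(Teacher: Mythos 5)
Your proof is correct and follows the paper's approach in essentials: both arguments produce the explicit inward-normal representative as the $\partial/\partial s$-tangent of the collar parameterization from the proof of Lemma~\ref{q_manifold} (formula~\eqref{q_manifold_1}), and both then read off $de$ using the fact that on the relevant chart $e$ is a coordinate projection onto $M(c)$, so that any change of representative only shifts the $V_{-}$-component. Your preliminary reduction to $\overline{\mathcal{D}(r_{k-1})}$ via the face embedding $i_{(p,r_{k-1})}$ is a cosmetic streamlining of what the paper does inside $\overline{\mathcal{D}(p)}$, not a different method.

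One loose end to tighten: you place the collar curve in $\tilde{U}(1) \subset Q_{c}^{+}$, which tacitly assumes that $f(r_{k})$ is the critical value of $f$ immediately below $f(r_{k-1})$. In general there may be intermediate critical values (for critical points not appearing in the sequence $I$), in which case the chart for $\overline{\mathcal{D}(r_{k-1})}$ containing the point is $\tilde{U}(j) = \iota_{j}^{-1}(\mathrm{Im}\,\Delta_{j}) \subset \prod_{m=1}^{j-1} P_{m} \times Q_{j}^{+}$ with $j>1$, and your curve needs to be padded with the constant intermediate coordinates determined by $\gamma_{k-1}$'s crossings of the levels $M_{m}^{\pm}$. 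Since those coordinates are constant in $s$, they contribute nothing to the $s$-tangent and the computation of $de$ is unchanged, so the fix is purely notational.
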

\begin{proof}
Suppose the critical values in $(-\infty, f(p)]$ are exactly $c_{0}
> c_{1} > \cdots > c_{l}$. Let $c_{-1} = + \infty$ and $c_{l+1} = - \infty$.
Suppose $f(r_{i}) = c_{t_{i}}$ ($i=1, \cdots, k$). Recall the
evaluation map $e$ in Theorem \ref{d(p)_manifold}. Let $U(t_{k}) =
e^{-1} \circ f^{-1}((c_{t_{k}+1}, c_{t_{k}-1}))$. Then $(\alpha,
r_{k}) \in \mathcal{D}_{I} \cap U(t_{k})$.

Returning to the proof of Theorem \ref{d(p)_manifold}, we have
$U(t_{k})$ is an embedded submanifold of $\prod_{i=1}^{t_{k}-1}
P_{i} \times Q_{t_{k}}^{+}$. We may assume $r_{i}$ is the unique
critical point with function value $c_{t_{i}}$. Otherwise, replace
$P_{t_{i}}$ by its open subset $\{ (x,y) \in P_{t_{i}} \mid \forall
r \neq r_{t_{i}}, \  x \notin \mathcal{A}(r) \cap M_{t_{i}}^{+} \}$
and replace $Q_{t_{k}}^{+}$ by its open subset $\{ (x,y) \in
Q_{t_{k}}^{+} \mid \forall r \neq r_{t_{k}}, \  x \notin
\mathcal{A}(r) \cap M_{t_{i}}^{+} \}$ in this proof.

Denote $\mathcal{D}_{I} \cap U(t_{k})$ by $D_{I}$, and
$(\mathcal{M}_{J} \times \overline{\mathcal{D}(r_{k-1})}) \cap
U(t_{k})$ by $D_{J}$. Then $T_{(\alpha, r_{k})} \mathcal{D}_{I} =
T_{(\alpha, r_{k})} D_{I}$ and $T_{(\alpha, r_{k})} (\mathcal{M}_{J}
\times \overline{\mathcal{D}(r_{k-1})}) = T_{(\alpha, r_{k})}
D_{J}$. Denote $\prod_{j \neq t_{s}} P_{j}^{\circ} \times \prod_{j <
k} \partial P_{t_{j}} \times Q_{t_{k}}^{+}$ by $H$. Then $\partial H
= \prod_{j \neq t_{s}} P_{j}^{\circ} \times \prod_{j<k} \partial
P_{t_{j}} \times \partial Q_{t_{k}}^{+}$. Here $P_{j}^{\circ} =
P_{j} - \partial P_{j}$.

Clearly, $D_{I} = \partial H \cap \iota_{t_{k}}^{-1}(\textrm{Im}
\Delta_{t_{k}})$ and $D_{J} = H \cap \iota_{t_{k}}^{-1}(\textrm{Im}
\Delta_{t_{k}})$. We have the following inclusion of pairs
\[
(T_{(\alpha, r_{k})} D_{I}, T_{(\alpha, r_{k})} D_{J})
\longrightarrow (T_{(\alpha, r_{k})} \partial H, T_{(\alpha, r_{k})}
H).
\]
Since $\iota_{t_{k}}$ is transversal to $\Delta_{t_{k}}$ in
$\partial H$, the above inclusion induce an isomorphism
\[
N_{(\alpha, r_{k})}(D_{I}, D_{J}) \longrightarrow N_{(\alpha,
r_{k})} (\partial H, H)
\]
Thus $\mathcal{N}$ also represents an inward normal vector in
$N_{(\alpha, r_{k})} (\partial H, H)$.

By the proof of Lemma \ref{q_manifold} (see (\ref{q_manifold_1})),
another such representative element is
\[
\widetilde{\mathcal{N}} = (0, \cdots, 0, (v_{1}, 0), (0, v_{2})) \in
T_{(\alpha, r_{k})} \left( \prod_{j \neq t_{i}} P_{j}^{\circ} \times
\prod_{j=1}^{k-1} \partial P_{t_{j}} \times Q_{t_{k}}^{+} \right) =
T_{(\alpha, r_{k})} H,
\]
where $((v_{1}, 0), (0, v_{2})) \in T Q_{t_{k}}^{+} \subseteq T
M_{t_{k}}^{+} \times T M(t_{k})$, and $0 \neq (0, v_{2}) \in V_{-}
\times V_{+} = T_{r_{k}}M$.

Since both $\mathcal{N}$ and $\widetilde{\mathcal{N}}$ are inward
normal vectors, we have $\mathcal{N} = a \widetilde{\mathcal{N}} +
w$ for some $a > 0$ and $w \in T_{(\alpha, r_{k})} (\prod_{j \neq
t_{i}} P_{j}^{\circ} \times \prod_{j=1}^{k-1} \partial P_{t_{j}}
\times \partial Q_{t_{k}}^{+}) = T_{(\alpha, r_{k})} \partial H$.
Clearly,
\[
w = (w_{1}, \cdots, w_{t_{k}-1}, (0, \tilde{v}_{2}), (\tilde{v}_{1},
0)),
\]
where $(0, \tilde{v}_{2}) \in T S_{t_{k}}^{+}$ and $(\tilde{v}_{1},
0) \in V_{-} \times \{0\} = T_{r_{k}} \mathcal{D}(r_{k})$.

Since the evaluation map $e$ on $U(t_{k})$ is just the projection
$\prod_{j=1}^{t_{k}-1} P_{j} \times Q_{t_{k}}^{+} \longrightarrow
Q_{t_{k}}^{+} \subseteq M(t_{k})$, we have $de (\mathcal{N}) =
(\tilde{v}_{1}, a v_{2})$. Thus $\mathcal{N}_{2} = a v_{2} \neq 0$.
\end{proof}

\section{Orientation}

\subsection{Definition of Orientations}\label{subsection_definition_orientation}
Before defining the orientations of $\overline{\mathcal{M}(p,q)}$,
$\overline{\mathcal{D}(p)}$ and $\overline{\mathcal{W}(p,q)}$, we
give a general way to get an orientation by transversality.

Suppose $M_{1}$, $M_{2}$ and $M_{3}$ are three Hilbert manifolds
such that $M_{2}$ is embedded in $M_{3}$. The normal bundle of
$M_{2}$ with respect to $M_{3}$ is defined as $\displaystyle
N(M_{2}, M_{3}) = \frac{T_{M_{2}}M_{3}}{TM_{2}}$. Here
$T_{M_{2}}M_{3}$ is the restriction of $TM_{3}$ on $M_{2}$. If
$\varphi: M_{1} \longrightarrow M_{3}$ is transversal to $M_{2}$,
then $M_{0} = \varphi^{-1}(M_{2})$ is an embedded submanifold of
$M_{1}$, and $d \varphi$ induces a bundle map $d \varphi: N(M_{0},
M_{1}) \longrightarrow N(M_{2}, M_{3})$, i.e., $d \varphi$ is an
isomorphism in each fiber. If $M_{1}$ is finite dimensional and
oriented and $N(M_{2}, M_{3})$ is a finite dimensional (i.e., the
fiber is finite dimensional) and oriented bundle, then we can give
an orientation of $M_{0}$ as follows. The orientation of $N(M_{2},
M_{3})$ gives an orientation to $N(M_{0}, M_{1})$ via $d \varphi$.
Let $\pi: T_{M_{0}}M_{1} \longrightarrow N(M_{0}, M_{1})$ be the
natural projection. For all $x \in M_{0}$, choose $\{ e_{k+1},
\cdots, e_{n} \} \subseteq T_{x} M_{1}$ such that $\{ \pi (e_{k+1}),
\cdots, \pi (e_{n}) \}$ is a positive base of $N_{x}(M_{0}, M_{1})$.
Choose $\{ e_{1}, \cdots, e_{k} \} \subseteq T_{x} M_{0}$ such that
$\{ e_{1}, \cdots, e_{k}, e_{k+1}, \cdots, e_{n} \}$ is a positive
base of $T_{x} M_{1}$, then $\{ e_{1}, \cdots,$ $e_{k} \}$ gives
$M_{0}$ an orientation. Clearly, this is well defined and only
depends on the orientations of $M_{1}$ and $N(M_{2}, M_{3})$.

By the above method, we can derive the orientations of
$\overline{\mathcal{M}(p,q)}$, $\overline{\mathcal{D}(p)}$ and
$\overline{\mathcal{W}(p,q)}$ provided that the orientations of
$\mathcal{D}(p)$ and $\mathcal{D}(q)$ have been assigned
arbitrarily.

Firstly, we can give $\mathcal{W}(p,q)$ an orientation.

Since $\mathcal{A}(q)$ is transversal to $\mathcal{D}(q)$ at $q$,
then the orientation of $T_{q} \mathcal{D}(q)$ induces an
orientation of $N(\mathcal{A}(q), M)$. Let $i: \mathcal{D}(p)
\longrightarrow M$ be the inclusion. $i$ is transversal to
$\mathcal{A}(q)$ and $i^{-1}(\mathcal{A}(q)) = \mathcal{W}(p,q)$.
The orientations of $\mathcal{D}(p)$ and $N(\mathcal{A}(q), M)$
determine an orientation of $\mathcal{W}(p,q)$.

Secondly, we can give $\mathcal{M}(p,q)$ an orientation.

Choose a regular value $a \in (-\infty, f(p))$. We give $S_{p}^{-} =
\mathcal{D}(p) \cap f^{-1}(a)$ the \textbf{induced orientation} from
$\mathcal{D}(p)$ as follows. For all $x \in S_{p}^{-}$, $\{ e_{1},
\cdots, e_{n} \}$ is a positive base of $T_{x} S_{p}^{-}$ if and
only if $\{ -\nabla f, e_{1}, \cdots, e_{n} \}$ is a positive base
of $T_{x} \mathcal{D}(p)$. Suppose $a \in (f(q), f(p))$. Denote
$\mathcal{A}(q) \cap f^{-1}(a)$ by $S_{q}^{+}$. Then both
$S_{p}^{-}$ and $S_{q}^{+}$ are embedded submanifolds of $f^{-1}(a)$
which are transversal to each other. $S_{p}^{-}$ has its induced
orientation from $\mathcal{D}(p)$ as above. There is a natural
bundle map from $N(S_{q}^{+}, f^{-1}(a))$ to $N(\mathcal{A}(q), M)$.
Thus $N(S_{q}^{+}, f^{-1}(a))$ is an oriented bundle. The
orientations of $S_{p}^{-}$ and $N(S_{q}^{+}, f^{-1}(a))$ give
$S_{p}^{-} \cap S_{q}^{+} = \mathcal{W}(p,q) \cap f^{-1}(a)$ an
orientation. The natural identification between $\mathcal{M}(p,q)$
and $\mathcal{W}(p,q) \cap f^{-1}(a)$ (see the comment below
Definition \ref{moduli_space}) moves the orientation of
$\mathcal{W}(p,q) \cap f^{-1}(a)$ to an orientation of
$\mathcal{M}(p,q)$. Clearly, this orientation only depends on those
of $\mathcal{D}(p)$ and $\mathcal{D}(q)$.

Thirdly, since $\mathcal{M}(p,q)$, $\mathcal{D}(p)$ and
$\mathcal{W}(p,q)$ are the interiors of
$\overline{\mathcal{M}(p,q)}$, $\overline{\mathcal{D}(p)}$ and
$\overline{\mathcal{W}(p,q)}$ respectively, the orientation of each
interior determines a unique orientation of each compactified space.

Assign orientations to descending manifolds of all critical points
arbitrarily. We can consider the orientations of the $1$-strata
$\partial^{1} \overline{\mathcal{M}(p,q)}$, $\partial^{1}
\overline{\mathcal{D}(p)}$ and $\partial^{1}
\overline{\mathcal{W}(p,q)}$ of $\overline{\mathcal{M}(p,q)}$,
$\overline{\mathcal{D}(p)}$ and $\overline{\mathcal{W}(p,q)}$. As
unoriented manifolds, $\partial^{1} \overline{\mathcal{M}(p,q)} =
\bigsqcup_{p \succ r \succ q} \mathcal{M}(p,r)$ $\times
\mathcal{M}(r,q)$. There are two orientations of it. First, since
$\overline{\mathcal{M}(p,q)}$ has an orientation, $\mathcal{M}(p,q)
\sqcup \partial^{1} \overline{\mathcal{M}(p,q)}$ is an oriented
manifold with boundary $\partial^{1} \overline{\mathcal{M}(p,q)}$.
For all $x \in
\partial^{1} \overline{\mathcal{M}(p,q)}$, let $\mathcal{N}$ be an
outward normal vector at $x$. We define an oriented base $\{ e_{1},
\cdots, e_{k} \}$ of $T_{x} \partial^{1}
\overline{\mathcal{M}(p,q)}$ to be positive if and only if $\{
\mathcal{N}, e_{1}, \cdots, e_{k} \}$ is a positive base of $T_{x}
(\mathcal{M}(p,q) \sqcup \partial^{1} \overline{\mathcal{M}(p,q)})$.
We call this the \textbf{boundary orientation} of $\partial^{1}
\overline{\mathcal{M}(p,q)}$. Second, since both $\mathcal{M}(p,r)$
and $\mathcal{M}(r,q)$ have orientations, $\mathcal{M}(p,r) \times
\mathcal{M}(r,q)$ has the product orientation of these two
orientations. This gives $\partial^{1} \overline{\mathcal{M}(p,q)}$
the \textbf{product orientation}. Similarly, we can also define the
boundary orientations and the product orientations for $\partial^{1}
\overline{\mathcal{D}(p)}$ and $\partial^{1}
\overline{\mathcal{W}(p,q)}$.

Theorem \ref{orientation} answers the relations between the boundary
orientations and the product orientations of the above $1$-strata.

\subsection{Proof of (1) of Theorem \ref{orientation}}
\begin{proof}
We only need to prove that, for all $r$,
\[
\partial (\mathcal{M}(p,q) \sqcup \mathcal{M}(p,r) \times
\mathcal{M}(r,q)) = (-1)^{\textrm{ind}(p) - \textrm{ind}(r)}
\mathcal{M}(p,r) \times \mathcal{M}(r,q).
\]
Denote $\mathcal{M}(p,q) \sqcup \mathcal{M}(p,r) \times
\mathcal{M}(r,q)$ by $\widehat{\mathcal{M}(p,q)}$. By local
triviality of the metric, we have the diffeomorphism $h$ in
(\ref{localization_map}) such that (\ref{localization_function}) and
(\ref{localization_dymamics}) hold. In addition, choose $\epsilon$
small enough such that $f(r)$ is the only critical value in
$[f(r)-\epsilon, f(r)+\epsilon]$. For now on, we identify $U$ with
$B$ without any difference. Let $M^{+} = f^{-1}(f(r) + \frac{1}{2}
\epsilon)$ and $M^{-} = f^{-1}(f(r) - \frac{1}{2} \epsilon)$. Let
$S_{p}^{-} = \mathcal{D}(p) \cap M^{+}$, $S_{q}^{+} = \mathcal{A}(q)
\cap M^{-}$, $S_{r}^{+} = \mathcal{A}(r) \cap M^{+}$ and $S_{r}^{-}
= \mathcal{D}(r) \cap M^{-}$. Then $S_{r}^{+} = \{ (0, v_{2}) \in
V_{-} \times V_{+} \mid \| v_{2} \|^{2} = \epsilon \}$ and
$S_{r}^{-} = \{ (v_{1}, 0) \in V_{-} \times V_{+} \mid \| v_{1}
\|^{2} = \epsilon \}$.

Define
\[
L = \{ (x,y) \in S_{p}^{-} \times M^{-} \mid \text{$x$ and $y$ are
connected by a generalized flow line.} \}.
\]
We may assume there is only one critical point $r$ in
$f^{-1}([f(r)-\epsilon, f(r)+\epsilon])$. Otherwise, define $L$ to
be
\[
\{ (x,y) \in (S_{p}^{-} - \bigcup_{r_{i} \neq r} S_{r_{i}}^{+} )
\times M^{-} \mid \text{$x$ and $y$ are connected by a generalized
flow line.} \}
\]
in this argument. Consider the projection $\pi_{+}: M^{+} \times
M^{-} \longrightarrow M^{+}$, then $L = \pi_{+}^{-1}(S_{p}^{-}) \cap
P_{c}$, where $P_{c}$ is defined in Lemma \ref{p_manifold} and
$c=f(r)$. By transversality, $L$ is an smoothly embedded submanifold
with boundary of $M^{+} \times M^{-}$. The interior of $L$ is
\[
L^{\circ} = \{ (x,y) \in L \mid \text{$x$ and $y$ are connected by a
unbroken flow line.} \},
\]
and $\partial L = (S_{p}^{-} \cap S_{r}^{+}) \times S_{r}^{-}$.
Clearly, $S_{p}^{-} \cap S_{r}^{+}$ can be identified with
$\mathcal{M}(p,r)$. We consider it as $\mathcal{M}(p,r)$. Then
$\partial L = \mathcal{M}(p,r) \times S_{r}^{-}$.

Consider the projection $\pi_{\pm}: M^{+} \times M^{-}
\longrightarrow M^{\pm}$. We have $\pi_{+}(L^{\circ}) = S_{p}^{-} -
S_{r}^{+}$, $\pi_{-}(L^{\circ}) = \mathcal{D}(p) \cap M^{-}$, and
$\pi_{\pm}$ give diffeomorphisms from $L^{\circ}$ to its images.
Give $S_{p}^{-} - S_{r}^{+}$ and $\mathcal{D}(p) \cap M^{-}$ the
induced orientations from $\mathcal{D}(p)$ (see Subsection
\ref{subsection_definition_orientation}). Then $\pi_{+}$ and
$\pi_{-}$ move the above two orientations to $L^{\circ}$. These
orientations on $L^{\circ}$ are the same. Thus $L^{\circ}$ has a
preferred orientation.

Clearly, $\pi_{-}: L \longrightarrow M^{-}$ is transversal to
$S_{q}^{+}$ in $L^{\circ}$ and $\partial L$. Just as in (3) of
Theorem \ref{m(p,q)_manifold}, $\pi_{-}^{-1}(S_{q}^{+})$ can be
identified with $\widehat{\mathcal{M}(p,q)}$ because $(x, y) \in
\pi_{-}^{-1}(S_{q}^{+})$ is a pair of points on a generalized flow
line $\Gamma \in \widehat{\mathcal{M}(p,q)}$. Likewise
$(\pi_{-}|_{\partial L})^{-1}(S_{q}^{+})$ can be identified with
$\partial \widehat{\mathcal{M}(p,q)}$. The boundary of
$\pi_{-}^{-1}(S_{q}^{+})$ is exactly $(\pi_{-}|_{\partial
L})^{-1}(S_{q}^{+})$. We consider the orientation of $L$ first in
order to study the one of $\widehat{\mathcal{M}(p,q)}$.

Similarly to $\partial \widehat{\mathcal{M}(p,q)}$, there are two
orientations of $\partial L$. First, the orientation of $L$ gives it
a boundary orientation. Second, the orientations of
$\mathcal{M}(p,r)$ and $S_{r}^{-}$ give $\partial L =
\mathcal{M}(p,r) \times S_{r}^{-}$ a product orientation, where the
orientation of $S_{r}^{-}$ is induced from that of $\mathcal{D}(r)$
(see Subsection \ref{subsection_definition_orientation}). The
following key lemma shows the difference between these two
orientations of $\partial L$.

\begin{lemma}\label{orientation_3}
$\partial L = (-1)^{\textrm{ind}(p) - \textrm{ind}(r)}
\mathcal{M}(p,r) \times S_{r}^{-}$. Here, $\partial L$ is given the
boundary orientation and $\mathcal{M}(p,r) \times S_{r}^{-}$ is
given the product orientation.
\end{lemma}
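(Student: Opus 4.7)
The plan is to choose a representative boundary point, write down compatible local parametrizations of $L$ and $S_p^-$ coming from the map $\varphi$ in Lemma \ref{p_manifold}, and compare the two orientations directly through the polar-coordinate structure that $\pi_+\circ\varphi$ imposes near $\partial L$.

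Fix $(x_0,y_0) = ((0,v_2'),(v_1',0)) \in \mathcal{M}(p,r) \times S_r^-$, so that $\varphi$ restricted to $L$ provides a collar chart
\[
V \times S_r^- \times [0,\epsilon) \hookrightarrow L, \qquad V \subset \mathcal{M}(p,r)\ \text{an open neighborhood of}\ v_2',
\]
with $s=0$ corresponding to $\partial L$ and $\partial/\partial s$ pointing inward. Composing with $\pi_+$ gives $(v_2,v_1,s) \mapsto (sv_1,(1+s^2)^{1/2}v_2)\in S_p^-$, which for small $s>0$ is (up to the positive rescaling $(1+s^2)^{1/2}$ on the $v_2$-coordinates tangent to $S_r^+$) precisely a polar-coordinate chart on the normal disk-bundle of $\mathcal{M}(p,r) = S_p^- \cap S_r^+$ inside $S_p^-$, whose normal fiber is canonically $V_- = T_r\mathcal{D}(r)$.

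The orientation of $L$ is obtained by pulling back that of $S_p^-$ via $\pi_+$. By the construction in Subsection \ref{subsection_definition_orientation}, a positive basis of $T_{x_0}S_p^-$ has the form $\{w_1,\ldots,w_m,u_1,\ldots,u_k\}$, where $\{w_i\}$ is a positive basis of $T_{x_0}\mathcal{M}(p,r)$ and $\{u_i\}$ is a positive basis of $V_-$ (identified with $N(S_r^+,M^+)$). Choose $\{u_i\}=\{v_1',e_1,\ldots,e_{k-1}\}$; since $-\nabla f$ at $y_0$ equals $v_1'$, the vectors $\{e_i\}$ form a positive basis of $T_{y_0}S_r^-$ in its induced orientation from $\mathcal{D}(r)$ if and only if $\{v_1',e_1,\ldots,e_{k-1}\}$ is positive in $V_-$. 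Unwinding this basis through the differential of $\pi_+\circ\varphi$ at $s>0$ — which sends $\partial/\partial s$ to $v_1'$, each $e_i$ to $s\,e_i$, and each $w_j$ to $(1+s^2)^{1/2}w_j$ — and using that all of $s, (1+s^2)^{1/2}$ are positive, one concludes that $\{w_1,\ldots,w_m,\partial/\partial s,e_1,\ldots,e_{k-1}\}$ is a positive basis of $T_{(x_0,y_0)}L$.

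Finally, the outward normal to $\partial L$ at $s=0$ is $-\partial/\partial s$, so the boundary-orientation basis $\{w_1,\ldots,w_m,e_1,\ldots,e_{k-1}\}$ is positive iff $\{-\partial/\partial s,w_1,\ldots,w_m,e_1,\ldots,e_{k-1}\}$ is positive in $L$. Passing $-\partial/\partial s$ past the $m$ vectors $w_i$ and then changing sign from $-\partial/\partial s$ to $\partial/\partial s$ gives a total factor of $(-1)^{m+1}$; since $m=\dim\mathcal{M}(p,r)=\textrm{ind}(p)-\textrm{ind}(r)-1$, this is $(-1)^{\textrm{ind}(p)-\textrm{ind}(r)}$, yielding the claim. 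The main obstacle is this middle orientation computation: one must verify that $\pi_+\circ\varphi$ sends the stated tuple to a positive basis of $T S_p^-$, which reduces to the standard fact that on the sphere $S_r^-\subset V_-$ the outward radial vector followed by a positive spherical basis is a positive basis of $V_-$, together with the observation that the polar-coordinate Jacobian introduces only positive scalings.
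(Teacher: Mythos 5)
Your overall strategy is the same as the paper's: pick a collar parametrization of $\partial L$ in $L$, push it through $\pi_+$ to compare with a preferred basis of $T S_p^-$, and count the sign from moving $-\partial/\partial s$ past the $\mathcal{M}(p,r)$-directions. The final count $(-1)^{m+1}$ with $m = \textrm{ind}(p)-\textrm{ind}(r)-1$ agrees with the paper.

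However, there is a genuine gap in the middle. You assert that the polar map $\varphi$ of Lemma \ref{p_manifold}, restricted to $V \times S_r^- \times [0,\epsilon)$ with $V \subset \mathcal{M}(p,r)$, is a collar chart for $L$, so that $\pi_+\circ\varphi(v_2,v_1,s) = (sv_1,(1+s^2)^{1/2}v_2)$ parametrizes $S_p^-$ near $\mathcal{M}(p,r)$. This is false. The image $(sv_1,(1+s^2)^{1/2}v_2)$ lies in the level set $M_c^+$ but \emph{not} in $S_p^-$ in general: $S_p^-$ is a curved submanifold of $M_c^+$ meeting $S_r^+$ along $\mathcal{M}(p,r)$, and the naive radial extension of $\mathcal{M}(p,r)$ along $V_-$-directions leaves $S_p^-$. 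Consequently your chart $V \times S_r^- \times [0,\epsilon)$ does not land in $L = \pi_+^{-1}(S_p^-)\cap P_c$, and the three Jacobian formulas you quote ($\partial/\partial s \mapsto v_1'$, $e_i\mapsto se_i$, $w_j\mapsto(1+s^2)^{1/2}w_j$) are the Jacobian of a map whose image is not $L$. The paper handles this with Lemma \ref{orientation_1}, which produces a graph-type parametrization $\tilde\theta(v_1,v_2)=(v_1,\theta(v_1,v_2))$ of $S_p^-$ near $\mathcal{M}(p,r)$, and Lemma \ref{orientation_2}, which uses $\theta$ to build an honest collar chart $\varphi$ of $L$. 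The honest Jacobian then carries extra $V_+$-components depending on $d\theta$; e.g.\ $d\pi_+\circ d\varphi(\partial/\partial s)|_{s=0} = (x_1, d\theta(0,x_2)\cdot x_1)$, not just $(x_1,0)$.

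You still get the correct sign because the $d\theta$-corrections all land in $V_+$ and therefore become invisible under the identification of $N_{(0,x_2)}(\mathcal{M}(p,r),S_p^-)$ with $V_-$ (i.e.\ they do not change which normal orientation a lift represents). But that cancellation is precisely what has to be checked, and your proposal neither constructs the correct chart nor argues that the $V_+$-corrections are harmless. As written, the proof relies on a false identification of $L$ with a polar disk bundle; to make it valid you must introduce the parametrization $\theta$ (or an equivalent device) and verify that the resulting corrections to the Jacobian do not affect the normal-bundle orientation.
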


The proof of Lemma \ref{orientation_3} is based on a good local
collar embedding of $\partial L$ into $ L$ and a subtle computation
of orientations. The collar embedding is provided by the following
two lemmas.

Fix a point$(0, x_{2}) \in \mathcal{M}(p,r)$. We know
$\mathcal{M}(p,r) = S_{p}^{-} \cap S_{r}^{+} \subseteq \{0\} \times
V_{+}$. Define $\widetilde{\mathcal{M}}(p,r) = \{ v_{2} \in V_{+}
\mid (0, v_{2}) \in \mathcal{M}(p,r) \}$.

\begin{lemma}\label{orientation_1}
There exist an open neighborhood $\Omega$ of $x_{2}$ in $V_{+}$, a
$\delta > 0$, and a map $\tilde{\theta}: B_{1}(\delta) \times
(\Omega \cap \widetilde{\mathcal{M}}(p,r)) \longrightarrow V_{-}
\times V_{+}$ such that $\tilde{\theta}(v_{1}, v_{2}) = (v_{1},
\theta(v_{1}, v_{2}))$, $\theta(0, v_{2}) = v_{2}$ and
$\tilde{\theta}$ is a diffeomorphism from $B_{1}(\delta) \times
(\Omega \cap \widetilde{\mathcal{M}}(p,r))$ to $S_{p}^{-} \cap
(B_{1}(\delta) \times \Omega)$. Here $B_{1}(\delta) = \{ v_{1} \in
V_{-} \mid \| v_{1} \|^{2} < \delta \}$.
\end{lemma}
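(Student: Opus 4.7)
\textbf{Plan for Lemma \ref{orientation_1}.} The idea is to obtain $\tilde{\theta}$ from the submersion theorem applied to the projection $\pi\colon S_{p}^{-} \to V_{-}$, $(w_{1}, w_{2}) \mapsto w_{1}$, after a short transversality-based surjectivity check, and then to renormalize the resulting trivialization so that it restricts to the identity on the fiber $\pi^{-1}(0)$. Note first that in the Morse chart $h$, the local ascending set $\mathcal{A}(r) \cap U$ equals $\{0\} \times V_{+}$, so $S_{r}^{+}$ and hence $\mathcal{M}(p, r) = S_{p}^{-} \cap S_{r}^{+}$ lie inside $\{0\} \times V_{+}$ near $(0, x_{2})$. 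This shows $\widetilde{\mathcal{M}}(p, r)$ is a well-defined embedded submanifold of $V_{+}$ near $x_{2}$ via $(0, v_{2}) \leftrightarrow v_{2}$, and identifies $\pi^{-1}(0)$ in a small neighborhood of $(0, x_{2})$ in $S_{p}^{-}$ with $\mathcal{M}(p, r)$.

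Next, I verify that $\pi$ is a submersion at $(0, x_{2})$. A direct computation from (\ref{localization_function}) gives $T_{(0, x_{2})} M^{+} = \{(w_{1}, w_{2}) \in V_{-} \times V_{+} : \langle x_{2}, w_{2} \rangle = 0\}$, and the restriction of $(w_{1}, w_{2}) \mapsto w_{1}$ to this subspace is surjective onto $V_{-}$ with kernel exactly $T_{(0, x_{2})} S_{r}^{+} = \{(0, w_{2}) : \langle x_{2}, w_{2}\rangle = 0\}$. The transversality of $\mathcal{D}(p)$ and $\mathcal{A}(r)$ then yields $T_{(0, x_{2})} S_{p}^{-} + T_{(0, x_{2})} S_{r}^{+} = T_{(0, x_{2})} M^{+}$, so $d\pi$ restricted to $T_{(0, x_{2})} S_{p}^{-}$ is still surjective onto $V_{-}$.

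Finally, I apply the submersion theorem to $\pi$ to obtain open neighborhoods $B \subset V_{-}$ of $0$, $F \subset \mathcal{M}(p, r)$ of $(0, x_{2})$, and $W \subset S_{p}^{-}$ of $(0, x_{2})$, together with a diffeomorphism $\Phi_{0}\colon B \times F \to W$ satisfying $\pi \circ \Phi_{0}(v_{1}, m) = v_{1}$. To arrange the additional condition $\Phi(0, m) = m$, I reparameterize by $\Phi(v_{1}, m) := \Phi_{0}(v_{1}, \Phi_{0}(0, \cdot)^{-1}(m))$, which remains a diffeomorphism with $\pi \circ \Phi(v_{1}, m) = v_{1}$ and now satisfies $\Phi(0, m) = m$. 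Identifying $F$ with $\Omega \cap \widetilde{\mathcal{M}}(p, r)$ for some open $\Omega \ni x_{2}$ in $V_{+}$ via $m = (0, v_{2}) \leftrightarrow v_{2}$, I set $\tilde{\theta}(v_{1}, v_{2}) := \Phi(v_{1}, (0, v_{2}))$. Then the first coordinate of $\tilde{\theta}(v_{1}, v_{2})$ is automatically $v_{1}$, so $\tilde{\theta}(v_{1}, v_{2}) = (v_{1}, \theta(v_{1}, v_{2}))$, and $\tilde{\theta}(0, v_{2}) = (0, v_{2})$ gives $\theta(0, v_{2}) = v_{2}$; shrinking $\delta$ and $\Omega$ if necessary makes the image exactly $S_{p}^{-} \cap (B_{1}(\delta) \times \Omega)$. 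The only substantive step is the surjectivity check in the second paragraph; the rest is a routine application of the submersion theorem together with the trivial renormalization of $\Phi_{0}$.
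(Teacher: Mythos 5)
Your proof is correct, and the route differs from the paper's, though both rest on the same transversality input. The paper first chooses a flattening chart $\alpha: \Omega \to K_1 \times K_2$ for $\widetilde{\mathcal{M}}(p,r)$, then applies the Inverse Function Theorem to the projection $\pi_2$ that forgets the $K_2$-coordinate, thereby realizing the flattened $S_p^-$ locally as a graph over $B_1(\delta) \times K_1$; the map $\tilde\theta$ is then the graph parametrization pulled back through $\beta = \textrm{id}\times\alpha$. You instead observe that the coordinate projection $\pi: S_p^- \to V_-$ is a submersion at $(0,x_2)$ --- your check that $T_{(0,x_2)} S_p^- + T_{(0,x_2)} S_r^+ = T_{(0,x_2)} M^+$ while $\ker\bigl(d\pi|_{T_{(0,x_2)}M^+}\bigr) = T_{(0,x_2)} S_r^+$ is the same surjectivity fact the paper invokes as ``$S_p^-$ is transversal to $\{0\}\times\Omega$'' --- and derive $\tilde\theta$ from a local trivialization of this submersion, fixed up by the renormalization $\Phi(v_1,m)=\Phi_0\bigl(v_1,\Phi_0(0,\cdot)^{-1}(m)\bigr)$ to arrange $\theta(0,v_2)=v_2$. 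Your version is a bit cleaner conceptually in that it avoids introducing the auxiliary chart $\alpha$ and exhibits the fiber $\pi^{-1}(0)=\mathcal{M}(p,r)$ directly; the paper's graph picture makes it slightly more immediate that the image of $\tilde\theta$ has the exact product form $S_p^-\cap(B_1(\delta)\times\Omega)$, whereas in your version the final shrinking of $\delta$ and $\Omega$ to arrange this is asserted rather than argued (a routine point that the paper also treats informally). The properties of $\theta$ used downstream --- $\tilde\theta(v_1,v_2)=(v_1,\theta(v_1,v_2))$, $\theta(0,v_2)=v_2$, and the diffeomorphism onto the stated image --- are precisely what your construction delivers.
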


Let $\tilde{S}_{r}^{+} = \{ v_{2} \in V_{+} \mid (0, v_{2}) \in
S_{r}^{+} \}$ and $\tilde{S}_{r}^{-} = \{ v_{1} \in V_{-} \mid
(v_{1}, 0) \in S_{r}^{-} \}$. We can identify $\mathcal{M}(p,r)$
with $\widetilde{\mathcal{M}}(p,r)$ and $\tilde{S}_{r}^{\pm}$ with
$S_{r}^{\pm}$ naturally. Fix a point $(x_{1},0) \in S_{r}^{-}$.

\begin{lemma}\label{orientation_2}
There exist $\delta > 0$, a neighborhood $\Omega_{2}$ of $x_{2}$ in
$V_{+}$ and a neighborhood $\Omega_{1}$ of $x_{1}$ in $V_{+}$ such
that $\varphi: [0, \delta) \times (\Omega_{2} \cap
\widetilde{\mathcal{M}}(p,r)) \times (\Omega_{1} \cap
\tilde{S}_{r}^{-}) \longrightarrow V_{-} \times V_{+} \times V_{-}
\times V_{+}$ is a local collar neighborhood embedding of $\partial
L$ into $L$ near $((0, x_{2}), (x_{1},0))$. Here
\[
\varphi(s, v_{2}, v_{1}) = (s v_{1}, \theta(s v_{1}, v_{2}),
\epsilon^{-\frac{1}{2}} \| \theta(s v_{1}, v_{2}) \| v_{1}, s
\epsilon^{\frac{1}{2}} \| \theta(s v_{1}, v_{2}) \|^{-1} \theta(s
v_{1}, v_{2})),
\]
and $\theta$ is defined in Lemma \ref{orientation_1}.
\end{lemma}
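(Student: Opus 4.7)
The strategy is to verify directly from the formula that $\varphi$ maps into $L$, to identify $\varphi|_{s=0}$ with the inclusion of a neighborhood of $((0, x_2), (x_1, 0))$ in $\partial L = \mathcal{M}(p, r) \times S_r^-$, and to show that $d\varphi$ is injective; the inverse function theorem for manifolds with boundary then produces the desired collar embedding.

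First I would check that $\varphi$ takes values in $L$. By Lemma \ref{orientation_1}, the first pair $(sv_1, \theta(sv_1, v_2))$ lies in $S_p^- \subseteq M^+$, which, combined with $\|v_1\|^2 = \epsilon$ and the level-set equation $f = f(r) + \tfrac{1}{2}\epsilon$, forces the key identity $\|\theta(sv_1, v_2)\|^2 = \epsilon(1 + s^2)$. A routine substitution then shows the second pair lies in $M^-$. For $s > 0$ the two pairs are joined by the explicit local flow $\phi_t(u_1, u_2) = (e^t u_1, e^{-t} u_2)$ at time $t = \log(\|\theta(sv_1,v_2)\|/(s\epsilon^{1/2})) > 0$; for $s = 0$ they are joined by a broken generalized flow line through the critical point $r$, since $(0, v_2) \in \mathcal{A}(r)$ and $(v_1, 0) \in \mathcal{D}(r)$. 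At $s = 0$ one reads off $\varphi(0, v_2, v_1) = (0, v_2, v_1, 0)$, which under the identifications preceding the lemma is precisely the inclusion of a neighborhood of $((0, x_2), (x_1, 0))$ in $\mathcal{M}(p,r) \times S_r^- = \partial L$.

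Smoothness of $\varphi$ is immediate because the identity $\|\theta(sv_1, v_2)\|^2 = \epsilon(1+s^2) \geq \epsilon > 0$ keeps all denominators bounded away from zero. The main obstacle is injectivity of $d\varphi$ at $s = 0$. Evaluating at $(0, x_2, x_1)$, one obtains
\[
  d\varphi\!\left( \frac{\partial}{\partial s} \right) = \bigl(x_1,\ d_1\theta(0, x_2)\,x_1,\ 0,\ x_2\bigr),
\]
\[
  d\varphi(e_2) = (0,\ e_2,\ 0,\ 0), \qquad d\varphi(e_1) = (0,\ 0,\ e_1,\ 0),
\]
for $e_2 \in T_{x_2}\widetilde{\mathcal{M}}(p,r)$ and $e_1 \in T_{x_1}\tilde{S}_r^-$. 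The vanishing of the third slot of $d\varphi(e_2)$ uses $T\widetilde{\mathcal{M}}(p,r) \subseteq T\tilde{S}_r^+$, so $\langle x_2, e_2 \rangle = 0$; the vanishing of the third slot of $d\varphi(\partial/\partial s)$ uses that $\|\theta\|^2 = \epsilon(1+s^2)$ has vanishing $s$-derivative at $s=0$. Since $x_1 \neq 0$ (as $\|x_1\|^2 = \epsilon$) occupies the first slot alone, the columns span complementary blocks and are linearly independent. For $s > 0$, injectivity follows from viewing $\varphi$ as a reparametrization of the graph parametrization in Lemma \ref{orientation_1} composed with the flow map, which is already a diffeomorphism onto its image. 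Applying the inverse function theorem for half-spaces then yields the local collar embedding asserted in the lemma.
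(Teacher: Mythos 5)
Your proposal is correct and follows essentially the same strategy as the paper: compute $d\varphi$ at $s=0$, observe that the three families of image vectors occupy complementary coordinate blocks of $V_-\times V_+\times V_-\times V_+$, conclude linear independence, and invoke the Inverse Function Theorem. The one small refinement you make — deriving the level-set identity $\|\theta(sv_1,v_2)\|^2=\epsilon(1+s^2)$ and using it to show that the third slot of $d\varphi(\partial/\partial s)|_{s=0}$ is actually zero — is correct and slightly cleaner than the paper, which carries the unsimplified term $\epsilon^{-1}\langle x_2, d\theta(0,x_2)x_1\rangle x_1$ without noting that it vanishes; this does not change the argument, since both forms have the same block structure and linear independence is read off from the first, second, and third slots in either case.
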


The proof of these three lemmas will be given later.

Since $L$ and $N(S_{q}^{+}, M^{-})$ have orientations, $\pi_{-}^{-1}
(S_{q}^{+})$ has an orientation. By the definitions of the
orientations of $L$ and $\widehat{\mathcal{M}(p,q)}$, the
orientations of $\pi_{-}^{-1} (S_{q}^{+})$ and
$\widehat{\mathcal{M}(p,q)}$ are the same under this identification.
The boundary orientation of $\partial L$ and the orientation of
$N(S_{q}^{+}, M^{-})$ also give $(\pi_{-}|_{\partial L})^{-1}
(S_{q}^{+})$ an orientation. This orientation of
$(\pi_{-}|_{\partial L})^{-1} (S_{q}^{+})$ coincides with the
boundary orientation induced from $\pi_{-}^{-1} (S_{q}^{+})$. The
reason is as follows. At $((0,x_{2}),(x_{1},0)) \in
(\pi_{-}|_{\partial L})^{-1} (S_{q}^{+})$, let $\{ e_{1}, \cdots,
e_{k} \}$ be a base of $T(\pi_{-}|_{\partial L})^{-1} (S_{q}^{+})$
and $\{ e_{k+1}, \cdots, e_{n} \} \subseteq T(\partial L)$ represent
a base of $N((\pi_{-}|_{\partial L})^{-1} (S_{q}^{+}),
\partial L)$. Let $\mathcal{N}$ be an outward normal vector of
$(\pi_{-}|_{\partial L})^{-1} (S_{q}^{+})$ with respect to
$\pi_{-}^{-1} (S_{q}^{+})$. Then $\{ \mathcal{N}, e_{1}, \cdots,
e_{k} \}$ gives an orientation of $\pi_{-}^{-1} (S_{q}^{+})$, $\{
e_{1}, \cdots, e_{k} \}$ gives an orientation of
$(\pi_{-}|_{\partial L})^{-1} (S_{q}^{+})$, $\{ \mathcal{N}, e_{1},
\cdots, e_{n} \}$ gives an orientation of $L$, and $\{ e_{1},
\cdots, e_{n} \}$ gives an orientation of $\partial L$. When $\{
e_{k+1}, \cdots, e_{n} \}$ is positively oriented, $\{ e_{1},
\cdots, e_{k} \}$ gives the boundary orientation if and only if $\{
e_{1}, \cdots, e_{n} \}$ gives the boundary orientation. This is the
reason.

Thus $(\pi_{-}|_{\partial L})^{-1} (S_{q}^{+})$ has the boundary
orientation of $\partial \widehat{\mathcal{M}(p,q)}$ under this
identification if $\partial L$ is equipped with the boundary
orientation.

On the other hand, if we give $\partial L$ the product orientation,
i.e., we consider it as $\mathcal{M}(p,r) \times S_{r}^{-}$, then
$(\pi_{-}|_{\partial L})^{-1} (S_{q}^{+})$ will have the product
orientation of $\mathcal{M}(p,r) \times \mathcal{M}(r,q)$ under this
identification.

By Lemma \ref{orientation_3}, we have completed the proof of (1) of
Theorem \ref{orientation}.
\end{proof}

\begin{proof}[Proof of Lemma \ref{orientation_1}]
Since $\widetilde{\mathcal{M}}(p,r)$ is an embedded submanifold of
$V_{+}$, there exist a neighborhood $\Omega$  of $x_{2}$ and a
diffeomorphism $\alpha: \Omega \longrightarrow V_{+}$ such that
$V_{+} = K_{1} \times K_{2}$, $\alpha(\Omega \cap
\widetilde{\mathcal{M}}(p,r)) = K_{1} \times \{0\}$ and
$\alpha(x_{2}) = (0,0)$. Here $K_{1}$ and $K_{2}$ are two Hilbert
spaces. Define $\beta: B_{1}(\delta) \times \Omega \longrightarrow
B_{1}(\delta) \times V_{+}$ by $\beta(v_{1}, v_{2}) = (v_{1},
\alpha(v_{2}))$. Then $\beta$ is also a diffeomorphism. $\beta
(B_{1}(\delta) \times (\Omega \cap \widetilde{\mathcal{M}}(p,r))) =
B_{1}(\delta) \times K_{1} \times \{0\}$, $\beta (\{0\} \times
\Omega) = \{0\} \times V_{+}$ and $\beta(0, x_{2}) = (0,0,0)$.

Since $S_{p}^{-}$ is transversal to $\{0\} \times \Omega$, then
$\beta (S_{p}^{-} \cap (B_{1}(\delta) \times \Omega))$ is also
transversal to $\{0\} \times V_{+} = \beta (\{0\} \times \Omega)$.
Denote $\beta (S_{p}^{-} \cap (B_{1}(\delta) \times \Omega))$ by
$S$. Then
\begin{equation}\label{orientation_1_1}
T_{(0,0,0)} S + T_{(0,0,0)} (\{0\} \times V_{+}) = T_{(0,0,0)}
(B_{1}(\delta) \times V_{+}) = V_{-} \times V_{+}.
\end{equation}
Consider the map $\pi_{1}: B_{1}(\delta) \times V_{+}
\longrightarrow B_{1}(\delta) \times \{(0,0)\}$, where $\pi_{1}
(v_{1}, k_{1}, k_{2}) = (v_{1}, 0, 0)$. By (\ref{orientation_1_1}),
we get
\[
d \pi_{1}: T_{(0,0,0)} S \longrightarrow T_{(0,0,0)} (B_{1}(\delta)
\times \{(0,0)\}) = V_{-} \times \{(0,0)\}
\]
is surjective. In addition, since $\{0\} \times (\Omega \cap
\widetilde{\mathcal{M}}(p,r)) \subseteq S_{p}^{-} \cap
(B_{1}(\delta) \times \Omega)$, we have
\[
\{0\} \times K_{1} \times \{0\} = \beta (\{0\} \times (\Omega \cap
\widetilde{\mathcal{M}}(p,r))) \subseteq S.
\]
Thus
\begin{equation}\label{orientation_1_2}
\{0\} \times K_{1} \times \{0\} = T_{(0,0,0)} (\{0\} \times K_{1}
\times \{0\}) \subseteq T_{(0,0,0)} S.
\end{equation}
Consider the map $\pi_{2}: B_{1}(\delta) \times V_{+}
\longrightarrow B_{1}(\delta) \times K_{1} \times \{0\}$, where
$\pi_{2} (v_{1}, k_{1}, k_{2}) = (v_{1}, k_{1}, 0)$. By the
surjectivity of $d \pi_{1}$ on $S$ and (\ref{orientation_1_2}), we
know that
\[
d \pi_{2}: T_{(0,0,0)} S \longrightarrow T_{(0,0,0)} (B_{1}(\delta)
\times K_{1} \times \{0\}) = V_{-} \times K_{1} \times \{0\}
\]
is surjective.

Now we count the dimensions of $S$ and $B_{1}(\delta) \times K_{1}
\times \{0\}$.
\begin{eqnarray*}
\textrm{dim}(S) & = & \textrm{dim}(S_{p}^{-}) = \textrm{ind}(p) - 1
= \textrm{ind}(p) - \textrm{ind}(r) - 1 +
\textrm{ind} (r) \\
& = & \textrm{dim}(\mathcal{M}(p,r)) + \textrm{dim}(V_{-}) =
\textrm{dim}(K_{1} \times \{0\}) +
\textrm{dim}(B_{1}(\delta)) \\
& = & \textrm{dim}(B_{1}(\delta) \times K_{1} \times \{0\})
\end{eqnarray*}

By the Inverse Function Theorem, shrinking $\delta$ and $\Omega$  if
necessary, we have that $\pi_{2}$ gives a diffeomorphism from
$S=\beta (S_{p}^{-} \cap (B_{1}(\delta) \times \Omega))$ to
$B_{1}(\delta) \times K_{1} \times \{0\} = \beta (B_{1}(\delta)
\times (\Omega \cap \widetilde{\mathcal{M}}(p,r)))$. Also,
$(\pi_{2}|_{S})^{-1} (v_{1}, k_{1}, 0) = (v_{1}, \hat{\theta}(v_{1},
k_{1}, 0))$ for some $\hat{\theta}$. It's easy to see that $S \cap
(\{0\} \times V_{+}) = \{0\} \times K_{1} \times \{0\}$. Then
$\hat{\theta}(0, k_{1}, 0) = (k_{1}, 0)$.

Defining $\tilde{\theta} = \beta^{-1} \circ (\pi_{2}|_{S})^{-1}
\circ \beta$ on $B_{1}(\delta) \times (\Omega \cap
\widetilde{\mathcal{M}}(p,r))$, completes the proof.
\end{proof}

\begin{proof}[Proof of Lemma \ref{orientation_2}]
We may assume $\epsilon \leq 1$. Choose $\delta$ as in Lemma
\ref{orientation_1}. Choose $\Omega_{2}$ to be $\Omega$ in Lemma
\ref{orientation_1}. Consider $\varphi$ as a map defined in
$(-\delta, \delta) \times (\Omega_{2} \cap
\widetilde{\mathcal{M}}(p,r)) \times \tilde{S}_{r}^{-}$. By Lemma
\ref{orientation_1}, we have $\textrm{Im}(\varphi) \subseteq
L^{\circ}$ when $s>0$, $\textrm{Im}(\varphi) \cap L = \emptyset$
when $s<0$ and $\varphi (0, v_{2}, v_{1}) = ((0, v_{2}), (v_{1}, 0))
\in
\partial L$.

Now we compute $d \varphi$. First, we introduce some notation. Let
$\frac{\partial}{\partial s}$ be the positive unit tangent vector of
$(-\delta, \delta)$. Let $\frac{\partial}{\partial x_{1}}$ be a base
of $T_{x_{1}} \tilde{S}_{r}^{-} \subseteq V_{-}$, i.e.
\[
\frac{\partial}{\partial x_{1}} = \{ e_{1}, \cdots,
e_{\textrm{ind}(r) - 1}. \}
\]
Let $\frac{\partial}{\partial x_{2}}$ be a base of $T_{x_{2}}
\widetilde{\mathcal{M}}(p,r)) \subseteq V_{+}$. The notation $(d
\varphi) \frac{\partial}{\partial x_{1}}$ means
\[
(d \varphi) \frac{\partial}{\partial x_{1}} = \{ (d \varphi) e_{1},
\cdots, (d \varphi) e_{\textrm{ind}(r) - 1} \}.
\]
In the following calculation, omit $d \varphi
\frac{\partial}{\partial x_{2}}$ if
$\textrm{dim}(\widetilde{\mathcal{M}}(p,r)) = 0$ and omit $d \varphi
\frac{\partial}{\partial x_{1}}$ if $\textrm{dim}(\tilde{S}_{r}^{-})
= 0$. At $(s, x_{1}, x_{2})$
\begin{eqnarray}\label{orientation_2_1}
(d \varphi) \frac{\partial}{\partial s} & = & (x_{1}, (d \theta)
x_{1}, \epsilon^{-\frac{1}{2}} \|\theta\|^{-1} \langle \theta, (d
\theta) x_{1} \rangle
x_{1}, \epsilon^{\frac{1}{2}} \|\theta\|^{-1} h + s *) \\
(d \varphi) \frac{\partial}{\partial x_{2}} & = & \left(0, (d
\theta) \frac{\partial}{\partial x_{2}}, \epsilon^{-\frac{1}{2}}
\|\theta\|^{-1} \left \langle \theta, (d \theta)
\frac{\partial}{\partial x_{2}} \right \rangle
x_{1}, s *  \right), \nonumber \\
(d \varphi) \frac{\partial}{\partial x_{1}} & = & \left( s
\frac{\partial}{\partial x_{1}}, s (d \theta)
\frac{\partial}{\partial x_{1}}, \epsilon^{-\frac{1}{2}} \|\theta\|
\frac{\partial}{\partial x_{1}} + s *, s^{2}
* \right).  \nonumber
\end{eqnarray}
Here $*$ stands for some smooth functions which are not important.
Since $\theta(0,v_{2}) \equiv v_{2}$, we have $(d \theta) (0, x_{2})
\frac{\partial}{\partial x_{2}} = \frac{\partial}{\partial x_{2}}$.
And since $\frac{\partial}{\partial x_{2}}$ is contained in
$T_{x_{2}} \tilde{S}_{r}^{+}$ and is orthogonal to $x_{2}$, we have
$\langle \theta(0, x_{2}), (d \theta) (0, x_{2})
\frac{\partial}{\partial x_{2}} \rangle = 0$. In addition, $\| x_{2}
\| = \epsilon^{\frac{1}{2}}$. Thus
\[
(d \varphi) (0, x_{2}, x_{1}) \frac{\partial}{\partial s} = (x_{1},
d \theta (0,x_{2}) x_{1}, \epsilon^{-1} \langle x_{2}, d \theta (0,
x_{2}) x_{1} \rangle x_{1}, x_{2}),
\]
\begin{equation}\label{orientation_2_2}
(d \varphi) (0, x_{2}, x_{1}) \frac{\partial}{\partial x_{2}} =
\left( 0, \frac{\partial}{\partial x_{2}}, 0, 0 \right), \ \ (d
\varphi) ( 0, x_{2}, x_{1}) \frac{\partial}{\partial x_{1}} =
\left(0, 0, \frac{\partial}{\partial x_{1}}, 0 \right).
\end{equation}

Clearly, $d \varphi (0, x_{2}, x_{1}) \{ \frac{\partial}{\partial
s}, \frac{\partial}{\partial x_{2}}, \frac{\partial}{\partial x_{1}}
\}$ is linear independent. Since $\textrm{dim}(L) = \textrm{dim}
([0,$ $\delta) \times (\Omega_{2} \cap \widetilde{\mathcal{M}}(p,r))
\times \tilde{S}_{r}^{-})$, by the Inverse Function Theorem, we have
that this lemma is true.
\end{proof}

\begin{proof}[Proof of Lemma \ref{orientation_3}]
Let $((0,x_{2}),(x_{1},0))$ be an arbitrary point in $\partial L$.
We only need to prove the orientation difference is
$(-1)^{\textrm{ind}(p) - \textrm{ind}(r)}$ at this point.

Suppose $(0, \frac{\partial}{\partial x_{2}}$) and
$(\frac{\partial}{\partial x_{1}}, 0)$ are a positive basis of
$T_{(0,x_{2})} \mathcal{M}(p,r)$ and $T_{(x_{1},0)} S_{r}^{-}$
respectively. We use the locally collar embedding $\varphi$ in Lemma
\ref{orientation_2}. Fix $x_{2}$ and $x_{1}$, change $s$. By
(\ref{orientation_2_2}), $d \varphi (0, x_{2}, x_{1})
\frac{\partial}{\partial x_{2}} = (0, \frac{\partial}{\partial
x_{2}}, 0, 0)$ and $d \varphi (0, x_{2}, x_{1})
\frac{\partial}{\partial x_{1}} = (0, 0, \frac{\partial}{\partial
x_{1}}, 0)$. So $\{ d \varphi (0, x_{2}, x_{1})
\frac{\partial}{\partial x_{2}}, d \varphi (0, x_{2}, x_{1})
\frac{\partial}{\partial x_{1}} \}$ is a positive basis of
$\mathcal{M}(p,r) \times S_{r}^{-}$. When
$\textrm{dim}(\mathcal{M}(p,r))=0$ or $\textrm{dim}(S_{r}^{-})=0$,
the orientation of $T_{(0,x_{2})} \mathcal{M}(p,r)$ or
$T_{(x_{1},0)} S_{r}^{-}$ is a sign $\pm 1$, and $d \varphi (s,
x_{2}, x_{1}) \frac{\partial}{\partial x_{2}}$ or $d \varphi (s,
x_{2}, x_{1}) \frac{\partial}{\partial x_{1}}$ is replaced by this
sign.

Now, $- (d \varphi) (0, x_{2}, x_{1}) \frac{\partial}{\partial s}$
is an outward normal vector of $\partial L$. Thus, when $s=0$, $\{ d
\varphi \frac{\partial}{\partial x_{2}},$ $d \varphi
\frac{\partial}{\partial x_{1}} \}$ is a positive base of $\partial
L$ if and only if $\{ -d \varphi \frac{\partial}{\partial s}, d
\varphi \frac{\partial}{\partial x_{2}}, d \varphi
\frac{\partial}{\partial x_{1}} \}$ is a positive base of $L$. This
is also equivalent to the statement that, when $s \neq 0$, $\{ -d
\varphi \frac{\partial}{\partial s}, d \varphi
\frac{\partial}{\partial x_{2}}, d \varphi \frac{\partial}{\partial
x_{1}} \}$ is a positive base of $L$.

When $s \neq 0$, $\varphi(s, x_{2}, x_{1}) \in L^{\circ}$, and
$\pi_{+}: L^{\circ} \longrightarrow S_{p}^{-}$ preserves
orientation. Thus, by (\ref{orientation_2_1}), the above
consideration is equivalent to the statement that,
\begin{eqnarray*}
& & \left \{ - d \pi_{+} \cdot d \varphi \frac{\partial}{\partial
s}, d \pi_{+} \cdot d \varphi \frac{\partial}{\partial x_{2}}, d
\pi_{+}
\cdot d \varphi \frac{\partial}{\partial x_{1}} \right \}  \\
& = & \left \{ -(x_{1}, d \theta \cdot x_{1}), \left( 0, d \theta
\frac{\partial}{\partial x_{2}} \right), \left( s
\frac{\partial}{\partial x_{1}}, s \cdot d \theta
\frac{\partial}{\partial x_{1}} \right) \right \}
\end{eqnarray*}
is a positive base of $S_{p}^{-}$. We change this base to another
base
\begin{equation}\label{orientation_3_1}
\left \{ -(x_{1}, d \theta \cdot x_{1}), \left( 0, d \theta
\frac{\partial}{\partial x_{2}} \right), \left(
\frac{\partial}{\partial x_{1}}, d \theta \frac{\partial}{\partial
x_{1}} \right) \right \}.
\end{equation}
The new base (\ref{orientation_3_1}) has the same orientation as the
old one. Its advantage is that, when $s=0$, (\ref{orientation_3_1})
is still a base of $S_{p}^{-}$. The reason is as follows. When $s
\neq 0$, (\ref{orientation_3_1}) is in $T S_{p}^{-}$. Thus, by
continuity, it is still in $T S_{p}^{-}$ when $s=0$. In addition,
when $s=0$, $(0, d \theta \frac{\partial}{\partial x_{2}}) = (0,
\frac{\partial}{\partial x_{2}})$. As a base of $T_{x_{1}} V_{-}$
and $T_{x_{2}} \widetilde{\mathcal{M}}(p,r)$ respectively, both $\{
-x_{1}, \frac{\partial}{\partial x_{1}} \}$ and
$\frac{\partial}{\partial x_{2}}$ are linearly independent. So
(\ref{orientation_3_1}) remains linearly independent when $s=0$.

When $s$ varies in $[0, \delta)$, the orientation difference between
$\{ -(x_{1}, d \theta \cdot x_{1}), (0, d \theta
\frac{\partial}{\partial x_{2}}),$ $(\frac{\partial}{\partial
x_{1}}, d \theta \frac{\partial}{\partial x_{1}}) \}$ and
$S_{p}^{-}$ is fixed. So we only need to check the difference when
$s=0$. As a base of $T_{x_{2}} \mathcal{M}(p,r)$, $(0,
\frac{\partial}{\partial x_{2}})$ contains $\textrm{ind}(p) -
\textrm{ind}(r) - 1$ vectors. Denote the orientation of a base $\{
* \}$ by $\textrm{Or} \{ * \}$. Then, when $s=0$,
\begin{eqnarray*}
& & \textrm{Or} \left \{ -(x_{1}, d \theta \cdot x_{1}), \left( 0, d
\theta \frac{\partial}{\partial x_{2}} \right), \left(
\frac{\partial}{\partial x_{1}}, d \theta \frac{\partial}{\partial x_{1}} \right) \right \} \\
& = & \textrm{Or} \left \{ -(x_{1}, d \theta \cdot x_{1}), \left( 0,
\frac{\partial}{\partial x_{2}} \right), \left(
\frac{\partial}{\partial x_{1}},
d \theta \frac{\partial}{\partial x_{1}} \right) \right \} \\
& = & (-1)^{\textrm{ind}(p) - \textrm{ind}(r)} \textrm{Or} \left \{
\left( 0, \frac{\partial}{\partial x_{2}} \right), (x_{1}, d \theta
\cdot x_{1}), \left( \frac{\partial}{\partial x_{1}}, d \theta
\frac{\partial}{\partial x_{1}} \right) \right \}.
\end{eqnarray*}
Since $(x_{1}, 0) = -\nabla f (x_{1}, 0)$,
$(\frac{\partial}{\partial x_{1}}, 0)$ is a positive base of
$T_{(x_{1},0)} S_{r}^{+}$, then $\{ (x_{1},0),$
$(\frac{\partial}{\partial x_{1}},0) \}$ is a positive base of
$T_{(x_{1},0)} (V_{-} \times \{0\}) = V_{-} \times \{0\} = T_{r}
\mathcal{D}(r)$. Thus $\{ (x_{1}, d \theta \cdot x_{1}),
(\frac{\partial}{\partial x_{1}}, d \theta \frac{\partial}{\partial
x_{1}}) \}$ represents a positive base of the normal space $N_{(0,
x_{2})} (\mathcal{M}(p,r),$ $S_{p}^{-})$. Since $(0,
\frac{\partial}{\partial x_{2}})$ is a positive base of
$T_{(0,x_{2})} \mathcal{M}(p,r)$, we infer that $\{ (0,
\frac{\partial}{\partial x_{2}}), (x_{1}, d \theta \cdot x_{1}),
(\frac{\partial}{\partial x_{1}}, d \theta \frac{\partial}{\partial
x_{1}}) \}$ is a positive base of $T_{(0,x_{2})} S_{p}^{-}$.

As a result, $(-1)^{\textrm{ind}(p) - \textrm{ind}(r)} \textrm{Or}
\{ d \varphi (0, x_{2}, x_{1}) \frac{\partial}{\partial x_{2}}, d
\varphi (0, x_{2}, x_{1}) \frac{\partial}{\partial x_{1}} \}$
represents the orientation of $\partial L$. This completes the
proof.
\end{proof}

\subsection{Proof of (2) of Theorem \ref{orientation}}
The proof of (2) is similar to that of (1). In particular, they
share many details. We shall only give the outline and the key
calculation of this proof.

\begin{proof}
We only need to prove that $\partial (\mathcal{D}(p) \sqcup
\mathcal{M}(p,r) \times \mathcal{D}(r)) = \mathcal{M}(p,r) \times
\mathcal{D}(r)$ as oriented manifolds. Actually, we only need to
argue this in an open subset containing $\mathcal{M}(p,r) \times
\mathcal{D}(r)$ of $\mathcal{D}(p) \sqcup \mathcal{M}(p,r) \times
\mathcal{D}(r)$. Recall the evaluation map $e: \mathcal{D}(p) \sqcup
\mathcal{M}(p,r) \times \mathcal{D}(r) \longrightarrow M$ in (3) of
Theorem \ref{d(p)_manifold}. We have $e^{-1} \circ f^{-1} ((-\infty,
f(r)+\epsilon))$ is such an open subset. Moreover, we can simplify
this problem again. Let $M(r) = f^{-1}((f(r)-\epsilon,
f(r)+\epsilon))$. Consider the open subset $e^{-1}(M(r))$. For all
$x \in e^{-1} \circ f^{-1} ((-\infty, f(r)+\epsilon)) \cap
\mathcal{M}(p,r) \times \mathcal{D}(r)$, there exist $y \in
e^{-1}(M(r)) \cap \mathcal{M}(p,r) \times \mathcal{D}(r)$ and a flow
map $\psi$ in $\overline{\mathcal{D}(p)}$, such that $\psi(y) = x$
(see Lemma \ref{pull_back_gradient}). From $y$ to $x$, $d \psi$
preserves the orientations of $\mathcal{D}(p)$ and $\mathcal{M}(p,r)
\times \mathcal{D}(r)$ and the outward normal direction. Then $d
\psi$ preserves the orientation difference between $\partial
(\mathcal{D}(p) \sqcup \mathcal{M}(p,r) \times \mathcal{D}(r))$ and
$\mathcal{M}(p,r) \times \mathcal{D}(r)$. Thus we only need to show
this is true in $e^{-1}(M(r))$. Now denote $\mathcal{D}(p) \cap
M(r)$ by $D_{p}$, $\mathcal{D}(r) \cap M(r)$ by $D_{r}$ and
$e^{-1}(M(r))$ by $\widehat{D_{p}}$. Then $\widehat{D_{p}} = D_{p}
\sqcup \mathcal{M}(p,r) \times D_{r}$. We only need to show that
$\partial \widehat{D_{p}} = \mathcal{M}(p,r) \times D_{r}$ as
oriented manifolds.

We use the same notation of $M^{\pm}$, $S_{p}^{-}$ and $S_{r}^{+}$
as in the proof of (1). Also identify $S_{p}^{-} \cap S_{r}^{+}$
with $\mathcal{M}(p,r)$ and define $\widetilde{\mathcal{M}}(p,r)$ as
in the proof of (1). Define $\widetilde{D}_{r} = \{ v_{2} \mid (0,
v_{2}) \in D_{r} \}$. We also assume that there is only one critical
point $r$ in $M(r)$.

Define
\[
L = \{ (x,y) \in S_{p}^{-} \times M(r) \mid \text{$x$ and $y$ are
connected by a generalized flow line.} \}.
\]
Then $\partial L = \mathcal{M}(p,r) \times D_{r}$. And
\[
L^{\circ} = \{ (x,y) \in L \mid \text{$x$ and $y$ are connected by a
unbroken flow line.} \},
\]
$L$ is identified with $\widehat{D_{p}}$ because $(x,y) \in L$ is a
pair of points on a generalized flow line connecting $p$ and $y$.
Since $L \subseteq S_{p}^{-} \times M(r)$, we may consider the
natural projection $\pi: L \longrightarrow M(r)$. Moreover, $\pi$
identifies $L^{\circ}$ with $D_{p}$, and $\pi$ coincides with the
above identification between $L$ and $\widehat{D_{p}}$. The
orientation of $\widehat{D_{p}}$ gives $L$ an orientation, and $L$
gives $\partial L$ a boundary orientation. We only need to check the
difference between the boundary orientation and the product
orientation of $\partial L$.

Fix $((0,x_{2}), (x_{1},0)) \in \mathcal{M}(p,r) \times D_{r}$. Just
as Lemma \ref{orientation_2}, we give a locally collar neighborhood
parametrization $\varphi: [0,\delta) \times (\Omega_{2} \cap
\widetilde{\mathcal{M}}(p,r)) \times \widetilde{D}_{r}
\longrightarrow V_{-} \times V_{+} \times V_{-} \times V_{+}$ such
that
\begin{equation}\label{orientation_4_1}
\varphi (s, v_{2}, v_{1}) = (s v_{1}, \theta(s v_{1}, v_{2}), v_{1},
s \theta(s v_{1}, v_{2})),
\end{equation}
where $\theta$ is defined in Lemma \ref{orientation_1}. It's
necessary to point out that this argument includes the special case
of $\textrm{ind}(r) = 0$. In this case, $\widetilde{D}_{r} = \{0\}$,
$\varphi (s, v_{2}, v_{1}) = (0, v_{2}, 0, s v_{2})$ and $d \varphi
\frac{\partial}{\partial x_{1}}$ is the sign $\pm 1$ assigned to
$D_{r}$.

Suppose $(0, \frac{\partial}{\partial x_{2}}$) and
$(\frac{\partial}{\partial x_{1}}, 0)$ are positive basis of
$T_{(0,x_{2})} \mathcal{M}(p,r)$ and $T_{(x_{1},0)} D_{r}$
respectively. At $(s, x_{2}, x_{1})$, we have
\begin{equation}\label{orientation_4_3}
d \varphi \frac{\partial}{\partial s} = (x_{1}, d \theta \cdot
x_{1}, 0, \theta + s \cdot d \theta \cdot x_{1}),
\end{equation}
\[
d \varphi \frac{\partial}{\partial x_{2}} = \left( 0, d \theta
\frac{\partial}{\partial x_{2}}, 0, s \cdot d \theta
\frac{\partial}{\partial x_{2}} \right), \ d \varphi
\frac{\partial}{\partial x_{1}} = \left( s \frac{\partial}{\partial
x_{1}}, s \cdot d \theta  \frac{\partial}{\partial x_{1}},
\frac{\partial}{\partial x_{1}}, s^{2} \cdot d \theta
\frac{\partial}{\partial x_{1}} \right).
\]
We shall check that, when $s \in [0, \delta)$, $\{ -d \varphi
\frac{\partial}{\partial s}, d \varphi \frac{\partial}{\partial
x_{2}}, d \varphi \frac{\partial}{\partial x_{1}} \}$ coincides with
the orientation of $L$.

When $s \neq 0$, $\varphi(s, x_{2}, x_{1}) \in L^{\circ}$. By the
definition of the orientation of $L^{\circ}$, $\pi: L^{\circ}
\longrightarrow D_{p}$ preserves its orientation. Thus, we only need
to show that, when $s \neq 0$,
\begin{eqnarray*}
& & \left \{- d\pi \cdot d \varphi \frac{\partial}{\partial s}, d\pi
\cdot d \varphi \frac{\partial}{\partial x_{2}}, d\pi \cdot d
\varphi \frac{\partial}{\partial x_{1}} \right \} \\
& = & \left \{ (0, -\theta - s \cdot d \theta \cdot x_{1}), \left(
0, s \cdot d \theta \frac{\partial}{\partial x_{2}} \right), \left(
\frac{\partial}{\partial x_{1}}, s^{2} \cdot d \theta
\frac{\partial}{\partial x_{1}} \right) \right \}
\end{eqnarray*}
gives the orientation of $D_{p}$ at $\pi \varphi (s, x_{2}, x_{1})$.
By (\ref{orientation_4_1}), we know that $\pi \varphi (s, x_{2},
x_{1})$ $= (x_{1}, s \theta(sx_{1}, x_{2}))$ is connected with $(s
x_{1}, \theta(s x_{1}, x_{2})) \in S_{p}^{-}$ by an unbroken flow
line. Consider the flow map $\psi$ in $U$ such that $\psi (v_{1},
v_{2}) = (s^{-1} v_{1}, s v_{2})$. Then $\psi (s x_{1}, \theta(s
x_{1}, x_{2})) = (x_{1}, s \theta(sx_{1}, x_{2}))$ and $\psi$
preserves the orientation of $D_{p}$. Thus we only need to check
that
\begin{eqnarray*}
& & \left \{- d \psi^{-1} \cdot d\pi \cdot d \varphi
\frac{\partial}{\partial s}, d \psi^{-1} \cdot d\pi \cdot d \varphi
\frac{\partial}{\partial x_{2}}, d \psi^{-1} \cdot d\pi \cdot d
\varphi \frac{\partial}{\partial x_{1}} \right \} \\
& = & \left \{ (0, -s^{-1} \theta - d \theta \cdot x_{1}), \left( 0,
d \theta \frac{\partial}{\partial x_{2}} \right), \left( s
\frac{\partial}{\partial x_{1}}, s \cdot d \theta
\frac{\partial}{\partial x_{1}} \right) \right \}
\end{eqnarray*}
gives the orientation of $D_{p}$ at $(s x_{1}, \theta(s x_{1},
x_{2}))$. Change the above base to the orientation equivalent base
$\{ (0, -\theta - s \cdot d \theta \cdot x_{1}), (0, d \theta
\frac{\partial}{\partial x_{2}}), (\frac{\partial}{\partial x_{1}},
d \theta \frac{\partial}{\partial x_{1}}) \}$. When $s=0$, it
becomes
\begin{equation}\label{orientation_4_2}
\left \{ (0, - x_{2}), \left( 0, \frac{\partial}{\partial x_{2}}
\right), \left( \frac{\partial}{\partial x_{1}}, d \theta
\frac{\partial}{\partial x_{1}} \right) \right \}.
\end{equation}
Since $(\frac{\partial}{\partial x_{1}},0)$ is a positive base of
$V_{-} \times \{0\} = T_{r}D_{r}$, $(\frac{\partial}{\partial
x_{1}}, d \theta \frac{\partial}{\partial x_{1}})$ represents a
positive base of $N_{(0,x_{2})} (\mathcal{M}(p,r), S_{p}^{-})$. At
$(0, x_{2})$, $(0, -x_{2}) = - \nabla f$, and $(0,
\frac{\partial}{\partial x_{2}})$ is a positive base of
$T_{(0,x_{2})} \mathcal{M}(p,r)$. Thus (\ref{orientation_4_2}) gives
the orientation of $D_{r}$.
\end{proof}

\begin{remark}\label{orientation_remark_latour}
It seems that, in the finite dimensional case, the paper
\cite{latour} gets orientation relations by the same strategy as
this paper has. The following key fact is pointed out without
explanations in \cite[p. 155]{latour}. ``La vari\'{e}t\'{e}
$W^{s}(c,\varepsilon) \times L_{A}(c,d)$ est de codimension $0$ dans
le bord de $\overline{W}^{s} (d, A+\varepsilon)$ et la normale
sortante $n_{0}$ \`{a} $\overline{W}^{s}(d, A+\varepsilon)$ en
$(c,l) \in W^{s}(c) \times L_{A}(c,d)$ s'identifie au vecteur
tangent \`{a} $l$ orient\'{e}e par $- \xi$." (Here $\xi = \nabla
f$.) This is proved in the paper by moving $-d \varphi
\frac{\partial}{\partial s}$ (see (\ref{orientation_4_3})) to be
$(0, -x_{2}) = - \nabla f$ in (\ref{orientation_4_2}). Thus our work
may give the details omitted in \cite{latour}.
\end{remark}

\subsection{Proof of (3) of Theorem \ref{orientation}}
The proof of (3) is a mixture of those of (1) and (2).
\begin{proof}
We shall prove that $\partial (\mathcal{W}(p,q) \sqcup
\mathcal{M}(p,r) \times \mathcal{W}(r,q)) = \mathcal{M}(p,r) \times
\mathcal{W}(r,q)$ and $\partial (\mathcal{W}(p,q) \sqcup
\mathcal{W}(p,r) \times \mathcal{M}(r,q)) =
(-1)^{\textrm{ind}(p)-\textrm{ind}(r)+1} \mathcal{W}(p,r) \times
\mathcal{M}(r,q)$. Recall the evaluation map $e: \mathcal{W}(p,q)
\sqcup \mathcal{M}(p,r) \times \mathcal{W}(r,q) \longrightarrow M$
(or $\mathcal{W}(p,q) \sqcup \mathcal{W}(p,r) \times
\mathcal{M}(r,q) \longrightarrow M$) in (3) of Theorem
\ref{w(p,q)_manifold}. Define $M(r) = f^{-1}((f(r)-\epsilon,
f(r)+\epsilon))$, $M(r)^{+} = f^{-1}((f(r), f(r)+\epsilon))$ and
$M(r)^{-} = f^{-1}((f(r)-\epsilon, f(r)))$. We have four cases. Just
as the proofs of (1) and (2), we will define a manifold $L$ which
plays a important role all through this proof, where
\[
L = \{ (x,y) \mid \text{$x$ and $y$ are connected by a generalized
flow line.} \},
\]
and $(x,y)$ is contained in some different manifolds in each case.
Also, $x$ and $y$ will be connected by a unbroken flow line if and
only if $(x,y) \in L^{\circ}$.

Case (a). The boundary is $\mathcal{M}(p,r) \times \mathcal{W}(r,q)$
and $r \neq q$.

We reduce this problem to considering the case of
$e^{-1}(M(r)^{-})$. Denote $e^{-1}(M(r)^{-})$ by
$\widehat{W_{p,q}}$, $\mathcal{W}(r,q) \cap M(r)^{-}$ by $W_{r,q}$,
$\mathcal{D}(p) \cap M(r)^{-}$ by $D_{p}$ and $\mathcal{D}(r) \cap
M(r)^{-}$ by $D_{r}$. Clearly, as unoriented manifolds, $\partial
\widehat{W_{p,q}} = \mathcal{M}(p,r) \times W_{r,q}$.

Define $L \subseteq S_{p}^{-} \times M(r)^{-}$. The natural
projection $\pi_{2}: L \longrightarrow M(r)^{-}$ identifies
$L^{\circ}$ with $D_{p}$. $\partial L = \mathcal{M}(p,r) \times
D_{r}$. The orientation of $D_{p}$ gives $L$ an orientation. In the
proof of (2), it has been verified that the boundary orientation and
the product orientation of $\partial L$ are the same. We identify
$\pi_{2}^{-1} (\mathcal{A}(q))$ with $\widehat{W_{p,q}}$ and
identify $(\pi_{2}|_{\partial L})^{-1} (\mathcal{A}(q))$ with
$\partial \widehat{W_{p,q}}$. An argument similar to that in (1)
completes the proof.

Case (b). The boundary is $\mathcal{M}(p,q) \times
\mathcal{W}(q,q)$.

Replace $M(r)^{-}$ by $M(q)$ in Case (a). The same argument gives a
proof.

Case (c). The boundary is $\mathcal{W}(p,r) \times \mathcal{M}(r,q)$
and $p \neq r$.

Reduce to the case of $e^{-1}(M(r)^{+})$. Denote $e^{-1}(M(r)^{+})$
by $\widehat{W_{p,q}}$, $\mathcal{W}(p,r) \cap M(r)^{+}$ by
$W_{p,r}$ and $\mathcal{D}(p) \cap M(r)^{+}$ by $D_{p}$.

Define $L \subseteq D_{p} \times M^{-}$, where $M^{-} = f^{-1}(f(r)
- \epsilon)$. The projection $\pi_{1}: L \longrightarrow D_{p}$
identifies $L^{\circ}$ with $D_{p} - W_{p,r}$, and $\partial L =
W_{p,r} \times S_{r}^{-}$. Then $D_{p}$ gives $L$ an orientation.
Consider another projection $\pi_{2}: L \longrightarrow M^{-}$. Then
$\pi_{2}^{-1} (S_{q}^{+})$ can be identified with
$\widehat{W_{p,q}}$ and $(\pi_{2}|_{\partial L})^{-1} (S_{q}^{+})$
can be identified with $\partial \widehat{W_{p,q}}$. We reduce the
proof to checking the difference of two orientations of $\partial
L$.

Define $\widetilde{W}_{p,r} = \{ v_{2} \mid (0, v_{2}) \in W_{p,r}
\}$. Similar to Lemma \ref{orientation_1} and \ref{orientation_2},
there is a neighborhood $\Omega_{2}$ of $x_{2}$ in
$\widetilde{W}_{p,r}$ and a parametrization $\tilde{\theta}:
B_{1}(\delta) \times \Omega_{2} \longrightarrow D_{p}$ such that
$\tilde{\theta} (v_{1}, v_{2}) = (v_{1}, \theta(v_{1}, v_{2}))$ and
$\theta(0, v_{2}) = v_{2}$. We also have a local collar embedding
$\varphi: [0,\delta) \times \Omega_{2} \times \widetilde{S}_{r}^{-}
\longrightarrow V_{-} \times V_{+} \times V_{-} \times V_{+}$ such
that
\begin{eqnarray*}
\varphi(s, v_{2}, v_{1}) & = & \left( s v_{1}, \theta(s v_{1},
v_{2}), (2 \epsilon)^{-\frac{1}{2}} (\epsilon + (\epsilon^{2} + 4
s^{2} \epsilon \|\theta(s v_{1},
v_{2})\|^{2})^{\frac{1}{2}})^{\frac{1}{2}}
v_{1}, \right. \\
& & \left. s (2 \epsilon)^{\frac{1}{2}} (\epsilon + (\epsilon^{2} +
4 s^{2} \epsilon \|\theta(s v_{1},
v_{2})\|^{2})^{\frac{1}{2}})^{-\frac{1}{2}} \theta(s v_{1}, v_{2})
\right).
\end{eqnarray*}

Just as the proof of (1), we reduce the proof to checking the
orientation of $\{ - d \pi_{1} \cdot d \varphi
\frac{\partial}{\partial s}, d \pi_{1} \cdot d \varphi
\frac{\partial}{\partial x_{2}}, d \pi_{1} \cdot d \varphi
\frac{\partial}{\partial x_{1}} \}$ and then that of $\{ -(x_{1}, d
\theta \cdot x_{1}), (0, \frac{\partial}{\partial x_{2}}),
(\frac{\partial}{\partial x_{1}}, d \theta \frac{\partial}{\partial
x_{1}}) \}$ in $T_{(0, x_{2})} D_{p}$. Here, $\{ (0,
\frac{\partial}{\partial x_{2}}) \}$ is a positive base of $T_{(0,
x_{2})}$ $W_{p,r}$. It contains $\textrm{ind}(p) - \textrm{ind}(r)$
vectors. Thus the orientations are
\begin{eqnarray*}
& & \textrm{Or} \left \{ -(x_{1}, d \theta \cdot x_{1}), \left( 0,
\frac{\partial}{\partial x_{2}} \right), \left(
\frac{\partial}{\partial x_{1}}, d \theta \frac{\partial}{\partial x_{1}} \right) \right \} \\
& = & (-1)^{\textrm{ind}(p) - \textrm{ind}(r) + 1} \textrm{Or} \left
\{ \left( 0, \frac{\partial}{\partial x_{2}} \right), \left( x_{1},
d \theta \cdot x_{1} \right), \left( \frac{\partial}{\partial
x_{1}}, d \theta \frac{\partial}{\partial x_{1}} \right) \right \}.
\end{eqnarray*}
Since $\{ (0, \frac{\partial}{\partial x_{2}}), (x_{1}, d \theta
\cdot x_{1}), (\frac{\partial}{\partial x_{1}}, d \theta
\frac{\partial}{\partial x_{1}}) \}$ is positive, the proof is
complete.

Case (d). The boundary is $\mathcal{W}(p,p) \times
\mathcal{M}(p,q)$.

Reduce to the case of $e^{-1}(M(p))$. Denote $e^{-1}(M(p))$ by
$\widehat{W_{p,q}}$ and $\mathcal{D}(p) \cap M(p)$ by $D_{p}$. Then
$\partial \widehat{W_{p,q}} = \mathcal{W}(p,p) \times
\mathcal{M}(p,q) = \{p\} \times \mathcal{M}(p,q)$.

Define $L \subseteq D_{p} \times M^{-}$, where $M^{-} =
f^{-1}(f(p)-\epsilon)$. Then $\pi_{1}: L \longrightarrow D_{p}$
identifies $L^{\circ}$ with $D_{p} - \{p\}$, and $\partial L =
\mathcal{W}(p,p) \times S_{p}^{-}$. Moreover, $D_{p}$ gives $L$ an
orientation. Consider $\pi_{2}: L \longrightarrow M^{-}$. Then
$\pi_{2}^{-1}(S_{q}^{+})$ can be identified with $\widehat{W_{p,q}}$
and $(\pi_{2}|_{\partial L})^{-1}(S_{q}^{+})$ can be identified with
$\partial \widehat{W_{p,q}}$. We reduce the proof to checking the
two orientations of $\partial L$.

Consider the collar embedding $\varphi: [0,\sqrt{2}) \times
\widetilde{S}_{p}^{-} \longrightarrow V_{-} \times V_{+} \times
V_{-} \times V_{+}$ such that $\varphi(s, v_{1}) = (s v_{1}, 0,
v_{1}, 0)$. Since $\mathcal{W}(p,p)$ has orientation $+1$, we only
need to check the orientation difference between $\{ - d \varphi
\frac{\partial}{\partial s}, d \varphi \frac{\partial}{\partial
x_{1}} \}$ and $L$. When $s=1$, $\textrm{Or} \{ - d \pi_{1} \cdot d
\varphi \frac{\partial}{\partial s}, d \pi_{1} \cdot d \varphi
\frac{\partial}{\partial x_{1}}  \} = -  \textrm{Or} \{ - \nabla f,
(\frac{\partial}{\partial x_{1}}, 0) \}$ is the negative orientation
of $T_{(x_{1},0)} D_{p}$. Thus $\partial \widehat{W_{p,q}} = -
\mathcal{W}(p,p) \times \mathcal{M}(p,q)$.
\end{proof}

\begin{remark}\label{cup_product}
The papers \cite{austin_braam} and \cite{viterbo} compute the cup
product of $H^{*}(M;R)$ via Morse Theory. Both
\cite[(2.2)]{austin_braam} and  \cite[lem.\ 2 and 3]{viterbo}
neglect signs. Theorem \ref{orientation}, (3), can tell us the the
signs if we do care about them. The following is an explanation of
\cite[lem.\ 3]{viterbo}. We shall use notation different from that
in \cite{viterbo}. Our $\mathcal{W}(p,q)$ and $\# \mathcal{M}(p,q)$
are $\mathcal{M}(p,q)$ and $n(p,q)$ in \cite{viterbo} respectively.
A real coefficients Thom-Smale cochain complex is defined in
\cite{viterbo} as $C^{*} = \bigoplus_{n}
\bigoplus_{\textrm{ind}(p)=n} R[p]$ with coboundary operator
\[
  \delta q = \sum_{\textrm{ind}(p) = \textrm{ind}(q) + 1} \# \mathcal{M}(p,q) p,
\]
where $\# \mathcal{M}(p,q)$ is defined in Theorem
\ref{boundary_operator}. Let $\omega$ be a differential form, in
\cite{viterbo}, a cup product action of $\omega$ on $C^{*}$ is
defined as
\[
  \pi(\omega) q = \sum_{p} \left( \int_{\mathcal{W}(p,q)} \omega \right) p.
\]

The paper \cite[lem.\ 3]{viterbo} states that $\pi(d \omega) =
\delta \pi (\omega) \pm \pi (\omega) \delta$. Actually, (3) of
Theorem \ref{orientation} tells us
\begin{equation}\label{cup_product_1}
\pi(d \omega) = \delta \pi (\omega) + (-1)^{|\omega| + 1} \pi
(\omega) \delta.
\end{equation}
If $\alpha$ and $\beta$ are two singular cochains, then $\delta
\alpha \cup \beta = \delta (\alpha \cup \beta) + (-1)^{|\alpha| + 1}
\alpha \cup \delta \beta$. By comparison with this,
(\ref{cup_product_1}) is reasonable. The proof of
(\ref{cup_product_1}) is as follows.

\begin{eqnarray*}
& & \pi(d \omega) q = \sum_{p} \left( \int_{\mathcal{W}(p,q)}
d \omega \right) p \\
& = & \sum_{p} \left( \int_{\overline{\mathcal{W}(p,q)}} e^{*} d
\omega \right) p = \sum_{p} \left( \int_{\partial^{1}
\overline{\mathcal{W}(p,q)}} e^{*} \omega \right)p \\
& = & \sum_{p} \left( \sum_{r} \int_{\mathcal{M}(p,r) \times
\mathcal{W}(r,q)} e^{*} \omega + \sum_{r} (-1)^{\textrm{ind}(p) -
\textrm{ind}(r) + 1} \int_{\mathcal{W}(p,r) \times \mathcal{M}(r,q)}
e^{*} \omega \right)p.
\end{eqnarray*}
Here $e$ is defined in (3) of Theorem \ref{w(p,q)_manifold}. When
$\textrm{dim}(\mathcal{W}(r,q)) < |\omega|$ (or
$\textrm{dim}(\mathcal{W}(p,r))$ $< |\omega|$), $e^{*} \omega = 0$
on $\mathcal{M}(p,r) \times \mathcal{W}(r,q)$ (or $\mathcal{W}(p,r)
\times \mathcal{M}(r,q)$). Thus
\begin{eqnarray*}
\pi(d \omega) q & = & \sum_{p} \left( \sum_{\textrm{ind}(r) =
\textrm{ind}(p)
-1} \# \mathcal{M}(p,r) \int_{\mathcal{W}(r,q)} \omega \right. \\
& & \left. + \sum_{\textrm{ind}(r) = \textrm{ind}(q) + 1}
(-1)^{\textrm{ind}(p) - \textrm{ind}(q)} \#
\mathcal{M}(r,q) \int_{\mathcal{W}(p,r)} \omega  \right)p \\
& = & \delta \pi (\omega) q + (-1)^{\textrm{ind}(p) -
\textrm{ind}(q)} \pi (\omega) \delta q.
\end{eqnarray*}
This completes the proof since $\textrm{ind}(p) - \textrm{ind}(q) =
|\omega| + 1$.
\end{remark}
\section{CW Structure}\label{section_CW_structure}

\subsection{Proof of Theorem \ref{disk}}
We present an elementary proof here. A non-elementary one is
sketched in Remark \ref{disk_non_elementary}.

Recall the evaluation map $e: \overline{\mathcal{D}(p)}
\longrightarrow M$ in (3) of Theorem \ref{d(p)_manifold}. We shall
``pull back" the vector field $- \nabla f$ on $M$ to
$\overline{\mathcal{D}(p)}$ via $e$. First, we need to explain the
definition of the pull back. We know $\overline{\mathcal{D}(p)} =
\bigsqcup_{I} \mathcal{D}_{I}$, where $I$ are critical sequences
with head $p$. The restriction of $e$ on $\mathcal{D}_{I} =
\mathcal{M}_{I} \times \mathcal{D}(r_{k})$ is the projection
$\mathcal{M}_{I} \times \mathcal{D}(r_{k}) \longrightarrow
\mathcal{D}(r_{k})$. For all $(\alpha, x) \in \mathcal{M}_{I} \times
\mathcal{D}(r_{k})$, $\{0\} \times T_{x} \mathcal{D}(r_{k})
\subseteq T_{\alpha} \mathcal{M}_{I} \times T_{x} \mathcal{D}(r_{k})
= T_{(\alpha, x)} (\mathcal{M}_{I} \times \mathcal{D}(r_{k}))$ and
the derivative of $e$ gives an isomorphism $d e: \{0\} \times T_{x}
\mathcal{D}(r_{k}) \longrightarrow T_{x} \mathcal{D}(r_{k})$. Thus
there is a unique vector $(0, - \nabla f) \in \{0\} \times T_{x}
\mathcal{D}(r_{k})$ such that $de(0, - \nabla f) = - \nabla f$. Then
$(0, - \nabla f(x)) \in T_{(\alpha, x)} (\mathcal{M}_{I} \times
\mathcal{D}(r_{k}))$ is the pull back of $- \nabla f(x)$.

\begin{lemma}\label{pull_back_gradient}
There is a smooth vector field $X$ on $\overline{\mathcal{D}(p)}$
such that $\forall (\alpha, z) \in \mathcal{M}_{I} \times
\mathcal{D}(r_{k})$, $X(\alpha, z) \in \{0\} \times T_{z}
\mathcal{D}(r_{k})$ and $de (X) = - \nabla f$.
\end{lemma}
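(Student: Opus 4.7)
The plan is to define $X$ locally on each coordinate patch $U(i) = e^{-1}(M(i))$ introduced in the proof of Theorem \ref{d(p)_manifold}, and to show that the local pieces glue by a uniqueness argument on each stratum. Recall that $U(i)$ was identified there with a smooth embedded submanifold $\tilde{U}(i)$ of
\[
Q(i) = \prod_{j=1}^{i-1} P_{j} \times Q_{i}^{+} \subseteq \prod_{j=1}^{i-1}(M_{j}^{+} \times M_{j}^{-}) \times M_{i}^{+} \times M(i),
\]
with $e|_{U(i)}$ corresponding to the projection onto the last $M(i)$-factor. On this ambient product I will consider the smooth vector field
\[
\tilde{X}_{i} = (0, \dots, 0, -\nabla f),
\]
whose only non-zero component is $-\nabla f$ on $M(i)$, and define $X_{i}$ to be its restriction to $\tilde{U}(i)$ once tangency has been verified.

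Tangency reduces to tangency of $(0, -\nabla f)$ to $Q_{i}^{+}$ inside $M_{i}^{+} \times M(i)$, since the $P_{j}$-components of $\tilde{X}_{i}$ already vanish. Geometrically, if $(x^{+}, z) \in Q_{i}^{+}$ with joining generalized flow line $\Gamma$, then flowing $z$ by $-\nabla f$ for small time yields $(x^{+}, \phi_{t}(z))$ still in $Q_{i}^{+}$: either $z$ and $\phi_{t}(z)$ lie on the same flow line, or $z$ belongs to the ascending manifold of some critical point of $M(i)$ at level $c_{i}$ and then so does $\phi_{t}(z)$, so in either case $x^{+}$ continues to connect to $\phi_{t}(z)$ via $\Gamma$. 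In the explicit local collar $\varphi$ of Lemma \ref{q_manifold}, the same tangency can be verified directly from the parametrization. Once tangency is established, $X_{i}$ is smooth on $U(i)$ as the restriction of a smooth vector field to a smoothly embedded submanifold with corners, and $de(X_{i}) = -\nabla f$ follows immediately from $e|_{U(i)}$ being the projection to $M(i)$. Moreover, on a stratum $\mathcal{D}_{I} = \mathcal{M}_{I} \times \mathcal{D}(r_{k})$ inside $U(i)$, varying the last coordinate along $\mathcal{D}(r_{k})$ under the identification $\tilde{E}_{I}$ keeps all the other coordinates $x_{j}^{\pm}(\Gamma)$ fixed (they depend only on the upper part of the broken flow line), so $\tilde{X}_{i}$ at $\tilde{E}_{I}(\alpha, z)$ corresponds precisely to $(0, -\nabla f(z)) \in \{0\} \times T_{z}\mathcal{D}(r_{k})$.

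On each stratum $\mathcal{M}_{I} \times \mathcal{D}(r_{k})$, the condition $X(\alpha, z) \in \{0\} \times T_{z}\mathcal{D}(r_{k})$ forces the $T_{\alpha}\mathcal{M}_{I}$-component of $X$ to vanish, and then $de(X) = -\nabla f$ together with flow-invariance of $\mathcal{D}(r_{k})$ (which ensures $-\nabla f(z) \in T_{z}\mathcal{D}(r_{k})$) pins down $X(\alpha, z) = (0, -\nabla f(z))$ uniquely. Hence $X_{i}$ and $X_{i+1}$ necessarily agree on the overlap $U(i) \cap U(i+1)$, and the local fields patch to a globally defined smooth vector field $X$ on $\overline{\mathcal{D}(p)}$ with the required properties. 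The main technical point will be establishing the tangency of $\tilde{X}_{i}$ to $\tilde{U}(i)$ at those corner strata where the generalized flow line breaks exactly at level $c_{i}$ (that is, where $f(r_{k}) = c_{i}$); here the interaction of the $(0,-\nabla f)$ direction with the $S_{r_{k}}^{+}$-direction in the boundary of $Q_{i}^{+}$ is delicate, but reduces to a direct computation in the explicit collar parametrization of Lemma \ref{q_manifold}.
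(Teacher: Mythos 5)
Your proposal is correct and follows essentially the same strategy as the paper: cover $\overline{\mathcal{D}(p)}$ by the sets $U(i)$, embed each $U(i)$ in a product of level sets and $M(i)$ (you use the embedding into $Q(i) = \prod P_j \times Q_i^+$; the paper uses the equivalent $E(i)$ from Lemma~\ref{d(p)_embedding}), restrict the ambient field $(0,\dots,0,-\nabla f)$, and verify it agrees with the pull-back on strata. The paper's argument is slightly more streamlined: it exhibits the explicit curve $\xi(t) = (z_0,\dots,z_{i-1},\gamma_n(t))$ inside $E(U(i))$ (and inside $E(\{\alpha\}\times\mathcal{D}(r_k))$), which simultaneously delivers tangency of the ambient field and its identification with $(0,-\nabla f)\in\{0\}\times T_z\mathcal{D}(r_k)$ in one stroke, whereas you separate these into a tangency claim for $Q_i^+$ and a stratum-wise verification; but your geometric observation that $(x^+,\phi_t(z))$ stays in $Q_i^+$ under the flow is exactly that curve argument, and already handles the corner strata you flag at the end (the curve lies in $S_{p_i}^+\times D_{p_i}$ there by flow-invariance of $\mathcal{D}(p_i)$), so the ``delicate direct computation in the collar'' you defer to Lemma~\ref{q_manifold} is not actually needed.
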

\begin{proof}
Let $X$ be the pull back of $- \nabla f$ as explained above. We only
need to prove that $X$ is smooth.

Suppose the critical values in $(- \infty, f(p)]$ are exactly $f(p)
= c_{0} > c_{1} > \cdots > c_{l}$. Let $U(i) = e^{-1} \circ f^{-1}
((c_{i+1}, c_{i-1}))$, where $c_{-1} = + \infty$ and $c_{l+1} = -
\infty$. By Theorem \ref{d(p)_manifold}, each $U(i)$ is open and
$\bigcup_{i} U(i) = \overline{\mathcal{D}(p)}$, and we only need to
prove that $X$ is smooth in each $U(i)$. By Lemma
\ref{d(p)_embedding}, there is a smooth embedding $E(i): U(i)
\longrightarrow \prod_{j=0}^{i-1} f^{-1}(a_{j}) \times M(i)$, where
$a_{j} \in (c_{j+1}, c_{j})$ is a regular value and $M(i) = f^{-1}
((c_{i+1}, c_{i-1}))$. Define a vector field $\widehat{X} = (0,
\cdots, 0, - \nabla f) \in \prod_{j=0}^{i-1} T f^{-1}(a_{j}) \times
T M(i)$ on $\prod_{j=0}^{i-1} f^{-1}(a_{j}) \times M(i)$. Clearly,
$\widehat{X}$ is smooth. For brevity, denote $E(i)$ by $E$. We shall
prove that the restriction of $\widehat{X}$ on $E (U(i))$ is $X$.

Each $(\alpha, z) \in (\mathcal{M}_{I} \times \mathcal{D}(r_{k}))
\cap U(i)$ represents a pair $(\Gamma, z)$, where $\Gamma$ is a
generalized flow line connecting $p$ and $z$ (see
(\ref{comapctify_d})). Suppose $\Gamma = (\gamma_{0}, \cdots,
\gamma_{n})$, where $\gamma_{0} \equiv p$ and $\gamma_{n}(0) = z$.
Suppose the intersection of $\Gamma$ with $f^{-1}(a_{j})$ is
$z_{j}$. Then $\xi(t) = (z_{0}, \cdots, z_{i-1}, \gamma_{n}(t))$ is
a curve in $E (U(i)) \subseteq \prod_{j=0}^{i-1} f^{-1}(a_{j})
\times M(i)$ such that
\[
\xi'(0) = (0, \cdots, 0, - \nabla f) = \widehat{X}, \qquad de \cdot
\xi'(0) = - \nabla f.
\]
Moreover, since $\xi(t) \subseteq E( \{ \alpha \} \times
\mathcal{D}(r_{k}) )$, we infer $\xi'(0) \in d E (\{0\} \times T_{z}
\mathcal{D}(r_{k}))$. Identify $U(i)$ with $E (U(i))$, then
$\widehat{X} = \xi'(0) = X$ at $(\alpha, z)$. This completes the
proof.
\end{proof}

In the following, we use the terminology of \cite{douady}. It's easy
to see that Definition \ref{tangent_sector} is equivalent to the
\textit{secteur tangent} in \cite[p. 3]{douady}.
\begin{definition}\label{tangent_sector}
Suppose $L$ is a manifold with corners. For all $x \in L$,
\[
A_{x}L = \{ v \in T_{x}L \mid \text{$v = \gamma'(0)$ for some smooth
curve $\gamma:$ $[0,\epsilon) \longrightarrow L$.} \}
\]
is the tangent sector of $L$ at $x$.
\end{definition}

\begin{definition}\label{strict_outward}
Suppose $L$ is a manifold with corners, $\partial^{k} L$ is the
$k$-stratum ($k>0$) of $L$, $x \in \partial^{k} L$ and $v \in T_{x}
L$. $v$ is in the corner if $v \in T_{x} \partial^{k} L$. $v$ is
outward if $v \notin A_{x} L$. $v$ is strictly outward if $- v$ is
in the interior of $A_{x} L$.
\end{definition}

Clearly, strictly outward implies outward. We know that $A_{x} L$ is
linear isomorphic to $[0, +\infty)^{k} \times R^{n-k}$. Under this
isomorphism, $v$ is in the corner if and only if $v \in \{0\}^{k}
\times R^{n-k}$; $v$ is strictly outward if and only if $v \in (-
\infty, 0)^{k} \times R^{n-k}$. This does not depend on the
isomorphisms. It's easy to see the above vector field $X$ is in the
corner. We present the following easy lemma without proof.

\begin{lemma}\label{strict_outward_sum}
If both $v_{1}$ and $v_{2}$ are strictly outward, so are $v_{1} +
v_{2}$ and $l v_{1}$ for $l > 0$. If $v_{1}$ is strictly outward and
$v_{2}$ is in the corner, then $v_{1} + v_{2}$ is strictly outward.
\end{lemma}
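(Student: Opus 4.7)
The plan is to reduce everything to the model $A_x L \cong [0,+\infty)^k \times \mathbb{R}^{n-k}$ already invoked in the paragraph preceding the lemma. Fix a linear isomorphism $\Phi: T_xL \to \mathbb{R}^n$ sending $A_xL$ onto $[0,+\infty)^k \times \mathbb{R}^{n-k}$. Write coordinates $\Phi(v) = (v^{(1)},\dots,v^{(k)},v^{(k+1)},\dots,v^{(n)})$. Under $\Phi$, the interior of $A_xL$ is $(0,+\infty)^k \times \mathbb{R}^{n-k}$, so $v$ is strictly outward exactly when $v^{(i)} < 0$ for all $i=1,\dots,k$. Likewise, $T_x\partial^kL$ corresponds to $\{0\}^k \times \mathbb{R}^{n-k}$, so $v$ is in the corner exactly when $v^{(i)} = 0$ for all $i=1,\dots,k$. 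These characterizations are intrinsic (independent of $\Phi$) because they are pullbacks of the topological interior of $A_xL$ and of the linear subspace $T_x\partial^kL$ respectively.

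With this dictionary the three assertions become elementary sign statements in the first $k$ coordinates. If $v_1^{(i)}<0$ and $v_2^{(i)}<0$ for all $i\le k$, then $(v_1+v_2)^{(i)} = v_1^{(i)}+v_2^{(i)} < 0$, so $v_1+v_2$ is strictly outward. If $v_1^{(i)}<0$ and $l>0$, then $(lv_1)^{(i)} = l\,v_1^{(i)}<0$, so $lv_1$ is strictly outward. Finally, if $v_1^{(i)}<0$ and $v_2^{(i)}=0$ for all $i\le k$, then $(v_1+v_2)^{(i)} = v_1^{(i)} + 0 < 0$, so $v_1+v_2$ is strictly outward. There is no real obstacle here beyond checking the identification of ``strictly outward'' with strict negativity of the first $k$ coordinates; everything else is a one-line sign argument, which is presumably why the author omits the proof.
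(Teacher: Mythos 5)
Your proof is correct and follows exactly the reduction the paper itself sets up in the paragraph before the lemma (identifying $A_xL$ with $[0,+\infty)^k\times\mathbb{R}^{n-k}$, under which ``strictly outward'' means all of the first $k$ coordinates are strictly negative and ``in the corner'' means they all vanish); the paper then states the lemma ``without proof,'' and your one-line sign check is precisely the argument being omitted.
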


The proof of the following lemma is in the Appendix.
\begin{lemma}\label{smoothness_norm}
Suppose $L$ is a manifold with corners, and $g: L \longrightarrow H$
is a smooth map where $H$ is a Hilbert space. If there exists a
smooth map $\tilde{g}: L \longrightarrow S(H)$ such that $g(x) =
\|g(x)\| \tilde{g}(x)$, then $\|g(x)\|$ is also smooth, where $S(H)$
is the unit sphere of $H$.
\end{lemma}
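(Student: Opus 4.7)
The plan is to exhibit $\|g(x)\|$ as a smooth function by rewriting it using the inner product of $H$, exploiting the extra data provided by $\tilde{g}$. The naive approach of writing $\|g(x)\| = \sqrt{\langle g(x), g(x)\rangle}$ fails because the square root is not smooth at $0$, and $g$ may well vanish on a large subset of $L$; this is precisely what makes the conclusion nontrivial.

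The key observation is that since $\tilde{g}(x) \in S(H)$, we have $\langle \tilde{g}(x), \tilde{g}(x)\rangle = 1$ identically on $L$. Taking the inner product of both sides of the hypothesis $g(x) = \|g(x)\|\, \tilde{g}(x)$ with $\tilde{g}(x)$ yields
\[
   \langle g(x), \tilde{g}(x) \rangle \;=\; \|g(x)\| \,\langle \tilde{g}(x), \tilde{g}(x) \rangle \;=\; \|g(x)\|.
\]
So $\|g(x)\|$ coincides pointwise with the function $x \mapsto \langle g(x), \tilde{g}(x)\rangle$.

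The inner product $\langle \cdot, \cdot \rangle : H \times H \longrightarrow \mathbb{R}$ is a continuous bilinear form, hence a smooth (in fact real-analytic) map. Since $g$ and $\tilde{g}$ are smooth maps from $L$ into $H$, the composition $x \mapsto \langle g(x), \tilde{g}(x)\rangle$ is smooth on $L$ in the sense of manifolds with corners (smoothness being a local notion checked in charts modeled on $[0,+\infty)^n$, where compositions of smooth maps remain smooth). Therefore $\|g(x)\|$ is smooth, finishing the proof. There is no real obstacle here; the substance of the lemma is entirely in recognizing that the hypothesis on $\tilde{g}$ lets one bypass the non-smoothness of the norm at the origin by replacing $\sqrt{\langle g, g\rangle}$ with the bilinear expression $\langle g, \tilde{g}\rangle$.
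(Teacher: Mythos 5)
Your proof is correct, and it is genuinely simpler than the paper's. You observe that pairing the identity $g(x)=\|g(x)\|\,\tilde g(x)$ with $\tilde g(x)$ and using $\langle\tilde g(x),\tilde g(x)\rangle=1$ gives the closed-form expression $\|g(x)\|=\langle g(x),\tilde g(x)\rangle$, which is smooth because the Hilbert inner product is a continuous (hence smooth) bilinear map and $g,\tilde g$ are smooth. The paper instead introduces the map $\varphi:[0,\infty)\times S(H)\to H\times S(H)$, $\varphi(\lambda,v)=(\lambda v,v)$, checks that $d\varphi$ is injective and that $\varphi$ admits a continuous left inverse $\theta(v_1,v_2)=(\|v_1\|,v_2)$, concludes $\varphi$ is a smooth embedding with smooth inverse on its image, and then reads off $\|g(x)\|$ as the first component of $\varphi^{-1}(g(x),\tilde g(x))$. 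Both arguments are valid, but yours dispenses with the embedding machinery entirely and replaces a geometric immersion argument by a one-line algebraic identity; the paper's version is more in the spirit of its surrounding parametrization lemmas (e.g.\ Lemmas \ref{orientation_1} and \ref{orientation_2}) and makes explicit the manifold-with-boundary structure of $\operatorname{Im}\varphi$, which is not needed for the conclusion but fits the paper's style. Your route is the one I would recommend.
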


Let $\tilde{f} = f \circ e$ defined on $\overline{\mathcal{D}(p)}$
be the pull back of $f$, then $X \cdot \tilde{f} = - \| (\nabla f) e
\|^{2} \leq 0$.
\begin{lemma}\label{critical_boundary}
Suppose $x \in \overline{\mathcal{D}(p)}$ be such that $e(x)$ is a
critical point. Let $U_{x}$ be a neighborhood of $x$. Then there is
a smooth vector field $Y_{x}$ on $\overline{\mathcal{D}(p)}$ such
that its support $\text{\rm supp}(Y_{x}) \subseteq U_{x}$, $Y_{x}(x)
\neq 0$ and $Y_{x} \tilde{f} \leq 0$. In addition, for all $y \in
\partial \overline{\mathcal{D}(p)}$, $Y_{x}(y)$ is strictly outward
if $Y_{x}(y) \neq 0$.
\end{lemma}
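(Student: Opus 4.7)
The point $x$ lies in some stratum $\mathcal{D}_I$ with $I=\{p,r_1,\dots,r_k\}$ and $x=(\alpha,r_k)$, so $x$ sits in the codimension-$k$ stratum of $\overline{\mathcal{D}(p)}$, adjacent to $k$ codimension-$1$ faces. My plan is to construct $Y_x$ in a local chart at $x$ as a negatively-weighted sum of the inward-pointing normals to these $k$ faces, multiplied by a bump function. First I would invoke Lemma \ref{embed_d(p)_normal} to obtain a chart $\varphi\colon W\times[0,\delta)^k\to\overline{\mathcal{D}(p)}$ with $\varphi(x,0)=x$, then refine it using the explicit embedding $\tilde U(t_k)\hookrightarrow\prod_{j<t_k}P_j\times Q_{t_k}^+$ from the proof of Theorem \ref{d(p)_manifold}, arranging so that $\rho_i$ for $i<k$ is the collar parameter of $P_{t_i}$ while $\rho_k$ is the collar parameter of $Q_{t_k}^+$.

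Next, using the Morse chart at $r_k$ (provided by local triviality of the metric) together with the explicit parametrization from Lemma \ref{q_manifold}, I would analyze $\tilde f=f\circ e$ in the chart. The two key observations are: (i) $\partial\tilde f/\partial\rho_k\geq 0$ on the chart, because the $Q_{t_k}^+$ collar moves the endpoint off $r_k$ into the $V_+$-direction in Morse coordinates, along which $f$ increases; and (ii) $\partial\tilde f/\partial\rho_j=0$ for $j<k$, since the $P_{t_j}$-collar resolves a break at an intermediate level without altering the endpoint in $M(t_k)$ once the chart is chosen so that the endpoint slice is preserved.

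Finally I would set
\[
Y_x=-\chi\sum_{i=1}^{k}\frac{\partial}{\partial\rho_i}
\]
in the chart, where $\chi$ is a smooth bump function compactly supported in $\varphi^{-1}(U_x)$ with $\chi(x,0)=1$, and extend $Y_x$ by zero outside. All four conclusions then follow immediately: $\operatorname{supp}(Y_x)\subset U_x$ by construction; $Y_x(x)=-\sum_i\partial/\partial\rho_i\neq 0$; $Y_x\tilde f=-\chi\,\partial\tilde f/\partial\rho_k\leq 0$ by (i)--(ii); and at any boundary point $y$ with $\chi(y)>0$, the inward normals at $y$ are exactly those $\partial/\partial\rho_i$ for which $\rho_i(y)=0$, each appearing in $Y_x(y)$ with coefficient $-\chi(y)<0$, so $-Y_x(y)$ lies in the interior of the tangent sector $A_y\overline{\mathcal{D}(p)}$, which is the strict-outwardness condition. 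The main obstacle is verifying the decoupling (ii): it is not automatic from the abstract chart of Lemma \ref{embed_d(p)_normal} and requires careful tracing of the constraints $\iota_{t_k}^{-1}(\operatorname{Im}\Delta_{t_k})$ that cut out $\tilde U(t_k)$, or an additional linear change of coordinates along the zero eigenspace of the Hessian of $\tilde f$ at $x$ to absorb any residual dependence on the $P_{t_j}$-collars.
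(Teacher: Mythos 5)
Your overall strategy is the right one and matches the paper's in spirit: build a strictly outward, $\tilde f$-nonincreasing vector field in a collar chart around $x$, cut it off by a bump function, and extend by zero. However, the proof has a genuine gap exactly at the step you yourself flag as ``the main obstacle,'' namely the decoupling claim (ii) that $\partial\tilde f/\partial\rho_j=0$ for $j<k$. In the abstract chart from Lemma \ref{embed_d(p)_normal}, this is \emph{false} in general: moving the $P_{t_j}$ collar parameter does change the endpoint $e(\cdot)\in M(t_k)$, because $e$ is a projection from the embedded picture in $\prod_j P_j\times Q_{t_k}^+$ and the constraints $\iota_{t_k}^{-1}(\operatorname{Im}\Delta_{t_k})$ couple the $P_{t_j}$ factors to the $Q_{t_k}^+$ factor. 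Likewise claim (i) is not established by the informal phrase about the collar ``moving the endpoint into $V_+$''; both (i) and (ii) require an actual coordinate normalization, not just intuition.

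The paper closes this gap with an explicit \emph{nonlinear} coordinate change. Writing $e\circ\varphi=(e_1,e_2)\in V_-\times V_+$ in the Morse chart at $r_k$, the paper defines $\theta(\tilde\alpha,z,\rho_I,\sigma)=(\tilde\alpha,e_1,\rho_I,\|e_2\|)$, proves $\|e_2\|$ is smooth via Lemma \ref{smoothness_norm} (because $e_2/\|e_2\|$ extends smoothly across $\sigma=0$ by tracking the positive eigenspace component at the level slice), proves $\partial\|e_2\|/\partial\sigma\neq 0$ via Lemma \ref{d(p)_normal}, concludes $d\theta$ is nonsingular, and then after replacing $\varphi$ by $\varphi\circ\theta^{-1}$ gets the \emph{exact} identity
\[
\tilde f\circ\varphi(\tilde\alpha,z,\rho_I,\sigma)=f(r_k)-\tfrac12\langle z,z\rangle+\tfrac12\sigma^2,
\]
from which $Y_x\tilde f\le 0$ is immediate. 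Your alternative remedy of ``an additional linear change of coordinates along the zero eigenspace of the Hessian'' is not going to work: the required reparametrization involves $\|e_2\|$ and is genuinely nonlinear, and a linear change cannot kill the cross-dependence of $e_2$ on the $\rho_j$ coordinates. Your choice of the constant field $-\sum_i\partial/\partial\rho_i$ times a bump function (in place of the paper's $\sum_i(\rho_i-\epsilon)\partial/\partial\rho_i$) is a harmless variant and would give strict outwardness equally well once the normalized chart is in hand, but without carrying out the normalization the inequality $Y_x\tilde f\le 0$ remains unproved.
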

\begin{proof}
Suppose $e(x) = r_{k}$ for some critical point $r_{k}$ and $x =
(\alpha, r_{k}) \in \mathcal{M}_{I} \times \mathcal{D}(r_{k})$,
where $I = \{ p, r_{1}, \cdots, r_{k} \}$. By Lemma
\ref{embed_d(p)_normal}, there exist a neighborhood $W_{1}$ of
$\alpha$ in $\mathcal{M}_{I}$, a neighborhood $W_{2}$ of $r_{k}$ in
$\mathcal{D}(r_{k})$, an $\epsilon > 0$ and a smooth embedding
$\varphi: W_{1} \times W_{2} \times [0, \epsilon)^{k}
\longrightarrow \overline{\mathcal{D}(p)}$ such that $\textrm{Im}
\varphi \subseteq U_{x}$, and $\varphi$ satisfies the stratum
condition in Lemma \ref{embed_d(p)_normal}.

By local triviality of the metric, choose a neighborhood $U$ of
$r_{k}$ as (\ref{localization_map}) such that
(\ref{localization_function}) and (\ref{localization_dymamics})
hold. We identify $U$ with $B$ by $h$ in (\ref{localization_map}).
We may assume $e(\textrm{Im} \varphi) \subseteq U$, and $W_{2}$ is a
neighborhood of $0$ in $V_{-}$. Identify $r_{k} \in
\mathcal{D}(r_{k})$ with $0 \in V_{-}$. The key part of the proof is
to show $\varphi$ can be modified so that
\begin{equation}\label{critical_boundary_1}
\tilde{f} \circ \varphi (\tilde{\alpha}, z, \rho_{I}, \sigma) =
f(r_{k}) - \frac{1}{2} \langle z, z \rangle + \frac{1}{2}
\sigma^{2},
\end{equation}
where $\tilde{\alpha} \in W_{1}$, $z \in W_{2}$, $\rho_{I} =
(\rho_{1}, \cdots, \rho_{k-1}) \in [0,\epsilon)^{k-1}$ and $\sigma
\in [0,\epsilon)$.

Denote $e \circ \varphi (\tilde{\alpha}, z, \rho_{I}, \sigma) =
(e_{1} (\tilde{\alpha}, z, \rho_{I}, \sigma), e_{2} (\tilde{\alpha},
z, \rho_{I}, \sigma)) \in V_{-} \times V_{+}$. Consider the map
$\theta: W_{1} \times W_{2} \times [0, \epsilon)^{k} \longrightarrow
W_{1} \times W_{2} \times [0, \epsilon)^{k}$ defined by
\[
\theta (\tilde{\alpha}, z, \rho_{I}, \sigma) = (\tilde{\alpha},
e_{1} (\tilde{\alpha}, z, \rho_{I}, \sigma), \rho_{I}, \|e_{2}
(\tilde{\alpha}, z, \rho_{I}, \sigma)\|).
\]

Firstly, we prove $\theta$ is smooth. It suffices to show
$\|e_{2}\|$ is smooth. Since $e_{2}$ is smooth, by Lemma
\ref{smoothness_norm}, we only need to find a smooth $\tilde{g}$
such that $e_{2} = \|e_{2}\| \tilde{g}$. By (\ref{comapctify_d}), an
element in $\overline{\mathcal{D}(p)}$ represents a pair $(\Gamma,
z)$, where $\Gamma$ is a generalized flow line connecting $p$ and $z
\in M$. Let $c = f(r_{k})$. Define $E: \overline{\mathcal{D}(p)}
\cap e^{-1} \circ f^{-1}((c - \epsilon, c + \epsilon))
\longrightarrow f^{-1}( c + \frac{\epsilon}{2}) \times M$ to be the
map $E(\Gamma, z) = (s(\Gamma), z)$, where $s(\Gamma)$ is the
intersection of $\Gamma$ with $f^{-1}( c + \frac{\epsilon}{2})$. By
Lemma \ref{d(p)_embedding}, $E$ is smooth. Furthermore, $E
\varphi(\tilde{\alpha}, z, \rho_{I}, \sigma) = ((\eta_{1},
\eta_{2}), (e_{1}, e_{2})) \in V_{-} \times V_{+} \times V_{-}
\times V_{+}$. By the stratum condition in Lemma
\ref{embed_d(p)_normal}, $e \varphi(\tilde{\alpha}, z, \rho_{I},
\sigma) \in \mathcal{D}(r_{k})$ or $e_{2}=0$ if and only if
$\sigma=0$. Thus, when $\sigma > 0$, $e_{2} \neq 0$ and $(e_{1},
e_{2})$ is connected with $(\eta_{1}, \eta_{2})$ by a unbroken flow
line. Thus $(e_{1}, e_{2}) = (\lambda^{-1} \eta_{1}, \lambda
\eta_{2})$ for some $\lambda > 0$ and $e_{2} / \|e_{2}\| = \eta_{2}
/ \|\eta_{2}\|$. However, $\eta_{2} \neq 0$ even if $\sigma = 0$.
Thus $\eta_{2} / \|\eta_{2}\|$ is smooth for all $\sigma \in
[0,\epsilon)$. Let $\tilde{g}(\tilde{\alpha}, z, \rho_{I}, \sigma) =
\eta_{2} / \|\eta_{2}\|$, then $e_{2} = \|e_{2}\| \tilde{g}$ for all
$\sigma \in [0,\epsilon)$. Thus $\|e_{2}\|$ is smooth.

Secondly, we prove that $\frac{\partial}{\partial \sigma} \|e_{2}\|
\neq 0$ at $(\alpha, 0, 0, 0)$. By the stratum condition, $d \varphi
\frac{\partial}{\partial \sigma}$ represents an inward normal vector
in $N_{(\alpha, r_{k})} ( \mathcal{M}_{I} \times \mathcal{D}(r_{k}),
\mathcal{M}_{J} \times \overline{\mathcal{D}(r_{k-1})} )$, where $J
= \{ p, r_{1}, \cdots, r_{k-1} \}$. Thus by Lemma \ref{d(p)_normal},
$0 \neq d e_{2} \frac{\partial}{\partial \sigma} \in V_{-}$. Denote
$d e_{2} \frac{\partial}{\partial \sigma}$ by $w$. Since
$e_{2}(\alpha, 0, 0, 0) = 0$, we see $e_{2}(\alpha, 0, 0, \sigma) =
\sigma w + O(\sigma^{2})$, and
\[
\frac{\partial}{\partial \sigma}|_{\sigma = 0} \|e_{2}\| =
\lim_{\sigma \rightarrow 0+} \frac{\| \sigma w + O(\sigma^{2})
\|}{\sigma} = \|w\| \neq 0.
\]

Thirdly, the Jacobian of $\theta$ at $(\alpha, 0, 0, 0)$ is
\[
\left(
\begin{array}{cccc}
\frac{\partial}{\partial \tilde{\alpha}} & 0 & 0 & 0 \\
0 & \frac{\partial}{\partial z} & d e_{1} \frac{\partial}{\partial
\rho_{I}} & d e_{2} \frac{\partial}{\partial \sigma} \\
0 & 0 & \frac{\partial}{\partial \rho_{I}} & 0 \\
0 & 0 & 0 & \frac{\partial}{\partial \sigma} \|e_{2}\|
\end{array}
\right).
\]
Since $\frac{\partial}{\partial \sigma} \|e_{2}\| \neq 0$, $d
\theta$ is nonsingular at $(\alpha, 0 , 0, 0)$.

Since $\| e_{2} \|$ is smooth, $\frac{\partial}{\partial
\sigma}|_{\sigma = 0} \|e_{2}\| \neq 0$, and $\| e_{2} \|$ vanishes
if and only if $\sigma = 0$, we can extend $\| e_{2} \|$ to be
defined on $W_{1} \times W_{2} \times (- \epsilon, \epsilon)^{k}$
such that $\| e_{2} \| < 0$ when $\sigma < 0$. By the Inverse
Function Theorem, shrinking $W_{1}$, $W_{2}$ and $\epsilon$
suitably, a smooth $\theta^{-1}$ can be defined in $W_{1} \times
W_{2} \times [0, \epsilon)^{k}$. Modify $\varphi$ to be $\varphi
\circ \theta^{-1}$ to get a smooth embedding $\varphi: W_{1} \times
W_{2} \times [0, \epsilon)^{k} \longrightarrow
\overline{\mathcal{D}(p)}$ such that $e \circ \varphi
(\tilde{\alpha}, z, \rho_{I}, \sigma) = (z, e_{2})$ and $\|e_{2}\| =
\sigma$. This gives (\ref{critical_boundary_1}).

Consider the vector field $\widetilde{Y} = \sum_{i=1}^{k-1}
(\rho_{i} - \epsilon) \frac{\partial}{\partial \rho_{i}} + (\sigma -
\epsilon) \frac{\partial}{\partial \sigma}$ in $W_{1} \times W_{2}
\times [0, \epsilon)^{k}$. It's strictly outward at corners,
$\widetilde{Y}(\varphi^{-1}(x)) \neq 0$ and $\widetilde{Y}(\tilde{f}
\circ \varphi) = (\sigma - \epsilon) \sigma \leq 0$.

By Lemma \ref{strict_outward_sum}, using the partition of the unity,
we can move $\widetilde{Y}$ to $\overline{\mathcal{D}(p)}$. This
defines the desired smooth vector field $Y_{x}$.
\end{proof}

\begin{lemma}\label{regular_boundary}
Suppose $x \in \overline{\mathcal{D}(p)}$ is such that $e(x)$ is a
regular point. Let $U_{x}$ be a neighborhood of $x$. Then there is a
smooth vector field $Y_{x}$ on $\overline{\mathcal{D}(p)}$ such that
its support $\text{\rm supp}(Y_{x}) \subseteq U_{x}$, $Y_{x}(x) \neq
0$ and $Y_{x} \tilde{f} = 0$. In addition, $\forall y \in
\partial \overline{\mathcal{D}(p)}$, $Y_{x}(y)$ is strictly outward
if $Y_{x}(y) \neq 0$.
\end{lemma}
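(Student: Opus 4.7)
The plan is to mirror the proof of Lemma \ref{critical_boundary}, using the non-vanishing of $\nabla f$ at $e(x)$ in place of the Morse lemma. Write $x = (\alpha, z) \in \mathcal{M}_{I} \times \mathcal{D}(r_{k}) = \mathcal{D}_{I}$ with critical sequence $I = \{p, r_{1}, \ldots, r_{k}\}$. Since $e(x) = z$ is a regular point of $f$, we have $z \neq r_{k}$, and $-\nabla f$ restricts to a non-vanishing vector field on $\mathcal{D}(r_{k})$ near $z$. The standard flow-box theorem therefore supplies a diffeomorphism $W_{2} \cong W_{2}' \times (-\delta, \delta)$ of a neighborhood of $z$ in $\mathcal{D}(r_{k})$ in which $f|_{\mathcal{D}(r_{k})}(z', t) = f(z) - t$.

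Now apply Lemma \ref{embed_d(p)_normal} to obtain a smooth embedding $\varphi: W_{1} \times W_{2} \times [0, \epsilon)^{k} \longrightarrow \overline{\mathcal{D}(p)}$ onto an open neighborhood of $x$ inside $U_{x}$, with coordinates $(\tilde{\alpha}, z', t, \rho_{I})$. Let $F = \tilde{f} \circ \varphi$. Because the restriction of $e$ to $\mathcal{D}_{I}$ is projection onto $\mathcal{D}(r_{k})$, at $\rho_{I} = 0$ we have $F(\tilde{\alpha}, z', t, 0) = f(z) - t$, so $\partial F / \partial t = -1$ at the preimage of $x$ and remains nonzero on a neighborhood. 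Define
\[
\widetilde{Y} = \sum_{i=1}^{k}(\rho_{i} - \epsilon) \frac{\partial}{\partial \rho_{i}} - \left( \frac{\partial F}{\partial t} \right)^{-1} \left( \sum_{i=1}^{k}(\rho_{i} - \epsilon) \frac{\partial F}{\partial \rho_{i}} \right) \frac{\partial}{\partial t}.
\]
By construction $\widetilde{Y} F \equiv 0$. At any boundary point where some $\rho_{j} = 0$, the coefficient of $\partial/\partial \rho_{j}$ equals $-\epsilon < 0$, while the $\partial/\partial t$ term lies in the tangent direction of the corresponding stratum; by the description of strictly outward vectors following Definition \ref{strict_outward} together with Lemma \ref{strict_outward_sum}, $\widetilde{Y}$ is strictly outward there. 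When $k \geq 1$, the value of $\widetilde{Y}$ at $(\alpha, z', t, 0)$ has nonzero $\rho$-component $-\epsilon \sum_{i} \partial/\partial \rho_{i}$, so $\widetilde{Y} \neq 0$ at $x$. When $k = 0$, so that $x$ lies in the interior of $\overline{\mathcal{D}(p)}$ and no corners are nearby, replace the first sum by a nonzero constant vector $v \in T_{z'} W_{2}'$; the same formula still gives $\widetilde{Y} F = 0$ with $\widetilde{Y}(x) \neq 0$. This last substitution requires $\dim \mathcal{D}(p) \geq 2$, but when $\dim \mathcal{D}(p) \leq 1$ Theorem \ref{disk} holds trivially and the lemma is not invoked.

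Finally, transport $\widetilde{Y}$ to $\overline{\mathcal{D}(p)}$ via $\varphi$, multiply by a smooth bump function $\chi$ supported in $\mathrm{Im}(\varphi) \subseteq U_{x}$ with $\chi \equiv 1$ near $x$, and extend by zero outside. The resulting vector field $Y_{x} = \chi \cdot \varphi_{*}\widetilde{Y}$ is smooth, has $\mathrm{supp}(Y_{x}) \subseteq U_{x}$, is nonzero at $x$, and satisfies $Y_{x} \tilde{f} = \chi \cdot \widetilde{Y}(\tilde{f} \circ \varphi) = 0$; strict outwardness at every boundary point where $Y_{x}$ does not vanish is preserved because $\chi(y) > 0$ there and a positive multiple of a strictly outward vector is again strictly outward (Lemma \ref{strict_outward_sum}). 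The principal obstacle is the simultaneous realization of strict outwardness of $\widetilde{Y}$ at all corners and the identity $\widetilde{Y} F = 0$: fixing the corner-direction coefficients as $\rho_{i} - \epsilon$ forces the $\partial/\partial t$ component, whose smoothness as a function of $(\tilde{\alpha}, z', t, \rho)$ up to $\rho = 0$ hinges precisely on the regularity of $e(x)$ via $\partial F/\partial t \neq 0$.
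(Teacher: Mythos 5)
Your proof is correct and follows the same skeleton as the paper's: both use the collar chart $\varphi$ from Lemma \ref{embed_d(p)_normal}, take $\sum_{i}(\rho_{i} - \epsilon)\,\partial/\partial\rho_{i}$ as the strictly outward part, subtract a corrector tangent to the strata so that the $\tilde{f}$-derivative vanishes, and then cut off with a bump function. The only substantive difference is the choice of corrector: the paper takes the globally defined pullback field $X$ (already smooth by Lemma \ref{pull_back_gradient} and already in the corner) and sets $Y_{2} = Y_{1} - \frac{Y_{1}\tilde{f}}{X\tilde{f}}X$, whereas you straighten $-\nabla f$ near $e(x)$ via a flow-box on $\mathcal{D}(r_{k})$ and subtract a multiple of $\varphi_{*}\partial/\partial t$. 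Both correctors are tangent to the strata and have nonzero $\tilde{f}$-derivative precisely because $e(x)$ is regular, so both constructions yield the lemma; the paper's is a little shorter since $X$ is ready-made, while yours re-derives locally what $X$ supplies globally. Your digression on the $k = 0$ case is harmless but unnecessary: Lemma \ref{modified_vector} invokes this lemma only for $x \in \partial\overline{\mathcal{D}(p)}$, so $k \geq 1$ always holds in the application (and the paper's own $Y_{1}$ also vanishes when $k = 0$).
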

\begin{proof}
Suppose $x \in \mathcal{M}_{I} \times \mathcal{D}(r_{k})$. By Lemma
\ref{embed_d(p)_normal}, there is a smooth embedding $\varphi: W
\times [0,\epsilon)^{k} \longrightarrow \overline{\mathcal{D}(p)}$
such that $\textrm{Im} \varphi \subseteq U_{x}$ where $W$ is a
neighborhood of $x$ in $\mathcal{M}_{I} \times \mathcal{D}(r_{k})$.
Since $e(x)$ is a regular point, $X \tilde{f}(x) = - \| \nabla
f(e(x)) \|^{2} < 0$. Shrinking $W$ and $\epsilon$ suitably, we may
assume $X \tilde{f} < 0$ in $\textrm{Im} \varphi$.

Denote the coordinates of $[0,\epsilon)^{k}$ by $(\rho_{1}, \cdots,
\rho_{k})$. Then $\sum_{i=1}^{k} (\rho_{i} - \epsilon)
\frac{\partial}{\partial \rho_{i}}$ defines a vector field on $W
\times [0,\epsilon)^{k}$ which is strictly outward at corners. Move
this one to $\textrm{Im} \varphi$ to get a strictly outward vector
field $Y_{1}$ on $\textrm{Im} \varphi$. Let $Y_{2} = Y_{1} -
\frac{Y_{1} \tilde{f}}{X \tilde{f}} X$. Then $Y_{2} \tilde{f} = 0$.
Since $Y_{1}$ is strictly outward, and $X$ is in the corner, we get,
by Lemma \ref{strict_outward_sum}, $Y_{2}$ is strictly outward and
$Y_{2}(x) \neq 0$. Using a partition of the unity, we get $Y_{x}$.
\end{proof}

As mentioned in Introduction, the following key lemma fulfills
Milnor's suggestion of adding a vector field to $X$.

\begin{lemma}\label{modified_vector}
Suppose $K \subseteq \mathcal{D}(p) \subseteq
\overline{\mathcal{D}(p)}$, $K$ is closed and $p$ is an interior
point of $K$. Then there is a smooth vector field $\widetilde{X}$ on
$\overline{\mathcal{D}(p)}$ such that $\widetilde{X} \tilde{f} \leq
X \tilde{f} = (-\nabla f) f$, $\widetilde{X}$ equals $X$ and
$-\nabla f$ on $K$, and $\widetilde{X}$ is strictly outward on
$\partial \overline{\mathcal{D}(p)}$.
\end{lemma}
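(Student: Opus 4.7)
My strategy is to start with $X$ (supplied by Lemma \ref{pull_back_gradient}) and perturb it by a sum of locally-constructed outward-pointing fields whose supports avoid $K$. Specifically, since $K \subseteq \mathcal{D}(p)$ is closed and hence disjoint from the compact set $\partial \overline{\mathcal{D}(p)}$, for every $y \in \partial \overline{\mathcal{D}(p)}$ I choose a neighborhood $U_y$ of $y$ with $U_y \cap K = \emptyset$, and then apply Lemma \ref{critical_boundary} if $e(y)$ is a critical point of $f$, or Lemma \ref{regular_boundary} if $e(y)$ is regular. This produces a smooth vector field $Y_y$ on $\overline{\mathcal{D}(p)}$ with $\text{supp}(Y_y) \subseteq U_y$, $Y_y(y) \neq 0$, $Y_y \tilde{f} \leq 0$, and strictly outward on $\partial \overline{\mathcal{D}(p)}$ wherever it is nonzero.

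By continuity, each $Y_{y}$ is nonzero on an open neighborhood $V_y \subseteq U_y$ of $y$, so in particular $Y_y$ is strictly outward at every boundary point inside $V_y$. By compactness of $\partial \overline{\mathcal{D}(p)}$ I extract a finite subcover $V_{y_1}, \dots, V_{y_m}$ and corresponding fields $Y_1 := Y_{y_1}, \dots, Y_m := Y_{y_m}$. Using smooth partitions of unity on the manifold with corners $\overline{\mathcal{D}(p)}$, I next choose nonnegative smooth functions $\phi_1, \dots, \phi_m$ with $\text{supp}(\phi_i) \subseteq V_{y_i}$ (so in particular $\text{supp}(\phi_i) \cap K = \emptyset$) and $\sum_{i=1}^{m} \phi_i > 0$ everywhere on $\partial \overline{\mathcal{D}(p)}$. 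Then I set
\[
  \widetilde{X} \;=\; X \,+\, \sum_{i=1}^{m} \phi_i \, Y_i.
\]

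It remains to verify the three required properties. On $K$, every $\phi_i$ vanishes, so $\widetilde{X} = X$, and by Lemma \ref{pull_back_gradient} we have $de(\widetilde{X}) = de(X) = -\nabla f$, which (since $e$ is the identity on $\mathcal{D}(p) \supseteq K$) means $\widetilde{X} = -\nabla f$ on $K$. The inequality $\widetilde{X}\tilde{f} \leq X\tilde{f}$ is immediate from $\phi_i \geq 0$ and $Y_i \tilde{f} \leq 0$. Finally, at any $y \in \partial \overline{\mathcal{D}(p)}$, the vector $X(y)$ lies in the corner (it corresponds via $de$ to $-\nabla f$, which is tangent to the stratum containing $e(y)$), while by construction at least one index $i$ has $\phi_i(y) > 0$, and for that index $y \in V_{y_i}$ forces $Y_i(y) \neq 0$, hence strictly outward. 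Applying Lemma \ref{strict_outward_sum} finitely many times—noting that any zero summands and the in-corner summand $X(y)$ do not destroy the strictly outward property—yields that $\widetilde{X}(y)$ is strictly outward. The only delicate point is arranging the $\phi_i$'s simultaneously to vanish on $K$ and to have positive sum on $\partial \overline{\mathcal{D}(p)}$, which is routine because $K$ and $\partial \overline{\mathcal{D}(p)}$ are disjoint compact sets in a paracompact smooth manifold with corners. The real content of this lemma is already packaged in Lemmas \ref{critical_boundary} and \ref{regular_boundary}; the present proof is a partition-of-unity patch, so no serious obstacle remains beyond careful bookkeeping of the corner structure.
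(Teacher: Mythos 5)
Your proposal is correct and is essentially the same argument as the paper's. The paper skips your cutoff functions $\phi_i$ entirely: since each $Y_{x_i}$ already has support in $\overline{\mathcal{D}(p)} - K$, one can simply set $\widetilde{X} = X + \sum_i Y_{x_i}$ and invoke Lemma \ref{strict_outward_sum} directly (the $Y_{x_i}(y)$ are each zero or strictly outward, at least one is strictly outward by the covering, and $X(y)$ is in the corner), so the partition-of-unity layer you add is harmless but unnecessary.
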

\begin{proof}
Since $K$ is closed, $\overline{\mathcal{D}(p)} - K$ is open. Since
$K \subseteq \mathcal{D}(p)$, then $\overline{\mathcal{D}(p)} - K
\supseteq \partial \overline{\mathcal{D}(p)}$. Thus $\forall x \in
\partial \overline{\mathcal{D}(p)}$, by Lemmas
\ref{critical_boundary} and \ref{regular_boundary}, there is a
vector field $Y_{x}$ such that $\text{\rm supp}(Y_{x}) \subseteq
\overline{\mathcal{D}(p)} - K$ and satisfies the conclusions of
those lemmas. Define $W_{x} = \{y|Y_{x}(y) \neq 0 \}$, we have
$W_{x}$ is a neighborhood of $x$. Since $\partial
\overline{\mathcal{D}(p)}$ is compact, it can be covered by finite
many $W_{x_{i}}$ ($i=1, \cdots, n$). Let $Y = \sum_{i=1}^{n}
Y_{x_{i}}$. Since $Y_{x_{i}} \tilde{f} \leq 0$, we get $Y \tilde{f}
\leq 0$. Since $Y_{x_{i}}$ vanishes on $K$, so does $Y$. Also since
$\{ W_{x_{i}} \mid i=1, \cdots, n \}$ covers $\partial
\overline{\mathcal{D}(p)}$, and $Y_{x_{i}}$ is strictly outward if
it's nonzero, by Lemma \ref{strict_outward_sum}, we have that $Y$ is
strictly outward. Recall that $X$ is in the corner on $\partial
\overline{\mathcal{D}(p)}$. We complete the proof by defining
$\widetilde{X} = X + Y$.
\end{proof}

\begin{lemma}\label{to_boundary}
Let $\phi_{t}(x)$ be the flow line of $\widetilde{X}$ with initial
value $x$ and $x \neq p$. Then $\phi_{t}(x)$ reaches $\partial
\overline{\mathcal{D}(p)}$ at a unique time $0 \leq \omega(x) < +
\infty$. Furthermore, $\omega(x)$ is continuous with respect to $x$
in $\overline{\mathcal{D}(p)} - \{p\}$.
\end{lemma}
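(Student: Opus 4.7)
The plan is to establish, in order, the finiteness of $\omega(x)$, its uniqueness, and its continuity, exploiting $\tilde{f} = f \circ e$ as a Lyapunov function together with the strict outwardness of $\widetilde{X}$ on $\partial \overline{\mathcal{D}(p)}$ provided by Lemma \ref{modified_vector}.

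For finiteness, boundary points $x \in \partial \overline{\mathcal{D}(p)}$ give $\omega(x) = 0$ immediately, since strict outwardness of $\widetilde{X}(x)$ prevents any forward continuation inside $\overline{\mathcal{D}(p)}$. For $x \in \mathcal{D}(p) - \{p\}$ I would argue by contradiction. Since $\widetilde{X}\tilde{f} \le X\tilde{f} = -\|\nabla f \circ e\|^{2} \le 0$, $\tilde{f}$ is nonincreasing along $\phi_t(x)$, so the forward orbit is confined to the compact sublevel set $L_x = \{y \in \overline{\mathcal{D}(p)} : \tilde{f}(y) \le \tilde{f}(x)\}$, which excludes $p$ because $\tilde{f}(x) < f(p) = \tilde{f}(p)$. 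Assuming $\omega(x) = +\infty$, a LaSalle-type argument (using that $\widetilde{X}\tilde{f}$ is uniformly continuous along the bounded trajectory) gives $\Omega(x) \subseteq \{y : \widetilde{X}\tilde{f}(y) = 0\} \subseteq \{y : e(y) \text{ is a critical point of } f\}$. Since the only interior point of $\overline{\mathcal{D}(p)}$ mapping to a critical point is $p$ itself (which is excluded), $\Omega(x) \subseteq \partial \overline{\mathcal{D}(p)}$. Picking $q \in \Omega(x)$ and using Lemma \ref{embed_d(p)_normal} to obtain local coordinates $W \times [0,\epsilon)^{k}$ at $q$, the function $\rho = \sum_{i} \rho_i$ satisfies $\widetilde{X}\rho \le -c < 0$ on some neighborhood $V'$ of $q$ by strict outwardness. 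Shrinking $V'$ so that the bound on $\|\widetilde{X}\|$ forces exit through $\{\rho = 0\}$ before exit through the sides, and using that $\phi_{t_n}(x) \to q$ along some sequence $t_n \to \infty$, we obtain $\omega(x) \le t_n + \rho(\phi_{t_n}(x))/c < \infty$, a contradiction.

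Uniqueness is then immediate: strict outwardness on all of $\partial \overline{\mathcal{D}(p)}$ prevents any trajectory from crossing $\partial$ and re-entering the interior, so $\omega(x)$ is the unique time at which $\phi_t(x) \in \partial \overline{\mathcal{D}(p)}$. For continuity at $x^* \in \overline{\mathcal{D}(p)} - \{p\}$, set $y^* = \phi_{\omega(x^*)}(x^*) \in \partial$. Transversality of $\widetilde{X}$ to $\partial$ at $y^*$ yields, via a standard flow-box argument, a chart $\Psi : N \times [0,\delta) \to \overline{\mathcal{D}(p)}$ with $N$ open in $\partial$ and $\phi_s(\Psi(\xi, t)) = \Psi(\xi, t-s)$; in these coordinates $\omega \circ \Psi(\xi, t) = t$, so $\omega$ is continuous (in fact smooth) on the chart. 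Combined with continuous dependence of $\phi_s$ on initial conditions---applied at time $s = \omega(x^*) - \eta$ for small $\eta > 0$ when $\omega(x^*) > 0$, and directly at $x^*$ when $\omega(x^*) = 0$---this gives continuity of $\omega$ at $x^*$. The main obstacle is the finite-time exit in the finiteness step; without the outward push $Y$ the bare field $X$ would produce only asymptotic approach to boundary critical lift points (the usual phenomenon for $-\nabla f$), and the essential use of Lemma \ref{modified_vector} is precisely that strict outwardness of $\widetilde{X} = X + Y$ converts this asymptotic approach into a transverse finite-time crossing, which must be verified uniformly near corner points via the single summed coordinate $\rho$ in the chart from Lemma \ref{embed_d(p)_normal}.
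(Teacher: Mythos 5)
Your proof is correct and follows essentially the same strategy as the paper's: use $\tilde f = f\circ e$ as a Lyapunov function and compactness of $\overline{\mathcal D(p)}$ to force any nonexiting forward orbit to accumulate on the zero set of $\widetilde X\tilde f$, hence on $\partial\overline{\mathcal D(p)}$, then use strict outwardness of $\widetilde X$ near such an accumulation point to upgrade asymptotic approach to a finite-time exit, and finally obtain continuity of $\omega$ from the same local exit estimate combined with continuous dependence of the flow on initial conditions. The one step the paper spells out in more detail is forward completeness of $\phi_t(x)$ on $[0,+\infty)$ under the hypothesis that the orbit never reaches $\partial\overline{\mathcal D(p)}$ (the preliminary claim in the paper's proof), which you assume implicitly in the phrase ``assuming $\omega(x)=+\infty$''; this is a routine escape-to-the-boundary argument on a compact manifold with corners and is not a substantive gap, though the parenthetical claim that your flow-box makes $\omega$ smooth should be dropped, since at a corner stratum of $\partial\overline{\mathcal D(p)}$ the exit time is only continuous.
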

\begin{proof}
Above all, we prove the following claim: If $\phi_{t}(x)$ cannot
reach $\partial \overline{\mathcal{D}(p)}$ when $t \geq 0$, then
$\phi_{t}(x)$ exists for $t \in [0, +\infty)$.

If not, the maximal positive flow of $\phi_{t}(x)$ can only be
defined in $[0,s]$ or $[0,s)$, where $s < + \infty$. If the domain
is $[0,s]$, then $\phi_{s}(x) \in \partial
\overline{\mathcal{D}(p)}$. This is a contradiction. If the domain
is $[0,s)$, by the compactness of $\overline{\mathcal{D}(p)}$,
$\phi_{t}(x)$ has a cluster point $y_{0}$ when $t \rightarrow s$.
There are two cases. Case (1): $y_{0} \in \mathcal{D}(p)$. In this
case, there is a neighborhood $U_{y_{0}}$ of $y_{0}$ such that there
exists $\delta
> 0$ such that, for all $y \in U_{y_{0}}$, and for all $t \in (-\delta, +\delta)$,
$\phi_{t}(y)$ exists. Thus $\phi_{t}(x)$ can be defined in $[0, s +
\delta)$. This is a contradiction. Case (2): $y_{0} \in \partial
\overline{\mathcal{D}(p)}$. In this case, a neighborhood $U_{y_{0}}$
of $y_{0}$ is diffeomorphic to an open subset of $[0,+\infty)^{k}
\times R^{n-k}$ for some $k$ and $n$. The vector field in
$U_{y_{0}}$ can be smoothly extended to an open subset of $R^{n}$.
Then we may consider $y_{0}$ as an interior point. This converts the
argument to the first case. We can define $\phi_{t}(x)$ for $t \in
[0,s]$ with $\phi_{s}(x) = y_{0}$. This is also a contradiction.
This gives the claim.

Secondly, we prove that $\phi_{t}(x)$ reaches $\partial
\overline{\mathcal{D}(p)}$ at some time $0 \leq \omega(x) < +
\infty$ by contradiction.

Suppose $\phi_{t}(x)$ doesn't reach $\partial
\overline{\mathcal{D}(p)}$. By the claim, $\phi_{t}(x)$ exists for
$t \in [0, +\infty)$. By the assumption, $m = \inf_{M} f > -
\infty$. For all $y \in \overline{\mathcal{D}(p)}$, $\tilde{f}(y)
\leq \tilde{f}(p) = f(p)$. For all $T \geq 0$,
\begin{equation}\label{to_boundary_1}
\int_{0}^{T} \widetilde{X} \tilde{f}(\phi_{t}(x)) dt =
\tilde{f}(\phi_{T}(x)) - \tilde{f}(\phi_{0}(x)) \geq m - f(p) > -
\infty.
\end{equation}
Since $\widetilde{X} \tilde{f} \leq X \tilde{f} \leq 0$, then there
exists $\{t_{n}\} \subseteq [0, + \infty)$, $t_{n} \rightarrow +
\infty$ and $\widetilde{X} \tilde{f}(\phi_{t_{n}}(x)) \rightarrow
0$. Since $\overline{\mathcal{D}(p)}$ is compact, we may assume
$\phi_{t_{n}}(x) \rightarrow y_{0}$. Then $0 = \widetilde{X}
\tilde{f}(y_{0}) \leq X \tilde{f}(y_{0}) \leq 0$. Since $X
\tilde{f}(y_{0}) = - \| \nabla f (e(y_{0})) \|^{2}$, we see that
$e(y_{0})$ is a critical point. Thus $y_{0} \in \partial
\overline{\mathcal{D}(p)}$. Choose a neighborhood $U_{y_{0}}$ of
$y_{0}$ which is diffeomorphic to $[0,\epsilon)^{k} \times
B(0,\epsilon)$, where $B(0,\epsilon) = \{ v \in R^{n-k} \mid \|v\| <
\epsilon \}$ and $y_{0}$ is identified with $0 \in [0,\epsilon)^{k}
\times B(0,\epsilon)$. Identify $U_{y_{0}}$ with $[0,\epsilon)^{k}
\times B(0,\epsilon)$. We may assume $\widetilde{X}$ can be extended
smoothly to $(- \epsilon, \epsilon)^{k} \times B(0,\epsilon)$.
Denote the flow of the extended vector field by $\varphi_{t}$. Then
$\varphi_{t}(y_{0}) = \varphi_{t}(0) = t \widetilde{X}(0) +
O(t^{2})$. Since $\widetilde{X}(0)$ is outward, there exists
$\delta_{1} > 0$, such that for all $\delta \in (0,\delta_{1}]$,
$\varphi_{\delta}(0) \in (- \epsilon, \epsilon)^{k} \times
B(0,\epsilon) - [0,\epsilon)^{k} \times B(0,\epsilon)$. Fixing
$\delta$, there exists $\epsilon_{1} > 0$, for all $y \in
[0,\epsilon_{1})^{k} \times B(0,\epsilon_{1})$, $\varphi_{t}(y)$
exists for $t \in [-\delta, \delta]$ and $\varphi_{\delta}(y) \in (-
\epsilon, \epsilon)^{k} \times B(0,\epsilon) - [0,\epsilon)^{k}
\times B(0,\epsilon)$. Since $(0,\epsilon)^{k} \times B(0,\epsilon)$
and $(- \epsilon, \epsilon)^{k} \times B(0,\epsilon) -
[0,\epsilon)^{k} \times B(0,\epsilon)$ are disconnected, we have
$\varphi_{t_{0}}(y) \in [0,\epsilon)^{k} \times B(0,\epsilon) -
(0,\epsilon)^{k} \times B(0,\epsilon)$ at some time $t_{0} \in [0,
\delta)$. Since $\phi_{t_{n}}(x) \in [0,\epsilon_{1})^{k} \times
B(0,\epsilon_{1})$ for some $t_{n}$, we have $\phi_{t_{n}+t_{0}}(x)
\in
\partial \overline{\mathcal{D}(p)}$ for some $t_{0} \in [0,\delta)$.
This gives a contradiction.

Finally, we prove that $\omega(x)$ is unique and continuous.

Since $\widetilde{X}$ is outward, $\phi_{t}(x)$ does not exist after
it reaches $\partial \overline{\mathcal{D}(p)}$. Thus $\omega(x)$ is
unique. Denote $y_{0} = \phi_{\omega(x_{0})} (x_{0}) \in \partial
\overline{\mathcal{D}(p)}$, by the argument at the end of the second
step, we have, $\forall \delta > 0$, there is a neighborhood
$U_{y_{0}}$ of $y_{0}$ such that, for all $y \in U_{y_{0}}$,
$\phi_{t_{0}}(y) \in \partial \overline{\mathcal{D}(p)}$ for some
$t_{0} \in [0,\delta)$. Then there exist a neighborhood $U_{x_{0}}$
of $x_{0}$ and $\delta_{2} > 0$ such that, for all $x \in
U_{x_{0}}$, $\phi_{\omega(x_{0}) - \delta_{2}}(x)$ exists and is in
$U_{y_{0}}$. Thus $\omega(x) \leq \omega(x_{0}) + \delta$. Since
$\omega(x) \geq 0$, and $\omega(x) = 0$ when $x \in
\partial \overline{\mathcal{D}(p)}$, we get $\omega(x)$ is
continuous at $x_{0} \in \partial \overline{\mathcal{D}(p)}$. If
$x_{0} \in \mathcal{D}(p)$, then for all $\delta > 0$, there exists
$\delta_{2} \in (0, \delta)$, such that $\phi_{\omega(x_{0}) -
\delta_{2}}(x_{0})$ exists and is in $\mathcal{D}(p)$. Also, there
exists $U_{x_{0}}$ such that, for all $x \in U_{x_{0}}$,
$\phi_{\omega(x_{0}) - \delta_{2}}(x)$ exists and is in
$\mathcal{D}(p)$. Thus $\omega(x) \geq \omega(x_{0}) - \delta$. We
have now proved $\omega(x)$ is continuous in general.
\end{proof}

Actually, the above lemma only requires $\widetilde{X}$ to be
outward. However, the following one requires $\widetilde{X}$ to be
strictly outward.

\begin{lemma}\label{to_center}
Let $\phi_{t}(x)$ be the flow line of $\widetilde{X}$ with initial
value $x$. Then $\phi_{t}(x)$ exists for $t \in (-\infty, 0]$ and
$\displaystyle \lim_{t \rightarrow - \infty} \phi_{t}(x) = p$.
\end{lemma}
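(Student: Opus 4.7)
The plan is to first show the backward flow is defined on all of $(-\infty,0]$ and remains in the interior $\mathcal{D}(p)$, then use a LaSalle-type argument with $\tilde{f}$ to force the only possible limit to be $p$.

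For existence and interior confinement, the strict outwardness of $\widetilde{X}$ on $\partial \overline{\mathcal{D}(p)}$ is the workhorse. If $x \in \partial \overline{\mathcal{D}(p)}$, working in a local corner chart $[0,\epsilon)^{k} \times B(0,\epsilon)$ and extending $\widetilde{X}$ smoothly across (as in the proof of Lemma \ref{to_boundary}), I would observe that the $\rho_{i}$-components of $-\widetilde{X}(x)$ are strictly positive, so $\phi_{t}(x)$ lies in $\mathcal{D}(p)$ for all sufficiently small $t < 0$. Conversely, if $x \in \mathcal{D}(p)$ and $\phi_{t}(x)$ first returned to the boundary as $t$ decreases, reaching some $y \in \partial \overline{\mathcal{D}(p)}$ at time $t_{0} < 0$, then the forward flow of $\widetilde{X}$ starting at $y$ would, by strict outwardness, immediately enter $\{\rho_{i} < 0\}$ in the chart, contradicting the fact that $\phi_{t_{0}+s}(x) \in \overline{\mathcal{D}(p)}$ for small $s > 0$. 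Combining this confinement with the compactness of $\overline{\mathcal{D}(p)}$ and the local flow box argument from the first part of the proof of Lemma \ref{to_boundary} gives the existence of $\phi_{t}(x)$ on $(-\infty,0]$.

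For the convergence to $p$, since $\widetilde{X}\tilde{f} \leq 0$, the map $t \mapsto \tilde{f}(\phi_{t}(x))$ is nonincreasing and bounded above by $\tilde{f}(p) = f(p)$, so $L := \lim_{t \to -\infty} \tilde{f}(\phi_{t}(x))$ exists. Let $z_{0}$ be any cluster point of $\{\phi_{t_{n}}(x)\}$ with $t_{n} \to -\infty$. The strict-outwardness analysis of Step 1, applied to a corner chart near $z_{0}$, rules out $z_{0} \in \partial \overline{\mathcal{D}(p)}$: uniformly near such a $z_{0}$ the $\rho_{i}$-components of $\widetilde{X}$ stay below some $-a_{i}/2 < 0$, a Gr\"onwall-type estimate then forces any trajectory entering a small neighborhood of $z_{0}$ to reach $\partial\overline{\mathcal{D}(p)}$ in time $O(\max_{i} \rho_{i})$, but the trajectory is pinned in $\mathcal{D}(p)$ for $t \leq 0$. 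So $z_{0} \in \mathcal{D}(p)$. Then $\phi_{s}(z_{0})$ is defined on a two-sided interval, and continuity gives $\tilde{f}(\phi_{s}(z_{0})) \equiv L$, whence $\widetilde{X}\tilde{f}(z_{0}) = 0$; with $\widetilde{X}\tilde{f} \leq X\tilde{f} = -\|\nabla f \circ e\|^{2} \leq 0$ this forces $e(z_{0})$ to be a critical point of $f$. Since $e$ is the identity on $\mathcal{D}(p)$ and $p$ is the only critical point there, $z_{0} = p$. Every cluster point equals $p$, so by compactness $\phi_{t}(x) \to p$ as $t \to -\infty$.

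The main obstacle is Step 1 and its reincarnation in Step 3: turning the qualitative ``strictly outward'' condition on $\widetilde{X}|_{\partial \overline{\mathcal{D}(p)}}$ into a quantitative statement that both (i) rules out the backward flow ever touching the boundary and (ii) prevents it from asymptotically approaching the boundary. A careful choice of a compact neighborhood of a putative boundary cluster point, together with the uniform lower bound on the inward components of $-\widetilde{X}$ in a corner chart, is what makes both arguments go through.
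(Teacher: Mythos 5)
Your overall plan is sound, and you reach the same conclusion, but you take a genuinely different route in the final step and a heavier one in the middle step.

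Where you diverge most clearly is in ruling out cluster points in $\mathcal{D}(p)-\{p\}$. You argue that any interior cluster point $z_{0}$ has $\tilde{f}\circ\phi_{s}(z_{0})\equiv L$ (LaSalle invariance via continuous dependence on initial data along $t_{n}+s\to-\infty$), hence $\widetilde{X}\tilde{f}(z_{0})=0$, hence $e(z_{0})$ is critical, hence $z_{0}=p$. The paper instead runs a divergent-integral argument: if $z_{0}\neq p$ were a cluster point, then $\widetilde{X}\tilde{f}\le A/2<0$ on a fixed neighborhood, and infinitely many disjoint time intervals in that neighborhood force $\int_{-\infty}^{0}\widetilde{X}\tilde{f}\,dt=-\infty$, contradicting the lower bound $\tilde{f}(x)-\tilde{f}(p)$. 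Your version is cleaner and arguably more conceptual; both are correct.

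For ruling out boundary cluster points, the paper simply invokes the continuity of $\omega(\cdot)$ from Lemma \ref{to_boundary} (with $\omega(x_{0})=0$ for $x_{0}\in\partial\overline{\mathcal{D}(p)}$), which immediately yields that a trajectory entering a small neighborhood of $x_{0}$ must hit the boundary within time $<1$, hence would have to stop before $t=0$. You instead re-derive a quantitative estimate from scratch via Gr\"onwall. That works, but as stated it has a small gap: a trajectory entering the corner chart could in principle escape sideways (through the $B(0,\epsilon)$ faces) before any $\rho_{i}$ reaches $0$. You need nested neighborhoods — enter at radius $\epsilon_{2}\ll\epsilon_{1}$, bound the transverse speed by some $C$, and take $\epsilon_{2}$ small enough that $2\epsilon_{2}/a<(\epsilon_{1}-\epsilon_{2})/C$ — to guarantee the boundary is reached before the chart is exited. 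You flag this issue yourself in your final paragraph, so you're aware of it; but it is cleaner to just reuse the continuity of $\omega$, which you have already proved. Once that detail is tightened, the argument goes through, and your proof is a valid alternative to the paper's.
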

\begin{proof}
Firstly, we prove that $\phi_{t}(x)$ exists for $t \in (-\infty, 0]$
by contradiction. If not, the maximal negative flow can only be
defined for $[s,0]$ or $(s,0]$, where $s > -\infty$.

Suppose the domain is $[s,0]$. If $\phi_{s}(x) \in \mathcal{D}(p)$,
then $\phi_{t}(x)$ can be defined in $(s - \delta, 0]$ for some
$\delta > 0$. This is a contradiction. Suppose $\phi_{s}(x) = x_{0}
\in \partial \overline{\mathcal{D}(p)}$. Like the proof of Lemma
\ref{to_boundary}, a neighborhood of $x_{0}$ is identified with
$[0,\epsilon)^{k} \times B(0,\epsilon)$ and $x_{0}$ is identified
with $0$. Extend the vector field in $[0,\epsilon)^{k} \times
B(0,\epsilon)$ smoothly to be defined in $(- \epsilon, \epsilon)^{k}
\times B(0,\epsilon)$. Denote the flow of the extended vector field
by $\varphi_{t}$. Since $\widetilde{X}(0) = \widetilde{X}(x_{0})$ is
strictly outward, then $-\widetilde{X}(0) \in (0, +\infty)^{k}
\times R^{n-k}$. Since $\varphi_{t}(x_{0}) = \varphi_{t}(0) = t
\widetilde{X}(0) + O(t^{2})$, there exists $\delta > 0$ such that,
for all $t \in [-\delta,0]$, we have $\varphi_{t}(0) \in
[0,\epsilon)^{k} \times B(0,\epsilon)$. Thus $\phi_{t}(x)$ exists
for $t \in [s - \delta, 0]$. This gives a contradiction.

Suppose the domain is $(s,0]$. Using the same argument as in the
proof of Lemma \ref{to_boundary}, we can extend the domain to be
$[s,0]$. This gives a contradiction.

As a result, we proved the first assertion.

Secondly, we prove by contradiction that $\phi_{t}(x)$ has no
cluster point in $\partial \overline{\mathcal{D}(p)}$ when $t
\rightarrow - \infty$.

Suppose $\phi_{t}(x)$ has a cluster point $x_{0} \in \partial
\overline{\mathcal{D}(p)}$. By the continuity of $\omega(x)$ in
Lemma \ref{to_boundary}, there exists a neighborhood $U_{x_{0}}$ of
$x_{0}$ such that, for all $x \in U_{x_{0}}$, we have $\omega(x) \in
[0,1)$. Since $x_{0}$ is a cluster point, there exist $T < -1$, and
$\phi_{T}(x) \in U_{x_{0}}$. Thus $\phi_{T + t_{0}}(x) \in \partial
\overline{\mathcal{D}(p)}$ for some $t_{0} \in [0,1)$. Then
$\phi_{t}(x)$ does not exist when $t > T + t_{0}$. In particular,
$\phi_{t}(x)$ does not exist when $t = 0$. This gives a
contradiction.

Thirdly, we prove by contradiction that $\phi_{t}(x)$ has no cluster
point in $\mathcal{D}(p) - \{p\}$ when $t \rightarrow - \infty$.

Suppose $x_{0} \in \mathcal{D}(p) - \{p\}$ is a cluster point.
Clearly, $\widetilde{X} \tilde{f}(x_{0}) \leq X f(x_{0}) = -
\|\nabla f (e(x_{0}))\|^{2} = A < 0$. Thus there exists a
neighborhood $U_{x_{0}}$ of $x_{0}$, a $\delta > 0$, for all $x \in
U_{x_{0}}$, such that $\phi_{t}(x)$ exists for $t \in [-\delta,
\delta]$ and $\widetilde{X} \tilde{f}(\phi_{t}(x)) \leq \frac{A}{2}$
in this interval. Since $x_{0}$ is a cluster point, there exists $\{
t_{n} \} \subseteq (-\infty, 0]$ such that $t_{n+1} < t_{n} -
\delta$ and $\phi_{t_{n}}(x) \in U_{x_{0}}$. Then
\[
\int_{-\infty}^{0} \widetilde{X} \tilde{f}(\phi_{t}(x)) dt \leq
\sum_{n=1}^{\infty} \int_{t_{n} - \delta}^{t_{n}} \widetilde{X}
\tilde{f}(\phi_{t}(x)) dt \leq \sum_{n=1}^{\infty} \int_{t_{n} -
\delta}^{t_{n}} \frac{A}{2} = - \infty.
\]
On the other hand, similar to (\ref{to_boundary_1}), we have for all
$T < 0$, $\int_{T}^{0} \widetilde{X} \tilde{f}(\phi_{t}(x)) dt \geq
\tilde{f}(x) - \tilde{f}(p) > -\infty$. This gives a contradiction.

Finally, since $\overline{\mathcal{D}(p)}$ is compact, $\forall
\{t_{n}\} \subseteq (-\infty, 0]$, there must be a cluster point of
$\phi_{t_{n}}(x)$. Thus $\phi_{t}(x) \rightarrow p$ when $t
\rightarrow - \infty$.
\end{proof}

Now we are ready to prove Theorem \ref{disk}. The idea of this proof
is as follows. Choose a closed neighborhood $K$ of $p$ in
$\overline{\mathcal{D}(p)}$ which is diffeomorphic to
$D^{\textrm{ind}(p)}$. The flow line $\phi_{t}$ of the above
$\widetilde{X}$ expands $K$ homeomorphically onto
$\overline{\mathcal{D}(p)}$. We also explain this idea by the
previous example on $T^{2}$. The flow generated by $X$ on
$\overline{\mathcal{D}(p)}$ is as the right part of Figure
\ref{dp_figure}. The flow generated by $\widetilde{X}$ is
illustrated by Figure \ref{flow_figure}.

\begin{figure}[!htbp]
\centering
\includegraphics[scale=0.3]{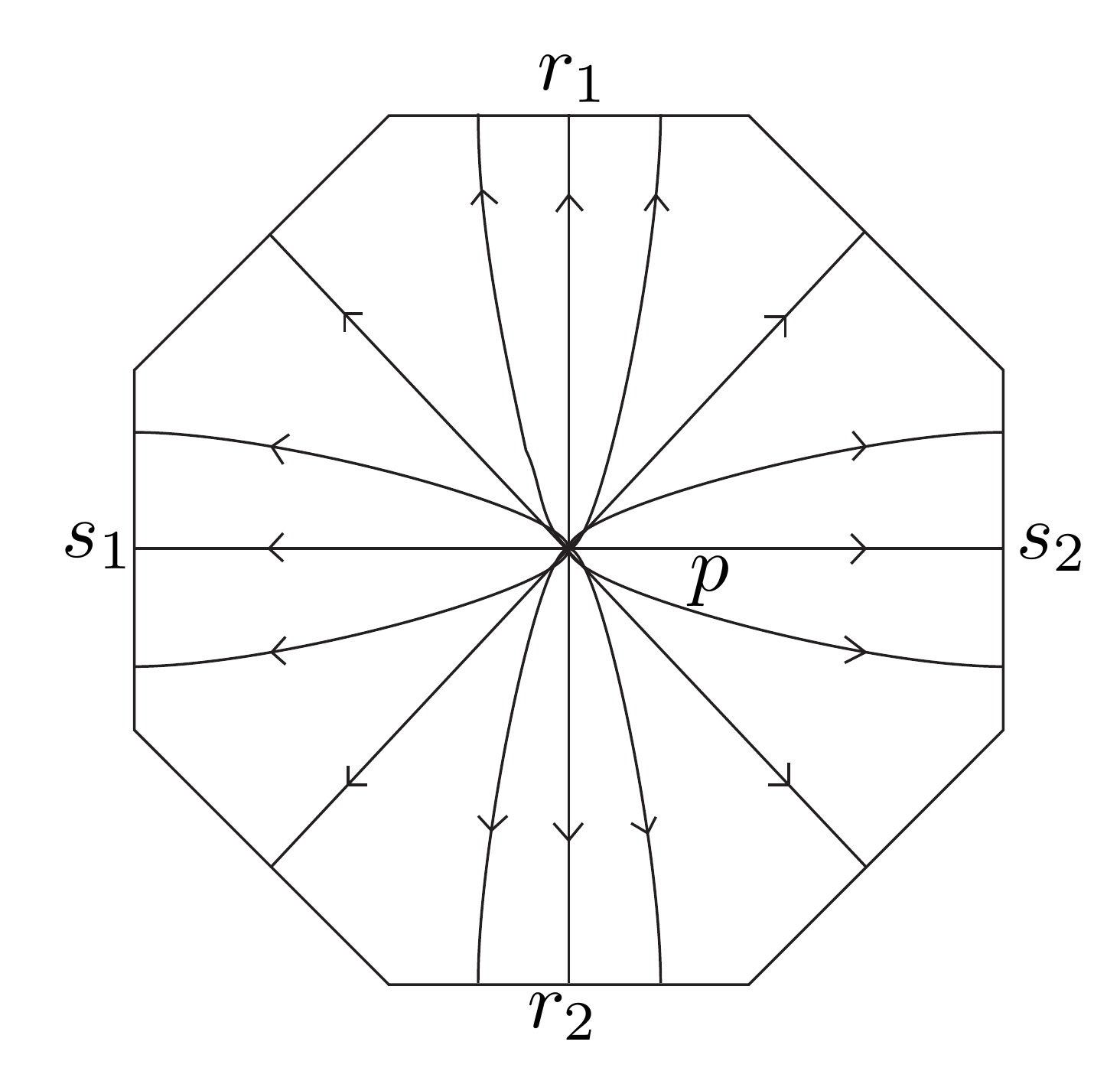} \caption{Flow Generated by $\widetilde{X}$}
\label{flow_figure}
\end{figure}

\begin{proof}[Proof of Theorem \ref{disk}]
Choose a closed neighborhood $K$ of $p$ in
$\overline{\mathcal{D}(p)}$ satisfying the following two properties:
(1). $K \subseteq \mathcal{D}(p)$. (2). There is a diffeomorphism
$\theta: D(\epsilon) \longrightarrow K$ such that $\theta(0) = p$,
$\tilde{f} \circ \theta(v) = f(p) - \frac{1}{2} \langle v, v
\rangle$ and $((d \theta)^{-1} X)(v) = v$, where $D(\epsilon) = \{ v
\in R^{\textrm{ind}(p)} \mid \|v\| \leq \epsilon \}$.

We only need to construct a homeomorphism $\Psi: (D(\epsilon),
S(\epsilon)) \longrightarrow (\overline{\mathcal{D}(p)}, \partial
\overline{\mathcal{D}(p)})$, where $S(\epsilon) = \partial
D(\epsilon)$.

By Lemmas \ref{modified_vector}, \ref{to_boundary} and
\ref{to_center}, there is a vector field $\widetilde{X}$ on
$\overline{\mathcal{D}(p)}$ satisfying the following four
properties: (1). We have $\widetilde{X} = X$ in $K$. (2). We have
$\widetilde{X} \tilde{f} < 0$ in $\mathcal{D}(p)-\{p\}$. (3). The
flow $\phi_{t}(x)$ generated by $\widetilde{X}$ reaches the boundary
at a unique time $\omega(x) \in [0,+\infty)$ when $x \neq p$, and
$\omega(x)$ is continuous in $\overline{\mathcal{D}(p)} - \{p\}$.
(4). For all $x$, $\phi_{t}(x) \rightarrow p$ when $t \rightarrow -
\infty$.

Denote $\varphi_{t}$ the flow generated by the vector field $Z(v)=v$
on $D(\epsilon)$. Then $\theta (\varphi_{t}(v)) = \phi_{t}
(\theta(v))$.

Define $\beta(s)$ in $[0,\epsilon]$ to be
\[
\beta(s) =
\begin{cases}
0 & t \in [0, \frac{\epsilon}{2}], \\
\frac{2t - \epsilon}{\epsilon} & t \in [\frac{\epsilon}{2},
\epsilon].
\end{cases}
\]

Define $\Psi: D(\epsilon) \longrightarrow \overline{\mathcal{D}(p)}$
to be
\[
\Psi(v) =
\begin{cases}
\theta(v) & \|v\| \in [0, \frac{\epsilon}{2}], \\
\phi[\omega[\theta(\frac{v}{\|v\|} \epsilon)] \beta(\|v\|),
\theta(v)] & \|v\| \in [\frac{\epsilon}{2}, \epsilon].
\end{cases}
\]
Here we use the notation $\phi(t,x) = \phi_{t}(x)$.

Firstly, $\Psi$ is continuous, $\Psi(S(\epsilon)) \subseteq \partial
\overline{\mathcal{D}(p)}$ and $\Psi^{-1}(\partial
\overline{\mathcal{D}(p)}) \subseteq S(\epsilon)$.

Secondly, we prove that $\Psi$ is injective. Consider the orbits of
the flows. The orbits in $D(\epsilon)$ are $\{0\}$ and $\{ s v \mid
\|v\| = \epsilon, s \in (0,1] \}$. We have $\Psi(0) = p$ and
$\Psi(sv) = \phi(l(s,v), \theta(v))$, where $l(s,v) =
\omega(\theta(v)) \beta(s \epsilon) + \log s$ and $\|v\| =
\epsilon$. When $\|v\| \equiv \epsilon$, $\tilde{f} (\theta(v))
\equiv \frac{1}{2} f(p) - \frac{1}{2} \epsilon^{2}$, by the above
property (2) of $\widetilde{X}$, we have $\Psi$ maps distinct orbits
to distinct orbits. Since $l(s,v)$ is a strictly increasing function
with respect to $s$, by the above property (2) of $\widetilde{X}$
again, we have $\Psi$ is injective.

Thirdly, we prove that $\Psi$ is surjective. Clearly, $\Psi(0)=p$.
For all $x \in \overline{\mathcal{D}(p)}$ and $x \neq p$, since
$\phi_{t}(x) \rightarrow p$ when $t \rightarrow - \infty$, we have
that there exist $t_{0} \in R$ and $v_{0} \in D(\epsilon)$ such that
$\phi_{-t_{0}}(x) \in K$, $\| v_{0} \| = \epsilon$, and $v_{0} =
\theta^{-1}(\phi_{-t_{0}}(x))$. Then $t_{0} \leq
\omega(\theta(v_{0}))$. Since $l(s,v_{0}) \rightarrow - \infty$ when
$s \rightarrow 0$ and $l(s,v)$ is continuous, the range of
$l(s,v_{0})$ is $(-\infty, \omega(\theta(v_{0}))]$. Then there
exists $s_{0}$ such that $l(s_{0}, v_{0}) = t_{0}$. Thus $\Psi(s_{0}
v_{0}) = x$. Therefore, $\Psi$ is surjective.

Finally, $\Psi$ is a map from a compact space to a Hausdorff space,
so $\Psi$ is a homeomorphism.
\end{proof}

\begin{remark}\label{disk_non_elementary}
The following is a quick but non-elementary proof of Theorem
\ref{disk}, which is based on the Poincar\'{e} Conjecture in all
dimensions. Clearly, $\overline{\mathcal{D}(p)}$ is a compact
topological manifold with boundary whose interior is an open disk.
We can prove that $\partial \overline{\mathcal{D}(p)}$ is a homotopy
sphere. By the Poincar\'{e} Conjecture, $\partial
\overline{\mathcal{D}(p)}$ is a topological sphere. Consider a
collar embedding $\partial \overline{\mathcal{D}(p)} \times [0,1]
\longrightarrow \overline{\mathcal{D}(p)}$ which identifies
$\partial \overline{\mathcal{D}(p)} \times \{0\} $ with $\partial
\overline{\mathcal{D}(p)}$. By the Generalized Schoenflies Theorem
(see \cite[thm. 5]{brown}), we can prove that
$\overline{\mathcal{D}(p)} -
\partial \overline{\mathcal{D}(p)} \times [0,\frac{1}{2})$ is a closed disk.
This completes the proof.
\end{remark}

\subsection{Proof of Theorem \ref{CW}}
As mentioned in Subsection \ref{subsection_cw_structure}, the CW
complex structure of $K^{a}$ immediately results from Theorems
\ref{d(p)_manifold} and \ref{disk}. We only need to prove that, when
$f$ is proper, $M^{a}$ has the desired CW decomposition. By Theorem
\ref{disk}, we can always construct a CW decomposition from a good
vector field. The key part of this proof is to find a good vector
field for $M^{a}$ (see Lemma \ref{symmetry_dynamic}). This is
heavily based on Milnor's dealing with gradient-like dynamics in
\cite{milnor2}.

\begin{definition}\label{gradient-like}
Suppose $f$ is a Morse function on a Hilbert manifold. A vector
field $X$ is a gradient-like vector field of $f$ if $X= \nabla f$
near each critical point of $f$ and $Xf>0$ at each regular point of
$f$.
\end{definition}

\begin{remark}
Some papers in the literature include the local triviality of $X$
into the definition of a gradient-like vector field. We follow the
style of \cite{smale1} and exclude it.
\end{remark}

Up until now, we haven't assumed that $M^{a}$ is compact and we have
considered only negative gradient dynamics. In this subsection, we
take $M^{a}$ to be compact because we take $f$ to be proper. The
results proved before this subsection still hold for negative
gradient-like dynamics when the underlying manifold is compact.
There are two reasons. Both are sufficient. Firstly, on the compact
manifold, Smale points out in \cite[remark after thm.\ B]{smale1}
that all gradient-like dynamics are actually gradient dynamics (this
is even true on a Hilbert manifold, see Lemma
\ref{gradient-like_gradient}), and $(M,f)$ is a CF pair
automatically. Secondly, we can formally replace ``gradient" by
``gradient-like" in the above proofs when $M$ is compact.

The proof of the following lemma is in the Appendix.

\begin{lemma}\label{gradient-like_gradient}
If $X$ is a gradient-like vector field of a Morse function $f$ on a
Hilbert manifold $M$, then there is a metric on $M$ such that this
metric equals the original one associated with $X$ near each
critical point of $f$ and $\nabla f = X$ for this metric.
\end{lemma}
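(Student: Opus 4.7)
\textbf{Proof proposal for Lemma \ref{gradient-like_gradient}.}

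The plan is to use the characterization that $X = \nabla_g f$ if and only if $g(X,Y) = df(Y)$ for every tangent vector $Y$, an identity that is \emph{affine} in $g$. Consequently, if we can build local metrics satisfying this identity and then take a convex combination via a partition of unity, the resulting global metric will automatically satisfy $\nabla_g f = X$.

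First, cover $M$ by two kinds of open sets. Near each critical point $p$, choose a neighborhood $U_p$ where, by hypothesis, the original metric $g_0$ already satisfies $g_0(X,Y) = df(Y)$ (because $X = \nabla_{g_0} f$ on $U_p$); on these neighborhoods take $g_p = g_0$. At each regular point $q$ we have $X(q) \neq 0$ and $df_q \neq 0$, and moreover $df(X) = Xf > 0$; hence $X$ is transverse to the closed hyperplane field $H = \ker df$ in a whole neighborhood $U_q$. There one gets a smooth splitting $TM|_{U_q} = \mathbb{R}X \oplus H$, and we define $g_q$ by declaring this splitting to be orthogonal, setting $g_q(X,X) = Xf$, and taking $g_q|_{H \times H}$ to be the restriction of $g_0$ (or any smooth positive-definite form on $H$). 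A direct check shows $g_q(X,Y) = df(Y)$ for all $Y$: decompose $Y = aX + Y_H$ with $Y_H \in H$, then $g_q(X,Y) = aXf = a\,df(X) = df(aX) = df(Y)$.

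Second, choose a smooth partition of unity $\{\phi_i\}$ subordinate to the cover $\{U_p, U_q\}$ and, crucially, arrange that the partition contains functions identically equal to $1$ on smaller neighborhoods of the critical points (for instance, by first enlarging the $U_p$-family and shrinking, which is possible since a Hilbert manifold admits smooth partitions of unity by Palais). Define
\[
 g = \sum_i \phi_i\, g_i.
\]
This is smooth, positive-definite as a convex combination of positive-definite forms, and satisfies
\[
 g(X,Y) = \sum_i \phi_i\, g_i(X,Y) = \sum_i \phi_i\, df(Y) = df(Y),
\]
so $\nabla_g f = X$ globally. On the prescribed small neighborhoods of the critical points only the $g_p = g_0$ terms appear, so $g$ agrees with $g_0$ there.

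The main obstacle is purely technical: verifying that the construction of $g_q$ is genuinely smooth on $U_q$ (which requires the splitting $TM|_{U_q} = \mathbb{R}X \oplus \ker df$ to be a smooth decomposition of Hilbert bundles, justified by the smooth projector $Y \mapsto (df(Y)/Xf)X$ that depends smoothly on the base point since $Xf > 0$ there) and that smooth partitions of unity exist on $M$ (which holds since any Hilbert manifold admits them). Everything else is forced by the affine identity $g(X,\cdot) = df$.
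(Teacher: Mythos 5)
Your proposal is correct, and it takes a genuinely different route from the paper. The paper constructs a single smooth field of symmetric positive operators $A(V_1,V_2)$ (with an explicit formula based on a $2\times 2$ block rotation-like matrix $A_1$) that maps $X$ to $\nabla_{G_1} f$ at every point, extends it to the identity at the critical points (where $X$ already equals $\nabla_{G_1} f$), and then defines the new metric in one global stroke by $G_2(u,v) = G_1(A u, v)$; smoothness is handled by the explicit formula and the fact that $A = \textrm{Id}$ near critical points. Your approach instead exploits the affine nature of the defining constraint $g(X,\cdot) = df$: you build \emph{local} solutions — trivially near critical points, and via the splitting $TM = \mathbb{R}X \oplus \ker df$ at regular points — and then glue with a partition of unity, using the fact that a convex combination of metrics satisfying the same affine constraint still satisfies it. The paper's method avoids partitions of unity and is more ``operator-algebraic''; yours is more conceptual, in that it isolates the structural reason (affineness) why gluing is permissible. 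One small thing to be careful about in your write-up: you should really arrange the cover so that the $U_q$ are disjoint from small neighborhoods $V_p \subset U_p$ of the critical points, and then take the partition of unity so that only the $\phi_p$ survives on $V_p$; saying ``arrange that the partition contains functions identically equal to $1$ on smaller neighborhoods of the critical points'' is a little informal but captures the right idea.
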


Since $a$ is a regular value of $f$ and $f$ is proper, $M^{a}$ is a
compact manifold with boundary $f^{-1}(a)$. There is a smooth collar
embedding $\varphi: [0, \epsilon_{0}) \times \partial M^{a}
\longrightarrow M^{a}$ such that $f \circ \varphi (s,x) = a - s$.
Clearly, all critical points of $f$ are in $M^{a} - \textrm{Im}
\varphi$. Double $M^{a}$ to be a compact manifold $2M^{a}$ without
boundary such that the above $\varphi$ can be extended in the
obvious way to a smooth embedding $\varphi: (-\epsilon_{0},
\epsilon_{0}) \times \partial M^{a} \longrightarrow 2M^{a}$.

For convenience, we identify $(-\epsilon_{0}, \epsilon_{0}) \times
\partial M^{a}$ with $\textrm{Im} \varphi$ from now on.

There is an evident $Z_{2}$-symmetry group acting on $2M^{a}$. For
all $x \in M^{a} \subseteq 2M^{a}$, denote $\bar{x} \in 2M^{a}$ the
copy of $x$. Define $\sigma: 2M^{a} \longrightarrow 2M^{a}$ by
$\sigma(x) = \bar{x}$ and $\sigma(\bar{x}) = x$. Then
\begin{equation}
Z_{2} = \{ \textrm{Id}, \sigma \}
\end{equation}
is the group. By the smooth structure of $2M^{a}$, $Z_{2}$ acts
smoothly. The set of fixed points of $Z_{2}$ is $\textrm{Fix}(Z_{2};
2M^{a}) = \partial M^{a}$.

We omit the proof of the following, which is straightforward.

\begin{lemma}\label{symmetry_function}
There exists a Morse Function $F$ on $2M^{a}$ satisfying the
following properties. (1). It is invariant under the $Z_{2}$ action.
(2). It equals $f$ on $M^{a} - \text{\rm Im} \varphi$. (3). We have
$F(s,x) = a - \frac{1}{2} s^{2} + g(x)$ in $(-\delta, \delta) \times
\partial M^{a}$ for some $\delta \in (0, \epsilon_{0})$, and $g$ (and then $F|_{\partial M^{a}}$) is
a Morse function on $\partial M^{a}$. (4). The critical points of
$F$ are exactly the critical points of $f$ (which are in $M^{a} -
\text{\rm Im} \varphi$) together with their images under the $Z_{2}$
action, and the critical points of $g$. (5) The function values of
$F$ on $\partial M^{a}$ are greater than the function values at
critical points off $\partial M^{a}$.
\end{lemma}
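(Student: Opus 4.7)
The plan is to modify $f$ only inside the collar $\varphi((-\epsilon_{0},\epsilon_{0})\times\partial M^{a})$, keeping $F=f$ on $M^{a}$ outside this collar and $F=f\circ\sigma$ on its $\sigma$-image, and to arrange the modification to be symmetric in the collar parameter so that $Z_{2}$-invariance is automatic. The ``obvious'' extension of $\varphi$ makes $\sigma\circ\varphi(s,x)=\varphi(-s,x)$, so it is enough to define $F$ on $s\ge 0$ and extend by reflection $F(\varphi(-s,x))=F(\varphi(s,x))$. Fix any Morse function $g_{0}$ on $\partial M^{a}$ and a small scaling parameter $\eta>0$ to be chosen later; set $g=\eta g_{0}$. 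Choose $0<\delta<\delta'<\min(\epsilon_{0},1)$ and an even smooth cutoff $\rho\colon R\to[0,1]$ with $\rho\equiv 1$ on $[-\delta,\delta]$, $\rho\equiv 0$ off $[-\delta',\delta']$, and $\rho'\le 0$ on $[\delta,\delta']$. For $s\in[0,\epsilon_{0})$ define
\[
F(\varphi(s,x))\;=\;(1-\rho(s))(a-s)+\rho(s)\bigl(a-\tfrac{1}{2}s^{2}+g(x)\bigr).
\]
On $|s|\le\delta$ this equals $a-\tfrac{1}{2}s^{2}+g(x)$, an even smooth function of $s$, so the reflected extension is smooth across $s=0$; on $|s|\ge\delta'$ it agrees with $a-|s|$ and hence with $f$ on the $M^{a}$ side and $f\circ\sigma$ on its image. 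Properties (1), (2), and (3) are immediate.

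For (4), the only thing to check is that no new critical points appear in the collar for $s\ne 0$. Direct differentiation on the $s>0$ side gives
\[
\partial_{s}F\;=\;-1+\rho(s)(1-s)+\rho'(s)\bigl(s-\tfrac{1}{2}s^{2}+g(x)\bigr).
\]
On $0<s\le\delta$, $\rho=1$, $\rho'=0$, so $\partial_{s}F=-s<0$; on $s\ge\delta'$, $\rho=\rho'=0$, so $\partial_{s}F=-1$. On the transition $\delta<s<\delta'$, $\rho(s)<1$ and $\rho'(s)\le 0$, while $s-\tfrac{1}{2}s^{2}>0$ since $\delta'<1$, so the $g$-independent part $-1+\rho(s)(1-s)+\rho'(s)(s-\tfrac{1}{2}s^{2})$ is strictly negative on this compact interval, and for $\eta$ small the remaining term $\rho'(s)g(x)$ is dominated uniformly in $x$. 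Hence $\partial_{s}F<0$ on the entire positive side away from $s=0$, and by symmetry on the negative side. The only critical points of $F$ in the collar therefore lie at $s=0$, where $F|_{\partial M^{a}}=a+g$ yields exactly the critical set of $g$; the Hessian of $F$ there is block diagonal with $s$-block equal to $-1$ and $\partial M^{a}$-block equal to $\nabla^{2}g$, nondegenerate by Morseness of $g_{0}$. Combined with the original critical points of $f$ in $M^{a}$ and their $\sigma$-images coming from the unmodified part, this gives (4).

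For (5), $F|_{\partial M^{a}}=a+g\ge a-\eta\|g_{0}\|_{C^{0}}$, while the critical values of $f$ on $M^{a}$ form a finite set (since $f$ is proper and $a$ is regular) bounded above by some $a-\eta'$ with $\eta'>0$, and the $\sigma$-images have the same values. Choosing $\eta$ so that $\eta\|g_{0}\|_{C^{0}}<\eta'$ and simultaneously small enough for the $\partial_{s}F<0$ estimate in the transition region yields all the required properties. The only subtle point is the $C^{1}$-smallness of $g$ needed to keep $\partial_{s}F$ strictly negative in the transition annulus; once this is pinned down the rest is bookkeeping.
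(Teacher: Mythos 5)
Your construction is correct and is essentially the expected one that the paper omits as ``straightforward'': a $Z_{2}$-symmetric interpolation inside the collar, cut off so that it agrees with $f$ (resp.\ $f\circ\sigma$) outside a smaller collar, together with a small Morse function on $\partial M^{a}$ to control the boundary critical points. The derivative computation $\partial_{s}F=-1+\rho(s)(1-s)+\rho'(s)\bigl(s-\tfrac{1}{2}s^{2}+g(x)\bigr)$ is right, the estimates on the three subintervals $0<s\le\delta$, $\delta<s<\delta'$, $s\ge\delta'$ do give $\partial_{s}F<0$ once $\eta$ is small, and the block-diagonal Hessian at $s=0$ gives nondegeneracy. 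One small inaccuracy: what you actually need to close the argument is only the $C^{0}$-smallness of $g$, not $C^{1}$-smallness as you remark at the end; the coefficient of $g(x)$ in $\partial_{s}F$ is $\rho'(s)$, which is bounded independently of $\eta$, so $\sup|\rho'|\cdot\eta\|g_{0}\|_{C^{0}}<\delta$ already suffices both for the sign of $\partial_{s}F$ and for property~(5). Since $g=\eta g_{0}$ the distinction does not matter, but the framing should not suggest that the estimate draws on derivatives of $g$.
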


We can define a metric $G$ on $2M^{a}$ satisfying the following
properties. (1). It is invariant under the $Z_{2}$ action. (2). It
equals the original metric on $M^{a} - \textrm{Im} \varphi$. (3). It
is a product metric on $(-\delta, \delta) \times \partial M^{a}$,
where $(-\delta, \delta)$ is given the standard metric. (4). It is
locally trivial.

\begin{lemma}\label{symmetry_dynamic}
There is a negative gradient-like vector field $\xi$ of $F$ on
$2M^{a}$ satisfying the following properties. (1). The vector field
$\xi$ is invariant under the $Z_{2}$ action. (2). It equals $-
\nabla f$ on $M^{a} - \textrm{Im} \varphi$. (3). It satisfies local
triviality and transversality. (4). For all $x \in \partial M^{a}$,
$\xi(x) \in T_{x} \partial M^{a}$, and $\xi|_{\partial M^{a}}$ is a
negative gradient-like vector field of $F|_{\partial M^{a}}$ on
$\partial M^{a}$ satisfying local triviality and transversality.
\end{lemma}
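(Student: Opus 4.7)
The plan is to construct $\xi$ in two stages: first a preliminary $Z_2$-invariant, locally trivial, negative gradient-like vector field $\xi_0$ of $F$ that is tangent to $\partial M^a$; then a $Z_2$-equivariant perturbation supported inside $\text{Im}\,\varphi$ (and its $\sigma$-image) to secure transversality. For the first stage I would set $\xi_0 = -\nabla f$ on $M^a - \text{Im}\,\varphi$, and on a slightly shrunken product collar $(-\delta', \delta') \times \partial M^a$ let $\xi_0$ be $-\nabla F$ computed with respect to $G$; by the product structure of $G$ and the form $F = a - \tfrac{1}{2} s^2 + g(x)$ this gives $\xi_0(s,x) = (s\,\partial_s, -\nabla g(x))$, which vanishes in the $\partial_s$-direction at $s = 0$ and therefore is tangent to $\partial M^a$ with $\xi_0|_{\partial M^a} = -\nabla g$. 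On the intermediate annulus inside $\text{Im}\,\varphi$ I would interpolate via a $Z_2$-symmetric partition of unity, choosing the supports so that the region contains no critical points and both competing vectors satisfy $dF < 0$ on them, so the convex combination remains strictly gradient-like. Local triviality of $\xi_0$ at each critical point is automatic: for critical points of $f$ in $M^a - \text{Im}\,\varphi$ from the local triviality of the original metric; for critical points of $g$ on $\partial M^a$ from the product form of $G$ together with local triviality of $G|_{\partial M^a}$ for $g$, in which Morse coordinates $(v_1^g, v_2^g)$ for $g$ extend to Morse coordinates $(s, v_1^g, v_2^g)$ for $F$ with $\xi_0 = (s, v_1^g, -v_2^g)$.

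For the second stage I would exploit that the tangency of $\xi_0$ to $\partial M^a$ renders this hypersurface flow-invariant. Thus $\mathcal{D}^{\xi_0}(p) \subset M^a \setminus \partial M^a$ for interior critical points $p$, while $\mathcal{D}^{\xi_0}(p)$ is locally $\mathbb{R} \times \mathcal{D}_g(x_0)$ and $\mathcal{A}^{\xi_0}(p) \subset \partial M^a$ when $p = (0, x_0)$ is a critical point of $g$. Three cases arise for transversality of $\mathcal{D}(p) \pitchfork \mathcal{A}(q)$: (i) if exactly one of $p, q$ lies on $\partial M^a$, or $p, q$ lie on opposite sides of $\partial M^a$, the intersection is empty, so transversality is vacuous; (ii) if both lie on $\partial M^a$, transversality in $2M^a$ reduces, via the product form, to Morse-Smale transversality of $-\nabla g$ inside $\partial M^a$; (iii) if both are interior critical points of $f$, transversality is not automatic, since flow lines may traverse the collar where $\xi_0 \neq -\nabla f$. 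I would therefore perturb $\xi_0$ $Z_2$-equivariantly, keeping the product form $(s\,\partial_s, \xi_\partial(x))$ on the product collar and perturbing $\xi_\partial$ alone to achieve Morse-Smale for $g$ on $\partial M^a$ (addressing case (ii)), then further perturbing $\xi_0$ in the interpolation annulus (disjoint from critical points and from $\partial M^a$) to address case (iii); standard transversality genericity in the space of $Z_2$-invariant perturbations supported in these regions suffices.

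The main obstacle I anticipate is controlling the interpolation region between the product collar and $M^a - \text{Im}\,\varphi$: there $G$ is not a product, $F$ need not take either the form $f$ or $a - \tfrac{1}{2} s^2 + g(x)$, and the two candidate vector fields $-\nabla F|_G$ and $-\nabla f$ need not agree. Ensuring that the interpolation remains strictly gradient-like, $Z_2$-invariant, and free of new critical points requires careful choice of the partition-of-unity supports. A safer route is to define $\xi_0 = -\nabla F|_G$ throughout $\text{Im}\,\varphi$ (requiring no interpolation there) and to interpolate only in a thin shell where $F = f$ already but the metrics may differ; in that shell both candidates have strictly negative $F$-derivative at regular points, so any convex combination does as well, and the interpolation is forced to preserve the gradient-like property and the $Z_2$-symmetry.
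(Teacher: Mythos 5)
Your overall plan — start with the gradient $-\nabla_G F$ of the $Z_2$-invariant data and then perturb equivariantly to achieve transversality — matches the paper's strategy, and your observation that tangency of $\xi_0$ to $\partial M^a$ makes the boundary flow-invariant is exactly the right starting point. However, your case analysis contains a genuine and central error: in case (i) you assert that $\mathcal{D}(p)\cap\mathcal{A}(q)=\emptyset$ whenever exactly one of $p,q$ lies on $\partial M^a$. This is false in the direction $p\in\partial M^a$, $q$ interior. As you yourself note, $\mathcal{D}^{\xi_0}(p)$ is locally $\mathbb{R}_s\times\mathcal{D}_g(x_0)$, so it extends off $\partial M^a$ into the interior of $M^a$; combined with condition (5) of Lemma~\ref{symmetry_function}, which forces $F(p)>F(q)$, flow lines from $p$ do descend to interior critical points $q$, and $\mathcal{D}(p)\cap\mathcal{A}(q)$ is generically nonempty. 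This is precisely the case that requires real work, and it is what the paper's inductive Milnor-style isotopy (pushing the descending spheres $S_i^-$ and $\widetilde{S_i^-}$ by a $Z_2$-equivariant ambient isotopy $\alpha_t$ of $F^{-1}(c_k-\epsilon)$, level by level) is built to handle. Your proposal never arranges transversality for this pair, so it does not establish the lemma as written.

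Two secondary points. First, the interpolation step you spend most of your effort worrying about is unnecessary: the metric $G$ constructed in the paragraph preceding the lemma is required to equal the original metric on $M^a-\textrm{Im}\,\varphi$, where $F=f$ as well, so $-\nabla_G F=-\nabla f$ there without any interpolation; you can take $\xi_0=-\nabla_G F$ globally and (1), (2), local triviality, and tangency to $\partial M^a$ all hold immediately. Second, your case (iii) (both critical points interior) is in fact automatic: since all interior critical points have $F$-value at most $c_0<c_1$ and the collar $\textrm{Im}\,\varphi$ can be taken thin enough that it lies entirely above this level, flow lines between interior critical points never meet $\textrm{Im}\,\varphi$, so they run in the region where $\xi=-\nabla f$ and transversality is inherited from the standing hypothesis on $(M,f)$. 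So the part you think needs work is free, and the part you think is free is the hard part.
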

\begin{proof}
We shall modify $- \nabla F$ to be $\xi$. The proof follows closely
those of \cite[thm.\ 4.4, lem.\ 4.6 and thm.\ 5.2]{milnor2} plus
arguing in the $Z_{2}$ invariant setting. The book \cite{milnor2}
uses gradient-like vector fields, we use negative ones.

Clearly, if $\xi$ is $Z_{2}$ invariant, then $\xi(x) \in T_{x}
\partial M^{a}$ for all $x \in \partial M^{a}$. Since both $F$ and
the metric on $2M^{a}$ are $Z_{2}$ invariant, so is $- \nabla F$. By
the constructions of $F$ and the metric, $- \nabla F$ and $- \nabla
F|_{\partial M^{a}}$ satisfy everything but transversality.

Suppose the critical points on $\partial M^{a}$ have function values
$c_{1} < \cdots < c_{l}$. Suppose $c_{0}$ is the maximum of function
values on critical points off $\partial M^{a}$. By (5) of Lemma
\ref{symmetry_function}, $c_{0} < c_{1}$. By induction on $k$, we
shall modify the vector field $\xi$ on $M^{a_{k},b_{k}}$ for some
$a_{k}$, $b_{k} \in (c_{k-1}, c_{k})$ such that the vector field on
$M^{c_{k}}$ satisfies the conclusion (in $M^{c_{k}}$, we don't
consider $\mathcal{D}(p) \cap M^{c_{k}}$ for $p \notin M^{c_{k}}$),
and the vector field globally satisfies everything but
transversality.

Firstly, by (2) and (4) of Lemma \ref{symmetry_function} and the
construction of the metric, the vector field on $M^{c_{0}}$
satisfies the conclusion automatically.

Secondly, supposing we have finished the construction for
$M^{c_{k-1}}$, we shall modify $\xi$ for $M^{c_{k}}$ by the method
in \cite{milnor2}. Suppose the critical points with function value
$c_{k}$ are exactly $p_{i}$ ($i=1, \cdots, n$). Denote the
descending and ascending manifolds of $p$  in $\partial M^{a}$ with
respect to $\xi|_{\partial M^{a}}$ by $\widetilde{\mathcal{D}(p)}$
and $\widetilde{\mathcal{A}(p)}$ respectively.

By (3) of Lemma \ref{symmetry_function} and local triviality of
$\xi$, there is a neighborhood $U_{i}$ of $p_{i}$ such that $U_{i}$
has a coordinate chart $(s, v_{1}, v_{2})$, $s^{2} < 4\epsilon$,
$\|v_{1}\|^{2} < 4\epsilon$, $\|v_{2}\|^{2} < 4\epsilon$, $F(s,
v_{1}, v_{2}) = c_{k} - \frac{1}{2} s^{2} - \frac{1}{2}
\|v_{1}\|^{2} + \frac{1}{2} \|v_{2}\|^{2}$, the metric on $U_{i}$ is
standard, and the action of $\sigma$ is $\sigma(s, v_{1}, v_{2}) =
(-s, v_{1}, v_{2})$. Here $\epsilon$ is uniform for all $i$. We may
assume $U_{i}$ are disjoint for different $i$. Then
$\mathcal{D}(p_{i}) \cap U_{i} = \{(s, v_{1}, 0)\}$ and
$\widetilde{\mathcal{D}(p_{i})} \cap U_{i} = \{(0, v_{1}, 0)\}$.
Denote $S_{i}^{-} = \mathcal{D}(p_{i}) \cap F^{-1}(c_{k} - \epsilon)
= \{(s, v_{1}, 0) \mid s^{2} + \|v_{1}\|^{2} = 2 \epsilon \}$ and
$\widetilde{S_{i}^{-}} = \widetilde{\mathcal{D}(p_{i})} \cap
F^{-1}(c_{k} - \epsilon) = \{(0, v_{1}, 0) \mid \|v_{1}\|^{2} = 2
\epsilon \}$. Let $B_{2} = \{ v_{2} \mid \|v_{2}\|^{2} < \epsilon
\}$. Then we have a map $\alpha_{i}: S_{i}^{-} \times B_{2}
\longrightarrow F^{-1}(c_{k} - \epsilon) \cap U_{i}$ defined by
\[
\alpha_{i} (s, v_{1}, 0, v_{2}) = ((\|v_{2}\|^{2} + 2
\epsilon)^{\frac{1}{2}} (2\epsilon)^{-\frac{1}{2}} s, (\|v_{2}\|^{2}
+ 2 \epsilon)^{\frac{1}{2}} (2\epsilon)^{-\frac{1}{2}} v_{1},
v_{2}).
\]
Clearly, $\alpha_{i}$ is a diffeomorphism to its image, and
$\alpha_{i}(\widetilde{S_{i}^{-}} \times B_{2}) \subseteq \partial
M^{a}$. There is a $v_{2,i} \in B_{2}$ such that, for all critical
points $q \in M^{c_{k-1}}$, $\alpha_{i}: S_{i}^{-} \times
\{v_{2,i}\} \longrightarrow F^{-1}(c_{k} - \epsilon)$ is transverse
to $\mathcal{A}(q) \cap F^{-1}(c_{k} - \epsilon)$, and $\alpha_{i}:
\widetilde{S_{i}^{-}} \times \{v_{2,i}\} \longrightarrow
F^{-1}(c_{k} - \epsilon) \cap \partial M^{a}$ is transverse to
$\widetilde{\mathcal{A}(q)} \cap F^{-1}(c_{k} - \epsilon)$. Define
$\alpha_{t,i}: S_{i}^{-} \longrightarrow F^{-1}(c_{k} - \epsilon)$
by $\alpha_{t,i}(s, v_{1}, 0) = \alpha_{i} (s, v_{1}, 0, t v_{2,i})$
for $t \in [0,1]$. When $t$ varies in $[0,1]$, $\alpha_{t,i}:
S_{i}^{-} \longrightarrow F^{-1}(c_{k} - \epsilon)$ is an isotopy of
embeddings, and its restriction to $\widetilde{S_{i}^{-}}$ is also
an isotopy of embeddings $\alpha_{t,i}: \widetilde{S_{i}^{-}}
\longrightarrow F^{-1}(c_{k} - \epsilon) \cap
\partial M^{a}$. Moreover, $\alpha_{t,i}$ is $Z_{2}$ equivariant.
Following \cite{milnor2}, we can extend $\alpha_{t,i}$ to be a
$Z_{2}$ equivariant isotopy of $F^{-1}(c_{k} - \epsilon)$, which we
still denote by $\alpha_{t,i}$, such that $\alpha_{0,i}$ is the
identity, and $\alpha_{t,i}$ is the identity outside of $U_{i}$ for
all $t$.

Since $U_{i}$ are disjoint for all $i$, composing these isotopies
$\alpha_{t,i}$, we get a $Z_{2}$ equivariant isotopy $\alpha_{t}$ of
$F^{-1}(c_{k} - \epsilon)$ such that $\alpha_{t}(U_{i}) = U_{i}$ and
$\alpha_{t}|_{U_{i}} = \alpha_{t,i}|_{U_{i}}$. We have $\alpha_{0}$
is the identity, and for all critical points $q \in M^{c_{k-1}}$,
$\alpha_{1}: S_{i}^{-} \longrightarrow F^{-1}(c_{k} - \epsilon)$ is
transverse to $\mathcal{A}(q) \cap F^{-1}(c_{k} - \epsilon)$, and
$\alpha_{1}: \widetilde{S_{i}^{-}} \longrightarrow F^{-1}(c_{k} -
\epsilon) \cap \partial M^{a}$ is transverse to
$\widetilde{\mathcal{A}(q)} \cap F^{-1}(c_{k} - \epsilon)$.

By this isotopy $\alpha_{t}$ and its $Z_{2}$ equivariance, following
\cite{milnor2}, we can modify $\xi$ in $M^{c_{k} - \epsilon, c_{k} -
\frac{1}{2} \epsilon}$ such that the new $\xi$ is still $Z_{2}$
invariant, and $\mathcal{D}(p_{i})$
($\widetilde{\mathcal{D}(p_{i})}$) is transverse to $\mathcal{A}(q)$
($\widetilde{\mathcal{A}(q)}$) for all $p_{i}$ and all critical
points $q \in M^{c_{k-1}}$. Since $\xi$ only changed in $M^{c_{k} -
\epsilon, c_{k} - \frac{1}{2} \epsilon}$, we have $\xi$ and
$\xi|_{\partial M^{a}}$ are still locally trivial, and on
$M^{c_{k-1}}$ nothing has changed. Thus we get a desired $\xi$ for
$M^{c_{k}}$.

The above two steps complete the induction.
\end{proof}

By Lemma \ref{symmetry_dynamic}, $\xi$ and $\xi|_{\partial M^{a}}$
give $2M^{a}$ and $\partial M^{a}$ a CW decomposition respectively.
We shall consider the relation between these two decompositions. Use
the same notations as in the proof of Lemma \ref{symmetry_dynamic},
denote the descending and ascending manifolds of $\xi|_{\partial
M^{a}}$ by $\widetilde{\mathcal{D}(p)}$ and
$\widetilde{\mathcal{A}(p)}$. It's easy to see that $\mathcal{D}(p)
\cap \mathcal{A}(q) = \widetilde{\mathcal{D}(p)} \cap
\widetilde{\mathcal{A}(q)}$ when $p$, $q \in \partial M^{a}$. Thus
the moduli spaces $\mathcal{M}(p,q)$ of $\xi$ and $\xi|_{\partial
M^{a}}$ are the same. Then $\overline{\mathcal{D}(p)} =
\bigsqcup_{I} \mathcal{M}_{I} \times \mathcal{D}(r_{k})$ and
$\overline{\widetilde{\mathcal{D}(p)}} = \bigsqcup_{I \subseteq
\partial M^{a}} \mathcal{M}_{I} \times
\widetilde{\mathcal{D}(r_{k})}$. Since
$\widetilde{\mathcal{D}(r_{k})} \subseteq \mathcal{D}(r_{k})$, there
is a natural embedding $\theta:
\overline{\widetilde{\mathcal{D}(p)}} \hookrightarrow
\overline{\mathcal{D}(p)}$. In addition, suppose $\Gamma$ is a
generalized flow line connecting $p$ and $x$, then $\sigma \Gamma$
is a generalized flow line connecting $\sigma p = p$ and $\sigma x$.
Thus there is a $Z_{2}$ action on $\overline{\mathcal{D}(p)}$.

\begin{lemma}\label{action_d(p)}
Suppose $p \in \partial M^{a}$. Then $\theta:
\overline{\widetilde{\mathcal{D}(p)}} \hookrightarrow
\overline{\mathcal{D}(p)}$ is a smooth embedding. The action of
$Z_{2}$ on $\overline{\mathcal{D}(p)}$ is smooth and $\text{\rm Im}
\theta = \text{\rm Fix}(Z_{2}; \overline{\mathcal{D}(p)})$. In
addition, $\widetilde{e} = e \theta$, where $\widetilde{e}$ is the
characteristic map $\widetilde{e}:
\overline{\widetilde{\mathcal{D}(p)}} \longrightarrow \partial
M^{a}$ and $e$ is the characteristic map $e:
\overline{\mathcal{D}(p)} \longrightarrow 2M^{a}$.
\end{lemma}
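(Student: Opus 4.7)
The plan is to work throughout with the viewpoint of (\ref{comapctify_d}), under which both $\overline{\mathcal{D}(p)}$ and $\overline{\widetilde{\mathcal{D}(p)}}$ are spaces of pairs $(\Gamma,x)$ with $\Gamma$ a generalized flow line from $p$ to $x$, the map $\theta$ is the evident set-theoretic inclusion, and the $Z_{2}$-action is $\sigma\cdot(\Gamma,x)=(\sigma\Gamma,\sigma x)$. Since $\xi$ is $Z_{2}$-invariant by Lemma \ref{symmetry_dynamic}, $\sigma$ maps flow lines to flow lines and fixes the head $p\in\partial M^{a}=\mathrm{Fix}(Z_{2})$, so the action is well defined. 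The identity $\widetilde{e}=e\circ\theta$ is then tautological, since by (3) of Theorem \ref{d(p)_manifold} both sides equal the coordinate projection $(\Gamma,x)\mapsto x$.

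For the smoothness assertions, the strategy is to pull back through the evaluation embeddings of Lemma \ref{d(p)_embedding} applied to the Morse pairs $(2M^{a},F)$ and $(\partial M^{a},F|_{\partial M^{a}})$. By (3)--(4) of Lemma \ref{symmetry_function}, the critical values of $F|_{\partial M^{a}}$ in $(-\infty,F(p)]$ form a sub-collection of the critical values of $F$, and each regular level $F^{-1}(a_{j})$ is transverse to $\partial M^{a}$ with $F^{-1}(a_{j})\cap\partial M^{a}=(F|_{\partial M^{a}})^{-1}(a_{j})$; so one may choose a common collection of regular values $\{a_{j}\}$ in the gaps between consecutive critical values of $F$ and obtain, for each $i$, the commutative square
\begin{equation*}
\xymatrix{
\theta^{-1}(U(i)) \ar[r]^-{\widetilde{E}(i)} \ar[d]_-{\theta}
 & \prod_{j=0}^{i-1}\bigl(F^{-1}(a_{j})\cap\partial M^{a}\bigr)\times\partial M^{a} \ar[d]^-{\iota} \\
U(i) \ar[r]^-{E(i)}
 & \prod_{j=0}^{i-1} F^{-1}(a_{j})\times 2M^{a}
}
\end{equation*}
in which $E(i)$ and $\widetilde{E}(i)$ are the smooth embeddings of Lemma \ref{d(p)_embedding} and $\iota$ is a product of smooth inclusions of transverse intersections. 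Hence $\theta|_{\theta^{-1}(U(i))}=E(i)^{-1}\circ\iota\circ\widetilde{E}(i)$ is a smooth embedding, and $\theta$ itself is a smooth embedding by patching. The same square, together with the $\sigma$-equivariance of $E(i)$ and the $\sigma$-invariance of each $F^{-1}(a_{j})$ and of $2M^{a}$, identifies the $Z_{2}$-action on $U(i)$ with the restriction of the diagonal smooth product action; hence the $Z_{2}$-action on $\overline{\mathcal{D}(p)}$ is smooth.

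To identify the fixed-point set, the inclusion $\mathrm{Im}\,\theta\subseteq\mathrm{Fix}(Z_{2};\overline{\mathcal{D}(p)})$ is clear, since every $(\Gamma,x)\in\mathrm{Im}\,\theta$ has $\Gamma$ and $x$ contained in $\partial M^{a}=\mathrm{Fix}(Z_{2})$. For the converse, suppose $(\Gamma,x)$ is $\sigma$-fixed; then $x\in\partial M^{a}$, and because $\sigma$ preserves the head $p$, the tail $x$, and the constant/non-constant alternation of components, $\sigma\Gamma=\Gamma$ forces $\sigma\gamma_{i}=\gamma_{i}$ in $\mathcal{M}(r_{i},r_{i+1})$ for every component $\gamma_{i}$. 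For constant $\gamma_{i}$ this places the corresponding critical point on $\partial M^{a}$; for non-constant $\gamma_{i}$, equality of $\mathbb{R}$-orbits yields $\tau\in\mathbb{R}$ with $\sigma\gamma_{i}(t)=\gamma_{i}(t+\tau)$, and iterating $\sigma$ gives $\gamma_{i}(t)=\gamma_{i}(t+2\tau)$; non-constancy forces $\tau=0$, so the image of $\gamma_{i}$ lies in $\partial M^{a}$. Thus all of $\Gamma$ lies in $\partial M^{a}$ and $(\Gamma,x)\in\mathrm{Im}\,\theta$. The main obstacle is precisely this last pointwise-from-$\mathbb{R}$-orbit argument; once it is in place, the rest of the proof is routine bookkeeping on top of Lemmas \ref{d(p)_embedding} and \ref{symmetry_function}.
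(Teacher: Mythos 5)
Your proof is correct and follows essentially the same route as the paper: smoothness of $\theta$ and of the $Z_2$-action are derived from the commutative square built from the evaluation embeddings $E(i)$ and $\widetilde{E}(i)$ of Lemma \ref{d(p)_embedding}, using that $F$ is $Z_2$-invariant so $E(i)$ is $Z_2$-equivariant. The paper dismisses the non-smoothness assertions as ``obviously true,'' whereas you spell out the fixed-point identification in detail (the key observation being that for a non-constant $\gamma_i$, $\sigma\gamma_i(t)=\gamma_i(t+\tau)$ and $\sigma^2=\mathrm{Id}$ force $\gamma_i$ to be periodic with period $2\tau$, hence $\tau=0$ since $F$ strictly decreases along non-constant flow lines); this is a worthwhile elaboration of a point the paper leaves implicit. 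Two cosmetic remarks: the codomain of $E(i)$ should be $\prod_{j<i} F^{-1}(a_j)\times M(i)$ rather than $\prod_{j<i} F^{-1}(a_j)\times 2M^{a}$ (the inclusion $M(i)\hookrightarrow 2M^a$ is an open embedding, so this doesn't affect the argument), and the constant components of $\Gamma$ sit at the intermediate critical points $r_i$, which are automatically in $\partial M^a$ once the critical sequence $I$ is $\sigma$-fixed — so the ``constant $\gamma_i$'' case in your fixed-point argument is already handled by the preservation of the stratum $\mathcal{D}_I$.
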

\begin{proof}
Except for smoothness, this lemma is obviously true. We only need to
prove smoothness. This is a local property.

Suppose the critical values in $(-\infty, f(p)]$ are $c_{l} < \cdots
< c_{0}$. Denote $M(i) = F^{-1} ((c_{i+1}, c_{i-1}))$, $U(i) =
e^{-1}(M(i))$ and $\widetilde{U}(i) = \widetilde{e}^{-1}(M(i))$.
Choose $a_{i} \in (c_{i-1}, c_{i+1})$, by Lemma
\ref{d(p)_embedding}, we have the following commutative diagram, and
both $E(i)$ and $\widetilde{E}(i)$ are smooth embeddings. Thus
$\theta$ is a smooth embedding.
\[
\xymatrix{
  \widetilde{U}(i) \ar[d]_{\theta} \ar[r]^-{\widetilde{E}(i)} &  \prod_{j=0}^{i-1} F^{-1}(a_{j}) \times M(i)      \\
  U(i) \ar[ur]_{E(i)}                     }
\]

Since $F$ is $Z_{2}$ invariant, there is a smooth $Z_{2}$ action on
$\prod_{j=0}^{i-1} F^{-1}(a_{j}) \times M(i)$ and $E(i)$ is $Z_{2}$
equivariant. Thus the action of $Z_{2}$ on $U(i)$ is smooth.
\end{proof}

\begin{proof}[Proof of Theorem \ref{CW}]
For briefness, we shall not distinguish between a CW complex and its
underlying space in this proof.

The function $F$ in Lemma \ref{symmetry_function} and the vector
fields $\xi$ and $\xi|_{\partial M^{a}}$ in Lemma
\ref{symmetry_dynamic} give two CW decompositions. They are $2M^{a}
= \bigsqcup_{p} \mathcal{D}(p)$ with characteristic maps $e:
\overline{\mathcal{D}(p)} \longrightarrow 2M^{a}$ and $\partial
M^{a} = \bigsqcup_{p \in
\partial M^{a}} \widetilde{\mathcal{D}(p)}$ with characteristic maps $\widetilde{e}:
\overline{\widetilde{\mathcal{D}(p)}} \longrightarrow \partial
M^{a}$. The decomposition of $2M^{a}$ is $Z_{2}$ invariant, $K^{a} =
\bigsqcup_{p \in M^{a} - \partial M^{a}} \mathcal{D}(p)$ is a
subcomplex of $2M^{a}$, and $\bigsqcup_{p \in 2M^{a} - M^{a}}
\mathcal{D}(p) = \sigma(K^{a}) \subseteq 2M^{a} - M^{a}$. However,
there is still no CW structure on $M^{a}$. We shall expand $K^{a}$
to $M^{a}$ by a sequence of elementary expansions (compare \cite[p.
14]{m.cohen}), which gives $M^{a}$ a CW structure.

For clarity, denote the characteristic map for
$\overline{\mathcal{D}(p)}$ by $e_{p}$. Suppose $p \in \partial
M^{a}$, and denote $e_{p}^{-1}(M^{a})$ by $\frac{1}{2}
\overline{\mathcal{D}(p)}$.

Construct a vector field $\widetilde{X}$ on
$\overline{\mathcal{D}(p)}$ as Lemma \ref{modified_vector}, i.e.,
$\widetilde{X} (F \circ e_{p}) \leq \xi F$, $\widetilde{X}$ equals
$\xi$ near $p$ in $\mathcal{D}(p)$, and $\widetilde{X}$ is strictly
outward on $\partial \overline{\mathcal{D}(p)}$. By Lemma
\ref{action_d(p)}, $\sigma \widetilde{X}$ has the same property as
$\widetilde{X}$ does. By Lemma \ref{strict_outward_sum}, and
replacing $\widetilde{X}$ by $\frac{1}{2}(\widetilde{X} + \sigma
\widetilde{X})$ if necessary, we may assume $\widetilde{X}$ is
$Z_{2}$ invariant. By the $Z_{2}$ invariance of $F$, Lemma
\ref{action_d(p)} and the proof of Theorem \ref{disk}, the $Z_{2}$
equivariant flow generated by $\widetilde{X}$ gives a homeomorphism
\[
\Psi: \left( \frac{1}{2} D^{\textrm{ind}(p)}, D^{\textrm{ind}(p)-1}
\right) \longrightarrow \left( \frac{1}{2}
\overline{\mathcal{D}(p)}, \overline{\widetilde{\mathcal{D}(p)}}
\right),
\]
where $\frac{1}{2} D^{\textrm{ind}(p)} = \{ (s, v_{1}) \in [0,
+\infty) \times V_{-} \mid s^{2} + \| v_{1} \|^{2} \leq \epsilon
\}$, $D^{\textrm{ind}(p)-1} = \{ (0, v_{1}) \in \{0\} \times V_{-}
\mid \| v_{1} \|^{2} \leq \epsilon \}$, and $V_{-} \times \{0\}$ is
the descending subspace of $T_{p} \partial M^{a}$.

Denote the $k$ skeletons of $2M^{a}$ and $\partial M^{a}$ by $L_{k}$
and $\widetilde{L}_{k}$ respectively. If $\textrm{ind}(p) = n$, then
$e_{p}(\partial \overline{\widetilde{\mathcal{D}(p)}}) \subseteq
\widetilde{L}_{n-2}$, $e_{p}(\partial (\frac{1}{2}
\overline{\mathcal{D}(p)})) \subseteq (L_{n-1} \cap M^{a}) \cup
\widetilde{L}_{n-1}$ and $e_{p}(\partial (\frac{1}{2}
\overline{\mathcal{D}(p)}) - \widetilde{\mathcal{D}(p)}) \subseteq
L_{n-1} \cap M^{a}$.

Expand $K^{a}$ by attaching cell pairs $e_{p}: (\frac{1}{2}
\overline{\mathcal{D}(p)}, \overline{\widetilde{\mathcal{D}(p)}})
\longrightarrow (M^{a}, \partial M^{a})$ for critical points $p \in
\partial M^{a}$ by induction on $\textrm{ind}(p)$. Then $K^{a}$
expands by elementary expansions to a CW complex $N$ such that
$K^{a}$ and $\partial M^{a}$ are its subcomplexes. Clearly, $N
\subseteq M^{a}$. In addition, if $x \in M^{a} - K^{a}$, then $x \in
\mathcal{D}(p)$ for some $p \in
\partial M^{a}$ because $\mathcal{D}(q) \subseteq 2M^{a} - M^{a}$
when $q \notin M^{a}$. Since $\frac{1}{2} \overline{\mathcal{D}(p)}
= e_{p}^{-1}(M^{a})$, then $x \in e_{p}(\frac{1}{2}
\overline{\mathcal{D}(p)}) \subseteq N$. Thus $N = M^{a}$ as sets.
Finally, $N$ and $M^{a}$ share the same topology since $N$ is a
finite complex.
\end{proof}

\subsection{Proof of Theorem \ref{boundary_operator}}
\begin{proof}
In this proof, all critical points have function values less than
$a$.

Suppose $\textrm{ind}(p)=k$. $[\overline{\mathcal{D}(p)}]$ is a base
of $H_{k}(\overline{\mathcal{D}(p)}, \partial
\overline{\mathcal{D}(p)})$. It's well known that $\partial
[\overline{\mathcal{D}(p)}]$ is the image of
$[\overline{\mathcal{D}(p)}]$ under the following composition of
homomorphisms
\begin{eqnarray*}
H_{k}(\overline{\mathcal{D}(p)}, \partial \overline{\mathcal{D}(p)})
\longrightarrow H_{k-1}(\partial \overline{\mathcal{D}(p)})
\longrightarrow \widetilde{H}_{k-1}(\partial
\overline{\mathcal{D}(p)} / e^{-1}(K^{a}_{k-2})) \\
\longrightarrow \widetilde{H}_{k-1}(K^{a}_{k-1} / K^{a}_{k-2}) =
H_{k-1}(K^{a}_{k-1}, K^{a}_{k-2}),
\end{eqnarray*}
where $K^{a}_{n} = \bigsqcup_{\textrm{ind}(q) \leq n}
\mathcal{D}(q)$, and $\partial \overline{\mathcal{D}(p)} =
\bigsqcup_{i} \partial^{i} \overline{\mathcal{D}(p)}$ is the full
boundary of $\overline{\mathcal{D}(p)}$. The first homomorphism
follows from the homology long exact sequence, the second one
follows from the quotient map $\partial \overline{\mathcal{D}(p)}
\longrightarrow
\partial \overline{\mathcal{D}(p)} / e^{-1}(K^{a}_{k-2})$, and the
third one follows from the map $\partial \overline{\mathcal{D}(p)} /
e^{-1}(K^{a}_{k-2}) \longrightarrow K^{a}_{k-1} / K^{a}_{k-2}$
induced by $e$. Denote the first homomorphism by $\varphi_{1}$ and
the composition of the first two by $\varphi_{2}$. The composition
of all of them is the boundary operator $\partial$.

We have $e^{-1}(K^{a}_{k-2}) = \bigsqcup_{\textrm{ind}(q)<k-1}
\mathcal{M}(p,q) \times \mathcal{D}(q) \sqcup \bigsqcup_{|I|>0}
\mathcal{D}_{I}$. Thus there is the following wadge of spheres with
dimension $k-1$
\[
\partial \overline{\mathcal{D}(p)} / e^{-1}(K^{a}_{k-2}) =
\bigvee_{\textrm{ind}(q)=k-1} \  \bigvee_{x \in \mathcal{M}(p,q)}
\{x\} \times \overline{\mathcal{D}(q)} / \partial(\{x\} \times
\overline{\mathcal{D}(q)}),
\]
where the base points of spheres are $\partial(\{x\} \times
\overline{\mathcal{D}(q)}) / \partial(\{x\} \times
\overline{\mathcal{D}(q)})$.

Clearly, $\overline{\mathcal{D}(p)}$ is a topological manifold with
boundary $\partial \overline{\mathcal{D}(p)}$, and
$[\overline{\mathcal{D}(p)}]$ represents an orientation of
$\overline{\mathcal{D}(p)}$. So
$\varphi_{1}([\overline{\mathcal{D}(p)}])$ represents the boundary
orientation of $\partial \overline{\mathcal{D}(p)}$ induced from
$[\overline{\mathcal{D}(p)}]$. Give $\{x\} \times
\overline{\mathcal{D}(q)}$ the orientation
$[\overline{\mathcal{D}(q)}]$ of $\overline{\mathcal{D}(q)}$ by the
natural identification. Denote by $[\{x\} \times
\overline{\mathcal{D}(q)}]$ the element in $
\widetilde{H}_{k-1}(\{x\} \times \overline{\mathcal{D}(q)} /
\partial(\{x\} \times \overline{\mathcal{D}(q)})) \subseteq
\widetilde{H}_{k-1}(\partial \overline{\mathcal{D}(p)} /
e^{-1}(K^{a}_{k-2}))$ which represents this orientation. Then by (2)
of Theorem \ref{orientation}, we have
\[
\varphi_{2}([\overline{\mathcal{D}(p)}]) =
\sum_{\textrm{ind}{q}=k-1} \  \sum _{x \in \mathcal{M}(p,q)}
\varepsilon(x) [\{x\} \times \overline{\mathcal{D}(q)}],
\]
where $\varepsilon(x)$ is the orientation $\pm 1$ at $x \in
\mathcal{M}(p,q)$. Thus
\[
\partial [\overline{\mathcal{D}(p)}] = \sum_{\textrm{ind}(q)=k-1} \  \sum_{x \in
\mathcal{M}(p,q)} \varepsilon(x) [\overline{\mathcal{D}(q)}] =
\sum_{\textrm{ind}(q) = \textrm{ind}(p)-1} \# \mathcal{M}(p,q)
[\overline{\mathcal{D}(q)}].
\]
\end{proof}

\section*{Appendix}
\begin{proof}[Proof of Lemma \ref{smoothness_norm}]
Define $\varphi: [0, +\infty) \times S(H) \longrightarrow H \times
S(H)$ by $\varphi(\lambda, v) = (\lambda v, v)$. Then
\[
d \varphi \frac{\partial}{\partial \lambda} = (v, 0), \qquad d
\varphi \frac{\partial}{\partial v} = \left( \lambda
\frac{\partial}{\partial v}, \frac{\partial}{\partial v} \right).
\]
Thus $d \varphi$ is nonsingular everywhere.

Define $\theta: H \times S(H) \longrightarrow [0, +\infty) \times
S(H)$ by $\theta (v_{1}, v_{2}) = (\| v_{1} \|, v_{2})$. Then
$\theta$ is continuous and $\theta \varphi = \textrm{Id}$. Thus
$\varphi$ is a smooth embedding. Then $\varphi^{-1}:
\textrm{Im}\varphi \longrightarrow [0, +\infty) \times S(H)$ is also
smooth.

Clearly, $\forall x \in L$, $(g(x), \tilde{g}(x)) \in \textrm{Im}
\varphi$, and $\varphi^{-1} (g(x), \tilde{g}(x)) = (\|g(x)\|,
\tilde{g}(x))$. Since $\varphi^{-1}$, $g(x)$ and $\tilde{g}(x)$ are
smooth, then so is $\|g(x)\|$.
\end{proof}

\begin{proof}[Proof of Lemma \ref{gradient-like_gradient}]
Define a matrix
\[
A_{1} = \left(
\begin{array}{cc}
\cos \theta & \sin \theta \\
\sin \theta & \frac{1}{\cos \theta}
\end{array}
\right)
\]
for $\theta \in [0, \frac{\pi}{2})$. Then $A_{1}$ is a symmetric
positive matrix and $A_{1} (1,0)^{T} = (\cos \theta, \sin
\theta)^{T}$, where $(*,*)^{T}$ is the transpose of $(*,*)$.

Suppose $V_{1}$ and $V_{2}$ are two vectors in a Hilbert space such
that $\langle V_{1}, V_{2} \rangle > 0$, i.e., the angle between
$V_{1}$ and $V_{2}$ is less than $\frac{\pi}{2}$. We define a
symmetric positive operator $A(V_{1},V_{2})$ such that
$A(V_{1},V_{2})V_{1}=V_{2}$ as follows.

If $V_{1}$ and $V_{2}$ are colinear, then define $A(V_{1},V_{2}) =
\frac{\|V_{2}\|}{\|V_{1}\|} \textrm{Id}$. If $V_{1}$ and $V_{2}$ are
not colinear, then they span a plane $V_{1} \wedge V_{2}$. First, we
define an operator $A_{2}(e_{1},e_{2})$ for $e_{1} =
\frac{V_{1}}{\|V_{1}\|}$ and $e_{2} = \frac{V_{2}}{\|V_{2}\|}$. In
$(V_{1} \wedge V_{2})^{\bot}$, $A_{2}(e_{1},e_{2})$ is the identity.
In $V_{1} \wedge V_{2}$, $A_{2}(e_{1},e_{2})$ is the above $A_{1}$
mapping $e_{1}$ to $e_{2}$. Define $A(V_{1},V_{2}) =
\frac{\|V_{2}\|}{\|V_{1}\|} A_{2}(e_{1}, e_{2})$.

Thus, in general, $A(V_{1},V_{2}) = \frac{\|V_{2}\|}{\|V_{1}\|}
A_{2}(\frac{V_{1}}{\|V_{1}\|}, \frac{V_{2}}{\|V_{2}\|})$ for
$\langle V_{1}, V_{2} \rangle > 0$. Here, for $\|e_{1}\| = \|e_{2}\|
= 1$ and $\langle e_{1}, e_{2} \rangle > 0$, we have
\begin{eqnarray*}
A_{2}(e_{1}, e_{2}) Y & = & Y + \frac{\langle e_{1}, e_{2} \rangle
\langle e_{2}, Y \rangle - (1 + \langle e_{1}, e_{2} \rangle +
\langle e_{1}, e_{2} \rangle^{2})\langle e_{1}, Y \rangle}{1 +
\langle e_{1}, e_{2} \rangle} e_{1} \\
& & + \frac{\langle e_{1}, e_{2} \rangle^{2} \langle e_{1}, Y
\rangle + \langle e_{2}, Y \rangle}{\langle e_{1}, e_{2} \rangle (1
+ \langle e_{1}, e_{2} \rangle)} e_{2}.
\end{eqnarray*}
Then $A(V_{1},V_{2})$ smoothly depends on $V_{1}$ and $V_{2}$, and
$A(V_{1},V_{1}) = \textrm{Id}$.

Let $G_{1}$ be the metric associated with $X$. Denote the gradient
vector field of $f$ with respect to $G_{1}$ by $\nabla_{G_{1}} f$.
Then $\nabla_{G_{1}} f$ equals $X$ near each critical point, and
$\langle \nabla_{G_{1}} f, X \rangle = Xf > 0$ at each regular
point. Define the operator $A(X, \nabla_{G_{1}} f)$ as above at each
regular point. Define $A(X, \nabla_{G_{1}} f) = \textrm{Id}$ at each
critical point. Then $A(X, \nabla_{G_{1}} f)$ is smooth on $M$.
Define a new metric $G_{2}$ such that $\langle *, * \rangle_{G_{2}}
= \langle A(X, \nabla_{G_{1}} f) *, * \rangle_{G_{1}}$. Then
$\nabla_{G_{2}} f = X$.
\end{proof}

\section*{Acknowledgements}
I wish to thank Profs. Octav Cornea, Matthias Schwarz and Katrin
Wehrheim, who told me which work had already been done. I wish to
thank the referees of this paper who referred me to
\cite{abbondandolo_majer1} and \cite{abbondandolo_majer2}. I wish to
thank Prof. Ralph Cohen for suggesting to me this topic, his patient
teaching, and his important referencing. I wish to thank Prof. Dan
Burghelea for his explanation of \cite{burghelea_haller} and for his
generosity in sending me part of
\cite{burghelea_friedlander_kappeler}. I'm indebted to Prof. John
Milnor for his advice and suggesting \cite{kalmbach2}, which
motivated me to prove Theorem \ref{disk}. All of Section
\ref{section_CW_structure} is a result of his influence. Most
importantly, I am very grateful to my PhD advisor Prof. John Klein.
It was he who helped me communicate with the above professors. I am
also indebted to him for his direction, his patient educating, and
his continuous encouragement.


\end{document}